	\theoremstyle{definition}
	\newtheorem{thm}{Theorem}[section]
	\newtheorem{defn}{Definition}[section]
	\newtheorem{lem}[thm]{Lemma}
	\newtheorem{prop}[thm]{Proposition}
	\newtheorem{cor}[thm]{Corollary}
	\newtheorem{rmk}{Remark}[section]
	\newcommand{\R}{\mathbb{R}}
	\newcommand{\N}{\mathbb{N}}
	\newcommand{\T}{\mathbb{T}}
	\newcommand{\TT}{\mathcal{T}}
	\newcommand{\TP}{\overline{\p}{}}
	\newcommand{\TL}{\overline{\Delta}{}}
	\newcommand{\wto}{\rightharpoonup}
	\newcommand{\ST}{\text{ ST}}
	\newcommand{\RT}{\text{ RT}}
	\newcommand{\VS}{\text{ VS}}
	\newcommand{\STr}{\mathring{\text{ ST}}}
	\newcommand{\RTr}{\mathring{\text{ RT}}}
	\newcommand{\VSr}{\mathring{\text{ VS}}}
	\newcommand{\ZBr}{\mathring{ZB}}
	\newcommand{\Zr}{\mathring{Z}}
	\newcommand{\bd}[1]{\mathbf{#1}} 
	\newcommand{\RR}{\mathcal{R}}      
	\newcommand{\Om}{\Omega}
	\newcommand{\om}{\omega}
	\newcommand{\q}{\quad}
	\newcommand{\p}{\partial}
	\newcommand{\dd}{\mathfrak{D}}
	\newcommand{\nab}{\nabla}
	\newcommand{\Jr}{\mathring{J}}
	\newcommand{\lap}{\Delta}
	\newcommand{\vp}{\varphi}
	\newcommand{\no}{\nonumber}
	\newcommand{\eql}{\stackrel{L}{=}}
	\newcommand{\cnab}{\overline{\nab}}
	\newcommand{\jp}{\lee\TP\ree}
	\newcommand{\dx}{\,\mathrm{d}x}
	\newcommand{\dy}{\,\mathrm{d}y}
	\newcommand{\dz}{\,\mathrm{d}z}
	\newcommand{\dt}{\,\mathrm{d}t}
	\newcommand{\dtau}{\,\mathrm{d}\tau}
	\newcommand{\lee}{\langle}
	\newcommand{\ree}{\rangle}
	\newcommand{\kk}{\kappa}
	\newcommand{\Er}{\mathring{E}}
	\newcommand{\Kr}{\mathring{K}}
	\newcommand{\rr}{\mathfrak{R}}
	\newcommand{\Rr}{\mathring{\mathcal{R}}}
	\newcommand{\cc}{\mathfrak{C}}
	\newcommand{\ccr}{\mathring{\mathfrak{C}}}
	\newcommand{\ddr}{\mathring{\mathfrak{D}}}
	\newcommand{\ccb}{\mathfrak{B}}
	\newcommand{\ccbr}{\mathring{\mathfrak{B}}}
	\newcommand{\sss}{\mathcal{S}}
	\newcommand{\h}{\mathcal{H}}
	\newcommand{\WW}{\mathcal{W}}
	\newcommand{\VV}{\mathbf{V}}
	\newcommand{\BB}{\mathbf{B}}
	\newcommand{\PP}{\mathbf{P}}
	\newcommand{\FS}{\mathfrak{S}}
	\newcommand{\EW}{\widetilde{E}}
	\newcommand{\fs}{\mathfrak{s}}
	\newcommand{\BS}{\mathbf{S}}
	\newcommand{\FF}{\mathbf{F}}
	\newcommand{\WWr}{\mathring{\mathcal{W}}}
	\newcommand{\VVr}{\mathring{\mathbf{V}}}
	\newcommand{\BBr}{\mathring{\mathbf{B}}}
	\newcommand{\QQr}{\mathring{\mathbf{Q}}}
	\newcommand{\BSr}{\mathring{\mathbf{S}}}
	\newcommand{\fr}{\mathring{f}}
	\newcommand{\FFr}{\mathring{\mathbf{F}}}
	\newcommand{\Ur}{\mathring{U}}
	\newcommand{\Ir}{\mathring{I}}
	\newcommand{\MH}{\mathcal{M}}
	\newcommand{\NH}{\mathcal{N}}
	\newcommand{\io}{\int_{\Omega}}
	\newcommand{\iop}{\int_{\Omega^+}}
	\newcommand{\iom}{\int_{\Omega^-}}
	\newcommand{\iopm}{\int_{\Omega^\pm}}
	\newcommand{\is}{\int_{\Sigma}}
	\newcommand{\ddt}{\frac{\mathrm{d}}{\mathrm{d}t}}
	\numberwithin{equation}{section}
	\newcommand{\eps}{\varepsilon}
	\providecommand{\jump}[1]{\left\llbracket #1 \right\rrbracket }
	\providecommand{\len}[1]{\lee #1 \ree }
	\providecommand{\ino}[1]{\left\| #1 \right\| }
	\providecommand{\bno}[1]{\left| #1 \right| }
	\newcommand{\lam}{\lambda}
	\newcommand{\sh}{\sharp}
	\newcommand{\pk}{\widetilde{\varphi}}
	\newcommand{\pr}{\mathring{\varphi}}
	\newcommand{\pd}{\dot{\varphi}}
	\newcommand{\pkr}{\mathring{\pk}}
	\newcommand{\vr}{\mathring{v}}
	\newcommand{\rhor}{\mathring{\rho}}
	\newcommand{\psr}{\mathring{\psi}}
	\newcommand{\psd}{\dot{\psi}}
	\newcommand{\nnr}{{[n+1]}}
	\newcommand{\nnn}{{[n]}}
	\newcommand{\nnl}{{[n-1]}}
	\newcommand{\nnll}{{[n-2]}}	
	\newcommand{\mmm}{{[m]}}
	\newcommand{\npr}{\mathring{N}}
	\newcommand{\NN}{\mathbf{N}}
	\newcommand{\Npr}{\mathring{\NN}}
	\newcommand{\Npd}{\dot{\NN}}
	\newcommand{\QQ}{\mathbf{Q}}
	\newcommand{\vb}{\bar{v}}
	\newcommand{\wb}{\bar{w}}
	\newcommand{\vbr}{\mathring{\vb}}
	\newcommand{\ff}{\mathcal{F}}
	\newcommand{\ffp}{\mathcal{F}_p}
	\newcommand{\ffpm}{\mathcal{F}_p^\pm}
	\newcommand{\ffr}{\mathring{\mathbf{f}}}
	\newcommand{\ffpr}{\mathring{\mathcal{F}}_p}
	\newcommand{\ffpmr}{\mathring{\mathcal{F}}_p^\pm}
	\newcommand{\gapm}{{\gamma,\pm}}
	\newcommand{\qr}{\mathring{q}}
	\newcommand{\prr}{\mathring{p}}
	\newcommand{\sr}{\mathring{S}}
	\newcommand{\es}{{\varepsilon, \sigma}}
	\newcommand{\pp}{\p^{\varphi}}
	\newcommand{\nabp}{\nab^{\varphi}}
	\newcommand{\nabper}{\nab^{\varphi,\perp}}
	\newcommand{\bp}{(b\cdot\nab^{\varphi})}
	\newcommand{\bpl}{(b^-\cdot\nab^{\varphi})}
	\newcommand{\cb}{\Lambda_{\eps,b}}
	\newcommand{\cbi}{\Lambda_{\eps,b_0}}
	\newcommand{\bc}{\bar{b}}
	\newcommand{\bcp}{(\bar{b}\cdot\cnab)}
	\newcommand{\bcpu}{(\bar{b}^+\cdot\cnab)}
	\newcommand{\mb}{\bm{b}}
	\newcommand{\mbc}{\bar{\bm{b}}}
	\newcommand{\mbr}{\mathring{\bm{b}}}
	\newcommand{\mbcr}{\mathring{\bar{\bm{b}}}}
	\newcommand{\br}{\mathring{b}}
	\newcommand{\bpm}{(b^\pm\cdot\nab^{\varphi})}
	\newcommand{\mbpr}{(\mbr\cdot\nab^{\pr})}
	\newcommand{\bpmr}{(\br^\pm\cdot\nab^{\pr})}
	\newcommand{\mbpmr}{(\mbr^\pm\cdot\nab^{\pr})}
	\newcommand{\ppr}{\p^{\pr}}
	\newcommand{\nabpr}{\nab^{\pr}}
	\newcommand{\nabpkr}{\nab^{\pkr}}
	\newcommand{\nabpp}{\nab^{\varphi_0}}
	\newcommand{\lapi}{\lap^{\vp_0}}
	\newcommand{\lapp}{\lap^{\vp}}
	\newcommand{\Dtb}{\overline{D_t}}
	\newcommand{\Dtbu}{\overline{D_t^+}}
	\newcommand{\Dtbl}{\overline{D_t^-}}
	\newcommand{\Dtbpm}{\overline{D_t^\pm}}
	\newcommand{\Dtp}{D_t^{\varphi}}
	\newcommand{\Dtpl}{D_t^{\vp,-}}
	\newcommand{\Dtpm}{D_t^{\vp\pm}}
	\newcommand{\Dtpmr}{D_t^{\pr\pm}}
	\newcommand{\Dtpr}{D_t^{\pr}}
	\newcommand{\Dtpkr}{D_t^{\pkr}}
	\newcommand{\dvt}{\,\mathrm{d}\mathcal{V}_t}
	\newcommand{\dvr}{\,\mathrm{d}\mathring{\mathcal{V}}_t}
\begin{document}
\bibliographystyle{plain}
\title{\textbf{Well-posedness and Incompressible Limit of Current-Vortex Sheets with Surface Tension in Compressible Ideal MHD}}
\author{
{\sc Junyan Zhang}\thanks{School of Mathematical Sciences, University of Science and Technology of China. 96 Jinzhai Road, Hefei, Anhui 230026, China. Email: \texttt{yx3x@ustc.edu.cn}.}
 }
\date{\today}
\maketitle

\setcounter{tocdepth}{1}

\begin{abstract}
	Current-vortex sheet is one of the characteristic discontinuities in ideal compressible magnetohydrodynamics (MHD). The motion of current-vortex sheets is described by a free-interface problem of two-phase MHD flows with magnetic fields tangential to the interface. This model has been widely used in both solar physics and controlled nuclear fusion.  This paper is the first part of the two-paper sequence, which aims to present a comprehensive study for compressible current-vortex sheets with or without surface tension. In this paper, we prove the local well-posedness and the incompressible limit of current-vortex sheets with surface tension. The key observation is a hidden structure of Lorentz force in the vorticity analysis which motivates us to establish the uniform estimates in anisotropic-type Sobolev spaces with weights of Mach number determined by the number of tangential derivatives. Besides, our framework of iteration and approximation to prove the local existence of vortex-sheet problems does not rely on Nash-Moser iteration. Furthermore, the local existence of current-vortex sheets without surface tension can be proved by taking zero-surface-tension limit under certain stability conditions, which is established in \cite{Zhang2023CMHDVS2} (the second part of the two-paper sequence).
\end{abstract}

\noindent\textbf{Mathematics Subject Classification (2020): }35Q35, 76N10, 76W05, 35L65.

\noindent \textbf{Keywords}: Current-vortex sheets, Magnetohydrodynamics, Incompressible limit, Surface tension.

\tableofcontents

\section{Introduction}
The equations of compressible ideal magnetohydrodynamics (MHD) in $\R^d~(d=2,3)$ can be written in the following form
\begin{equation}
\begin{cases}
\varrho D_t u=B\cdot\nabla B-\nabla Q,~~Q:=P+\frac{1}{2}|B|^2, \\
D_t\varrho+\varrho\nab\cdot u=0,\\
D_t B=B\cdot\nabla u-B\nab\cdot u,\\
\nab\cdot B=0,\\
D_t \mathfrak{s}=0.
\end{cases}\label{CMHD}
\end{equation}
Here $\nabla:=(\p_{x_1},\cdots,\p_{x_d})$ is the standard spatial derivative and $\nab\cdot X:=\p_{x_i}X^i$ is the divergence of the vector field $X$. Throughout this paper, we use Einstein summation convenction, that is, repeated indices represent taking summation over these indices. $D_t:=\p_t+u\cdot\nabla$ is the material derivative. The fluid velocity, the magnetic field, the fluid density, the fluid pressure and the entropy are denoted by $u=(u_1,\cdots,u_d)$, $B=(B_1,\cdots,B_d)$, $\varrho$, $P$ and $\mathfrak{s}$ respectively. The quantity $Q:=P+\frac12|B|^2$ is the total pressure. Note that the fourth equation in \eqref{CMHD} is just an initial constraint instead of an independent equation. The last equation of \eqref{CMHD} is derived from the equation of total energy and Gibbs relation and we refer to \cite[Ch. 4.3]{MHDphy} for more details. To close system \eqref{CMHD}, we need to introduce the equation of state
\begin{equation}\label{eos}
P=P(\varrho,\mathfrak{s})\text{ satisfying }\frac{\p P}{\p\varrho}>0.
\end{equation} We also need to assume $\varrho\geq\bar{\rho_0}>0$ for some constant $\bar{\rho_0}>0$, which together with $\frac{\p P}{\p\varrho}>0$ guarantees the hyperbolicity of system \eqref{CMHD}.  For detailed requirement on the equation of state, we refer to Section \ref{sect eos}.

\subsection{Mathematical formulation of current-vortex sheets}

Let $H>10$ be a given real number, $x=(x_1,\cdots,x_d)$ and $x':=(x_1,\cdots,x_{d-1})$ and the space dimension $d=2,3$. We define the regions $\Om^+(t):=\{x\in\T^{d-1}\times\R:\psi(t,x')<x_d<H\}$, $\Om^-(t):=\{x\in\T^{d-1}\times\R:-H<x_d<\psi(t,x')\}$ and the moving interface $\Sigma(t):=\{x\in\T^{d-1}\times\R:x_d=\psi(t,x')\}$ between $\Om^+(t)$ and $\Om^-(t)$. We assume $U^{\pm}=(u^\pm,B^\pm,P^\pm,\mathfrak{s}^\pm)^\top$ to be a smooth solution to \eqref{CMHD} in $\Om^\pm(t)$ respectively. We say $\Sigma(t)$ is a \textit{current-vortex sheet} (or an \textit{MHD tangential discontinuity}) if the following conditions are satisfied:
\begin{equation}\label{CMHDjump}
\jump{Q}=\sigma\mathcal{H},~~B^\pm \cdot N=0,~~\p_t\psi=u^\pm \cdot N~~\text{ on }\Sigma(t),
\end{equation}where $N:=(-\p_1\psi,\cdots,-\p_{d-1}\psi, 1)^\top$ is the normal vector to $\Sigma(t)$ (pointing towards $\Om^+(t)$), $\sigma\geq 0$ is the constant coefficient of surface tension and the quantity $\mathcal{H}:=\cnab\cdot\left(\frac{\cnab\psi}{\sqrt{1+|\cnab\psi|^2}}\right)$ is twice the mean curvature of $\Sigma(t)$ with $\cnab=(\p_1,\cdots,\p_{d-1})$. The jump of a function $f$ on $\Sigma(t)$ is denoted by $\jump{f}:=f^+|_{\Sigma(t)}-f^-|_{\Sigma(t)}$ with $f^\pm:=f|_{\Om^\pm(t)}$. The first condition shows that the jump of total pressure is balanced by surface tension. The second condition shows that both plasmas are perfect conductors. The third condition shows that there is no mass flow across the interface and thus the two plasmas are physically contact and mutually impermeable. These conditions on $\Sigma(t)$ are given by the Rankine-Hugoniot conditions for ideal compressible MHD when the magnetic fields are tangential to the interface, and we refer to Trakhinin-Wang \cite[Appendix A]{TW2021MHDCDST} for detailed derivation. Besides, we impose the slip boundary conditions on the rigid boundaries $\Sigma^\pm:=\T^{d-1}\times\{\pm H\}$
\begin{equation}\label{CMHDslip}
u^\pm_d=B^\pm_d=0~~\text{ on }\Sigma^\pm.
\end{equation}

\begin{rmk} [Initial constraints for the magnetic field] The conditions $\nab\cdot B^\pm=0$ in $\Om^\pm(t)$, $B^\pm\cdot N|_{\Sigma(t)}=0$ and $B^\pm_d=0$ on $\Sigma^\pm$ are constraints for initial data so that system \eqref{CMHD} with jump conditions \eqref{CMHDjump} is not over-determined. One can show that $D_t^\pm(\frac{1}{\rho^\pm}\nab\cdot B^\pm)=0$ in $\Om^\pm(t)$ and $D_t^\pm(\frac{B^\pm}{\rho^\pm}\cdot N)=0$ on $\Sigma(t)$ and $\Sigma^\pm$ with $D_t^\pm:=\p_t+u^\pm\cdot\nab$. Thus, the initial constraints can propagate within the lifespan of solutions if initially hold.
\end{rmk}

To make the initial-boundary-valued problem \eqref{CMHD}-\eqref{CMHDslip} solvable, we have to require the initial data to satisfy certain compatibility conditions. Let $(u_0^\pm,B_0^\pm,\varrho_0^\pm,\mathfrak{s}_0^\pm,\psi_0):=(u^\pm,B^\pm,\varrho^\pm,\mathfrak{s}^\pm,\psi)|_{t=0}$ be the initial data of system \eqref{CMHD}-\eqref{CMHDjump}. We say the initial data satisfies the compatibility condition up to $m$-th order $(m\in\N)$ if
\begin{equation}\label{CMHDcomp}
\begin{aligned}
(D_t^\pm)^j \jump{Q}|_{t=0}=\sigma(D_t^\pm)^j\mathcal{H}|_{t=0}~~\text{ on }\Sigma(0),~~0\leq j\leq m,\\
(D_t^\pm)^j \p_t\psi|_{t=0}=(D_t^\pm)^j(u^\pm\cdot N)|_{t=0}~~\text{ on }\Sigma(0),~~0\leq j\leq m,\\
\p_t^j u_d^\pm=0~~\text{ on }\Sigma^\pm,~~0\leq j\leq m.
\end{aligned}
\end{equation} With these compatibility conditions, one can show that the magnetic fields also satisfy (cf. \cite[Section 4.1]{Trakhinin2008CMHDVS})
\[
(D_t^\pm)^j(B^\pm\cdot N)|_{t=0}=0~~\text{ on }\Sigma(0)\text{ and }\Sigma^\pm,~~0\leq j\leq m.
\]We also note that the fulfillment of the first condition implicitly requires the fulfillment of the second one. 

For $T>0$, we denote $\Om_T^\pm:=\bigcup\limits_{0\leq t\leq T}\{t\}\times\Om^\pm(t)$ and $\Sigma_T:=\bigcup\limits_{0\leq t\leq T}\{t\}\times\Sigma(t)$. We consider the Cauchy problem of \eqref{CMHD}: Given initial data $(u_0^\pm,B_0^\pm,\varrho_0^\pm,\mathfrak{s}_0^\pm,\psi_0)$ satisfying the compatibility conditions \eqref{CMHDcomp} up to certain order, the vortex-sheet condition $|\jump{u_0\cdot\tau}|_{\Sigma}>0$ for any vector $\tau$ tangential to $\Sigma(0)$, the constraints $\nab\cdot B_0^\pm=0$ in $\Om^\pm(0)$, $(B_0^\pm\cdot N)|_{\Sigma(0)}=0$ and $B_{0d}^\pm|_{\Sigma^\pm}=0$, we want to study the well-posedness and the incompressible limit of the following system for the case $\sigma>0$ in this paper.  The zero-surface-tension limit under suitable stability conditions on $\Sigma_T$ and further improvement of the incompressible limit are discussed in the second part of the two-paper sequence \cite{Zhang2023CMHDVS2}.
\begin{equation}\label{CMHD1}
\begin{cases}
\varrho^\pm (\p_t+u^\pm\cdot\nab) u^\pm-B^\pm\cdot\nabla B^\pm+\nabla Q^\pm=0,~~Q^\pm:=P^\pm+\frac{1}{2}|B^\pm|^2 &~~\text{ in }\Om_T^\pm,\\
(\p_t+u^\pm\cdot\nab)\varrho^\pm+\varrho^\pm\nab\cdot u^\pm=0&~~\text{ in }\Om_T^\pm,\\
(\p_t+u^\pm\cdot\nab) B^\pm=B^\pm\cdot\nabla u^\pm-B^\pm\nab\cdot u^\pm&~~\text{ in }\Om_T^\pm,\\
\nab\cdot B^\pm=0&~~\text{ in }\Om_T^\pm,\\
(\p_t+u^\pm\cdot\nab) \fs^\pm=0&~~\text{ in }\Om_T^\pm,\\
P^\pm=P^\pm(\varrho^\pm,\mathfrak{s}^\pm),~~\frac{\p P^\pm}{\p\varrho^\pm}>0,~~\varrho^\pm\geq\overline{\rho}_0>0&~~\text{ in }\overline{\Om_T^\pm},\\
\jump{Q}=\sigma\cnab\cdot\left(\frac{\cnab\psi}{\sqrt{1+|\cnab\psi|^2}}\right)&~~\text{ on }\Sigma_T,\\
B^\pm\cdot N=0&~~\text{ on }\Sigma_T,\\
\p_t\psi=u^\pm\cdot N&~~\text{ on }\Sigma_T,\\
u_d^\pm=B_d^\pm=0&~~\text{ on }[0,T]\times\Sigma^\pm,\\
(u^\pm,B^\pm,\varrho^\pm,\mathfrak{s}^\pm)|_{t=0}=(u_0^\pm,B_0^\pm,\varrho_0^\pm,\mathfrak{s}_0^\pm)~~\text{ in }\Om^\pm(0),\quad\quad \psi|_{t=0}=\psi_0 &~~\text{ on }\Sigma(0).
\end{cases}
\end{equation}

System \eqref{CMHD1}, as a hyperbolic conservation law, admits a conserved $L^2$ energy
\[
\begin{aligned}
E_0(t):=\sum_{\pm}\frac12\int_{\Om^\pm(t)} \varrho^\pm|u^\pm|^2+|B^\pm|^2+ 2 \mathfrak{P}(\varrho^\pm,\fs^\pm) + \varrho^\pm |\fs^\pm|^2\dx + \sigma \text{ Area}(\Sigma(t))
\end{aligned}
\]where $\mathfrak{P}(\varrho^\pm,\fs^\pm)=\int_{\bar{\rho}_0}^{\varrho^\pm}\frac{P^\pm(z,\fs^\pm)}{z^2}\dz$. See Section \ref{sect L2} for proof.

\subsection{Reformulation in flattened domains}
\subsubsection{Flattening the fluid domains}
We shall convert \eqref{CMHD1} into a PDE system defined in fixed domains $\Om^\pm := \T^{d-1}\times\{0<\pm x_d<H\}.$ One way to achieve this is to use the Lagrangian coordinates, but it would bring lots of unnecessary technical difficulties when analyzing the surface tension. Here, we consider a family of diffeomorphisms $\Phi(t, \cdot): \Om^\pm\to \Om^\pm(t)$ characterized by the moving interface. In particular, let 
\begin{equation}
\Phi(t,x',x_d) = \left(x', \varphi(t,x_d)\right), \label{change of variable}
\end{equation} 
where
\begin{align}
 \varphi (t,x) = x_d+\chi(x_d)\psi (t,x') \label{change of variable vp}
\end{align}and $\chi\in C_c^\infty ([-H,H])$ is a smooth cut-off function satisfying the following bounds:
\begin{equation}
{\|\chi'\|_{L^\infty(\R)}\leq \frac{1}{\|\psi_0\|_{\infty}+20},\quad \sum_{j=1}^8 \|\chi^{(j)}\|_{L^\infty(\R)}\leq C, }\quad\chi=1\,\,\,\, \text{on}\,\, (-1, 1) \label{chi}
\end{equation}
for some generic constant $C>0$. We assume $|\psi_0|_{L^{\infty}(\T^2)}\leq 1$. One can prove that there exists some $T_0>0$ such that $\sup\limits_{[0,T_0]}|\psi(t,\cdot)|_{L^{\infty}(\T^2)}<10<H$, the free interface is still a graph  within the time interval $[0,T_0]$ and
$$\p_d \varphi(t, x', x_d) = 1+\chi'(x_d)\psi(t,x')=1-\frac{1}{20}\times 10 \geq \frac12 ,\quad t\in[0,T_0],
$$
which ensures that $\Phi(t)$ is a diffeomorphism in $[0,T_0]$.  

Based on this, we introduce the following variables
\begin{align}
v^\pm(t, x) = u^\pm(t, \Phi(t,x)),\quad b^\pm(t,x)=B^\pm(t,\Phi(t,x)),\quad \rho^\pm(t,x) = \varrho^\pm(t, \Phi(t,x)),\notag\\
S^\pm(t, x) = \mathfrak{s}^\pm(t, \Phi(t,x)), \quad q^\pm(t,x) = Q^\pm(t, \Phi(t,x)), \quad p^\pm(t,x)=P(t,\Phi(t,x)),
\end{align}
which represent the velocity fields, the magnetic fields, the densities, the entropy functions, the total pressure functions and the fluid pressure functions defined in the fixed domains $\Omega^\pm$ respectively. Also, we introduce the differential operators
\begin{align}
\nab^{\vp}=(\pp_1,\cdots,\pp_d),~~ &~\pp_a = \p_a -\frac{\p_a \varphi}{\p_d\varphi}\p_d,~~a=t, 1,\cdots,d-1;~~\p_d^{\vp}= \frac{1}{\p_d \varphi} \p_d.\label{nabp 3}
\end{align}
Moreover, setting the tangential gradient operator and the tangential derivatives as
\[
\cnab := (\p_1, \cdots,\p_{d-1}),~~\TP_i:=\p_i,~~i=1,\cdots,d-1,
\]
 then the boundary conditions \eqref{CMHDjump} on the free interface $\Sigma(t)$ are turned into
\begin{align}
\jump{q} =\sigma\h(\psi):=\sigma \cnab \cdot \left( \frac{\cnab \psi}{\sqrt{1+|\cnab\psi|^2}}\right) &\quad \text{on}\,\,[0,T]\times \Sigma, \label{jump BC}\\
\p_t \psi =v^\pm\cdot N, \quad N = (-\TP_1\psi, -\TP_2\psi, 1)^\top&\quad  \text{on}\,\,[0,T]\times \Sigma, \label{kinematic BC}\\
b^\pm\cdot N=0&\quad  \text{on}\,\,[0,T]\times \Sigma, \label{magnetic BC}
\end{align} 
where $\Sigma =\T^{d-1}\times\{x_d=0\}$.

 Let $\Dtpm:=\p_t^\vp + v^\pm\cdot \nab^\vp$. Then system \eqref{CMHD1} is converted into
\begin{equation}\label{CMHD2}
\begin{cases}
\rho^\pm \Dtpm v^\pm -\bpm b^\pm+\nabp q^\pm=0,~~q^\pm=p^\pm+\frac12|b^\pm|^2&~~\text{ in }[0,T]\times \Omega^\pm,\\
\Dtpm\rho^\pm+\rho^\pm \nabp\cdot v^\pm=0 &~~\text{ in }[0,T]\times \Omega^\pm,\\
p^\pm=p^\pm(\rho^\pm,S^\pm),~~\frac{\p p^\pm}{\p\rho^\pm}>0,~\rho^\pm\geq \bar{\rho_0}>0 &~~\text{ in }[0,T]\times \Omega^\pm, \\
\Dtpm b^\pm-\bpm v^\pm+b^\pm\nabp\cdot v^\pm=0&~~\text{ in }[0,T]\times \Omega^\pm,\\
\nabp\cdot b^\pm=0&~~\text{ in }[0,T]\times \Omega^\pm,\\
\Dtpm S^\pm=0&~~\text{ in }[0,T]\times \Omega^\pm,\\
\jump{q}=\sigma\cnab \cdot \left( \frac{\cnab \psi}{\sqrt{1+|\cnab\psi|^2}}\right) &~~\text{ on }[0,T]\times\Sigma, \\
\p_t \psi = v^\pm\cdot N &~~\text{ on }[0,T]\times\Sigma,\\
b^\pm\cdot N=0&~~\text{ on }[0,T]\times\Sigma,\\
v_d^\pm=b_d^\pm=0&~~\text{ on }[0,T]\times\Sigma^\pm,\\
{(v^\pm,b^\pm,\rho^\pm,S^\pm,\psi)|_{t=0}=(v_0^\pm, b_0^\pm, \rho_0^\pm, S_0^\pm,\psi_0)}.
\end{cases}
\end{equation}

Invoking \eqref{nabp 3}, we can alternatively write the material derivative $\Dtp$ as
\begin{equation}
\Dtpm = \p_t + \vb^\pm\cdot\cnab+\frac{1}{\p_d \varphi} (v^\pm\cdot \NN-\p_t\varphi)\p_d, \label{Dt alternate}
\end{equation}
where $\vb^\pm:=(v_1^\pm,\cdots,v_{d-1}^\pm)^\top$ is the horizontal components of the fluid velocity, 
$\vb^\pm \cdot \cnab := \sum\limits_{j=1}^{d-1}v_j^\pm\p_j$, and $\NN:= (-\p_1\varphi, \cdots,-\p_{d-1} \varphi, 1)^\top$ is the extension of the normal vector $N$ into $\Om^\pm$. 
This formulation will be helpful for us to define the linearized material derivative when using Picard iteration to construct the solution.

\subsubsection{On the equation of state}\label{sect eos}
\paragraph*{Parametrization and requirement of the equation of state.} We assume the fluids in $\Om^+$ and $\Om^-$ satisfy the same equation of state. Specifically, we parametrize the equation of state to be $\rho_\lam(p,S):=\rho(p/\lam^2,S)$ where $\lam>0$ is proportional to the sound speed $c_s:=\sqrt{\p_p\rho}$ and $\rho$ is a $C^8$ function in its arguments satisfying $\frac{\p \rho}{\p p}>0$ as well as the non-degeneracy condition $\rho\geq \bar{\rho_0}>0$ in $\overline{\Om}$ for some constant $\bar{\rho_0}$. By chain rule, it is straightforward to see 
	\begin{align}\label{eos ineq 1}
	0<\frac{\p }{\p p}\rho_\lam(p,S)\leq C\lam^{-2}.
	\end{align}
	and 
	\begin{align}\label{eos ineq 3}
	|(\p_p)^k \rho_\lam(p,S)|\leq C\lam^{-2k},\q\q |(\p_S)^k \rho_\lam(p, S)|\leq C,\q\q 1\leq k\leq 8,
	\end{align} for some $C>0.$  For example, a polytropic gas satisfies the above assumptions whose the equation of state is parametrized  in terms of $\lam>0$: 
	\begin{equation}\label{eos11}
	p_{\lam}(\rho,S)=\lam^2\left(\rho^{\gamma}\exp(S/C_V)-1\right),~~~\gamma> 1,~~C_V>0.
	\end{equation}

\paragraph*{The formulation used in this manuscript.}		
For sake of clean notations,  we would introduce the quantity $\ff^\pm:= \log \rho^\pm$ to replace $\rho$ and introduce the parameter $\eps:=1/\lam$ to replace $\lam$  in the continuity equation, that is, $\ff_\eps(p,S):=\log\rho_{\frac{1}{\eps}}(p, S)$. Since $\frac{\p p^\pm}{\p\rho^\pm}>0$ and $\rho^\pm>0 $ imply $\frac{\p\ff^\pm}{\p p^\pm}=\frac{1}{\rho^\pm}\frac{\p \rho^\pm}{\p p^\pm}>0$,  then the continuity equation is equivalent to 
\begin{equation} \label{continuity eq f}
\frac{\p\ff_\eps^\pm}{\p p^\pm}( p^\pm,S^\pm) \Dtpm p^\pm  +\nabp\cdot v^\pm=0. 
\end{equation}\eqref{eos ineq 1}-\eqref{eos ineq 3} lead to the following inequalities: There exists a constant $A>0$ such that
		\begin{align} \label{pp property}
			0<\frac{\p \ff_\eps}{\p p}( p,S)\leq&~ A\eps^2,\\
			|\p_p^k \ff_\eps(p,S)|\leq A\eps^{2k},~~~|\p_S^k \ff_\eps(p, S)|\leq&~A,\q\q 1\leq k\leq 8.
		\end{align}
In what follows, we slightly abuse the terminology and call $\lam$ the sound speed and call $\eps$ the Mach number. When discussing the incompressible limit ($\lam\gg1$ or equivalently $\eps\ll 1$), we sometimes write $\ffpm:=\frac{\p\ff_\eps^\pm}{\p p} (p^\pm,S^\pm)=\eps^2$ for simplicity.

\subsection{History and background}
\subsubsection{An overview of previous results}
There have been a lot of studies about free-boundary problems in ideal MHD, of which the original models in physics are mainly three types: plasma-vacuum interface model, current-vortex sheets and MHD contact discontinuities. The plasma-vacuum problem is related to plasma confinement problems \cite[Chap. 4]{MHDphy} in laboratory plasma physics, which describes the motion of \textit{one isolated} perfectly conducting fluid in an electro-magentic field confined in a vacuum region (in which there is another vacuum magnetic field satisfying the pre-Maxwell system).  When the vacuum magnetic fields are neglected, the plasma-vacuum model is reduced the free-boundary problem of one-phase MHD flows and we refer to \cite{HaoLuo2014priori,LuoZhang2019MHDST,GuWang2016LWP,GuLuoZhang2021MHDST,GuLuoZhang2021MHD0ST,HaoYang2023MHDST} for local well-posedness (LWP) theory in incompressible ideal MHD. It should be noted that, when the surface tension is neglected, the Rayleigh-Taylor sign condition $-\nab_N Q|_{\Sigma(t)}\geq c_0>0$ should be added as an initial constraint for LWP analogous to Euler equations \cite{Ebin1987} and we refer to Hao-Luo \cite{HaoLuo2018ill} for the proof. For the full plasma-vacuum model without surface tension in incompressible ideal MHD, we refer to \cite{Guaxi1,Guaxi2,SWZ2017MHDLWP,LiuXin2023MHDPV}. As for the compressible case, in a series of works \cite{Secchi2013CMHDLWP,TW2020MHDLWP,TW2021MHDSTLWP,TW2022MHDPVST}, Secchi, Trakhinin and Wang used Nash-Moser iteration to construct the solution due to the derivative loss in the linearized problems. Very recently, Lindblad and the author \cite{Zhang2021CMHD} proved the LWP and a continuation criterion for the one-phase free-boundary problem in compressible ideal MHD without surface tension, which gave the first result about the energy estimates without loss of regularity. 

A vortex sheet is an interface between two ``impermeable" fluids across which there is a tangential discontinuity in fluid velocity. For incompressible inviscid fluids without surface tension, vortex sheets tend to be violently unstable, which exhibit the so-called Kelvin-Helmholtz instability. There have been numerous mathematical studies, especially for 2D irrotational flows, and we refer to \cite{Ebin1988, Wu2004VS} and references therein. On the other hand, surface tension is expected to ``suppress" the Kelvin-Helmholtz instability and we refer to \cite{AM2007VS,CCS2007,SZ3}. When the compressibility is taken into account, we shall consider not only the motion of the interface of discontinuities but also its interaction with the wave propagation in the interior. Let $\mathbf{j}=\varrho(u\cdot N-\p_t\psi)$ be the mass transfer flux. In view of hyperbolic conservation laws, strong discontinuities can be classified into shock waves $(\mathbf{j}\neq 0,\jump{\varrho}\neq 0)$ and characteristic (contact) discontinuities $(\mathbf{j}=0)$. For compressible Euler equations, contact discontinuities are classified to be vortex sheets ($\jump{u_\tau}\neq\bd{0}$) and entropy waves ($\jump{u}=\vec{0},~\jump{\varrho},\jump{\mathfrak{s}}\neq 0$). The existence and the structural stability of multi-dimensional shocks for compressible Euler equations was proved by Majda \cite{Majdashock1,Majdashock2} (see also Blokhin \cite{Blokhinshock}) provided that the uniform Kreiss-Lopatinski\u{\i} condition \cite{lopatinskii} is satisfied. Since compressible vortex sheets are characteristic discontinuities (the uniform Kreiss-Lopatinski\u{\i} condition is never satisfied), there is a potential loss of normal derivatives for compressible vortex sheets, which makes the proof of existence more difficult. For 3D Euler equations, compressible vortex sheets are always violently unstable \cite{Miles1, Miles2, SyrovatskiiEuler} which exhibit an analogue of Kelvin-Helmholtz instability; whereas for 2D Euler equations, Coulombel-Secchi \cite{Secchi2004CVS,Secchi2008CVS} proved the existence of ``supersonic" vortex sheets when the Mach number for the rectilinear background solution $(\pm\underline{v},\underline{\rho})$ exceeds $\sqrt{2}$ and the linear instability when the Mach number is lower than $\sqrt{2}$. Similarly as the incompressible case, surface tension again prevents such violent instability and we refer to Stevens \cite{Stevens2016CVS} for the proof of structural stability.

As for MHD, there are three types of characteristic discontinuities: current-vortex sheets ($\mathbf{j}=0,~B^\pm\cdot N|_{\Sigma(t)}=0$), MHD contact discontinuities ($\mathbf{j}=0,~B^\pm\cdot N|_{\Sigma(t)}\neq0$) and Alfv\'en (rotational) discontinuities ($\mathbf{j}\neq0,~\jump{\varrho}=0$). The Rankine-Hugoniot conditions for current-vortex sheets and MHD contact discontinuities (cf. \cite[Chap. 4.5]{MHDphy} and \cite[Appendix A]{TW2021MHDCDST}) are 
\begin{itemize}
\item (Current-vortex sheets/Tangential discontinuities) $\jump{Q}=\sigma\mathcal{H},~~B^\pm \cdot N=0,~~\p_t\psi=u^\pm \cdot N$  on  $\Sigma(t)$.
\item (MHD contact discontinuities)  $\jump{P}=\sigma\mathcal{H},~~\jump{u}=\jump{B}=\vec{0},~~B^\pm \cdot N\neq0,~~\p_t\psi=u^\pm \cdot N$  on  $\Sigma(t)$.
\end{itemize}

MHD contact discontinuities usually arise from astrophysical plasmas \cite{MHDphy}, where the magnetic fields typically originate in a rotating object, such as a star or a dynamo operating inside, and intersect the surface of discontinuity. An example is the photosphere of the sun. In contrast, current-vortex sheets require the magnetic fields to be tangential to the interface. An example in laboratory plasma physics is that the discontinuities confine a high-density plasma by a lower-density one, which is isolated thermally from an outer rigid wall. In particular, when the plasma is liquid metal, the effect of surface tension cannot be neglected \cite{MHDSTphy2}. In astrophysics, a generally accepted model for compressible current-vortex sheets is the heliopause \cite{CVSphy} (in some sense, the ``boundary" of the solar system\footnote{On August 25, 2012, Voyager 1 flew beyond the heliopause and entered interstellar space. At the time, it was at a distance about 122 A.U. (around 18 billion kilometers) from the sun. On November 5, 2018, Voyager 2 also traversed the heliopause.}) that separates the interstellar plasma from the solar wind plasma.  The night-side magnetopause of the earth is also considered to be current-vortex sheets. 

For MHD contact discontinuities, the transversality of magnetic fields could enhance the regularity of the free interface and avoid the possible normal derivative loss in the interior. We refer to Morando-Trakhinin-Trebeschi \cite{MTT2018MHDCD} for the 2D case under Rayleigh-Taylor sign condition $N\cdot\nab\jump{Q}|_{\Sigma(t)}\geq c_0>0$, Trakhinin-Wang \cite{TW2021MHDCDST} for the case with nonzero surface tension, and Wang-Xin \cite{WangXinMHDCD} for both 2D and 3D cases without surface tension or Rayleigh-Taylor sign condition. In other words, Wang-Xin \cite{WangXinMHDCD} showed that transversal magnetic fields across the interface could suppress the Rayleigh-Taylor instability.

As for current-vortex sheets, Kelvin-Helmholtz instability can also be suppressed, but, unlike the transversal magnetic fields in MHD contact discontinuities, the tangential magnetic fields must satisfy certain constraints. For 3D incompressible ideal MHD, Syrovatski\u{\i} \cite{SyrovatskiiMHD} introduced a stability condition by using normal mode analysis:
\begin{equation}\label{syrov}
\varrho^+|B^+\times\jump{u}|^2+\varrho^-|B^-\times\jump{u}|^2<(\varrho^++\varrho^-)|B^+\times B^-|^2,
\end{equation} which corresponds to the transition to violent instability, that is, ill-posedness of the linearized problem. Coulombel-Morando-Secchi-Trebeschi \cite{CMST2012MHDVS} proved the a priori estimate for the nonlinear problem under a more restrictive condition
\begin{equation}\label{syrov2}
\max\left\{\left|\frac{B^+}{\sqrt{\varrho^+}}\times\jump{u}\right|,\left|\frac{B^-}{\sqrt{\varrho^-}}\times\jump{u}\right|\right\}<\left|\frac{B^+}{\sqrt{\varrho^+}}\times \frac{B^-}{\sqrt{\varrho^-}}\right|.
\end{equation}Sun-Wang-Zhang \cite{SWZ2015MHDLWP} proved local well-posedness of the nonlinear problem under the original Syrovatski\u{\i} condition \eqref{syrov} by adapting the framework of Shatah-Zeng \cite{SZ3}. Very recently, Liu-Xin \cite{LiuXin2023MHDVS} gave a comprehensive study for both $\sigma>0$ and $\sigma=0$ cases (see also Li-Li \cite{sb}).

For compressible current-vortex sheets without surface tension, it is still unknown if there is any \textit{necessary and sufficient condition} for the linear (neutral) stability. Trakhinin \cite{Trakhinin2005CMHDVS} raised a sufficient condition for the problem linearized around a background planar current-vortex sheet $(\hat{v}^\pm, \hat{b}^\pm, \hat{\rho}^\pm, \hat{S}^\pm)$ in flattened domains $\Om^\pm$, which reads
\begin{equation}\label{3D stable}
\max\left\{|\hat{b}^-\times\jump{\hat{v}}|\sqrt{\hat{\rho}^+\left(1+({c_A^+}/{c_s^+})^2\right)},|\hat{b}^+\times\jump{\hat{v}}|\sqrt{\hat{\rho}^-\left(1+({c_A^-}/{c_s^-})^2\right)}\right\}<|\hat{b}^+\times \hat{b}^-|.
\end{equation}where $c_A^\pm:=|\hat{b}^\pm|/\sqrt{\hat{\rho}^\pm}$ represents the Alfv\'en speed and $c_s^\pm:=\sqrt{\p \hat{p}^\pm/\p\hat{\rho}^\pm}$ represents the sound speed. If we formally take the incompressible limit $\hat{\rho}^\pm\to 1$ and $c_s^\pm\to +\infty$, then the above inequality exactly converges to \eqref{syrov2} used in \cite{CMST2012MHDVS}, and it is easy to see \eqref{syrov2} implies \eqref{syrov}. Under \eqref{3D stable}, Chen-Wang \cite{ChenWangCMHDVS} and Trakhinin \cite{Trakhinin2008CMHDVS} proved the well-posedness for the 3D problem without surface tension and see also \cite{WangYu2013CMHDVS, MSTY2023CMHDVS} for the 2D case without surface tension.  \textbf{When the surface tension is taken into account, it is expected to drop the extra assumptions to establish the well-posedness of compressible current-vortex sheets, but so far there is no available result.} Besides, the local existence results were established by using Nash-Moser iteration in all these previous works which leads to an unavoidable loss of regularity from initial data to solution.

Apart from the local existence, the singular limits for both free-surface ideal MHD flows and compressible vortex sheets are far less developed. Ohno-Shirota \cite{OS1998MHDill} showed that the linearized problem in a fixed domain with magnetic fields tangential to the boundary is ill-posed in standard Sobolev spaces $H^l(l\geq 2)$, but the corresponding incompressible problem is well-posed in standard Sobolev spaces \cite{GuWang2016LWP,SWZ2015MHDLWP,SWZ2017MHDLWP,LiuXin2023MHDVS,LiuXin2023MHDPV}. The anisotropic Sobolev spaces defined in Section \ref{sect anisotropic}, first introduced by Chen \cite{ChenSX}, have been adopted in previous works about ideal compressible MHD \cite{1991MHDfirst,Secchi1995,Secchi1996,Trakhinin2008CMHDVS,ChenWangCMHDVS,Secchi2013CMHDLWP,TW2020MHDLWP,TW2021MHDSTLWP}. In other words, there is no explanation for the mismatch of the function spaces for local existence yet. Besides, it is also unclear about the comparison between the stabilization mechanism brought by surface tension and the one brought by certain magnetic fields when the plasma is compressible. These questions should be answered by rigorously justifying the incompressible limit and the zero-surface-tension limit. In particular, the existing literature about the incompressible limit of free-boundary problems in inviscid fluids is only avaliable for the one-phase problems \cite{LL2018priori, Luo2018CWW, DL2019limit, Zhang2020CRMHD, Zhang2021elasto, LuoZhang2022CWWST, GuWang2023LWP}. {\bf The low Mach number limit of inviscid vortex sheets remains completely open.}

\subsubsection{Our goals}
We aim to give a comprehensive study for the local-in-time solution to current-vortex sheets in ideal MHD and particularly give affirmative answers to the abovementioned questions. Specifically, in this paper, we prove well-posedness and incompressible limit of current-vortex sheets with surface tension, namely system \eqref{CMHD2}, in both 2D and 3D. In the second part of the two-paper sequence, we will prove the zero-surface-tension limit of system \eqref{CMHD2} under certain stability conditions in 3D and 2D respectively; besides, we will also improve the incompressible limit result such that the uniform boundedness (with respect to Mach number) of high-order time derivatives can be dropped, which is a rather nontrivial improvement and relies on a new framework to prove the uniform estimates.

To our knowledge, this is \textit{the first result} about the incompressible limit of compressible vortex sheets and free-boundary MHD. The incompressible limit also ties our result to the suppression effect on Kelvin-Helmholtz instability brought by either surface tension or suitable magnetic fields.

\subsection{Main results}

\subsubsection{Anisotropic Sobolev spaces}\label{sect anisotropic}

Following the notations in \cite{WZ2023CMHDlimit}, we first define the anisotropic Sobolev space $H_*^m(\Omega^\pm)$ for $m\in\N$ and $\Om^\pm=\T^{d-1}\times\{0<\pm x_d<H\}$. Let $\omega=\omega(x_d)=(H^2-x_d^2)x_d^2$ be a smooth function\footnote{The choice of $\omega(x_d)$ is not unique, as we just need $\omega(x_d)$ vanishes on $\Sigma\cup\Sigma^\pm$ and is comparable to the distance function near the interface and the boundaries.} on $[-H,H]$.Then we define $H_*^m(\Omega^\pm)$ for $m\in\N^*$ as follows
\[
H_*^m(\Omega^\pm):=\left\{f\in L^2(\Omega^\pm)\bigg| (\omega\p_d)^{\alpha_{d+1}}\p_1^{\alpha_1}\cdots\p_d^{\alpha_d} f\in L^2(\Omega^\pm),~~\forall \alpha \text{ with } \sum_{j=1}^{d-1}\alpha_j +2\alpha_d+\alpha_{d+1}\leq m\right\},
\]equipped with the norm
\begin{equation}\label{anisotropic1}
\|f\|_{H_*^m(\Omega^\pm)}^2:=\sum_{\sum\limits_{j=1}^{d-1}\alpha_j +2\alpha_d+\alpha_{d+1}\leq m}\|(\omega\p_d)^{\alpha_{d+1}}\p_1^{\alpha_1}\cdots\p_d^{\alpha_d} f\|_{L^2(\Omega)}^2.
\end{equation} For any multi-index $\alpha:=(\alpha_0,\alpha_1,\cdots,\alpha_{d},\alpha_{d+1})\in\N^{d+2}$, we define
\[
\p_*^\alpha:=\p_t^{\alpha_0}(\omega\p_d)^{\alpha_{d+1}}\p_1^{\alpha_1}\cdots\p_d^{\alpha_d},~~\lee \alpha\ree:=\sum_{j=0}^{d-1}\alpha_j +2\alpha_d+\alpha_{d+1},
\]and define the \textbf{space-time anisotropic Sobolev norm} $\|\cdot\|_{m,*,\pm}$ to be
\begin{equation}\label{anisotropic2}
\|f\|_{m,*,\pm}^2:=\sum_{\lee\alpha\ree\leq m}\|\p_*^\alpha f\|_{L^2(\Omega^\pm)}^2=\sum_{\alpha_0\leq m}\|\p_t^{\alpha_0}f\|_{H_*^{m-\alpha_0}(\Omega^\pm)}^2.
\end{equation}

We also write the interior Sobolev norm to be $\|f\|_{s,\pm}:= \|f(t,\cdot)\|_{H^s(\Omega^\pm)}$ for any function $f(t,x)\text{ on }[0,T]\times\Omega^\pm$ and denote the boundary Sobolev norm to be $|f|_{s}:= |f(t,\cdot)|_{H^s(\Sigma)}$ for any function $f(t,x')\text{ on }[0,T]\times\Sigma$. 

From now on, we assume the dimension $d=3$, that is, $\Om^\pm=\T^2\times\{0<\pm x_3<H\} $, $\Sigma^\pm=\T^2\times\{x_3=\pm H\} $ and $\Sigma=\T^2\times\{x_3=0\}$. We will see the 2D case follows in the same manner up to slight modifications in the vorticity analysis and we refer to Section \ref{sect 2D curl} for details. Invoking \eqref{continuity eq f} and writing $\ff_p^\pm:=\frac{\p\ff^\pm}{\p p^\pm}$, system \eqref{CMHD2} is equivalent to
{\small\begin{equation}\label{CMHD0}
\begin{cases}
\rho^\pm \Dtpm v^\pm -\bpm b^\pm+\nabp q^\pm=0,~~q^\pm=p^\pm+\frac12|b^\pm|^2&~~\text{ in }[0,T]\times \Omega^\pm,\\
\ff_p^{\pm}\Dtpm p^\pm+\nabp\cdot v^\pm=0 &~~\text{ in }[0,T]\times \Omega^\pm,\\
p^\pm=p^\pm(\rho^\pm,S^\pm),~~\ff^\pm=\log \rho^\pm,~~\ff_p^\pm>0,~\rho^\pm\geq \bar{\rho_0}>0 &~~\text{ in }[0,T]\times \Omega^\pm, \\
\Dtpm b^\pm-\bpm v^\pm+b^\pm\nabp\cdot v^\pm=0&~~\text{ in }[0,T]\times \Omega^\pm,\\
\nabp\cdot b^\pm=0&~~\text{ in }[0,T]\times \Omega^\pm,\\
\Dtpm S^\pm=0&~~\text{ in }[0,T]\times \Omega^\pm,\\
\jump{q}=\sigma\cnab \cdot \left( \frac{\cnab \psi}{\sqrt{1+|\cnab\psi|^2}}\right) &~~\text{ on }[0,T]\times\Sigma, \\
\p_t \psi = v^\pm\cdot N &~~\text{ on }[0,T]\times\Sigma,\\
b^\pm\cdot N=0&~~\text{ on }[0,T]\times\Sigma,\\
v_d^\pm=b_d^\pm=0&~~\text{ on }[0,T]\times\Sigma^\pm,\\
{(v^\pm,b^\pm,\rho^\pm,S^\pm,\psi)|_{t=0}=(v_0^\pm, b_0^\pm, \rho_0^\pm, S_0^\pm,\psi_0)}.
\end{cases}
\end{equation}}

Since the material derivatives are tangential to the boundary, that is, $\Dtpm=\Dtbpm:=\p_t+\vb^\pm\cdot\cnab$ on $\Sigma$ and $\Sigma^\pm$, the compatibility conditions \eqref{CMHDcomp} for initial data up to $m$-th order $(m\in\N)$ are now written as:
\begin{equation}\label{comp cond}
\begin{aligned}
\jump{\p_t^j q}|_{t=0}=\sigma\p_t^j\mathcal{H}|_{t=0},\q\p_t^{j+1}\psi|_{t=0}=\p_t^{j}(v^\pm\cdot N)|_{t=0}~~\text{ on }\Sigma,~~0\leq j\leq m,\\
\p_t^j v_d^\pm|_{t=0}=0~~\text{ on }\Sigma^\pm,~~0\leq j\leq m.
\end{aligned}
\end{equation}Under \eqref{comp cond}, one can prove that $\p_t^{j}(b^\pm\cdot N)|_{t=0}=0$ is also satisfied on $\Sigma$ and $\Sigma^\pm$ for $0\leq j\leq m$ and we refer to Trakhinin \cite[Section 4]{Trakhinin2008CMHDVS} for details.

\subsubsection{Main result 1: Well-posedness and uniform estimates in Mach number}

The first result shows the local well-posedness and the energy estimates of \eqref{CMHD0} for each fixed $\sigma>0$.
\begin{thm}[\textbf{Well-posedness and uniform estimates for fixed $\sigma>0$}]\label{thm STLWP}
Fix the constant $\sigma>0$. Let $\mathbf{U}_0^\pm:=(v_0^\pm, b_0^\pm, \rho_0 ^\pm,S_0^\pm)^\top \in H_*^8(\Om^\pm)$ and $\psi_0\in H^{9.5}(\Sigma)$ be the initial data of \eqref{CMHD0} satisfying 
\begin{itemize}
\item the compatibility conditions \eqref{comp cond} up to 7-th order;
\item the constraints $\nab^{\vp_0}\cdot b_0^\pm=0$ in $\Om^\pm$, $b^\pm\cdot N|_{\{t=0\}\times(\Sigma\cup\Sigma^\pm)}=0$ ;
\item $|\jump{\vb_0}|>0$ on $\Sigma$, $|\psi_0|_{L^{\infty}(\Sigma)}\leq 1$, and $E(0)\le M$ for some constant $M>0$.
\end{itemize}Then there exists $T_\sigma>0$ depending only on $M$ and $\sigma$, such that \eqref{CMHD0} admits a unique solution $(v^\pm(t),b^\pm(t),\rho^\pm(t),S^\pm(t),\psi(t))$ that verifies the energy estimate
		    \begin{equation}
		    	\sup_{t\in[0,T]}E(t) \le C(\sigma^{-1})P(E(0))
		    \end{equation}and $\sup\limits_{t\in[0,T_\sigma]}|\psi(t)|<10<H$, where $P(\cdots)$ is a generic polynomial in its arguments. The energy $E(t)$ is defined to be

	        \begin{equation}
	        		\begin{aligned}\label{energy lwp}
	        			E(t):=&~E_4(t) + E_5(t) + E_6(t) + E_7(t) + E_8(t),\\
	        			E_{4+l}(t):=&\sum_\pm\sum_{\lee\alpha\ree=2l}\sum_{k=0}^{4-l}\left\|\left(\eps^{2l}\TT^{\alpha}\p_t^{k}\left(v^\pm, b^\pm,S^\pm,(\ffpm)^{\frac{(k+\alpha_0-l-3)_+}{2}}p^\pm\right)\right)\right\|^2_{4-k-l,\pm}+\sum_{k=0}^{4+l}\left|\sqrt{\sigma}\eps^{2l}\p_t^{k}\psi\right|^2_{5+l-k}\quad 0\leq l\leq 4,
	        		\end{aligned}
	        \end{equation}
            where $k_+:=\max\{k,0\}$ for $k\in\R$ and we denote $\TT^{\alpha}:=(\omega(x_3)\p_3)^{\alpha_4}\p_t^{\alpha_0}\p_1^{\alpha_1}\p_2^{\alpha_2}$ to be a high-order tangential derivative for the multi-index $\alpha=(\alpha_0,\alpha_1,\alpha_2,0,\alpha_4)$ with length (for the anisotropic Sobolev spaces) $\lee \alpha\ree=\alpha_0+\alpha_1+\alpha_2+2\times0+\alpha_4$.	The quantity $\eps$ is the parameter defined in Section \ref{sect eos}. Moreover, the $H^{9.5}(\Sigma)$-regularity of $\psi$ can be recovered in the sense that 
\begin{align}
\sum_{l=0}^4\sum_{k=0}^{3+l}\left|\sigma\eps^{2l}\p_t^{k}\psi\right|^2_{5.5+l-k}\leq P(E(t)),\quad \forall t\in[0,T_\sigma].
\end{align}

	\end{thm}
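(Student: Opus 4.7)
The plan is to prove Theorem~\ref{thm STLWP} in two stages: derive uniform a priori estimates in the anisotropic energy $E(t)$ for smooth solutions, then construct the solution by a Picard iteration whose linearized step can be solved directly thanks to surface tension, so neither Nash-Moser nor tangential mollification is needed. The conserved $L^2$ energy recorded after \eqref{CMHD1} supplies base-level control, and $E(t)$ is propagated from $E(0)$ by Gr\"onwall.

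For the a priori estimate, I would commute the weighted multi-derivative $\eps^{2l}\TT^\alpha \p_t^k$ (with $\lee \alpha \ree = 2l$) with the evolutionary equations in \eqref{CMHD0}, then pair the commuted momentum equation with $v^\pm$, the commuted continuity equation \eqref{continuity eq f} with $q^\pm$, and the commuted induction equation with $b^\pm$, summing over the five levels $l = 0,1,2,3,4$. The standard MHD cancellation $\int_{\Om^\pm}(b^\pm\cdot\nabp)v^\pm\cdot b^\pm + \int_{\Om^\pm}(b^\pm\cdot\nabp)b^\pm\cdot v^\pm$ vanishes modulo lower-order terms via $\nabp\cdot b^\pm = 0$ and $b^\pm\cdot N = 0$ on $\Sigma\cup\Sigma^\pm$. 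The pressure-velocity term produces the boundary integral $\int_\Sigma \jump{\TT^\alpha \p_t^k q}\,\TT^\alpha \p_t^k(\p_t\psi)\,\dS$, which, through the surface tension condition $\jump{q} = \sigma \h(\psi)$ and the kinematic condition $\p_t\psi = v^\pm\cdot N$, converts into $\tfrac{d}{dt}|\sqrt{\sigma}\,\eps^{2l}\TT^\alpha\p_t^k\psi|$-type energies plus controlled commutator errors from the linearization of the mean-curvature operator. The Mach-number weight $\eps^{2l}$ together with the exponent $\frac{(k+\alpha_0-l-3)_+}{2}$ on $\ffpm$ applied to $p^\pm$ in \eqref{energy lwp} is chosen precisely so that every commutator $[\TT^\alpha \p_t^k, \Dtpm]$ or $[\TT^\alpha \p_t^k, \nabp]$ landing on the acoustic component generates a matching $\ffpm \sim \eps^2$ factor, preventing any uncompensated $\eps^{-2}$ from arising.

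The interior $H^{4-k-l}$ norms on $(v^\pm, b^\pm)$ are then closed by a div-curl decomposition: the divergence of $v^\pm$ is read off from \eqref{continuity eq f}, $\nabp\cdot b^\pm = 0$ is structural, tangential traces are contained in the already-controlled tangential energy, and only the curl needs new input. Here I would exploit the hidden Lorentz structure noted in the abstract: taking the $\nabp$-curl of the momentum equation and combining with the induction equation, the vorticities of $v^\pm$ and $b^\pm$ satisfy transport-type equations whose sources live in the tangential energy, so no full normal derivative of $(v^\pm, b^\pm)$ need be estimated at the characteristic interface. Interior ellipticity estimates on $q^\pm$ using the trace $\jump{q} = \sigma\h(\psi)$ on $\Sigma$ complete the closure of $E(t)$.

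For existence I would set up a Picard iteration: at iterate $n$, the transport equations for $\rho^\pm, b^\pm, S^\pm$ are solved along characteristics of the previous iterate, and the residual velocity--pressure--interface subsystem, with linearized kinematic and surface tension boundary conditions, becomes a symmetric hyperbolic problem whose boundary operator is uniformly elliptic of order two thanks to $\sigma > 0$; this coercivity is exactly what removes the need for Nash-Moser or tangential smoothing. Compatibility of the initial data up to order $7$ (from \eqref{comp cond}) allows all needed time derivatives to be initialized consistently at $t = 0$. Convergence follows from a contraction estimate in a lower-norm energy combined with the uniform bound in $E(t)$, and uniqueness is obtained by the same difference-energy argument applied to two solutions. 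The $H^{9.5}(\Sigma)$-recovery of $\psi$ is extracted at the end by inverting the mean-curvature operator against the controlled trace of $\jump{q}$. \emph{The hardest step} is the bookkeeping of Mach-number weights through every commutator in the a priori estimate: the graded weight scheme in \eqref{energy lwp} is essentially rigid, and verifying that the tangential, curl, and boundary estimates all fit together without producing an uncontrolled $\eps^{-1}$ -- while simultaneously allowing the linearized problem to be solved in the same anisotropic spaces -- is the main technical burden, and it is what enables the incompressible limit result that is the second assertion of this paper.
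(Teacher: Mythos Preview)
Your boundary analysis has a genuine gap that is precisely the obstacle distinguishing vortex sheets from one-phase problems. You write that the pressure--velocity pairing produces the boundary integral $\int_\Sigma \jump{\TT^\alpha\p_t^k q}\,\TT^\alpha\p_t^k(\p_t\psi)$, but this is incorrect once $\TT^\gamma$ is commuted through. After integrating $\nabp\QQ^{\gamma,\pm}\cdot\VV^{\gamma,\pm}$ by parts you obtain $\pm\int_\Sigma \QQ^{\gamma,\pm}(\VV^{\gamma,\pm}\cdot N)$, and from the kinematic condition $\VV^{\gamma,\pm}\cdot N = \p_t\TT^\gamma\psi + (\vb^\pm\cdot\cnab)\TT^\gamma\psi - \WW^{\gamma,\pm}$ on $\Sigma$. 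Since $\vb^+\neq\vb^-$ (this is the vortex-sheet hypothesis $|\jump{\vb_0}|>0$), summing the $\pm$ contributions leaves an uncancelled term
\[
\VS = \int_\Sigma \TT^\gamma q^-\,(\jump{\vb}\cdot\cnab)\TT^\gamma\psi\,\dx'
\]
in which $q^-$ appears \emph{alone}, with no jump and hence no boundary condition. Controlling $\VS$ requires $|\TT^\gamma\psi|_{1.5}$, i.e.\ $|\eps^{2l}\p_t^k\psi|_{5.5+l-k}$, which is one and a half derivatives \emph{above} the $\sqrt{\sigma}$-weighted energy in $E_{4+l}$; the paper extracts this from the ellipticity of the mean-curvature equation $\sigma\h(\psi)=\jump{q}$ (Lemma~\ref{qelliptic}). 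When $\TT^\gamma=\eps^{2l}\p_t^{4+l}$ contains only time derivatives, even this fails (you cannot integrate $\p_t^{1/2}$ by parts), and the paper resorts to replacing one $\p_t$ by $D_t^{\vp,-}$ and a long chain of Gauss--Green manipulations (Section~\ref{sect E8ttbdry}). Your proposal does not address this term.

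Your Picard scheme is also too optimistic. You claim that the second-order ellipticity from $\sigma>0$ alone ``removes the need for Nash--Moser or tangential smoothing,'' but the paper shows explicitly (Section~\ref{stat LWP}) that the cancellation structures enabling the a~priori estimate (in particular the $ZB^\pm+Z^\pm$ cancellation and the $\VS$ control in the full-time-derivative case) do \emph{not} survive linearization, and surface tension does not supply enough regularity of $\psi_t$ to close the iteration. The paper's remedy is to add the artificial regularization $-\kk(1-\TL)^2\psi-\kk(1-\TL)\p_t\psi$ to the jump condition, obtain $\sqrt{\kk}$-weighted $H^6$ control of $\psi$ and $L_t^2H^5$ control of $\psi_t$, solve the resulting system by Picard iteration (with a modified magnetic field $\mb^{[n]}$ to preserve $b\cdot N|_\Sigma=0$ through the iteration), and only then pass $\kk\to0$ using the uniform-in-$\kk$ estimate. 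Your splitting into ``transport for $\rho,b,S$ then a residual hyperbolic subsystem'' would also lose $b^{[n+1]}\cdot N^{[n]}=0$ on $\Sigma$, breaking the integration-by-parts for $(b\cdot\nabp)$. Finally, the commutators $[\TT^\gamma,\nabp]f$ contain $(\p_3\vp)^{-1}\TT^\gamma\cnab\vp\,\p_3 f$, whose $L^2$ norm requires $|\TT^\gamma\cnab\psi|_0$ \emph{without} $\sqrt{\sigma}$; the paper avoids this via Alinhac good unknowns $\FF^\gamma=\TT^\gamma f-\TT^\gamma\vp\,\pp_3 f$, which your outline omits.
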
 

\begin{rmk}[Correction of $E_4(t)$]
		The norm $\|p^\pm\|_{4,\pm}^2$ in $E_4(t)$ defined by \eqref{energy lwp} should be replaced by $\|(\ffpm)^{\frac12} p^\pm\|_{0,\pm}^2+\|\nab p^\pm\|_{3,\pm}^2$ because we do not have $L^2$ estimates of $p^\pm$ without $\ffpm$-weight. We still write $\|p^\pm\|_{4,\pm}^2$ as above for simplicity of notations.
		\end{rmk}		

\begin{rmk}[Weights of Mach number of $p^\pm$]
		In \eqref{energy lwp}, the weight of Mach number of $p$ is slightly different from $(v,b,S)$, but such difference only occurs when $\TT^\alpha$ are full time derivatives and $k=4-l$. In fact, due to $k\leq 4-l$ and $\alpha_0\leq \lee\alpha\ree=2l$, we know $(k+\alpha_0-l-3)_+$ is always equal to zero unless $\alpha_0=2l$ and $k=4-l$ simultanously hold.
		\end{rmk}	

\begin{rmk}[Relations with anisotropic Sobolev space]
		The energy functional $E(t)$ above is considered as a variant of $\|\cdot\|_{8,*,\pm}$ norm at time $t>0$. For different multi-index $\alpha$, we set suitable weights of Mach number according to the number of tangential derivatives that appear in $\p_*^{\alpha}$, such that the energy estimates for the modified norms are uniform in $\eps$.
		\end{rmk}

\begin{rmk}[Nonlinear structural stability]
		System \eqref{CMHD0} is studied in a bounded domain $\T^2\times(-H,H)$. Indeed, our proof also applies to the case of an unbounded domain, such as $\T^2\times\R_\pm,~\R^2\times\R_\pm$, for non-localised initial data $\mathbf{U}_0^\pm$ satisfying $(\mathbf{U}_0^\pm-\underline{\mathbf{U}}^\pm,\psi_0)\in H_*^8(\Om)\times H^{9.5}(\Sigma)$ where $\underline{\mathbf{U}}^\pm$ represents a given piecewise-smooth background solution of planar current-vortex sheet $(\underline{v}_1^\pm,\underline{v}_2^\pm,0,\underline{b}_1^\pm,\underline{b}_2^\pm,0,\underline{p}^\pm,\underline{S}^\pm)^\top$ in $\Om^\pm$. The result corresponding to this initial data exactly justifies \textit{the existence and the local-in-time nonlinear structural stability} of the piecewise-smooth planar current-vortex sheet $\underline{\mathbf{U}}^\pm$.
		\end{rmk}		

\subsubsection{Main result 2: The incompressible limit}

Next we are concerned with the incompressible limit.  For any fixed $\sigma>0$, the energy estimates obtained in Theorem \ref{thm STLWP} are already uniform in $\eps$. Also, $\|\p_t (v,b,S)\|_{3}+|\psi_t|_{4.5}$ is uniformly bounded in $\eps$. Thus, using compactness argument, we can prove the incompressible limit for current-vortex sheets with surface tension. Specifically, the motion of incompressible current-vortex sheets with surface tension are characterised by the equations of $(\xi^\sigma, w^{\pm,\sigma}, h^{\pm,\sigma})$ with incompressible initial data $(\xi_0^\sigma,w_0^{\pm,\sigma},h_0^{\pm,\sigma})$ and a transport equation of $\mathfrak{S}^{\pm,\sigma}$:
\begin{equation} \label{IMHDs}
\begin{cases}
\rr^{\pm,\sigma}(\p_t+w^{\pm,\sigma}\cdot\nab^{\Xi^\sigma})w^{\pm,\sigma}- (h^{\pm,\sigma}\cdot\nab^{\Xi^\sigma})h^{\pm,\sigma}+\nab^{\Xi^\sigma} \Pi^{\pm,\sigma}=0&~~~ \text{in}~[0,T]\times \Omega,\\
\nab^{\Xi^\sigma}\cdot w^{\pm,\sigma}=0&~~~ \text{in}~[0,T]\times \Omega,\\
(\p_t+w^{\pm,\sigma}\cdot\nab^{\Xi^\sigma}) h^{\pm,\sigma}=(h^{\pm,\sigma}\cdot\nab^{\Xi^\sigma})w^{\pm,\sigma}&~~~ \text{in}~[0,T]\times \Omega,\\
\nab^{\Xi^\sigma}\cdot h^{\pm,\sigma}=0&~~~ \text{in}~[0,T]\times \Omega,\\
(\p_t+w^{\pm,\sigma}\cdot\nab^{\Xi^\sigma})\mathfrak{S}^{\pm,\sigma}=0&~~~ \text{in}~[0,T]\times \Omega,\\
\jump{\Pi^{\sigma}}=\sigma\cnab \cdot \left( \frac{\cnab \xi^{\sigma}}{\sqrt{1+|\cnab\xi^{\sigma}|^2}}\right) &~~~\text{on}~[0,T]\times\Sigma,\\
\p_t \xi^{\sigma} = w^{\pm,\sigma}\cdot N^{\sigma} &~~~\text{on}~[0,T]\times\Sigma,\\
h^{\pm,\sigma}\cdot N^{\sigma}=0&~~~\text{on}~[0,T]\times\Sigma,\\
w_3^\pm=h_3^\pm=0&~~\text{ on }[0,T]\times\Sigma^\pm,\\
(w^{\pm,\sigma},h^{\pm,\sigma},\mathfrak{S}^{\pm,\sigma},\xi^{\sigma})|_{t=0}=(w_0^{\pm,\sigma},h_0^{\pm,\sigma},\mathfrak{S}_0^{\pm,\sigma}, \xi_0^{\sigma}), 
\end{cases}
\end{equation}where $\Xi^{\sigma}(t,x) = x_3+\chi(x_3) \xi^\sigma(t,x')$ is the extension of $\xi^\sigma$ in $\Omega$ and $ N^\sigma:=(-\TP_1\xi^\sigma, -\TP_2\xi^\sigma, 1)^\top$. The quantity $\Pi^\pm:=\bar{\Pi}^\pm+\frac12|h^\pm|^2$ represent the total pressure functions for the incompressible equations with $\bar{\Pi}^\pm$ the fluid pressure functions. The quantity $\rr^\pm$ satisfies the evolution equation $(\p_t+w^{\pm,\sigma}\cdot\nab^{\Xi^\sigma})\rr^{\pm,\sigma}=0$ with initial data $\rr_0^{\pm,\sigma}:=\rho^{\pm,\sigma}(0,\mathfrak{S}_0^{\pm,\sigma})$. 

Denoting $(\psi^\es, v^{\pm,\es}, b^{\pm,\es}, \rho^{\pm,\es}, S^{\pm,\es})$ to be the solution of \eqref{CMHD0} indexed by $\sigma$ and $\eps$, we prove that $(\psi^\es, v^{\pm,\es}, b^{\pm,\es}, \rho^{\pm,\es}, S^{\pm,\es})$ converges to $(\xi^\sigma,w^{\pm,\sigma},h^{\pm,\sigma}, \rr^{\pm,\sigma}, \mathfrak{S}^{\pm,\sigma})$ as $\eps \rightarrow 0$ provided the convergence of initial data. 

\begin{thm}[\textbf{Incompressible limit for fixed $\sigma>0$}] \label{thm CMHDlimit1}
Fix $\sigma>0$. Let $(\psi_0^\es, v_0^{\pm,\es}, b_0^{\pm,\es}, \rho_0^{\pm,\es}, S_0^{\pm,\es})$ be the initial data of \eqref{CMHD0} for each fixed $(\eps, \sigma)\in \R^+\times \R^+$, satisfying 
\begin{enumerate}
\item [a.] The sequence of initial data $(\psi_0^\es, v_0^{\pm,\es}, b_0^{\pm,\es},\rho_0^{\pm,\es}, S_0^{\pm,\es}) \in H^{9.5}(\Sigma)\times H_*^8(\Omega^\pm)\times H_*^8(\Omega^\pm)\times H_*^8(\Omega^\pm)\times H^8(\Omega^\pm)$ satisfies the compatibility conditions \eqref{comp cond} up to 7-th order, and {$|\psi_0^\es|_{L^{\infty}} \leq 1$}. 
\item [b.] $(\psi_0^\es, v_0^{\pm,\es}, b_0^{\pm,\es}, S_0^{\pm,\es}) \to (\xi_0^\sigma, w_0^{\pm,\sigma}, h_0^{\pm,\sigma}, \mathfrak{S}_0^{\pm,\sigma})$ in $H^{5.5}(\Sigma) \times H^4(\Omega^\pm)\times H^4(\Omega^\pm)\times H^4(\Omega^\pm)$ as $\eps\to 0$. 
\item [c.] The incompressible initial data satisfies $|\jump{\wb_0^\sigma}|>0$ on $\Sigma$, the constraints $\nab^{\xi_0^\sigma}\cdot h_0^{\pm,\sigma}=0$ in $\Om^\pm$, $h^{\pm,\sigma}\cdot  N^\sigma|_{\{t=0\}\times\Sigma}=0$.
\end{enumerate} 
Then it holds that 
\begin{align}
(\psi^\es, v^{\pm,\es}, b^{\pm,\es}, S^{\pm,\es})\to(\xi^\sigma, w^{\pm,\sigma}, h^{\pm,\sigma},\mathfrak{S}^{\pm,\sigma}),
\end{align} weakly-* in $L^\infty([0,T_\sigma]; H^{5.5}(\Sigma)\times(H^{4}(\Om^\pm))^3)$ and strongly in  $C([0,T_\sigma]; H_{\text{loc}}^{5.5-\delta}(\Sigma)\times(H_{\text{loc}}^{4-\delta}(\Om^\pm))^3)$ after possibly passing to a subsequence, where $T_\sigma$ is the time obtained in Theorem \ref{thm STLWP}.
\end{thm}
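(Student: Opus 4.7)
The argument is a compactness plus passing-to-the-limit scheme based on Theorem \ref{thm STLWP}. Under assumption (a) the energy $E(0)$ is uniformly bounded in $\eps$, hence the lifespan $T_\sigma$ and the estimate $\sup_{t\in[0,T_\sigma]}E(t)\le C(\sigma^{-1})P(E(0))$ are $\eps$-independent. In particular $\|(v^{\pm,\es},b^{\pm,\es},S^{\pm,\es})\|_{4,\pm}$, $\|\nab p^{\pm,\es}\|_{3,\pm}$, $|\psi^\es|_{5.5}$, $\|\p_t(v^{\pm,\es},b^{\pm,\es},S^{\pm,\es})\|_{3,\pm}$ and $|\p_t\psi^\es|_{4.5}$ are uniformly bounded in $\eps$. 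Banach--Alaoglu then yields along a subsequence weak-$*$ limits $(\xi^\sigma,w^{\pm,\sigma},h^{\pm,\sigma},\mathfrak{S}^{\pm,\sigma})$ in $L^\infty([0,T_\sigma];H^{5.5}(\Sigma)\times(H^4(\Om^\pm))^3)$. Combining the spatial bounds with the temporal bounds via Aubin--Lions produces strong convergence in $C([0,T_\sigma];H^{5.5-\delta}_{\text{loc}}(\Sigma)\times(H^{4-\delta}_{\text{loc}}(\Om^\pm))^3)$ after a further subsequential extraction; since $\vp^\es$ is determined by $\psi^\es$ through \eqref{change of variable vp}, the operator $\nab^{\vp^\es}$ also converges strongly to $\nab^{\Xi^\sigma}$.

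The incompressibility of the limit emerges from rewriting the continuity equation as $\nab^{\vp^\es}\cdot v^{\pm,\es}=-\ff_p^{\pm,\es}\Dtpm p^{\pm,\es}$. By \eqref{ff' is lambda} we have $\ff_p^{\pm,\es}\le A\eps^2$, while $\Dtpm p^{\pm,\es}=\p_t p^{\pm,\es}+v^{\pm,\es}\cdot\nab p^{\pm,\es}$ is uniformly bounded in, say, $L^\infty_t L^2_x$ because the exponent $(k+\alpha_0-l-3)_+$ in \eqref{energy lwp} vanishes for $k+\alpha_0\le 3$, giving $\|\p_t^k p^\pm\|$ bounds without $\eps$-weight for $k\le 3$, and $\nab p$ is bounded in $H^3$. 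Hence $\nab^{\vp^\es}\cdot v^{\pm,\es}\to 0$, so $\nab^{\Xi^\sigma}\cdot w^{\pm,\sigma}=0$ in the sense of distributions. The entropy equation passes to the limit immediately via strong convergence, yielding $(\p_t+w^{\pm,\sigma}\cdot\nab^{\Xi^\sigma})\mathfrak{S}^{\pm,\sigma}=0$; and the equation of state \eqref{eos2} gives $\rho^{\pm,\es}=(1+\eps^2 p^{\pm,\es})^{1/\gamma}e^{-S^{\pm,\es}/(\gamma C_V)}\to e^{-\mathfrak{S}^{\pm,\sigma}/(\gamma C_V)}$ pointwise and in $H^{4-\delta}_{\text{loc}}$, identifying the limit density as $\rr^{\pm,\sigma}$, which then satisfies its own transport equation. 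The magnetic equation, divergence constraint $\nab^{\Xi^\sigma}\cdot h^{\pm,\sigma}=0$, kinematic condition $\p_t\xi^\sigma=w^{\pm,\sigma}\cdot N^\sigma$, and tangency conditions pass to the limit by the same recipe using strong convergence of the traces.

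The main obstacle is identifying and passing to the limit in the total pressure $q^{\pm,\es}$. Although $E(t)$ does not control $p^{\pm,\es}$ itself uniformly in $\eps$, the momentum equation expresses $\nab^{\vp^\es}q^{\pm,\es}=\bpm b^{\pm,\es}-\rho^{\pm,\es}\Dtpm v^{\pm,\es}$ as a combination of strongly convergent quantities, so $\nab^{\vp^\es}q^{\pm,\es}$ is uniformly bounded in $L^\infty_t H^3_x$; together with the trace condition $\jump{q^\es}=\sigma\h(\psi^\es)$, whose right-hand side is uniformly bounded because $\sigma$ is fixed and $|\psi^\es|_{5.5}$ is uniformly controlled, a standard normalization (e.g.\ subtracting the average of $q^{\pm,\es}$ on $\Sigma^\pm$) produces uniform bounds on $q^{\pm,\es}$ and a weak-$*$ limit $\Pi^{\pm,\sigma}$ satisfying $\jump{\Pi^\sigma}=\sigma\h(\xi^\sigma)$. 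Passing to the limit in the momentum equation then yields the incompressible momentum equation for $(w^\pm,h^\pm,\Pi^\pm)$. The delicate point I expect is justifying strong convergence of traces at the regularity levels available after losing $\delta$; this forces one to exploit the anisotropic $H_*^m$-regularity of the interior variables (upgrading trace regularity at $\Sigma$ via the trace theorem for $H_*^m$) together with the improved $|\sigma\eps^{2l}\p_t^k\psi|_{5.5+l-k}$ bounds from the end of Theorem \ref{thm STLWP}, rather than relying on naive Sobolev embedding. Uniqueness for the incompressible current-vortex sheet system (available since $\sigma>0$) finally upgrades subsequential convergence to convergence along the full family $\eps\to 0$.
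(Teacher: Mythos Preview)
Your approach is correct and coincides with the paper's: uniform-in-$\eps$ estimates from Theorem~\ref{thm STLWP} (giving bounds on $(v,b,S,\psi)$ and their first-order time derivatives) followed by Aubin--Lions compactness. The paper's own proof is in fact much terser than yours---it simply cites the uniform estimates and invokes Aubin--Lions without spelling out the identification of the limit equations---so your additional detail on passing to the limit in each equation (incompressibility via $\ff_p^\pm\le A\eps^2$, pressure via $\nab^{\vp^\es}q^{\pm,\es}$ and the jump condition, etc.) is sound supplementary material rather than a different route. One minor point: since the theorem as stated only claims convergence after passing to a subsequence, your final step invoking uniqueness of the incompressible problem to upgrade to full-family convergence is not required here.
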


\begin{rmk}[The ``compatibility conditions" for the incompressible problem]
For the incompressible problem, there is no need to require the so-called ``compatibility conditions" for the initial data. The convergence of compressible data automatically implies the fulfillment of time-differentiated kinematic boundary conditions and the time-differentiated slip conditions at $t=0$. The time-differentiated jump conditions can also be easily fulfilled by adjusting the boundary values of $\Pi^\pm$, as the pressure function $\Pi$ is NOT uniquely determined by the other variables for the incompressible problem.
\end{rmk}

\noindent\textbf{List of Notations: }
In the rest of this paper, we sometimes write $\TT^k$ to represent a tangential derivative $\TT^{\alpha}$ in $\Om^\pm$ with order $\lee\alpha\ree =k$ when we do not need to specify what the derivative $\TT^{\alpha}$ contains. We also list all the notations used in this manuscript.
\begin{itemize}
\item $\Omega^\pm:=\T^{d-1}\times\{0<\pm x_d<H\}$, $\Sigma:=\T^{d-1}\times\{x_d=0\}$ and $\Sigma^\pm:=\T^{d-1}\times \{x_d=\pm H\}$, $d=2,3$.
\item $\|\cdot\|_{s,\pm}$:  We denote $\|f\|_{s,\pm}:= \|f(t,\cdot)\|_{H^s(\Omega^\pm)}$ for any function $f(t,x)\text{ on }[0,T]\times\Omega^\pm$.
\item $|\cdot|_{s}$:  We denote $|f|_{s}:= |f(t,\cdot)|_{H^s(\Sigma)}$ for any function $f(t,x')\text{ on }[0,T]\times\Sigma$.
\item $\|\cdot\|_{m,*}$: For any function $f(t,x)\text{ on }[0,T]\times\Omega$, $\|f\|_{m,*,\pm}^2:= \sum\limits_{\lee \alpha\ree\leq m}\|\p_*^\alpha f(t,\cdot)\|_{0,\pm}^2$ denotes the $m$-th order space-time anisotropic Sobolev norm of $f$.
\item $P(\cdots)$:  A generic polynomial with positive coefficients in its arguments;
\item $[T,f]g:=T(fg)-fT(g)$, and $[T,f,g]:=T(fg)-T(f)g-fT(g)$, where $T$ denotes a differential operator and $f,g$ are arbitrary functions.
\item $\TP$: $\TP=\p_1,\cdots,\p_{d-1}$ denotes the spatial tangential derivative.
\item $A\eql B$: $A$ is equal to $B$ plus some lower-order terms that are easily controlled.
\end{itemize}

\paragraph*{Acknowledgement.} The author would like to thank Prof. Zhouping Xin and Prof. Chenyun Luo for helpful discussions when he visited The Chinese University of Hong Kong during May 2023. The author would also like to thank Prof. Paolo Secchi for sharing his idea about the trace theorem for anisotropic Sobolev spaces.

\section{Strategy of the proof}\label{stat}

Before going to the detailed proof, we would like to briefly introduce the strategies to tackle this complicated problem, including key observations in the uniform a priori estimates and the design of the approximate problem to avoid Nash-Moser iteration. Moreover, we will make comparison between the compressible problem and the incompressible problem, between the Lagrangian coordinates and the ``flattened coordinates" among the vortex sheet problem, the one-phase problem and the MHD contact discontinuity.

\subsection{A hidden structure of Lorentz force in vorticity analysis}\label{stat fixed}
Let us discuss the uniform (in Mach number) estimates for the original current-vortex sheet system \eqref{CMHD0}. For sake of clean notations, we omit the superscripts $\pm$ unless we analyze the terms on the free interface. The entropy is easy to control thanks to $\Dtp S=0$, so it suffices to analyze the relations between $(v,b)$ and $q:=p+\frac12|b|^2$. Take the $H^4$ estimates as an example. Using div-curl decomposition (Lemma \ref{hodgeTT}),
\begin{align}\label{divcurl1}
\forall s\geq 1,~\|X\|_{s}^2\lesssim C(|\psi|_{s},|\cnab\psi|_{W^{1,\infty}})\left(\|X\|_0^2+\|\nabp\cdot X\|_{s-1}^2+\|\nabp\times X\|_{s-1}^2+\|\TP^s X\|_{0}^2\right),
\end{align}
 we shall prove the $H^3$-estimates for divergence, curl and the boundary term in order to control $\|v,b\|_{H^4}$. The divergence part is reduced to the tangential derivatives $\|\ffp \Dtp p\|_3$. Any normal derivative falling on $p$ or $q=p+\frac12|b|^2$ can be reduced to a tangential derivative by invoking the momentum equation. However, more observations and techniques are needed to control the vorticity and the contribution of the free interface.

To control the curl part, we take $\nabp\times$ in the momentum equation and invoke the evolution equation of $b$ to get
\begin{equation}\label{bad1}
\ddt\io \rho|\p^3(\nabp\times v)|^2+|\p^3(\nab\times b)|^2\dx= -\io \p^3\nab\times(b(\nabp\cdot v))\cdot\p^3(\nabp\times b)\dx+ \text{ controllable terms},
\end{equation}where we find that there is a normal derivative loss in the term $\p^3\nabp\times(b(\nab\cdot v))$. Indeed, invoking $\nabp\cdot v=-\ffp \Dtp p$, commuting $\nabp$ with $\Dtp$ and inserting the momentum equation $-\nabp p=\rho \Dtp v+b\times(\nabp\times b)$, we find a hidden structure of the Lorentz force $b\times(\nabp\times b)$ that eliminates the normal derivative in the curl operator:
\[
\ffp b\times(\p^3\nab \Dtp)=-\ffp \rho b\times(\p^3(\Dtp)^2 v)-\ffp  b\times(b\times \p^3 \Dtp(\nabp\times b))+\text{ lower order terms},
\] in which the second term contributes to an energy term $-\frac12\ddt\io \ffp |b\times(\p^3\nabp\times b)|^2\dx$ plus controllable remainder terms. Thus, the vorticity analysis for compressible ideal MHD motivates us to \textbf{trade one normal derivative (in curl) for two tangential derivatives together with square weights of Mach number, namely $\eps^2 (\Dtp)^2$.} Furthermore, it can be seen that the anisotropic Sobolev spaces defined in Section \ref{sect anisotropic} should be the appropriate function spaces to study compressible ideal MHD with magnetic fields tangential to the boundary. This structure was observed by the author and Wang in the recent preparatory work \cite{WZ2023CMHDlimit} for ideal MHD flows in a fixed domain and gives a definitive explanation on the ``mismatch" of fucntion spaces for the well-posedness of incompressible MHD ($H^m$) and compressible MHD ($H_*^{2m}$): the ``anisotropic part", namely the part containing more than $m$ derivatives, must have weight $\eps^{2}$ or higher power which converges to zero when taking the incompressible limit. The 2D case can also be similarly treated.

Following the above argument, all normal derivatives are reduced to tangential derivatives, and the tangential estimates are expected to be parallel to the proof of $L^2$ energy conservation, which will be analyzed in the next subsection. Now, a remaining task is to determine the weights of Mach number assigned on $v,b,p$. One thing we already know from the momentum equation is that $\nabp (q+\frac12|b|^2)\sim (b\cdot\nabp)b-\Dtp v$, which suggests that $\p_t^k\nabp p$ should share the same weights of Mach number as $\p_t^{k+1}v$. Apart from this, we recall that the $L^2$ energy conservation shows that $v,b,\sqrt{\ffp} q,S\in L^2(\Om)$, which suggest that $\p_t^k(v,b,S)$ should share the same weights of Mach number as $\eps \p_t^kp$ when doing tangential estimates.

Thus, we can conclude our reduction scheme as follows
\begin{itemize}
\item [a.] Using div-curl analysis to reduce any normal derivatives on $v,b$. In this process, we have $(\nabp\cdot v,\nabp\cdot b)\to \eps^2\Dtp q$ and $(\nabp\times v,\nabp\times b)\to \eps^2(\Dtp)^2 v$.
\item [b.] Using the momentum equation to reduce $\nab p$ to $\TT (v,b)$ and $\nabp (\frac12|b|^2)$ (this term should be further reduced via div-curl analysis), where $\TT$ can be any one of the tangential derivatives $\p_t,\TP_1,\TP_2,\omega(x_3)\p_3$..
\item [c.] Tangential estimates: When estimating $E_{4+l}(t)$ (defined in \eqref{energy lwp}), $\TT^{\gamma}(v,b)$ is controlled together with $\sqrt{\ffp}\TT^{\gamma}p$ in the estimates of full tangential derivatives, i.e., when $\lee\gamma\ree=4+l$.
\end{itemize}

Based on the above three properties, we design the energy functional $E(t)$ in \eqref{energy lwp} and we expect to establish uniform-in-$\eps$ estimates for this energy functional. 

\begin{rmk}
It should be noted that the above reduction scheme is substantially different from the one in the author's previous work \cite{Zhang2021CMHD} about one-phase free-surface MHD without surface tension, in which the normal derivatives are not reduced via the div-curl analysis, and the hidden structure in vorticity analysis is not observed. The energy estimates obtained via the method in \cite{Zhang2021CMHD} are \textit{never uniform in Mach number}. The precise reasons are referred to \cite[Section 2.2]{WZ2023CMHDlimit}. 
\end{rmk}

\subsection{Analysis of the free interface}\label{stat AGU}
After the above reduction of normal derivatives, we need to control $\|\eps^{2l}\TT^{\alpha}\TT^{\beta}\p_t^k(v,b,\sqrt{\ffp} p,S)\|_{0}^2$ where $\TT^{\alpha}=(\omega(x_3)\p_3)^{\alpha_4}\p_t^{\alpha_0}\p_1^{\alpha_1}\p_2^{\alpha_2}$ and $\alpha,\beta,k,l$ satisfy
\begin{equation}\label{TT index}\lee\alpha\ree=2l,~\lee\beta\ree=4-l-k,~0\leq k\leq 4-l,~0\leq l\leq4~\text{ and }~\beta_0=0.\end{equation} In fact, the $\eps^{2l}\TT^\alpha$-part comes from the vorticity analysis for $E_{4+l}$ and the $\TT^{\beta}\p_t^k$-part comes from the interior tangential derivatives in div-curl inequality \eqref{divcurl1}.

When commuting $\TT^\gamma$ with $\nabp$, the commutator $[\TT^\gamma,\p_i^\vp]f$ contains the term $(\p_3\vp)^{-1}\TT^\gamma\p_i\vp\p_3 f$ whose $L^2(\Om)$-norm is controlled by $|\TT^\gamma\cnab\psi|_0$. However, the regularity of $\psi$ obtained in $\TT^\gamma$-estimate is $|\sqrt{\sigma}\TT^\gamma\cnab\psi|_0$, which is $\sigma$-dependent. To get rid of such dependence, we introduce the Alinhac good unknown method \cite{Alinhac1989good} which reveals that the ``essential" leading order term in $\TT^\gamma(\nabp f)$ is not simply $\nabp(\TT^\gamma f)$, but the covariant derivative of the ``Alinhac good unknown" $\FF$. Namely, the Alinhac good unknown for a function $f$ with respect to $\TT^\gamma$ is defined by $\FF^\gamma:=\TT^\gamma f-\TT^\gamma\varphi\p_3^\vp f$ and satisfies
\begin{equation}\label{AGU good intro}
\TT^{\gamma}\nab_i^\vp f=\nabp_i\FF^\gamma+\cc_i^\gamma(f),~~\TT^\gamma \Dtp f=\Dtp\FF^\gamma+\dd^\gamma(f),
\end{equation}where $\|\cc_i^\gamma(f)\|_0$ and $\|\dd^\gamma(f)\|_0$ can be directly controlled. Therefore, we can reformulate the $\TT^\gamma$-differentiated current-vortex sheets system \eqref{CMHD0} in terms of $\VV^\gapm,\BB^\gapm,\PP^\gapm,\QQ^\gapm, \BS^\gapm$ (the Alinhac good unknowns of $v^\pm,b^\pm,p^\pm,q^\pm,S^\pm$ in $\Om^\pm$) and reduce the $\TT^\gamma$-estimate of $(v^\pm,b^\pm,p^\pm,q^\pm,S^\pm)$ to the $L^2$-estimate of good unknowns, which is similar to the $L^2$ energy conservation and exhibits several important structures when proving the uniform-in-$\eps$ estimates.

Dropping the superscript $\gamma$ for convenience and applying $L^2$ estimates to the good unknowns, we get the following equality which includes four major terms
\begin{equation}\label{TT energy 1}
\begin{aligned}
\sum_\pm\ddt\frac12\int_{\Om^\pm} \rho^\pm|\VV^\pm|^2+|\BB^\pm|^2+\ffpm|\PP^\pm|^2\dvt=\ST+\RT+\VS+\sum_\pm(Z^\pm + ZB^\pm)+\cdots
\end{aligned}
\end{equation}where $\dvt:=\p_3\vp\dx$. These four major terms are
\begin{align}
\ST:=&~\eps^{4l}\is\TT^\gamma(\sigma\h)\p_t\TT^\gamma\psi\dx',~~\RT:=-\eps^{4l}\is \jump{\p_3q}\TT^\gamma\psi\TT^\gamma\p_t\psi\dx',\\
\VS:=&~\eps^{4l}\is \TT^\gamma q^- (\jump{\vb}\cdot\cnab)\TT^\gamma\psi\dx',\\
ZB^\pm:=&~\mp\eps^{4l}\is\TT^\gamma q^\pm [\TT^\gamma, v_i, N_i]\dx',\quad Z^\pm:=-\eps^{4l}\iopm \TT^\gamma q^\pm [\TT^\gamma, \p_3v_i, \NN_i]\dvt.
\end{align} On the interface $\Sigma$, the weight function $\omega(x_3)=0$, so it remains to consider $\TT^\gamma=\p_t^{k+\alpha_0}\TP^{4-l-k+(\alpha_1+\alpha_2)}=\p_t^{k+\alpha_0}\TP^{4+l-(k+\alpha_0)}$. For simplicity of notations, we replace $k+\alpha_0$ by $k$. One can directly show that the term$\ST$ gives the $\sqrt{\sigma}\eps^{2l}$-weighted boundary regularity in $E(t)$. The term RT is supposed to give us boundary regularity $|\eps^{2l}\p_t^{k}\psi|_{4+l-k}^2$ without $\sigma$-weight provided the Rayleigh-Taylor sign condition $\jump{\p_3 q}\geq c_0>0$. However, in the presence of surface tension, we cannot impose the Rayleigh-Taylor sign condition. Thus, we have to use the $\sqrt{\sigma}$-weighted boundary energy, contributed by surface tension, to control RT.

\subsubsection{A crucial term for vortex-sheet problems}\label{stat VSST}
Let us consider the term$\VS$ that exhibits an essential difficulty in the study of vortex sheets. 
\begin{equation}\label{intro VS}
\begin{aligned}
\VS:=\eps^{4l}\is \p_t^{k}\TP^{4+l-k} q^- (\jump{\vb}\cdot\cnab)\p_t^{k}\TP^{4+l-k}\psi\dx'.
\end{aligned}
\end{equation}

The difficulty is that we only have a jump condition for $\jump{q}$ but no conditions for $q^\pm$ individually. Thus, when $0\leq k\leq 3+l$, we integrate $\TP^{1/2}$ by parts and control $q^\pm$ by using Lemma \ref{nctrace}
\[
\VS\leq |\eps^{2l}\p_t^{k}\TP^{3.5+l-k} q^-|_0|\eps^{2l}(\jump{\vb}\cdot\cnab)\p_t^{k}\TP^{4+l-k} \psi|_{1/2}\leq \|\eps^{2l}\p_t^{k}\TP^{4+l-k}  q^-\|_{0,-}^{1/2}\|\eps^{2l}\p_t^{k}\TP^{3+l-k} \p_3 q^-\|_{0,-}^{1/2}|\vb|_{2}|\eps^{2l}\p_t^k\psi|_{5.5+l-k}.
\]
This indicates us to seek for the control of $|\eps^{2l}\p_t^k\psi|_{5.5+l-k}$ for $0\leq k\leq 3+l$, which is exactly given by the surface tension. Indeed, the jump condition $\h(\psi)=\sigma^{-1}\jump{q}$ and the ellipticity of the mean curvature operator indicates that we can control $|\eps^{2l}\p_t^k\psi|_{5.5+l-k}$ by $|\sigma^{-1}\eps^{2l}\p_t^k\jump{q}|_{3.5+l-k}$ plus lower-order terms. Thus, \textbf{surface tension significantly enhances the regularity of the free interface such that VS is directly controlled.}

\begin{rmk}[\textbf{Comparison with one-phase problems and MHD contact discontinuities}]
The above estimate of VS term is not uniform in $\sigma$ as the elliptic estimate is completely contributed by surface tension. This corresponds to the fact that one cannot take the vanishing surface tension limit of vortex sheets for Euler equations as they are usually violently unstable (except the 2D supersonic case \cite{Secchi2004CVS,Secchi2008CVS}). In the absence of surface tension, the term VS loses control even if the Rayleigh-Taylor sign condition holds because the Rayleigh-Taylor sign condition $N\cdot\nab \jump{Q}|_{\Sigma(t)}\geq c_0> 0$ only gives the energy of $|\eps^{2l}\p_t^k\psi|_{4+l-k}$ which is 1.5-order lower than the desired regularity. \textit{For one-phase problems, the term VS does not appear} because everything in $\Om^-$ is assumed to be vanishing, so the Rayleigh-Taylor sign condition is usually enough to guarantee the well-posedness \cite{TW2020MHDLWP, Zhang2021CMHD}. \textit{For MHD contact discontinuities, the jump condition $\jump{v}=\vec{0}$ also eliminates the term VS} and the transversality of magnetic fields automatically give the bound for $|\p_t^k\psi|_{4-k}$ (cf. Wang-Xin \cite{WangXinMHDCD}). However, the term VS must appear in the vortex sheet problems due to $|\jump{\vb}|>0$ on $\Sigma$. Thus, the appearance of the term VS shows an essential difference from one-phase flow problems and MHD contact discontinuities. 
\end{rmk}

\begin{rmk}[Treatment of full time derivatives] It should be noted that when the tangential derivatives are the full time derivatives $\eps^{2l}\p_t^{4+l}$, the above analysis is no longer valid as we cannot integrate by part $\p_t^{1/2}$. Instead, one has to replace one $\p_t$ by $\Dtpl$ and repeatedly use the Gauss-Green formula, the symmetric structure, the continuity equation. In fact, this is the most difficult step in the proof of uniform estimates and we refer to Step 2 in Section \ref{sect E8ttbdry} for those rather technical computations.
\end{rmk}

\begin{rmk}[Comparison with the Lagrangian setting]\label{Lagrangian}
In the author's previous paper \cite{Zhang2021CMHD} about the one-phase MHD without surface tension under the setting of Lagrangian coordinates, the ``modified Alinhac good unknowns" were introduced to avoid the derivative loss in these commutators, that is, lots of modification terms were added to $\FF$ such that the corresponding $\cc(f)$ is $L^2$-controllable. Those modification terms are necessary when using Lagrangian coordinates but are redundant in the setting of this paper when the free interface is a graph. The precise reason is that, in the Lagrangian setting, the boundary regularity we obtain from tangential estimates has the form $|\TP^r\eta\cdot N|_0^2$ where $\eta$ represents the flow map of $v$, which is not enough to control the top-order derivatives of the co-factor matrix $A:=[\p\eta]^{-1}$ and the Eulerian normal vector $N=\TP\eta\times\TP\eta$. In contrast, the setting in this paper allows us to \textit{explicitly express the Eulerian normal vector, the surface tension, the boundary energy in terms of }$\cnab\psi$, and we can also \textit{explicitly} write the normal derivative of the ``non-characteristic variables" $(q, v\cdot\NN)$ in terms of tangential derivatives of the other quantities.
\end{rmk}

\subsubsection{A cancellation structure for the incompressible limit}\label{stat cancel}

So far, it remains to control the term $Z^\pm+ZB^\pm$:
\begin{equation}\label{TT cancel}
Z^\pm+ZB^\pm=\mp\eps^{4l}\is\TT^\gamma q^\pm [\TT^\gamma, v_i, N_i]\dx'-\eps^{4l}\iopm \TT^\gamma q^\pm [\TT^\gamma, \p_3v_i, \NN_i]\dvt.
\end{equation} We only obtain the regularity for $\sqrt{\ffpm}\TT^\gamma q^\pm$ in tangential estimates, but the first term contains $\TT^\gamma q^\pm$ without $\eps$-weights. When $\TT^\gamma$ contains at least one spatial derivative ($\gamma_0<\lee\gamma\ree$), one can invoke the momentum equation to replace $\TT_i q~(i=1,2,4)$ by tangential derivatives of $v,b$, as only the full time derivatives of $q^\pm$ require one more $\eps$-weight. However, there may be a loss of $\eps$-weight in this term when $\TT^\gamma$ only contains time derivatives, e.g., in $\eps^{2l}\p_t^{4+l}$-estimates for $0\leq l\leq 4$. To get rid of this, there is a cancellation structure that is observed by comparing the concrete forms of the commutators. Using Gauss-Green formula and integrating by parts in $\p_t$, it is easy to see that the leading-order part is
\begin{equation}\label{TTcancel0}
\begin{aligned}
ZB^\pm+Z^\pm\overset{\p_t}{=}&~\eps^{4l}\ddt\iopm\p_3\p_t^{3+l}  q^\pm\, \p_t^{3+l} v^\pm\cdot\p_t\NN\dx\\
&-\eps^{4l}\iopm \p_3\p_t^{3+l} q^\pm\p_t( \p_t^{3+l}v^\pm\cdot\p_t\NN)\dx-\eps^{4l}\iopm\p_t^{3+l} q^\pm \p_t(\p_t^{3+l}v^\pm\cdot\p_3\p_t\NN)\dx+\cdots
\end{aligned}
\end{equation}where the first term can be controlled by using Young's inequality after integrating in time $t$ and the other two terms can be directly controlled uniformlly in $\eps$ because the full time derivatives of $q$ no longer appear. Hence, the problematic terms in \eqref{TT cancel} are controlled uniformly in $\eps$.


\subsection{Our method to solve the compressible vortex-sheet problem}\label{stat LWP}
For equations of free-surface inviscid fluids, there is a loss of one tangential derivative in $\psi$ arising from the analogues of ST and RT terms when doing iteration. Besides, due to the presence of surface tension and compressibility, one has to control the full time derivatives of $v,b,p,S$ which only belong to $L^2(\Om^\pm)$ and their boundary regularity is unknown due to the failure of trace lemma. The delicate cancellation structures for the original nonlinear problem \eqref{CMHD0} no longer exist for the linearized problem. {\bf Therefore, we shall enhance the regularity of $\psi$ in both tangential spatial variables $x'$ and the time variable $t$.} 

There are mainly two methods to prove the existence in previous related works
\begin{enumerate}
\item \textbf{Nash-Moser iteration}. For the linearized problem, the order of the regularity loss is a fixed number, so one can use Nash-Moser iteration to prove the local existence of $C^\infty$ solution or solution in Sobolev spaces with a loss of regularity from initial data to solution. See \cite{Secchi2008CVS, Trakhinin2008CMHDVS, Secchi2013CMHDLWP, TW2020MHDLWP, TW2021MHDSTLWP, TW2022MHDPVST}.	
\item \textbf{Tangential smoothing}. This method was widely used in the study of free-surface inviscid fluids by using Lagrangian coordinates \cite{CCS2007, GuWang2016LWP, LuoZhang2020CWW, Zhang2020CRMHD, Zhang2021elasto, GuLuoZhang2021MHDST}. In \cite{LuoZhang2022CWWST}, Luo and the author introduced the tangential smoothing scheme for Euler equations in the ``flattened coordinates" when the free surface is a graph.
\end{enumerate} 

In this paper,  the method to do tangential regularization is different: the constraint $b\cdot N|_{\Sigma}=0$ no longer propagates from the initial data after doing tangential smoothing on $N$ (via convolution as in \cite{LuoZhang2022CWWST}).

\subsubsection{Our new design of the approximate problem}
In this paper, we introduce the following approximation scheme, namely the nonlinear approximate problem \eqref{CMHD0kk} (indexed by $\kk>0$) for \eqref{CMHD0}, by adding two regularization terms to the jump condition for $q$ as below:
\begin{equation}\label{CMHDjumpkk}
\jump{q}=\sigma\h - \kk (1-\TL)^2\psi - \kk (1-\TL)\p_t\psi,
\end{equation}where $\TL:=\TP_1^2+\TP_2^2$ is the tangential Laplacian operator on $\Sigma$. These two regularization terms help us to get $\sqrt{\kk}$-weighted enhanced regularity for both $\psi$ and $\psi_t$ which is enough for us to compensate the loss of derivatives in the Picard iteration process. {\bf One of the advantages of this approximation is that it does not change the structure of MHD equations and so the constraint $b\cdot N|_{\Sigma}=0$ still propagates for the nonlinear approximate system.}

By using this new regularization, we can solve the nonlinear approximate problem for each fixed $\kk>0$ and we refer to Section \ref{sect linear kkeq} for detailed construction of the linearization and the iteration scheme. It should also be noted that, when treating the linearized equation in Picard iteration, the constraint $b\cdot N|_{\Sigma}=0$ can be recovered at the end of each iteration step by modifying the normal component of the solution to the linearized system and we refer to step 3 in Section \ref{sect linear kkeq} and also Section \ref{sect recoverkk} for details. As for the uniform-in-$\kk$ estimates for the nonlinear approximate problem, the appearance of these two regularization terms will not introduce any uncontrollable terms with the help of some delicate technical modifications. In particular, the term VS remains the same as \eqref{intro VS}, and the elliptic estimate for $\jump{q}$ in Section \ref{stat VSST} is still vaild and uniform in $\kk$ for the approximate problem. Hence, the local existence of \eqref{CMHD0} is proven after passing the limit $\kk\to0$. The bounds obtained in both the iteration process (for fixed $\kk$) and the uniform-in-$\kk$ nonlinear estimates have no loss of regularity and are uniform in Mach number. 

When the free surface is a graph, the approximation scheme provides a method to prove the local existence without loss of regularity (and also the incompressible limit) for a large class of free-boundary problems in inviscid fluids (not only restricted within the study of Euler equations), especially the vortex-sheet problems with surface tension. Furthermore, we believe that taking zero-surface-tension limit seems to be an alternative way, other than Nash-Moser iteration, to prove the local existence of compressible vortex-sheet problem under certain stability conditions. This is presented in the second paper of this two-paper sequence \cite{Zhang2023CMHDVS2}.

\begin{rmk}
We choose the ``flattened coordinate" because, as mentioned in Remark \ref{Lagrangian}, we can explicitly introduce the regularized equations in terms of $\psi$. However, it should also be noted that the design of the linearized problem and the Picard iteration process in the ``flattened coordinate" is much more difficult than in the Lagrangian coordinate because one has to ``define" the free surface in each step of the iteration, whereas the free surface is not explicitly computed and the flow map $\eta$ is completely determined by the velocity in Lagrangian coordinates.
\end{rmk}

\section{Uniform estimates of the nonlinear approximate system}\label{sect uniformkk}

Now we introduce the approximate system of \eqref{CMHD0} indexed by $\kk>0$.
\begin{equation}\label{CMHD0kk}
\begin{cases}
\rho^\pm \Dtpm v^\pm -\bpm b^\pm+\nabp q^\pm=0,~~q^\pm=p^\pm+\frac12|b^\pm|^2&~~\text{ in }[0,T]\times \Omega^\pm,\\
\ff_p\Dtpm p^\pm+\nabp\cdot v^\pm=0 &~~\text{ in }[0,T]\times \Omega^\pm,\\
p^\pm=p^\pm(\rho^\pm,S^\pm),~~\ff^\pm=\log \rho^\pm,~~\ff_p^\pm>0,~\rho^\pm\geq \bar{\rho_0}>0 &~~\text{ in }[0,T]\times \Omega^\pm, \\
\Dtpm b^\pm-\bpm v^\pm+b^\pm\nabp\cdot v^\pm=0&~~\text{ in }[0,T]\times \Omega^\pm,\\
\nabp\cdot b^\pm=0&~~\text{ in }[0,T]\times \Omega^\pm,\\
\Dtpm S^\pm=0&~~\text{ in }[0,T]\times \Omega^\pm,\\
\jump{q}=\sigma\cnab \cdot \left( \frac{\cnab \psi}{\sqrt{1+|\cnab\psi|^2}}\right) - \kk (1-\TL)^2\psi - \kk (1-\TL)\p_t\psi&~~\text{ on }[0,T]\times\Sigma, \\
\p_t \psi = v^\pm\cdot N &~~\text{ on }[0,T]\times\Sigma,\\
b^\pm\cdot N=0&~~\text{ on }[0,T]\times\Sigma,\\
v_3^\pm=b_3^\pm=0&~~\text{ on }[0,T]\times\Sigma^\pm,\\
{(v^\pm,b^\pm,\rho^\pm,S^\pm,\psi)|_{t=0}=(v_0^{\kk,\pm}, b_0^{\kk,\pm}, \rho_0^{\kk,\pm}, S_0^{\kk,\pm},\psi_0^{\kk})}.
\end{cases}
\end{equation}Note that this system is not over-determined: the continuity equation, the evolution equation of $b^\pm$ and the kinematic boundary condition stay unchanged, so one can still prove $\nabp\cdot b^\pm=0$, $b^\pm\cdot N|_{\Sigma}=0$ and $b_3^\pm|_{\Sigma^\pm}=0$ all propagate from the initial data.

The energy functional associated with system \eqref{CMHD0kk} is defined by	        
\begin{equation}
	        	\begin{aligned}\label{energy lwp kk}
	        			E^\kk(t)&:= E_4^{\kk}(t) + E_5^{\kk}(t) + E_6^{\kk}(t) + E_7^{\kk}(t) + E_8^{\kk}(t)\\
	        			E_{4+l}^{\kk}(t)&:=\sum_\pm\sum_{\lee\alpha\ree=2l}\sum_{k=0}^{4-l}\left\|\left(\eps^{2l}\TT^{\alpha}\p_t^{k}\left(v^\pm, b^\pm,S^\pm,(\ffpm)^{\frac{(k+\alpha_0-l-3)_+}{2}}p^\pm\right)\right)\right\|^2_{4-k-l,\pm}\\
 &\quad+\sum_{k=0}^{4+l}\left|\sqrt{\sigma}\eps^{2l}\p_t^{k}\psi\right|^2_{5+k-l}+\left|\sqrt{\kk}\eps^{2l}\p_t^{k}\psi\right|_{6+k-l}^2+\int_0^t \left|\sqrt{\kk}\eps^{2l}\p_t^{k+1}\psi(\tau)\right|_{5+k-l}^2 \dtau,
			\end{aligned}
\end{equation}
            where $0\leq l\leq 4$ and we denote $\TT^{\alpha}:=(\omega(x_3)\p_3)^{\alpha_4}\p_t^{\alpha_0}\p_1^{\alpha_1}\p_2^{\alpha_2}$ to be a tangential derivative for the multi-index $\alpha=(\alpha_0,\alpha_1,\alpha_2,0,\alpha_4)$ with length $\lee \alpha\ree=\alpha_0+\alpha_1+\alpha_2+2\times0+\alpha_4$. The quantity $(k+\alpha_0-l-3)_+=1$ only when $\alpha_0=2l$ and $k=4-l$ and it is equal to 0 otherwise.

We aim to establish the a priori estimates of system \eqref{CMHD0kk} that is uniform in $\kk>0$, which allows us taking the limit $\kk\to 0_+$ to construct the local-in-time solution to the original system \eqref{CMHD0} for fixed $\sigma>0$. Spefically, we want to prove the following proposition
\begin{prop}\label{prop Ekk}
There exists some $T_{\sigma}>0$ independent of $\kk,\eps$ such that
\begin{equation}
\sup\limits_{0\leq t\leq T_{\sigma}} E^\kk(t)\leq C(\sigma^{-1})P(E^\kk(0)).
\end{equation}
\end{prop}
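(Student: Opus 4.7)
The plan is to establish the bound for each subenergy $E^\kk_{4+l}(t)$, $l=0,\dots,4$, inductively (in fact simultaneously, coupled through Gr\"onwall), by combining a div-curl decomposition in the interior with tangential $L^2$-estimates for Alinhac good unknowns on the boundary. First, for each admissible pair $(\alpha,k)$ in \eqref{energy lwp kk}, I apply the div-curl inequality \eqref{divcurl1} to $\eps^{2l}\TT^\alpha\p_t^k(v^\pm,b^\pm)$: the divergence parts are reduced via the continuity equation and $\nabp\cdot b^\pm=0$ to tangential derivatives of $p^\pm$ carrying a factor $\ffpm\sim\eps^2$, producing exactly the $(\ffpm)^{(k+\alpha_0-l-3)_+/2}p^\pm$ weight in \eqref{energy lwp kk}; the curl parts are treated through the hidden Lorentz-force structure described in Section \ref{stat fixed}—namely, taking $\nabp\times$ of the momentum equation, commuting $\nabp$ with $\Dtpm$, and substituting back the momentum equation to convert $\ffpm B\times(\TT\nabp p)$ into $-\tfrac12\tfrac{d}{dt}\int\ffpm|B\times(\TT\nabp\times B)|^2$ plus controllable remainders, thereby trading one normal derivative for two tangential derivatives with an $\eps^2$-weight. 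The pressure-gradient normal derivative is absorbed by using the momentum equation itself, which also dictates the $\eps$-weights of $p$.

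Next I carry out the tangential estimates. For each tangential multi-index $\gamma$ with $\lee\gamma\ree=4+l$, I introduce the Alinhac good unknowns $\VV^\gapm,\BB^\gapm,\PP^\gapm,\QQ^\gapm,\BS^\gapm$ satisfying \eqref{AGU good intro} and rewrite the $\TT^\gamma$-differentiated system \eqref{CMHD0kk} in terms of them. Applying $L^2$ energy estimates on $\Om^\pm$ and using $\nabp\cdot b^\pm=0$, $\Dtpm\rho^\pm+\rho^\pm\nabp\cdot v^\pm=0$, the boundary contributions reduce to the four terms $\ST+\RT+\VS+\sum_\pm(Z^\pm+ZB^\pm)$ in \eqref{TT energy 1}. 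The term $\ST$ produces the surface-tension boundary energy $|\sqrt\sigma\eps^{2l}\p_t^k\psi|_{5+l-k}^2$, and the new regularization contributes its own dissipation $|\sqrt\kk\eps^{2l}\p_t^k\psi|_{6+l-k}^2 + \int_0^t|\sqrt\kk\eps^{2l}\p_t^{k+1}\psi|_{5+l-k}^2\dtau$ (after one integration by parts on $\Sigma$ using $\TL$). The $\RT$ term is absorbed by the $\sqrt\sigma$-weighted boundary energy. For $\VS$, since the regularization leaves the jump condition in the elliptic form $\h(\psi)=\sigma^{-1}(\jump{q}+\kk(1-\TL)^2\psi+\kk(1-\TL)\p_t\psi)$, elliptic regularity of the mean-curvature operator still gives $|\eps^{2l}\p_t^k\psi|_{5.5+l-k}\lesssim C(\sigma^{-1})(|\eps^{2l}\p_t^k\jump{q}|_{3.5+l-k}+|\sqrt\kk\eps^{2l}\p_t^k\psi|_{5.5+l-k}+\cdots)$, which is $\sigma^{-1}$-dependent but uniform in $\kk$, and hence suffices to close $\VS$ after a half-derivative integration by parts on $\Sigma$.

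The $Z^\pm+ZB^\pm$ terms are treated as in Section \ref{stat cancel}: when $\TT^\gamma$ contains at least one spatial tangential derivative, the momentum equation replaces $\TT_i q$ by tangential derivatives of $v,b$ without losing an $\eps$-weight; for the purely time-differentiated cases $\TT^\gamma=\eps^{2l}\p_t^{4+l}$ one uses the cancellation \eqref{TTcancel0}, so after $dt$-integration Young's inequality closes it uniformly in $\eps$. Throughout, the commutators $\cc^\gamma(f),\dd^\gamma(f)$ in \eqref{AGU good intro} are controlled in the anisotropic Sobolev framework by repeatedly using Leibniz expansion together with the trace lemmas (Lemma~\ref{trace}, Lemma~\ref{nctrace}) since the standard trace inequality fails; the delicate commutators $\cc_i(v_i)$ and $\cc(q)$ arising in $E_8^\kk$ are handled by the reductions of Section~\ref{sect E8TT}.

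The main obstacle is the full-time-derivative estimate, namely $\TT^\gamma=\eps^{2l}\p_t^{4+l}$, where the $\VS$ analysis above breaks down because one cannot integrate $\p_t^{1/2}$ by parts on $\Sigma$; here I will follow the prescription sketched in Section \ref{sect E8ttbdry}, replacing one $\p_t$ by $\Dtpl$, using the interface kinematic identity $\Dtpl$ tangential to $\Sigma$, and combining Gauss--Green with the continuity equation to exhibit an exact time derivative plus $\eps$-uniform remainders. Once all five subenergies are bounded pointwise by $P(E^\kk(0))+\int_0^t P(E^\kk(\tau))\dtau$ with constants depending on $\sigma^{-1}$ but independent of $\kk$ and $\eps$, Gr\"onwall's inequality yields Proposition~\ref{prop Ekk} on a time interval $[0,T_\sigma]$ determined solely by $\sigma$ and the size of the initial data.
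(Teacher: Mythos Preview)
Your proposal is correct and follows essentially the same approach as the paper: div-curl decomposition with the hidden Lorentz-force structure for the curl part, Alinhac good unknowns for the tangential estimates, the decomposition of the boundary integral into $\ST+\RT+\VS+\sum_\pm(Z^\pm+ZB^\pm)$, the elliptic estimate for $\psi$ via the mean-curvature jump condition (uniform in $\kk$) to close $\VS$, the cancellation structure for $Z^\pm+ZB^\pm$, and the replacement of one $\p_t$ by $\Dtpl$ in the full-time-derivative case, closed by a Gr\"onwall-type argument. Two minor points of alignment: (i) the paper's elliptic estimate (Lemma~\ref{qelliptic}) actually puts the $\kk$-terms on the \emph{left} side as positive contributions and uses the parabolic-type Lemma~\ref{parabolic} to pick up an initial-data term $|\psi_0|_{s+1.5}$, rather than placing $|\sqrt\kk\psi|$ on the right as you wrote; (ii) the final inequality before Gr\"onwall takes the form $E^\kk(t)\lesssim \delta E^\kk(t)+P(E^\kk(0))+P(E^\kk(t))\int_0^t P(\sigma^{-1},E^\kk(\tau))\,d\tau$, with a $\delta$-absorption and a nonlinear factor outside the integral, rather than the simpler linear form you stated.
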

\begin{rmk}
The initial data of the approximate system \eqref{CMHD0kk} is not the same as the initial data of the original system \eqref{CMHD0} because of the different compatibility conditions. The compatibility conditions (up to 7-th order) for system \eqref{CMHD0kk} are
\begin{equation}\label{comp cond kk}
\begin{aligned}
\jump{\p_t^j q}\big|_{t=0}=\p_t^j\left(\sigma\mathcal{H}- \kk (1-\TL)^2\psi - \kk (1-\TL)\p_t\psi\right)\big|_{t=0}~~\text{ on }\Sigma,~~0\leq j\leq 7,\\
\p_t^{j+1}\psi|_{t=0}=\p_t^{j}(v^\pm\cdot N)|_{t=0}~~\text{ on }\Sigma,~~0\leq j\leq 7,\\
\p_t^j v_3^\pm|_{t=0}=0~~\text{ on }\Sigma^\pm,~~0\leq j\leq 7.
\end{aligned}
\end{equation} In Appendix \ref{sect comp cond}, we construct the initial data of \eqref{CMHD0kk} satisfying the compatibility conditions \eqref{comp cond kk} that is uniformly bounded in $\kk$ and converges to a given initial data of \eqref{CMHD0} satisfying the compatibility conditions \eqref{comp cond} up to 7-th order.
\end{rmk}

\subsection{$L^2$ energy conservation}\label{sect L2}
\begin{prop}\label{prop L2kk}
The approximate system \eqref{CMHD0kk} admits the following conserved quantity: Let \begin{equation}\label{L2kk}
\begin{aligned}
E_0^\kk(t):=&\sum_{\pm}\frac12\iopm \rho^\pm|v^\pm|^2+|b^\pm|^2+ 2 \mathfrak{P}(\rho^\pm,S^\pm) + \rho^\pm |S^\pm|^2\dvt \\
& + \frac12\is \sigma \sqrt{1+|\cnab\psi|^2} + \kk |(1-\TL)\psi|^2 \dx' + \int_0^t\is \kk |\jp \psi_t|^2\dx'\dtau.
\end{aligned}
\end{equation} Then $\ddt E_0^{\kk}(t)=0$ with in the lifespan of the solution to \eqref{CMHD0kk}. Here $\jp:=\sqrt{1-\TL}$, that is, $\widehat{\jp f}(\xi)=\sqrt{1+|\xi|^2}\hat{f}(\xi)$ in $\T^2$ and $\dvt:=\p_3\vp\dx$.
\end{prop}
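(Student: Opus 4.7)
The plan is to differentiate $E_0^\kk(t)$ in time and show, using the evolution equations in \eqref{CMHD0kk} as an $L^2$-energy test, that every bulk contribution either vanishes or combines with the interface contributions so as to produce precisely the dissipation $\kk|\jp\p_t\psi|^2$ recorded in \eqref{L2kk}. The technical backbone is the flattened Reynolds transport formula
\[
\frac{d}{dt}\iopm f\,\dvt = \iopm\bigl(\Dtpm f + f\,\nabp\!\cdot v^\pm\bigr)\dvt,
\]
valid for smooth $f$ because $v_3^\pm=0$ on $\Sigma^\pm$ and $v^\pm\cdot N=\p_t\psi$ on $\Sigma$ (which, combined with $\p_t\vp=\chi\,\p_t\psi$, kills the would-be moving-boundary terms).

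I would apply this transport identity to each interior integrand in turn. For $f=\tfrac12\rho^\pm|S^\pm|^2$, the continuity equation $\Dtpm\rho^\pm+\rho^\pm\nabp\!\cdot v^\pm=0$ combined with $\Dtpm S^\pm=0$ makes the time derivative vanish on the spot. For $f=\tfrac12\rho^\pm|v^\pm|^2$, after pairing with continuity and inserting the momentum equation, one obtains
\[
\tfrac12\ddt\iopm\rho^\pm|v^\pm|^2\,\dvt = \iopm v^\pm\!\cdot(b^\pm\!\cdot\nabp)b^\pm - v^\pm\!\cdot\nabp q^\pm\,\dvt,
\]
and an analogous computation using the induction equation gives
\[
\tfrac12\ddt\iopm|b^\pm|^2\,\dvt = \iopm b^\pm\!\cdot(b^\pm\!\cdot\nabp)v^\pm - \tfrac12|b^\pm|^2\nabp\!\cdot v^\pm\,\dvt.
\]
Summing these, the nonlinear cross term equals $(b^\pm\!\cdot\nabp)(v^\pm\!\cdot b^\pm)=\nabp\!\cdot\bigl((v^\pm\!\cdot b^\pm)b^\pm\bigr)$ by $\nabp\!\cdot b^\pm=0$, and an integration by parts in $\Om^\pm$ turns it into boundary integrals that vanish thanks to $b_3^\pm|_{\Sigma^\pm}=0$ and $b^\pm\!\cdot N|_\Sigma=0$. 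For the internal-energy piece, the thermodynamic identity $\p_\rho\mathfrak{P}^\pm=p^\pm/(\rho^\pm)^2$ together with continuity produces
\[
\ddt\iopm\rho^\pm\mathfrak{P}^\pm\,\dvt = -\iopm p^\pm\,\nabp\!\cdot v^\pm\,\dvt,
\]
where the separately conserved total mass $\iopm\rho^\pm\dvt$ absorbs the constant of integration in the definition of $\mathfrak{P}^\pm$.

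Assembling all four contributions and using $q^\pm=p^\pm+\tfrac12|b^\pm|^2$, the integrand of the interior balance collapses into the single divergence $-\nabp\!\cdot(q^\pm v^\pm)$; integrating by parts and invoking $v_3^\pm=0$ on $\Sigma^\pm$ together with $v^\pm\!\cdot N=\p_t\psi$ on $\Sigma$ (with $\p_t\psi$ single-valued), this yields
\[
\sum_\pm\tfrac12\ddt\iopm\bigl(\rho^\pm|v^\pm|^2+|b^\pm|^2+2\rho^\pm\mathfrak{P}^\pm+\rho^\pm|S^\pm|^2\bigr)\dvt = \is\jump{q}\,\p_t\psi\,dx'.
\]
Substituting the boundary condition $\jump{q}=\sigma\mathcal{H}(\psi)-\kk(1-\TL)^2\psi-\kk(1-\TL)\p_t\psi$, the curvature piece is recognized via the variational identity $\sigma\is\mathcal{H}(\psi)\p_t\psi\,dx'=-\sigma\ddt\is\sqrt{1+|\cnab\psi|^2}\,dx'$ (one integration by parts in $\cnab$); the first $\kk$-regularization term yields $-\tfrac{\kk}{2}\ddt\is|(1-\TL)\psi|^2 dx'$ by self-adjointness of $1-\TL$; and the second produces the manifestly nonnegative dissipation $-\kk\is|\jp\p_t\psi|^2 dx'$, whose time integral is exactly the final contribution in \eqref{L2kk}. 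Rearranging delivers $\ddt E_0^\kk(t)=0$. The only somewhat delicate step is the internal-energy identity above, where one must carefully use mass conservation to absorb the constants of integration hidden in $\mathfrak{P}^\pm$; every other step is a routine bookkeeping of interface and wall contributions made possible by the three tangentiality conditions on $v^\pm$ and $b^\pm$.
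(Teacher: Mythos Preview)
Your proposal is correct and follows essentially the same approach as the paper: apply the Reynolds transport identity to each piece of the energy, insert the evolution equations, integrate by parts using the tangentiality of $v^\pm$ and $b^\pm$ at $\Sigma$ and $\Sigma^\pm$ so that the bulk terms collapse to the single interface contribution $\is\jump{q}\,\p_t\psi\,dx'$, and then process this using the jump condition to extract the surface-area, $\kk(1-\TL)^2$, and $\kk(1-\TL)\p_t$ pieces. Your organization (differentiating the energy directly) is a mild reshuffling of the paper's (testing the momentum equation with $v^\pm$ first), but the algebraic content is identical; your aside about mass conservation absorbing constants in $\mathfrak{P}^\pm$ is unnecessary since $\mathfrak{P}^\pm$ is already defined with a fixed lower limit $\bar{\rho}_0$.
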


\begin{proof}
The proof of $L^2$ estimate is straightforward. Taking $L^2(\Om^\pm)$-inner product of $v$ and the first equation in \eqref{CMHD0kk} and using Reynolds transport formula \eqref{transport thm nonlinear}, we get
\begin{equation}\label{L2 kk}
\begin{aligned}
&\sum_\pm\ddt\frac12\iopm \rho^\pm |v^\pm|^2\dvt=\sum_\pm\iopm (\rho^\pm \Dtpm v^\pm)\cdot v^\pm\dvt\\
=& \is \jump{q} \p_t \psi\dx'+\sum_\pm \iopm p^\pm (\nabp\cdot v^\pm)\dvt-\iopm \bpm v^\pm\cdot b^\pm \dvt + \iopm \frac12|b^\pm|^2 (\nabp\cdot v^\pm)\dvt,
\end{aligned}
\end{equation} where the integral on $\Sigma^\pm$ vanishes thanks to the slip conditions. Let $\mathfrak{P}(\rho^\pm,S^\pm)=\int_{\bar{\rho}_0}^{\rho^\pm}\frac{p^\pm(z,S^\pm)}{z^2}\dz$. Then the first integral above together with $\Dtpm S^\pm=0$ gives
\begin{align*}
\iopm p^\pm (\nabp\cdot v^\pm)\dvt = -\iopm \frac{p^\pm}{(\rho^\pm)^2} \Dtpm\rho^\pm\dvt = -\ddt\iopm \rho^\pm\mathfrak{P}(\rho^\pm)\dvt.
\end{align*} The boundary term gives $\sqrt{\sigma}$-weighted and $\sqrt{\kk}$-weighted regularity of $\psi$ and $\psi_t$. One has
\begin{align*}
\is \jump{q} \p_t \psi\dx' =-\ddt\frac12\is \sigma \sqrt{1+|\cnab\psi|^2} + \kk |(1-\TL)\psi|^2 \dx'-\is \kk |\jp \psi_t|^2\dx'.
\end{align*} Then we insert the evolution equation of $b^\pm$ in the third term in \eqref{L2 kk} to get the energy of $b^\pm$.
\begin{align*}
-\iopm \bpm v^\pm\cdot b^\pm \dvt =- \iopm \Dtpm b^\pm \cdot b^\pm\dvt - \iopm |b^\pm|^2 (\nabp\cdot v^\pm)\dvt\\
=-\ddt\frac12\iopm |b^\pm|^2\dvt +\frac12\iopm |b^\pm|^2 (\nabp\cdot v^\pm)\dvt-\iopm|b^\pm|^2 (\nabp\cdot v^\pm)\dvt,
\end{align*} where the last two terms exactly cancels with the last term in \eqref{L2 kk}. Finally, $\Dtpm S^\pm=0$ and the Reynolds transport theorem shows that $\ddt\frac12\iopm \rho^\pm|S^\pm|^2\dvt=0$. Therefore, we conclude that system \eqref{CMHD0kk} admits the following conserved quantity
\begin{equation}
\begin{aligned}
E_0^\kk(t):=&\sum_{\pm}\frac12\iopm \rho^\pm|v^\pm|^2+|b^\pm|^2+ 2 \mathfrak{P}(\rho^\pm,S^\pm) + \rho^\pm |S^\pm|^2\dvt \\
& + \frac12\is \sigma \sqrt{1+|\cnab\psi|^2} + \kk |(1-\TL)\psi|^2 \dx' + \int_0^t\is \kk |\jp \psi_t|^2\dx'\dtau,
\end{aligned}
\end{equation}which can also be inherited to the original current-vortex sheet system \eqref{CMHD0} after taking $\kk\to 0_+$.\end{proof}

\subsection{Reformulations in Alinhac good unknowns}

Let $\TT^\gamma:=(\omega(x_3)\p_3)^{\gamma_4}\p_t^{\gamma_0}\p_1^{\gamma_1}\p_2^{\gamma_2}$ be a tangential derivative with $\lee\gamma\ree=\gamma_0+\gamma_1+\gamma_2+\gamma_4$. We define the Alinhac good unknown of a given function $f$ with respect to $\TT^\gamma$ by $\FF^\gamma:=\TT^\gamma f-\TT^\gamma\varphi\p_3^\vp f$. The good unknown $\FF$ satisfies
\begin{equation}\label{AGU good}
\TT^{\gamma}\nab_i^\vp f=\nabp_i\FF^\gamma+\cc_i^\gamma(f),~~\TT^\gamma \Dtp f=\Dtp\FF^\gamma+\dd^\gamma(f),
\end{equation}where the commutators $\cc^\gamma_i(f)$ and $\dd^\gamma(f)$ are defined by
\begin{align}\label{AGU comm Ci}
\mathfrak{C}_i^\gamma(f) =&~(\pp_3\pp_i f)\TT^\gamma\vp
+\left[ \TT^\gamma, \frac{\NN_i}{\p_3 \vp}, \p_3 f\right]+\p_3 f \left[ \TT^\gamma, \NN_i, \frac{1}{\p_3 \vp}\right] +\NN_i \p_3 f\left[\TT^{\gamma-\gamma'}, \frac{1}{(\p_3 \vp)^2}\right] \TT^{\gamma'} \p_3  \vp \nonumber\\
&+\frac{\NN_i}{\p_3  \vp} [\TT^\gamma, \p_3] f - \frac{\NN_i}{(\p_3 \vp)^2}\p_3f [\TT^\gamma, \p_3] \vp,\quad i=1,2,3,
\end{align} and 
\begin{align} \label{AGU comm D}
\mathfrak{D}^\gamma(f) =&~ (\Dtp \pp_3 f)\TT^\gamma\vp+[\TT^\gamma, \vb]\cdot \TP f + \left[\TT^\gamma, \frac{1}{\p_3\vp}(v\cdot \NN-\p_t\varphi), \p_3 f\right]+\left[\TT^\gamma, v\cdot \NN-\p_t\varphi, \frac{1}{\p_3\vp}\right]\p_3 f\nonumber\\ 
&+\frac{1}{\p_3\vp} [\TT^\gamma, v]\cdot \NN \p_3 f-(v\cdot \NN-\p_t\varphi)\p_3 f\left[ \TT^{\gamma-\gamma'}, \frac{1}{(\p_3 \vp)^2}\right]\TT^{\gamma'} \p_3 \vp\nonumber\\
&+\frac{1}{\p_3\vp}(v\cdot \NN-\p_t\varphi) [\TT^\gamma, \p_3] f+ (v\cdot \NN-\p_t\varphi) \frac{\p_3 f}{(\p_3\vp)^2}[\TT^\gamma, \p_3] \vp
\end{align}
with $\len{\gamma'}=1$. Here $\NN:=(-\TP_1\vp,-\TP_2\vp,1)^\top$ is the extension of normal vector $N$ in $\Om^\pm$. The third term on the right side of \eqref{AGU comm Ci} is zero when $i=3$ because $\NN_3=1$ is a constant.

Under the setting of anisotropic Sobolev spaces, we also need to carefully treat the terms generated by commuting $\TT^\gamma$ with $\bp$ and inserting the Alinhac good unknown $\FF^\gamma$. Now, we shall rewrite the directional derivative to be $\bp=\bc\cdot\cnab+(\p_3\vp)^{-1}(b\cdot \NN)\p_3$. Similarly as in \eqref{AGU comm Ci}, we have
\begin{equation}\label{AGUB good}
\TT^\gamma(\bp f)=\bp\FF^\gamma + \ccb^\gamma(f)
\end{equation}where the commutator  $\ccb^\gamma(f)$ is defined by
\begin{align}\label{AGU comm B}
\ccb^\gamma(f) =&~(\bp\pp_3f)\TT^\gamma\vp
+\left[ \TT^\gamma, \frac{b\cdot\NN}{\p_3 \vp}, \p_3 f\right]+\p_3 f \left[ \TT^\gamma, b\cdot\NN, \frac{1}{\p_3 \vp}\right] +(b\cdot\NN) \p_3 f\left[\TT^{\gamma-\gamma'}, \frac{1}{(\p_3 \vp)^2}\right] \TT^{\gamma'} \p_3  \vp \nonumber\\
&+[\TT^\gamma,\bc_i]\TP_i f +\TT^\gamma b_3\,\pp_3 f  +\frac{b\cdot\NN}{\p_3  \vp} [\TT^\gamma, \p_3] f - \frac{b\cdot\NN}{(\p_3 \vp)^2}\p_3f [\TT^\gamma, \p_3] \vp.
\end{align}

Therefore, we can reformulate the $\TT^\gamma$-differentiated current-vortex sheets system \eqref{CMHD0kk} in terms of $\VV^\gapm$, $\BB^\gapm$, $\PP^\gapm$, $\BS^\gapm$ (the Alinhac good unknowns of $v^\pm,b^\pm,p^\pm,S^\pm$ in $\Om^\pm$) as follows
\begin{align}
\label{goodv} \rho^\pm\Dtpm\VV^{\gapm}-\bpm \BB^{\gapm} +\nabp \QQ^\gapm = \RR_v^{\gapm} - \cc^\gamma(q^\pm) + \ccb^\gamma(b^\pm) ~~&\text{ in }[0,T]\times\Om^\pm,\\
\label{goodp} \ffp\Dtpm\PP^{\gamma,\pm}+\nabp\cdot \VV^{\gamma,\pm} =\RR_p^{\gamma,\pm}-\cc_i^\gamma(v_i^\pm)~~&\text{ in }[0,T]\times\Om^\pm,\\
\label{goodb} \Dtpm \BB^{\gapm} - \bpm \VV^\gapm +b^\pm\nabp\cdot\VV^\gapm = \RR_b^\gapm - \ccb^\gamma(v^\pm)+b^\pm\cc_i^\gamma(v_i^\pm)~~&\text{ in }[0,T]\times\Om^\pm,\\
\label{goodd} \nabp\cdot b^\pm=0~~&\text{ in }[0,T]\times\Om^\pm,\\
\label{goods} \Dtpm\BS^{\pm,\alpha}=\dd^{\gamma}(S^\pm)~~&\text{ in }[0,T]\times\Om^\pm,
\end{align}with boundary conditions
\begin{align}
\label{goodbdc}\jump{\QQ^\gamma}=\sigma\TT^{\gamma}\h-\kk\TT^{\gamma}(1-\TL)^2\psi-\kk\TT^{\gamma}(1-\TL)\p_t \psi-\jump{\p_3 q}\TT^\gamma\psi~~&\text{ on }[0,T]\times\Sigma,\\
\label{goodkbc}\VV^\gapm\cdot N=\p_t\TT^{\gamma}\psi+\vb^\pm\cdot\cnab\TT^\gamma\psi-\WW^\gapm~&\text{ on }[0,T]\times\Sigma,\\
\label{goodbnc}b^\pm\cdot N=0~&\text{ on }[0,T]\times\Sigma,\\
b_3^\pm=v_3^\pm=\BB_3^\pm=\VV_3^\pm=0~&\text{ on }[0,T]\times\Sigma^\pm,
\end{align}where $\RR_v,\RR_p,\RR_b$ terms consist of the following commutators
\begin{align}
\label{goodrv} \RR_v^{\gamma,\pm}:=&-[\TT^\gamma,\rho^\pm]\Dtpm v^\pm-\rho^\pm\dd^\gamma(v^\pm)\\
\label{goodrp} \RR_p^{\gamma,\pm}:=&-[\TT^\gamma,\ffpm]\Dtpm p^\pm -\ffpm \dd^\gamma(p^\pm)\\
\label{goodrb} \RR_b^{\gamma,\pm}:=&-[\TT^\gamma, b^\pm](\nabp\cdot v^\pm)-\dd^\gamma(b^\pm),
\end{align} and the boundary term $\WW^\gapm$ is 
\begin{align}\label{goodww}
\WW^\gapm:=(\p_3 v^\pm\cdot N)\TT^\gamma\psi+[\TT^\gamma,N_i,v_i^\pm],
\end{align} 

Note that $\om(x_3)=0$ on $\Sigma\cup\Sigma^\pm$, so all boundary conditions are vanishing when $\gamma_4>0$. Thus, $\TT^\gamma$ can be written as $\p_t^{k+\alpha_0}\TP^{(4+l)-(k+\alpha_0)}$ on $\Sigma$. We can replace $k+\alpha_0$ by $k$ ($0\leq k\leq 4+l$) in the boundary energy terms. In the rest of Section \ref{sect uniformkk}, we aim to prove the following tangential estimates
\begin{prop}[Tangential estimates for the approximate system]\label{prop EkkTT}
For fixed $l\in\{0,1,2,3,4\}$ and any $\delta\in(0,1)$, the following uniform-in-$(\kk,\eps)$ energy inequalities hold:
\begin{align}
&\sum_\pm\sum_{\lee\alpha\ree=2l}\sum_{\substack{0\leq k\leq 4-l \\ k+\alpha_0<4+l}}\left\|\left(\eps^{2l}\TP^{4-k-l}\TT^{\alpha}\p_t^{k}(v^\pm, b^\pm,S^\pm,p^\pm)\right)\right\|^2_{0,\pm}\no\\
&+\sum_{k=0}^{3+l}\left|\sqrt{\sigma}\eps^{2l}\p_t^{k}\psi\right|^2_{5-k-l}+\left|\sqrt{\kk}\eps^{2l}\p_t^{k}\psi\right|_{6+k-l}^2+\int_0^t \left|\sqrt{\kk}\eps^{2l}\p_t^{k+1}\psi(\tau)\right|_{5+k-l}^2 \dtau\no\\
\lesssim&~\delta E_{4+l}^{\kk}(t)+\sum_{k=0}^{3+l}\bno{\eps^{2l}\p_t^{k}\psi(0)}_{5.5+l-k}^2 + P\left(\sigma^{-1}, \sum_{j=0}^lE_{4+j}^{\kk}(0)\right)+P\left(\sum_{j=0}^lE_{4+j}^{\kk}(t)\right)\int_0^tP\left(\sigma^{-1},\sum_{j=0}^lE_{4+j}^{\kk}(\tau)\right)\dtau
\end{align}and
\begin{align}
&\sum_\pm\sum_{k=0}^{4-l}\left\|\left(\eps^{2l}\p_t^{4+l}(v^\pm, b^\pm,S^\pm,(\ffp)^{\frac12}p^\pm)\right)\right\|^2_{4-k-l,\pm}+\left|\sqrt{\sigma}\eps^{2l}\p_t^{4+l}\psi\right|^2_{1}+\left|\sqrt{\kk}\eps^{2l}\p_t^{4+l}\psi\right|_{2}^2+\int_0^t \left|\sqrt{\kk}\eps^{2l}\p_t^{5+l}\psi(\tau)\right|_{1}^2 \dtau\no\\
\lesssim&~\delta E_{4+l}^{\kk}(t)+\bno{\eps^{2l}\p_t^{3+l}\psi(0)}_{2.5}^2+ P\left(\sigma^{-1},\sum_{j=0}^lE_{4+j}^{\kk}(0)\right)+ P\left(\sum_{j=0}^lE_{4+j}^{\kk}(t)\right)\int_0^tP\left(\sigma^{-1},\sum_{j=0}^lE_{4+j}^{\kk}(\tau)\right)\dtau.
\end{align} Here the first inequality represents the case when there are at least one spatial tangential derivatives and the second inequality represents the case of full time derivatives. Moreover, the term $\bno{\eps^{2l}\p_t^{k}\psi(0)}_{5.5+l-k}^2$ on the right side does not appear when $\kk=0$.
\end{prop}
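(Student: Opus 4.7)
The plan is to perform a weighted $L^2$ energy estimate on the Alinhac good-unknown system \eqref{goodv}--\eqref{goodbdc}. Fix a tangential multi-index $\gamma$ with $\lee\gamma\ree=4+l$ and Mach-number weight $\eps^{4l}$. Taking the $L^2(\Om^\pm)$ inner products of \eqref{goodv} against $\VV^\gapm$, of \eqref{goodp} against $\PP^\gapm$, and of \eqref{goodb} against $\BB^\gapm$, weighting by $\eps^{4l}$, summing over $\pm$, and using Reynolds transport formula together with the continuity equation mimics the $L^2$ conservation of Proposition \ref{prop L2kk} and yields an identity of the form
\begin{align*}
\sum_\pm\ddt\frac12\iopm\eps^{4l}\bigl(\rho^\pm|\VV^\gapm|^2+|\BB^\gapm|^2+\ffpm|\PP^\gapm|^2\bigr)\dvt \,=\, \ST+\RT+\VS+\sum_\pm(Z^\pm+ZB^\pm)+\text{interior commutators},
\end{align*}
where the commutator bucket collects the contributions of $\RR_v^\gamma,\RR_p^\gamma,\RR_b^\gamma,\cc^\gamma,\dd^\gamma$. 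The $\kk$-regularization in \eqref{goodbdc}, paired with the kinematic boundary condition \eqref{goodkbc}, additionally contributes $\ddt\frac12|\sqrt{\kk}\eps^{2l}(1-\TL)\TT^\gamma\psi|_0^2$ together with a dissipation $|\sqrt{\kk}\eps^{2l}\jp\p_t\TT^\gamma\psi|_0^2$, which supply exactly the $\kk$-weighted boundary and time-integrated norms on the left-hand side of the proposition.

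I would next treat the four critical surface terms following Section \ref{stat AGU}. The $\ST$ term becomes $\frac12\ddt|\sqrt{\sigma}\eps^{2l}\p_t^k\psi|_{5+l-k}^2$ modulo lower-order terms after exploiting the divergence structure of $\sigma\h$ and the kinematic condition \eqref{goodkbc}. Since no Rayleigh--Taylor sign can be imposed when $\sigma>0$, $\RT$ is absorbed into the $\sqrt{\sigma}$-weighted boundary energy by Young's inequality. For the vortex-sheet term $\VS$ in the case $k+\alpha_0<4+l$, I integrate the half tangential derivative $\TP^{1/2}$ to the other side, estimate $q^-$ in the interior via the non-characteristic trace lemma \ref{nctrace}, and invoke the elliptic gain
\begin{align*}
|\eps^{2l}\p_t^k\psi|_{5.5+l-k}\lesssim \sigma^{-1}|\eps^{2l}\p_t^k\jump{q}|_{3.5+l-k}+\text{l.o.t.},
\end{align*}
obtained by inverting the mean-curvature operator in the capillary jump condition \eqref{goodbdc}; this is what produces the $C(\sigma^{-1})$ dependence. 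The sum $Z^\pm+ZB^\pm$ is reorganized via integration by parts in $\p_t$ and Gauss--Green into the cancellation structure \eqref{TTcancel0}: the leading pieces combine into a time-exact derivative (closed by Young's inequality after time integration), while the residuals no longer involve the top-order time derivative of $q$ and are therefore $\eps$-uniformly bounded.

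The main obstacle is the full-time-derivative case $\TT^\gamma=\p_t^{4+l}$, in which the $\TP^{1/2}$ integration-by-parts is unavailable for $\VS$. The strategy is to replace one $\p_t$ by the $\Sigma$-tangential operator $\Dtpl=\p_t+\vb^-\cdot\cnab$, use the momentum equation \eqref{goodv} to trade $\nabp q^-$ for $\rho^-\Dtpl v^--\bpl b^-$, and then iteratively apply Gauss--Green, the symmetric structure of the MHD system, and the continuity equation to relocate derivatives off $q^-$; this is the most delicate computation and is carried out in Section \ref{sect E8ttbdry}. The interior commutators are controlled in the anisotropic norm by product rules (with $\om\p_3$ counted as second order), the trace lemmas \ref{trace}--\ref{nctrace} where the standard Sobolev trace fails, and the reduction scheme of Section \ref{stat fixed}: normal derivatives of $(v,b)$ are traded via div--curl for $\eps^2$-weighted tangential derivatives, and residual $\nab p$ is eliminated via the momentum equation. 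The data term $|\eps^{2l}\p_t^k\psi(0)|_{5.5+l-k}^2$ appears precisely when passing the $\sigma^{-1}$ elliptic gain back to $t=0$, and it is absent for $\kk=0$ since that gain is not invoked in the non-regularized limit system. Gr\"onwall's inequality applied to the resulting differential inequality then closes both estimates uniformly in $\kk$ and $\eps$.
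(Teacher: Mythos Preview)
Your proposal is correct and follows essentially the same route as the paper: Alinhac good unknowns, the energy identity producing $\ST,\RT,\VS,Z^\pm+ZB^\pm$, the $\sqrt{\sigma}$- and $\sqrt{\kk}$-weighted boundary regularity from $\ST$, the $\TP^{1/2}$ trick plus the mean-curvature elliptic estimate (Lemma~\ref{qelliptic}) for $\VS$ when a spatial derivative is present, the cancellation \eqref{TTcancel0} for $Z^\pm+ZB^\pm$, and the replacement $\p_t^{4+l}\to\Dtpl\p_t^{3+l}$ for the full-time case. One small correction: the initial-data term $|\eps^{2l}\p_t^k\psi(0)|_{5.5+l-k}^2$ does not arise from ``passing the elliptic gain back to $t=0$'' but from the $\kk\ddt|\psi|_{s+1.5}^2$ term in the elliptic--parabolic estimate of Lemma~\ref{qelliptic}, which forces the use of Lemma~\ref{parabolic}; when $\kk=0$ that time-derivative term is absent and the estimate is purely elliptic, so no initial data is needed.
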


\subsection{Tangential estimates: full spatial derivatives}\label{sect ETT}

We first study the case when all tangential derivatives are spatial derivatives $\TP_1$ and $\TP_2$, namely $\gamma_0=\gamma_4=0$ in $\TT^\gamma:=(\omega(x_3)\p_3)^{\gamma_4}\p_t^{\gamma_0}\p_1^{\gamma_1}\p_2^{\gamma_2}$. In view of the definition of $E(t)$ and the div-curl decomposition, we need to prove the $L^2$ estimates for the $\eps^{2l}\TP^{4+l}$-differentiated system $(0\leq l\leq 4)$. We now consider the case $l=0$, that is, the $\TP^4$-estimate for the approximate system \eqref{CMHD0kk} and aim to prove the following estimate
\begin{prop}\label{prop ETT}
Fix $l\in\{0,1,2,3,4\}$. For the tangential derivative $\TT^{\gamma}=\TP^{4+l},~(\gamma_0+\gamma_4=0,~\gamma_1+\gamma_2=4+l)$, the $\eps^{2l}\TP^{4+l}$-differentiated approximate system admits the following uniform-in-$(\kk,\eps)$ estimate: For any $0<\delta< 1$
\begin{equation}
\begin{aligned}
&\ino{\eps^{2l}\left(\VV^{\gamma,\pm},\BB^{\gamma,\pm},\BS^{\gamma,\pm},\sqrt{\ffpm}\PP^{\gamma,\pm}\right)(t)}_0^2+\bno{\sqrt{\sigma}\eps^{2l}\TP^{4+l}\psi(t)}_1^2+\bno{\sqrt{\kk}\eps^{2l}\TP^{4+l}\psi(t)}_2^2+\int_0^t\bno{\sqrt{\kk}\eps^{2l}\TP^{4+l}\p_t\psi(\tau)}_1^2\dtau\\
\lesssim&~\delta E_{4+l}^\kk(t)+\bno{\eps^{2l}\psi_0}_{5.5+l}^2+\sum_{j=0}^l\int_0^t P(\sigma^{-1},E_{4+j}^\kk(\tau))\dtau,~~~0\leq l\leq 4.
\end{aligned}
\end{equation}
\end{prop}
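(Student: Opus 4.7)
The plan is to carry out an $L^2$ energy estimate for the good-unknown system \eqref{goodv}-\eqref{goods} with $\TT^\gamma=\TP^{4+l}$, weighted by $\eps^{4l}$. First I would pair \eqref{goodv} with $\VV^{\gamma,\pm}$, \eqref{goodp} with $\PP^{\gamma,\pm}$, \eqref{goodb} with $\BB^{\gamma,\pm}$ and \eqref{goods} with $\BS^{\gamma,\pm}$, integrate over $\Om^\pm$ against $\dvt$, and sum over $\pm$. Applying the Reynolds transport formula to the material-derivative contributions yields $\tfrac12\ddt$ of the desired interior energy. The $\bpm b^\pm$ term from \eqref{goodv} and the $\bpm v^\pm$ term from \eqref{goodb} combine and, after integration by parts using $\nabp\cdot b^\pm=0$ together with $b^\pm\cdot N|_{\Sigma}=0$, $b_3^\pm|_{\Sigma^\pm}=0$, cancel against the $b^\pm(\nabp\cdot\VV^\gamma)$ term exactly as in the proof of Proposition \ref{prop L2kk}. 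The pairing between $\nabp\QQ^{\gamma,\pm}$ and $\VV^{\gamma,\pm}$, combined with $\ffpm\PP^\gamma\Dtpm\PP^\gamma+\PP^\gamma\nabp\cdot\VV^\gamma$, then leaves a single boundary integral $\is\jump{\QQ^\gamma}\,\VV^{\gamma,+}\cdot N\dx'$, into which I substitute the jump condition \eqref{goodbdc} and the kinematic condition \eqref{goodkbc}. The interior right-hand sides $\RR_v^{\gamma,\pm},\RR_p^{\gamma,\pm},\RR_b^{\gamma,\pm}$ and the commutators $\cc^\gamma(q^\pm),\cc_i^\gamma(v_i^\pm),\dd^\gamma(\cdot)$ are genuine commutators of order $\leq 4+l$; Moser-type product estimates in anisotropic Sobolev spaces show they contribute only time-integrable terms of the form $P(\sigma^{-1},\sum_{j\leq l}E_{4+j}^\kk)$.

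The boundary integral splits into the four pieces ST, RT, VS, and $Z^\pm+ZB^\pm$ from Section \ref{stat AGU}. For ST, expanding $\TP^{4+l}\h$ and integrating $\cnab$ by parts once yields the coercive $\tfrac12\ddt|\sqrt\sigma\eps^{2l}\TP^{4+l}\psi|_1^2$ plus lower-order remainders, using $|\cnab\psi|_{W^{1,\infty}}\ll 1$. The two $\kk$-regularization terms in \eqref{goodbdc} contribute $\tfrac12\ddt|\sqrt\kk\eps^{2l}\TP^{4+l}\psi|_2^2$ and the dissipation $|\sqrt\kk\eps^{2l}\TP^{4+l}\p_t\psi|_1^2$, which after integration in $t$ becomes the last term on the left-hand side. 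The RT term $-\eps^{4l}\is\jump{\p_3 q}(\TP^{4+l}\psi)(\TP^{4+l}\p_t\psi)\dx'$ is handled by Young's inequality, one half absorbed into the ST boundary energy and the other into a time integral controlled by $E_{4+l}^\kk$; the Rayleigh-Taylor sign condition is not needed because the surface tension is present. For $Z^\pm+ZB^\pm$, since $\TT^\gamma=\TP^{4+l}$ contains at least one spatial derivative, the dangerous $\TT^\gamma q^\pm$-factor can be reduced via the momentum equation $\nabp q^\pm=-\rho^\pm\Dtpm v^\pm+\bpm b^\pm$ to tangential derivatives of $v^\pm,b^\pm$ sitting inside $E_{4+l}^\kk$ with the correct $\eps^{2l}$-weight, so these terms cause no loss of $\eps$-power.

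The hardest piece is the vortex-sheet term $\VS=\eps^{4l}\is\TP^{4+l}q^-(\jump\vb\cdot\cnab)\TP^{4+l}\psi\dx'$. Since $\gamma_0=0$, I shift a half-derivative off $q^-$ onto $\psi$: using the non-characteristic trace Lemma \ref{nctrace},
\[
\VS\leq C|\jump\vb|_{W^{1,\infty}}\,\|\eps^{2l}\TP^{4+l}q^-\|_{0,-}^{1/2}\|\eps^{2l}\TP^{3+l}\p_3 q^-\|_{0,-}^{1/2}\,|\eps^{2l}\TP^{4+l}\psi|_{1/2},
\]
so one needs $|\eps^{2l}\psi|_{5.5+l}$. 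This is exactly the point where surface tension is indispensable: the jump condition reads $\sigma\h(\psi)=\jump q+\kk(1-\TL)^2\psi+\kk(1-\TL)\p_t\psi$, and the ellipticity of the mean-curvature operator (with the nonnegative $\kk$-terms only improving things) gives the elliptic estimate $|\eps^{2l}\psi|_{5.5+l}\lesssim\sigma^{-1}|\eps^{2l}\jump q|_{3.5+l}+\text{l.o.t.}$; the right side is bounded by the interior $H^{4+l}$-norm of $q^\pm$, itself controlled through the momentum equation by quantities inside $E_{4+l}^\kk$. The non-uniform prefactor $C(\sigma^{-1})$ in the conclusion originates from this elliptic gain. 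The main obstacle is precisely this VS estimate: the required $|\eps^{2l}\psi|_{5.5+l}$-bound sits one-and-a-half derivatives above the coercive surface-tension energy $|\sqrt\sigma\eps^{2l}\TP^{4+l}\psi|_1$, and closing it uniformly in $\kk$ and $\eps$ requires delicate tracking of the Mach weights across the non-characteristic trace, the jump condition, and the momentum equation. After absorbing $\delta E_{4+l}^\kk(t)$ on the right by Young's inequality and applying Grönwall to the time-integrated remainders, the stated estimate follows.
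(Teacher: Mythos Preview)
Your overall plan matches the paper's, but there is a genuine structural error in how you describe the boundary term. You write that integrating $\nabp\QQ^{\gamma,\pm}\cdot\VV^{\gamma,\pm}$ by parts ``leaves a single boundary integral $\is\jump{\QQ^\gamma}\,\VV^{\gamma,+}\cdot N\dx'$.'' This is false: what you actually get is $\is\QQ^{\gamma,+}(\VV^{\gamma,+}\cdot N)\dx'-\is\QQ^{\gamma,-}(\VV^{\gamma,-}\cdot N)\dx'$, and the two normal traces differ because, by \eqref{goodkbc}, $\VV^{\gamma,\pm}\cdot N=\p_t\TT^\gamma\psi+(\vb^\pm\cdot\cnab)\TT^\gamma\psi-\WW^{\gamma,\pm}$ depends on $\vb^\pm$ and $\WW^{\gamma,\pm}$ separately. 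The VS term $\eps^{4l}\is\TT^\gamma q^-(\jump{\vb}\cdot\cnab)\TT^\gamma\psi\dx'$ arises \emph{precisely} from this mismatch; if the boundary integral really collapsed to a jump in $\QQ$ times a single factor, VS would not appear. The same mismatch also produces the terms $\ST'$ and $\RT^\pm$ in the paper's decomposition \eqref{E4TTbdry}, which your proposal omits. Since you later treat VS correctly, this may be a slip in exposition rather than a real misunderstanding, but as written the argument does not produce the terms you go on to estimate.

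Two smaller points. First, the coercivity of the ST term does not require $|\cnab\psi|_{W^{1,\infty}}\ll 1$; the paper extracts the full energy via the algebraic inequality \eqref{cauchy}, valid for arbitrary $\cnab\psi$, so no smallness is needed. Second, you dismiss the $\kk$-terms in the elliptic estimate for $|\eps^{2l}\psi|_{5.5+l}$ as ``only improving things,'' but the $\kk(1-\TL)\p_t\psi$ term actually forces a parabolic-type argument (Lemma \ref{parabolic} in the paper), and this is exactly where the initial-data contribution $|\eps^{2l}\psi_0|_{5.5+l}^2$ on the right-hand side of the proposition comes from. Without this step the origin of that term is unexplained.
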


\subsubsection{The case $l=0$: $\TP^4$-estimates}\label{sect E4TT}
As stated in Section \ref{stat AGU}, we introduce the Alinhac good unknowns for $\TT^\gamma=\TP^4$ and drop the script $\gamma$ for simplicity of notations
\[
\VV^\pm:=\TP^4v^\pm-\TP^4\vp\pp_3 v^\pm,~\BB^\pm:=\TP^4b^\pm-\TP^4\vp\pp_3 b^\pm,~\PP^\pm:=\TP^4p^\pm-\TP^4\vp\pp_3 p^\pm,~\QQ^\pm:=\TP^4q^\pm-\TP^4\vp\pp_3 q^\pm.
\] Note that we have
\[
\QQ^\pm=\PP^\pm+b\cdot\BB^\pm+\underbrace{\sum_{k=1}^3 c_k \TP^{k} b^\pm\cdot\TP^{4-k} b^\pm}_{=:\RR_q^{\gamma,\pm}}
\]for some constants $c_k\in\N^*$. 

\subsubsection*{Step 1: Interior energy structure.} We test the equation \eqref{goodv} by $\VV^\pm$ in $\Om^\pm$ and integrate by parts to get one boundary term and several interior terms
\begin{equation}\label{E4TT0}
\begin{aligned}
&\frac12\ddt\iopm\rho^\pm|\VV^\pm|^2\dvt=\iopm \rho^\pm\Dtpm \VV^\pm\cdot\VV^\pm\dvt\\
=&-\iopm \BB^\pm\cdot \bpm\VV^\pm\dvt +  \iopm b^\pm\cdot\BB^\pm (\nabp\cdot\VV^\pm)\dvt+ \iopm \PP^\pm (\nabp\cdot\VV^\pm)\dvt\\
&\pm\is\QQ^\pm(\VV^\pm\cdot N)\dx' +\underbrace{\iopm \VV^\pm\cdot(\RR_v^\pm-\cc(q^\pm)+ \ccb^\gamma(b^\pm) )\dvt}_{=:R_1^\pm}+\iopm \RR_q^\pm (\nabp\cdot\VV^\pm)\dvt.
\end{aligned}
\end{equation}

Invoking the equation \eqref{goodb} for the evolution of $\BB$ in the first integral above, the energy of $\BB^\pm$ is produced.
\begin{equation}\label{E4TT B}
\begin{aligned}
&-\iopm \BB^\pm \cdot\bpm\VV^\pm\dvt \\
=&-\frac12\ddt\iopm|\BB^\pm|^2\dvt \underbrace{-\frac12\iopm (\nabp\cdot v^\pm)|\BB^\pm|^2\dvt  +\iopm \BB^\pm\cdot(\RR_b^\pm-\ccb^\gamma(v^\pm))\dvt}_{R_2^\pm}\\
& - \iopm (\BB^\pm\cdot b^\pm) (\nabp\cdot\VV^\pm)\dvt-\iopm(\BB^\pm\cdot b^\pm) \cc_i(v_i^\pm)\dvt,
\end{aligned}
\end{equation}where the first term in the last line is cancelled with the second integral in \eqref{E4TT0}, and the analysis of the second term in the last line will be postponed.

The third term in \eqref{E4TT0} produces the energy of $(\ffpm)^{\frac12} \PP^\pm$ with the help of equation \eqref{goodp}. 
\begin{equation}\label{E4TT P}
\begin{aligned}
&\iopm \PP^\pm (\nabp\cdot\VV^\pm)\dvt\\
=&-\frac12\ddt\iopm\ffpm(\PP^\pm)^2\dvt \underbrace{-\frac12\iopm (\Dtpm\ffpm+\ffpm\nabp\cdot v^\pm)|\PP^\pm|^2\dvt + \iopm \PP^\pm \RR_p^\pm\dvt}_{R_3^\pm} -\iopm \PP^\pm \cc_i(v_i^\pm)\dvt.
\end{aligned}
\end{equation}

The last term in \eqref{E4TT0} can be controlled by inserting again the continuity equation and integrating $\Dtpm$ by parts. We have
\begin{equation}
\begin{aligned}
&\iopm \RR_q^\pm (\nabp\cdot\VV^\pm)\dvt = -\iopm \ffpm \RR_q^\pm  \Dtpm \PP^\pm\dvt+\iopm\RR_q^\pm\RR_p^\pm \dvt -\iopm\RR_q^\pm \cc_i(v_i^\pm)\dvt\\
=&-\ddt\iopm\left( \sqrt{\ffpm}\RR_q^\pm\right)\left(\sqrt{\ffpm}\PP^\pm\right)\dvt + \iopm \left( \sqrt{\ffpm}\Dtpm\RR_q^\pm\right)\left(\sqrt{\ffpm}\PP^\pm\right)\dvt+\iopm\RR_q^\pm\RR_p^\pm \dvt\\
&-\iopm\RR_q^\pm \cc_i(v_i^\pm)\dvt,
\end{aligned}
\end{equation}where the first term on the right side is controlled under time integral by
\[
\delta\left\|\sqrt{\ffpm}\PP^\pm(t)\right\|_0^2+P(E^\kk_4(0))+\int_0^t P(E^\kk_4(\tau))\dtau,~~\forall 0<\delta\ll1
\]and the second term, the third term on the right side can be both controlled by $P(E_4(t))$ via direct computation because $\RR_q$ only contains 3-rd order tangential derivative of $b$. 

The entropy is directly bounded by testing the transport equation of $\BS^\pm$ with $\BS^{\pm}$ itself
\begin{equation}\label{E4TT S}
\ddt\frac12\iopm\rho^\pm(\BS^\pm)^2\dvt=\iopm\rho^\pm\dd(S^\pm)\,\BS^\pm\dvt\leq \|\BS^\pm\|_0\|\rho^\pm\|_{L^{\infty}}\,\sqrt{E_4^{\kk}(t)}.
\end{equation}

The remainder terms are controlled by direct computation. For the commutator $\cc,\dd,\ccb$, we have 
$$\|\cc(f^\pm)\|_{0,\pm}\lesssim C(|\psi|_4)\|f^\pm\|_{4,\pm},~~\|\dd(f^\pm)\|_{0,\pm}\lesssim C(|\psi|_4,|\p_t\psi|_{L^\infty}|)\|f^\pm\|_{4,\pm},~~\|\ccb(f^\pm)\|_{0,\pm}\lesssim C(|\psi|_4)\|f^\pm\|_{4,\pm}\|b^\pm\|_{4,\pm}$$ 
when $\TT^\gamma=\TP^4$ by straightforward computation. Note that the initial data is \textit{well-prepared} in the sense that $\p_t v|_{t=0}=O(1)$ with respect to Mach number, so there is no loss of $\eps$-weight in $\RR_v$ term. We have 
\begin{align}
R_1^\pm+R_2^\pm+R_3^\pm\leq P(E_4^{\kk}(t)).
\end{align}

\subsubsection*{Step 2: The boundary regularity contributed by surface tension.}
We denote $Z^\pm:=-\iopm (\PP^\pm+b^\pm\cdot\BB^\pm+\RR_q^\pm)\cc_i(v_i^\pm)\dvt=-\iopm\QQ^\pm\cc_i(v_i^\pm)\dvt$ to be the remaining interior terms presented above which should be controlled together with some boundary terms involving $\WW^\pm$. Now we analyze the boundary integral in \eqref{E4TT0}. The sum of two boundary integrals can be written as
\begin{equation}\label{E4TTbdry}
\begin{aligned}
&\is \QQ^+ (\VV^+\cdot N)\dx' - \is \QQ^- (\VV^-\cdot N)\dx' \\
=&\is\TP^4\jump{q}\TP^4\p_t\psi\dx' + \is \TP^4\jump{q} (\vb^+\cdot\cnab)\TP^4\psi\dx'+\is \TP^4 q^- (\jump{\vb}\cdot\cnab)\TP^4\psi \dx'\\
&-\is \jump{\p_3 q}\TP^4\psi\p_t\TP^4\psi\dx'-\is \p_3q^+\TP^4\psi (\vb^+\cdot\cnab)\TP^4\psi\dx'+\is \p_3q^-\TP^4\psi (\vb^-\cdot\cnab)\TP^4\psi\dx'\\
&-\is \QQ^+\WW^+\dx'+\is \QQ^-\WW^-\dx'\\
=:&\ST+\ST'+\VS+\RT+\RT^++\RT^-+ZB^++ZB^-.
\end{aligned}
\end{equation} We will see that the term ST gives the $\sqrt{\sigma}$-weighted boundary regularity (contributed by surface tension) and the $\sqrt{\kk}$-weighted boundary regularity (contributed by the two regularization terms) which help us control the terms$\ST'$, VS, RT,$\RT^\pm$. The terms $ZB^\pm$ will be controlled together with $Z^\pm$ by using Gauss-Green formula. Do note that the slip conditions imply $\VV_3^\pm=\BB_3^\pm=\TP\psi=0$ on $\Sigma^\pm$, which eliminates all boundary integrals on $\Sigma^\pm$.

Inserting the jump condition $\jump{q}=\sigma\h-\kk(1-\TL)^2\psi-\kk(1-\TL)\psi_t$ into the term ST, we get
\begin{equation}
\begin{aligned}
\ST=&~\sigma\is\TP^4\cnab\cdot\left(\frac{\cnab\psi}{\sqrt{1+|\cnab\psi|^2}}\right)\TP^4\p_t\psi\dx'-\frac12\ddt\is\left|\sqrt{\kk}\jp^2\TP^4\psi\right|^2\dx'-\is\left|\sqrt{\kk}\TP^4\jp\psi_t\right|^2\dx'.
\end{aligned}
\end{equation} Integrating by parts in the mean curvature term and using $\TP(|N|^{-1})=\frac{\cnab\psi\cdot\cnab\TP\psi}{|N|^3},~~~|N|=\sqrt{1+|\cnab\psi|^2},$ we get 
\begin{equation}\label{STTT40}
\begin{aligned}
&\sigma\is\TP^4\cnab\cdot\left(\frac{\cnab\psi}{\sqrt{1+|\cnab\psi|^2}}\right)\TP^4\p_t\psi\dx'=-\frac{\sigma}{2}\ddt\is\frac{|\TP^4\cnab\psi|^2}{\sqrt{1+|\cnab\psi|^2}}-\frac{|\cnab\psi\cdot\TP^4\cnab\psi|^2}{{\sqrt{1+|\cnab\psi|^2}}^3}\dx'\\
&\q\underbrace{-\sigma\is\left(\left[\TP^3,\frac{1}{|N|}\right]\TP\cnab_i\psi+\left[\TP^3,\frac{1}{|N|^3}\right](\cnab_k\psi\cdot\TP\cnab_k\psi\cnab_i\psi)-\frac{1}{|N|^3}[\TP^3,\cnab_i\psi\cnab_k\psi]\TP\cnab_k\psi\right)\cdot\p_t\cnab_i\TP^4\psi\dx'}_{=:\ST_1^R}\\
&\q+\underbrace{\frac{\sigma}{2}\is \p_t\left(\frac{1}{|N|}\right) \bno{\TP^4\cnab\psi}^2 -\p_t\left(\frac{1}{|N|^3}\right) \bno{\cnab\psi\cdot\cnab\TP^4\psi}^2\dx'}_{=:\ST_2^R}.
\end{aligned}
\end{equation}

The control of$\ST_1^R,\ST_2^R$ is straightforward which has been analyzed in the author's previous paper \cite[(4.77)-(4.78)]{LuoZhang2022CWWST}, so we only record the result here
\[
\ST_1^R+\ST_2^R\lesssim P(|\cnab\psi|_{L^{\infty}})|\cnab\psi|_{W^{1,\infty}}\bno{\sqrt{\sigma}\TP^4\cnab\psi}_0\bno{\sqrt{\sigma}\p_t\TP^4\psi}_0\leq P(E_4^\kk(t)).
\]

Using Cauchy's inequality \begin{equation}\label{cauchy}
\forall \mathbf{a}\in\R^2,~~\frac{|\mathbf{a}|^2}{\sqrt{1+|\cnab\psi|^2}}-\frac{|\cnab\psi\cdot\mathbf{a}|^2}{{\sqrt{1+|\cnab\psi|^2}}^3}\geq \frac{|\mathbf{a}|^2}{{\sqrt{1+|\cnab\psi|^2}}^3},
\end{equation} we obtain the $\sqrt{\sigma}$-weighted boundary regularity
\begin{equation}\label{E4TT ST}
\begin{aligned}
&\int_0^t\ST\dtau+\frac{\sigma}{2}\is\frac{|\cnab\TP^4\psi|^2}{{\sqrt{1+|\cnab\psi|^2}}^3}\dx'+\is\left|\sqrt{\kk}\jp^2\TP^4\psi\right|^2\dx'+\int_0^t\is\left|\sqrt{\kk}\TP^4\jp\psi_t\right|^2\dx'\dtau\\
\leq&\int_0^t \ST_1^R+\ST_2^R\dx' \leq \int_0^t P(E_4^\kk(\tau))\dtau.
\end{aligned}
\end{equation} So far, we already obtain the boundary regularity $\sqrt{\sigma}\psi\in H^5(\Sigma),~\sqrt{\kk}\psi\in H^6(\Sigma)$ and $\sqrt{\kk}\psi_t\in L_t^2H_{x'}^5([0,T]\times\Sigma)$. Using this, we can easily control$\ST'$ term in \eqref{E4TTbdry}. Invoking again the boundary condition for $\jump{q}$, we get
\begin{equation}
\ST'=\is\sigma\h (\vb^+\cdot\cnab)\TP^4\psi\dx'-\kk\is(1-\TL)^2\TP^4\psi~(\vb^+\cdot\cnab)\TP^4\psi\dx'-\kk\is (1-\TL)\TP^4\psi_t (\vb^+\cdot\cnab)\TP^4\psi\dx'.
\end{equation} Integrating by parts $1-\TL$ in the second term and $\jp=\sqrt{1-\TL}$ in the third term above, we can easily use the $\sqrt{\kk}$-weighted energy to control the last two terms.
\begin{equation}
\begin{aligned}
&-\kk\is(1-\TL)^2\TP^4\psi~(\vb^+\cdot\cnab)\TP^4\psi\dx'\\
=&-\kk\is\left( (1-\TL)\TP^4\psi \right)(\vb^+\cdot\cnab)(1-\TL)\TP^4\psi\dx'-\kk\is\left((1-\TL)\TP^4\psi\right) ~[1-\TL,\vb^+\cdot\cnab]\TP^4\psi\dx',
\end{aligned}
\end{equation} where the first term is controlled by $|\vb^+|_{W^{1,\infty}}|\sqrt{\kk}(1-\TL)\TP^4\psi|_0^2$ after integrating $\vb^+\cdot\cnab$ by parts and using the symmetry, and the second term is directly controlled by $|\vb^+|_{W^{2,\infty}}|\sqrt{\kk}(1-\TL)\TP^4\psi|_0|\sqrt{\kk}\TP^4\psi|_2$. Similarly, we have for any $\delta\in(0,1)$
\begin{equation}
\begin{aligned}
&-\kk\int_0^t\is (1-\TL)\TP^4\psi_t (\vb^+\cdot\cnab)\TP^4\psi\dx'\dtau=-\kk\int_0^t\is\jp\TP^4\psi_t~\jp( (\vb^+\cdot\cnab)\TP^4\psi)\dx'\dtau\\
\leq &~ \delta\bno{\sqrt{\kk}\jp\TP^4\psi_t}_{L_t^2L_{x'}^2}^2 +\frac{1}{4\delta}\int_0^t\is |\vb^+|_{W^{1,\infty}}^2 \bno{\sqrt{\kk}\TP^4\psi}_2^2\dx'\dtau \leq \delta E_4^\kk(t) +\int_0^t P(E_4^\kk(\tau))\dtau.
\end{aligned}
\end{equation} Picking $\delta>0$ sufficiently small, the $\delta$-term can be absorbed by $E_4^\kk(t)$. The first term in${\ST}'$ is controlled in the same way if we integrating $\cnab\cdot$ by parts. Here we only list the result and refer the details to \cite[(4.87)-(4.89)]{LuoZhang2022CWWST}
\begin{equation}\label{E4TT ST'}
\is\sigma\h (\vb^+\cdot\cnab)\TP^4\psi\dx'\leq P(|\cnab\psi|_{W^{1,\infty}})|\vb^+|_{W^{1,\infty}}\bno{\sqrt{\sigma}\cnab\TP^4\psi}_0^2\leq P(E_4^\kk(t)).
\end{equation}

Next we control the terms RT and$\RT^\pm$ in \eqref{E4TTbdry}. Note that we do not have the Rayleight-Taylor sign condition $\jump{\p_3 q}|_{\Sigma}\geq c_0>0$, so we have to use the $\sqrt{\sigma}$-weighted energy to control these terms, we have
\begin{equation}
\RT\leq |\p_3 q|_{L^{\infty}}|\psi|_4|\psi_t|_4\leq \sigma^{-1} P(E_4^{\kk}(t)).
\end{equation}Similarly, integrating $\vb^\pm\cdot\nab$ by parts in$\RT^\pm$ and using symmetry, the terms$\RT^\pm$ can be directly controlled
\begin{equation}\label{E4TT RT}
\RT^\pm\leq |\vb^\pm\p_3 q|_{W^{1,\infty}}|\psi|_4^2\leq \sigma^{-1} P(E_4^{\kk}(t)).
\end{equation}

\subsubsection*{Step 3: The crucial term for vortex sheets problem.}

Now we study the term VS in \eqref{E4TTbdry} which appears to be the most problematic term for the vortex sheets problem. Note that we do not have any boundary condition for $q^\pm$ individually. Thus, we may alternatively integrate $\TP^{1/2}$ by parts and use \eqref{q IBP} to control VS.
\begin{equation}
\begin{aligned}
\VS=\is\TP^4 q^- (\jump{\vb}\cdot\cnab)\TP^4\psi \dx'\lesssim\|\TP^4 q^-\|_{0,-}^{\frac12}\|\p_3\TP^3 q^-\|_{0,-}^{\frac12}\|\vb^\pm\|_{2,\pm}|\cnab\TP^4\psi|_{1/2} \leq P(E_4^\kk(t))|\psi|_{5.5},
\end{aligned}
\end{equation}where we have used the Kato-Ponce inequality (cf. Lemma \ref{KatoPonce}) for $s=1/2,~p_1=2,~p_2=\infty,~q_1=q_2=4$ and Sobolev embedding $H^{1/2}(\T^2)\hookrightarrow L^4(\T^2)$. Now we need to control $|\psi|_{5.5}$ via the jump condition of $\jump{q}$. Without the $\kk$-regularization terms, we may use the ellipticity of the mean curvature operator to control $|\psi|_{5.5}$ by $\sigma^{-1}|\jump{q}|_{3.5}$. Now, we can still prove an analogous result for the $\kk$-regularized jump condition. 
\begin{lem}[Elliptic estimate for the free interface]\label{qelliptic}
For any $s\geq 0.5$ and $\kk>0$, we have the uniform-in-$\kk$ estimate $$|\psi|_{s+1.5}\leq |\psi_0|_{s+1.5}+\sigma^{-1}\left( P(|\cnab\psi|_{L^{\infty}})|\cnab\psi|_{W^{1,\infty}}|\TP\psi|_{s-0.5}+|\jump{q}|_{s-0.5}\right).$$
\end{lem}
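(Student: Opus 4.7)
The plan is to treat the jump condition as a parabolic-regularized nonlinear elliptic equation in $\psi$ and extract a uniform-in-$\kk$ bound via a Fourier/semigroup analysis on $\Sigma=\T^2$. I first write the mean curvature in non-divergence form,
$$
\h(\psi)=a^{ij}(\cnab\psi)\,\p_i\p_j\psi,\qquad a^{ij}=\frac{1}{|N|}\!\left(\delta^{ij}-\frac{\p_i\psi\,\p_j\psi}{|N|^2}\right),
$$
so that $a^{ij}$ is uniformly positive definite with ellipticity constants depending only on $|\cnab\psi|_{L^\infty}$. Substituting into the jump condition yields
$$
\sigma\,a^{ij}(\cnab\psi)\p_i\p_j\psi-\kk(1-\TL)^2\psi-\kk(1-\TL)\p_t\psi=\jump{q},
$$
which I view as an evolutionary equation for $\psi$ whose principal symbol is $\sigma|\xi|_a^2$ (uniform in $\kk$) while $\kk(1-\TL)^2,\,\kk(1-\TL)\p_t$ contribute a $\kk$-dependent parabolic regularization.

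Next I freeze $a^{ij}$ at a reference value $\bar a^{ij}$ and apply Fourier transform on $\T^2$, obtaining, mode-by-mode, the first-order linear ODE
$$
B(\xi)\,\p_t\hat\psi(\xi,t)+A(\xi)\,\hat\psi(\xi,t)=-\hat{\jump{q}}(\xi,t)+\hat R(\xi,t),
$$
with $A(\xi):=\sigma\bar a^{ij}\xi_i\xi_j+\kk(1+|\xi|^2)^2$, $B(\xi):=\kk(1+|\xi|^2)$, and $R$ the variable-coefficient commutator from $(a^{ij}-\bar a^{ij})\p_i\p_j\psi$. Duhamel gives
$$
\hat\psi(\xi,t)=e^{-A(\xi)t/B(\xi)}\hat\psi_0(\xi)-\int_0^t e^{-A(\xi)(t-\tau)/B(\xi)}\frac{\hat{\jump{q}}(\xi,\tau)-\hat R(\xi,\tau)}{B(\xi)}\,d\tau,
$$
so that $|\hat\psi(\xi,t)|\leq|\hat\psi_0(\xi)|+A(\xi)^{-1}\sup_{\tau\leq t}|\hat{\jump{q}}(\xi,\tau)-\hat R(\xi,\tau)|$ using $\int_0^t B^{-1}e^{-A(t-\tau)/B}\,d\tau\leq A^{-1}$. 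The crucial observation is the uniform-in-$\kk$ lower bound $A(\xi)\geq c(|\cnab\psi|_{L^\infty})\,\sigma|\xi|^2$ for $|\xi|\geq 1$; hence
$$
\frac{(1+|\xi|^2)^{(s+1.5)/2}}{A(\xi)}\leq \frac{C}{\sigma}(1+|\xi|^2)^{(s-0.5)/2}.
$$
Summing over Fourier modes (with the zero mode absorbed by the $\kk$-part of $A$ and the $|\psi_0|$-term) produces
$$
|\psi(t)|_{s+1.5}\leq|\psi_0|_{s+1.5}+C\sigma^{-1}\bigl(|\jump{q}|_{s-0.5}+|R|_{s-0.5}\bigr).
$$

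Finally, unfreezing the coefficients, the commutator $R=(a^{ij}-\bar a^{ij})\p_i\p_j\psi$ is estimated by standard product/Moser-type inequalities on $H^{s-0.5}(\T^2)$, giving
$$
|R|_{s-0.5}\leq P(|\cnab\psi|_{L^\infty})\,|\cnab\psi|_{W^{1,\infty}}\,|\TP\psi|_{s-0.5},
$$
which is precisely the nonlinear term in the claim. The main difficulty of the proof is exactly the extraction of $\kk$-uniform constants: a direct elliptic estimate applied to $\sigma\h(\psi)=\jump{q}+\kk(1-\TL)^2\psi+\kk(1-\TL)\p_t\psi$ forces one to control $\kk|\psi|_{s+3.5}$ and $\kk|\p_t\psi|_{s+1.5}$, which cannot be iteratively absorbed. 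The Fourier/ODE strategy circumvents this because the ratio $B/A$ appears only inside the bounded exponential factor, while the coercive $\sigma|\xi|^2$-contribution to $A$ persists uniformly as $\kk\to 0^+$; the $|\psi_0|_{s+1.5}$ contribution then corresponds to the initial datum in the parabolic semigroup representation.
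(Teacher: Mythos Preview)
Your Fourier/semigroup outline has a genuine gap in the ``unfreezing'' step. The remainder $R=(a^{ij}-\bar a^{ij})\p_i\p_j\psi$ is a \emph{second-order} operator acting on $\psi$, so a product estimate on $H^{s-0.5}(\T^2)$ gives
\[
|R|_{s-0.5}\lesssim |a-\bar a|_{L^\infty}\,|\p^2\psi|_{s-0.5}+|a-\bar a|_{s-0.5}\,|\p^2\psi|_{L^\infty}
\lesssim P(|\cnab\psi|_{L^\infty})\,|\psi|_{s+1.5}+\text{(lower order)},
\]
not the claimed bound $P(|\cnab\psi|_{L^\infty})|\cnab\psi|_{W^{1,\infty}}|\TP\psi|_{s-0.5}$ (which is $\sim|\psi|_{s+0.5}$). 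The top-order piece $|a-\bar a|_{L^\infty}|\psi|_{s+1.5}$ lands on the right-hand side with no smallness, so the estimate does not close. Freezing at a single constant $\bar a^{ij}$ cannot avoid this: it is the standard obstruction to naive frozen-coefficient arguments for quasilinear elliptic operators. A secondary issue is that your mode-wise bound produces $\sum_\xi\langle\xi\rangle^{2(s-0.5)}\sup_\tau|\hat F(\xi,\tau)|^2$, which in general dominates (rather than is dominated by) $\sup_\tau|F(\tau)|_{s-0.5}^2$.

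The paper's proof avoids both issues by an energy argument that exploits the \emph{divergence form} of $\h$. One applies $\jp^{s+0.5}$ to the jump condition and tests against $\jp^{s+0.5}\psi$. Integrating the $\cnab\cdot$ by parts moves one derivative onto the test function, so the surface-tension term yields the coercive quantity $\sigma\int(|\jp^{s+0.5}\cnab\psi|^2/|N|-|\cnab\psi\cdot\jp^{s+0.5}\cnab\psi|^2/|N|^3)\,dx'\gtrsim\sigma|\psi|_{s+1.5}^2$ plus commutators of the form $[\jp^{s-0.5},|N|^{-1}]\jp\cnab\psi$. By the Kato--Ponce commutator estimate these gain a full derivative and are bounded by $P(|\cnab\psi|_{L^\infty})|\cnab\psi|_{W^{1,\infty}}|\TP\psi|_{s-0.5}$ as stated. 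The two $\kk$-terms contribute $+\kk|\psi|_{s+2.5}^2$ and $+\frac{\kk}{2}\frac{d}{dt}|\psi|_{s+1.5}^2$, both with good sign. One arrives at a scalar inequality of the form $\sigma f(t)+\kk f'(t)\le g(t)$ for $f=|\psi|_{s+1.5}$, and the elementary ODE comparison Lemma~\ref{parabolic} gives $\sup f\le f(0)+\sigma^{-1}\sup g$, uniformly in $\kk$. This is how the $|\psi_0|_{s+1.5}$ term arises and why it disappears when $\kk=0$.
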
Moreover, when $\kk=0$, $|\psi_0|_{s+1.5}$ is not needed
\begin{align}
|\sigma\psi|_{s+1.5}\leq~P(|\cnab\psi|_{L^{\infty}})|\cnab\psi|_{W^{1,\infty}}|\sigma\TP\psi|_{s-0.5}+|\jump{q}|_{s-0.5}.
\end{align}
\begin{proof}
We take $\jp^{s+0.5}$ in the jump condition to get
\[
-\jp^{s+0.5}\jump{q}=-\sigma\jp^{s+0.5}\cnab\cdot\left(\frac{\cnab\psi}{\sqrt{1+|\cnab\psi|^2}}\right)+\kk(1-\TL)^2\jp^{s+0.5}\psi +\kk(1-\TL)\jp^{s+0.5}\psi_t.
\] Testing this equation with $\jp^{s+0.5}\psi$ in $L^2(\Sigma)$, we get
\[
-\is \jp^{s+0.5}\jump{q}~\jp^{s+0.5}\psi\dx'\leq |\jp^{s-0.5} \jump{q}|_0|\jp^{s+1.5}\psi|_0.
\] For the right side, we can mimic the treatment of ST term to obtain the boundary regularity. The two regularization terms can be directly controlled
\[
\is\kk(1-\TL)^2\jp^{s+0.5}\psi~\jp^{s+0.5}\psi\dx'=\is\kk(1-\TL)\jp^{s+0.5}\psi~(1-\TL)\jp^{s+0.5}\psi\dx'=\bno{\sqrt{\kk}\psi}_{s+2.5}^2,
\]
\[
\is\kk(1-\TL)\jp^{s+0.5}\psi_t~\jp^{s+0.5}\psi\dx'\overset{\jp}{=}\ddt\bno{\sqrt{\kk}\psi}_{s+1.5}^2.
\]
The term involving surface tension is controlled as follows
\begin{align*}
&-\sigma\is\jp^{s+0.5}\cnab\cdot\left(\frac{\cnab\psi}{\sqrt{1+|\cnab\psi|^2}}\right)\jp^{s+0.5}\psi\dx'=\sigma\is\jp^{s+0.5}\left(\frac{\cnab\psi}{\sqrt{1+|\cnab\psi|^2}}\right)\cdot\jp^{s+0.5}\cnab\psi\dx'\\
=&~\sigma \is \frac{|\jp^{s+0.5}\cnab\psi|^2}{\sqrt{1+|\cnab\psi|^2}}-\frac{|\cnab\psi\cdot\jp^{s+0.5}\cnab\psi|^2}{{\sqrt{1+|\cnab\psi|^2}}^3}\dx'\\
&+\sigma\is\left(\left[\jp^{s-0.5},\frac{1}{|N|}\right]\jp\cnab_i\psi+\left[\jp^{s-0.5},\frac{1}{|N|^3}\right](\cnab_k\psi\cdot\jp\cnab_k\psi\cnab_i\psi)-\frac{1}{|N|^3}[\jp^{s-0.5},\cnab\psi]\jp\cnab_i\psi\right)\cdot\cnab_i\jp^{s+0.5}\psi\dx'
\end{align*}
Using Kato-Ponce commutator estimate (cf. \eqref{commutator} in Lemma \ref{KatoPonce}), the commutators in the last line of the above identity are controlled by $P(|\cnab\psi|_{L^{\infty}})|\cnab\psi|_{W^{1,\infty}}|\TP\psi|_{s-0.5}.$ Using again Cauchy's inequality \eqref{cauchy}, we conclude the elliptic estimate by
\begin{align*}
\sigma|\psi|_{s+1.5}^2+\kk\bno{\psi}_{s+2.5}^2+\kk\ddt\bno{\psi}_{s+1.5}^2\leq~\left(P(|\cnab\psi|_{L^{\infty}})|\cnab\psi|_{W^{1,\infty}}|\sigma\TP\psi|_{s-0.5}+|\jump{q}|_{s-0.5}\right)|\psi|_{s+1.5}.
\end{align*} In particular, Lemma \ref{parabolic} suggests that we have
\[
|\psi|_{s+1.5}\leq |\psi_0|_{s+1.5}+\sigma^{-1}\left( P(|\cnab\psi|_{L^{\infty}})|\cnab\psi|_{W^{1,\infty}}|\sigma\TP\psi|_{s-0.5}+|\jump{q}|_{s-0.5}\right).
\]
Moreover, when $\kk=0$, $|\psi_0|_{s+1.5}$ no longer appears as we do not need Lemma \ref{parabolic}
\begin{align}
|\sigma\psi|_{s+1.5}\leq~P(|\cnab\psi|_{L^{\infty}})|\cnab\psi|_{W^{1,\infty}}|\sigma\TP\psi|_{s-0.5}+|\jump{q}|_{s-0.5}.
\end{align} 
\end{proof}

Now we can easily obtain the control for the problematic term VS by setting $s=4$ in Lemma \ref{qelliptic}
\begin{equation}\label{E4TT VS}
\VS\lesssim |\psi_0|_{5.5}+\sigma^{-1}P(E_4^\kk(t)).
\end{equation} 

\subsubsection*{Step 4: A cancellation structure for the incompressible limit.}

It remains to control the term $Z^\pm$ and $ZB^\pm$. In $\TP^4$-estimates, each of these terms can be directly controlled. However, in the control of $E_8^{\kk}(t)$ and the control of full time derivatives, there will be extra technical difficulties due to the loss of Mach number or the anisotropy of the function spaces. Thus, we would like to present a robust approach to control these terms. We take $Z^-+ZB^-$ as an example and the ``+" case is controlled in the same way by reversing the sign when integrating by parts. Recall that $\QQ^-=\TP^4 q^--\TP^4\vp\pp_3 q^-$, so we have
\begin{equation}
\begin{aligned}
ZB^-=&\is \TP^4q^-(\p_3v^-\cdot N)\TP^4\psi\dx'-\is\TP^4\psi\p_3 q^-(\p_3v^-\cdot N)\TP^4\psi\dx'+\sum_{k=1}^{3}\is\binom{4}{k}\QQ^-\TP^{4-k}v^-\cdot\TP^k N\dx'.
\end{aligned}
\end{equation} The first two terms in $ZB^-$ can be directly controlled
\begin{align*}
&\is \TP^4q^-(\p_3v^-\cdot N)\TP^4\psi\dx'-\is\TP^4\psi\p_3 q^-(\p_3v^-\cdot N)\TP^4\psi\dx'\\
\leq&~\left(|\TP^{7/2}q^-|_0|\psi|_{4.5}+|\psi|_4^2\right)|\p_3 v\cdot N|_{1.5}\leq \left(\|q^-\|_{4,-}|\psi|_{4.5}+|\psi|_4^2\right)\|\p v^-\|_{2,-}|\psi|_{2.5}\leq  P(\sigma^{-1},E_4^{\kk}(t)).
\end{align*}

The last term in $ZB^-$ is controlled together with $Z^-:=-\iom\QQ^-\cc_i(v_i^-)\dvt$. Recall that
\begin{align*}
\mathfrak{C}_i(v_i^-) =&~(\pp_3\pp_i v_i^-)\TP^4\vp
-\left[ \TP^4, \frac{\p_i  \vp}{\p_3\vp}, \p_3 v_i^-\right]- \p_3 v_i^-\left[\TP^4, \p_i\vp,\frac{1}{\p_3\vp} \right]+\p_i \vp \p_3 v_i^-\left[\TP^3, \frac{1}{(\p_3 \vp)^2}\right] \TP \p_3  \vp,~i=1,2 \\
\end{align*}and 
\begin{align*}
\mathfrak{C}_3(f) =&~(\pp_3)^2 v_3^-\TP^4\vp + \left[\TP^4, \frac{1}{\p_3\vp}, \p_3 v_3^-\right] - \p_3 v_3^-\left[\TP^3, \frac{1}{(\p_3\vp)^2}\right] \TP \p_3 \vp. \\
\end{align*} Note that $\NN_i=-\p_i\vp$ for $i=1,2$, so we have
\begin{align*}
&-\left[ \TP^4, \frac{\p_i  \vp}{\p_3\vp}, \p_3 v_i^-\right]=\left[ \TP^4, \frac{\NN_i}{\p_3\vp}, \p_3 v_i^-\right]=\sum_{k=1}^3\binom{4}{k}\TP^{k}\NN_i\pp_3\TP^{4-k} v_i^- - \binom{4}{k}\left[\TP^k,\frac{1}{\p_3\vp}\right]\p_i\vp\p_3\TP^{4-k}v_i^-,
\end{align*}where the contribution of the first term above gives us (using Gauss-Green formula)
\begin{equation}
\begin{aligned}
&ZB^--\sum_{k=1}^3\iom\binom{4}{k}\QQ^-\TP^{k}\NN_i\pp_3\TP^{4-k} v_i^-\dvt\\
=&\sum_{k=1}^3\binom{4}{k}\left(\iom\p_3\QQ^-\TP^{k}\NN_i\TP^{4-k} v_i^-\dx+\iom\QQ^-\TP^{k}\NN_i\p_3\TP^{4-k} v_i^-\dx -\iom\QQ^-\TP^{k}\NN_i\p_3\TP^{4-k} v_i^-\dx\right)\\
=&\sum_{k=1}^3\binom{4}{k}\iom\p_3\QQ^-\TP^{k}\NN_i\TP^{4-k} v_i^-\dx.
\end{aligned}
\end{equation} Now invoking $\QQ^-=\TP^4 q^- - \TP^4\vp \pp_3 q^-$ and integrating one $\TP$ by parts, we find that
\begin{equation}
\sum_{k=1}^3\binom{4}{k}\iom\p_3\QQ^-\TP^{k}\NN_i\TP^{4-k} v_i^-\dx\lesssim (\|\TP^3\p_3 q^-\|_{0,-} +|\psi|_4\|\p_3 q^-\|_{L^{\infty}(\Om^-)})|\psi|_{4}\|v_i^-\|_{4,-}.
\end{equation} Among other terms in $\cc_i (v_i^-)$, we shall focus on the case when there are 4 derivatives falling on $v_i^-$ and $\vp$, and the control of these terms (lised below) appears to be easier.
\begin{equation}
\begin{aligned}
&-\iom\QQ^- \TP^4\vp \pp_3(\nabp\cdot v^-)\dvt\quad\text{from the first term in }\cc_i(v_i^-)\\
&4\sum_{i=1}^3\iom\QQ^-\p_3\TP\vp~\pp_3\TP^3 v^-\cdot\NN\dx\quad\text{from the second term in }\cc_i(v_i^-)\text{ when }\TP^3\text{ falls on }\p_3v_i^-.
\end{aligned}
\end{equation} Note that $\pp_3 v^-\cdot N=\nabp\cdot v^--\cnab\cdot\vb^-$, we have 
\begin{equation}
-\iom\QQ^- \TP^4\vp \pp_3(\nabp\cdot v)\dvt\lesssim \ino{\sqrt{\ffp^-}\QQ^-}_{0,-}\ino{\sqrt{\ffp^-}\p_3D_t^{\vp,-} p^-}_{0,-}\bno{\psi}_4,
\end{equation}and 
\begin{equation}
\begin{aligned}
4\iom\QQ^-\p_3\TP\vp~\pp_3\TP^3 v^-\cdot\NN\dx\overset{L}{=}&~4\iom\QQ^-\p_3\TP\vp~\TP^3 (\nabp\cdot v^-)\dx-4\iom\QQ^-\p_3\TP\vp~\TP^3(\cnab\cdot \vb^-)\dx\\
\lesssim&~|\TP\psi|_{L^{\infty}}\left(\ino{\sqrt{\ffp^-}\QQ^-}_{0,-}\ino{\sqrt{\ffp^-}\TP^3D_t^{\vp, -}p^-}_{0,-}+\|\cnab\TP^3\QQ^-\|_{0,-}\|\TP^4 v^-\|_{0,-}\right).
\end{aligned}
\end{equation}  Thus, combining the estimates in the above four steps, we conclude the $\TP^4$-estimate by: For the tangential derivative $\TT^\gamma=\TP^4~(\gamma_0=\gamma_4=0,~\gamma_1+\gamma_2=4)$ and for any $0<\delta< 1$, we have
\begin{equation}
\begin{aligned}
&\ino{\left(\VV^{\gamma,\pm},\BB^{\gamma,\pm},\BS^{\gamma,\pm}, \sqrt{\ffpm}\PP^{\gamma,\pm}\right)(t)}_0^2+\bno{\sqrt{\sigma}\eps^{2l}\TP^{4+l}\cnab\psi(t)}_0^2+\bno{\sqrt{\kk}\eps^{2l}\TP^{4+l}\psi(t)}_2^2+\int_0^t\bno{\sqrt{\kk}\eps^{2l}\TP^{4+l}\p_t\psi(\tau)}_1^2\dtau\\
\lesssim&~\delta E_{4+l}^\kk(t)+\bno{\eps^{2l}\psi_0}_{5.5+l}^2+\sum_{j=0}^l\int_0^t P(\sigma^{-1},E_{4+j}^\kk(\tau))\dtau,~~~0\leq l\leq 4.
\end{aligned}
\end{equation}

\begin{rmk}
It should be noted that we only have the $L^2$ control of $\VV,\BB,\BS$ and $(\ffp)^{\frac12} \PP$ in the tangential estimates, but the term $\QQ$ without $\ffp$-weight does appear in tangential estimates. When $\TT^{\gamma}$ contains at least one spatial derivative, that is, $\gamma_0<\len{\gamma}$, one can invoke the momentum equation to replace $\TT q$ by $\Dtp v$ and $\bp b$ to avoid the loss of Mach number. This also suggests that we can actually control $\|\PP\|_0$ instead of only $\|\ffp^{1/2}\PP\|_0$ when there is at least one spatial derivatives in $\TT^\gamma$. However, when $\TT^\gamma$ only consists of time derivatives, we cannot do such substitution any longer. Thus, we have to use the above cancellation structure between $ZB$ and $Z$ to control these two terms together.
\end{rmk}

\subsubsection{The case $l>0$: No loss of regularity or weights of Mach number}\label{sect E8TT}

Next we consider the tangential estimates for $\eps$-weighted spatial derivatives, namely $\eps^{2l}\TP^{4+l}$ for $1\leq l\leq 4$. The proof is parallel to the case $\TT^\gamma=\TP^4$, but we have to check the following aspects
\begin{itemize}
\item [a.] We have to guarantee that there is no loss of $\ffp$-weight in various commutators, especially those involving $q$.
\item [b.] When $l=4$, we only have tangential regularity for 8 derivatives. Due to the anisotropy of $H_*^8$, we have to put extra efforts to reduce the terms involving the derivative $\TP^7\p_3$.
\end{itemize}
We only show the detailed modifications for the case $l=4$, that is, the $\eps^8\TP^8$-estimate. When $1\leq l\leq 3$, similar modifications can be made in the same way. 

\subsubsection*{Commutators of type $\eps^8[\TP^8,f]\TT g$ for $\TT=\TP$ or $\Dtp$}
This type of commutator includes the following terms
\begin{align*}
-[\TT^\gamma,\rho]\Dtp v\text{ in }\RR_v,~~-[\TT^\gamma,\ffp]\Dtp p\text{ in }\RR_p,~~[\TT^\gamma, \vb]\cdot\TP f\text{ and }\pp_3 f[\TT^\gamma,v]\cdot \NN \text{ in }\dd^\gamma(f)
\end{align*}It is controlled directly by expanding the commutator. We have
\begin{align*}
\eps^8[\TP^8,f]\TT g=&~(\eps^8\TP^8 f)\TT g+8(\eps^6\TP^7 f)(\eps^2\TP \TT g)+28(\eps^6\TP^6f)(\eps^2\TP^2 \TT g)+56(\eps^6\TP^5f)(\eps^2\TP^3\TT g)\\
&+70(\eps^2\TP^4 f)(\eps^6\TP^4\TT g)+56(\eps^2\TP^3f)(\eps^6\TP^5\TT g)+28(\eps^2\TP^2f)(\eps^6\TP^6\TT  g)+8(\TP f)(\eps^8\TP^7\TT g),
\end{align*}whose $L^2(\Om)$ norm is controlled by
\begin{align*}
&\|\eps^8\TP^8 f\|_{0}\|\TT g\|_{L^{\infty}}+8\eps^2\|\eps^6\TP^7 f\|_0\|\TP \TT g\|_{L^{\infty}}+28\|\eps^6\TP^6 f\|_{L^6}\|\eps^2\TP^2\TT g\|_{L^3}+56\|\eps^6\TP^5 f\|_{L^6}\|\eps^2\TP^3\TT g\|_{L^3}\\
+&~70\|\eps^2\TP^4 f\|_{L^3}\|\eps^6\TP^4\TT g\|_{L^6}+56\|\eps^6\TP^3 f\|_{L^3}\|\eps^2\TP^5\TT g\|_{L^6}+28\|\eps^6\TP^6 g\|_{L^6}\|\eps^2\TP^2\TT f\|_{L^3}+8\eps^2\|\eps^6\TP^7 g\|_0\|\TP \TT f\|_{L^{\infty}}\\
\lesssim&~(1+\eps^2)\left(\sqrt{E_8^\kk(t)E_4^\kk(t})+\sqrt{E_7^\kk(t)E_4^\kk(t)}+\sqrt{E_7^\kk(t)E_5^\kk(t)}\right),
\end{align*}where we use the Sobolev embedding $H^1\hookrightarrow L^6$ and $H^1\hookrightarrow H^{1/2}\hookrightarrow L^3$ in 3D. In 2D case, we can replace $(L^6,L^3)$ by $(L^4,L^4)$ and use Lady\v{z}enskaya's inequality $\|f\|_{L^4}^2\leq \|f\|_{L^2}\|\p f\|_{L^2}\leq \|f\|_1^2$ to obtain the same bound.

Besides, we also have to treat the term $-[\TT^\gamma, b^\pm](\nabp\cdot v^\pm)$ in $\RR_b$. In fact, invoking the continuity equation converts $-(\nabp\cdot v^\pm)$ to $\ffpm \Dtpm p^\pm$ which again has the form $\eps^8[\TP^8,f]\TT g$.

\subsubsection*{Commutator $\dd(f)$ for $f=v,~p,~b,~S$}
Among all terms in \eqref{AGU comm D}, we need to further analyze the third term, that is, the commutator $\eps^{8}\left[\TP^8, \frac{1}{\p_3\vp}(v\cdot \NN-\p_t\varphi), \p_3 f\right]$ for $f=v,b,p$. The problem is the same as above, that is, $\TP^7$ may fall on $\p_3 f$ which is not directly controllable. Again, we notice that there is only one $\TP$ falling on $\frac{1}{\p_3\vp}(v\cdot \NN-\p_t\varphi)$ and $(v\cdot \NN-\p_t\varphi)|_{\Sigma}=0$, so we can use the same method (as in the control of $\eps^8[\TP^8,b]\cdot\nabp f$) to control this commutator.

\subsubsection*{Commutator $\cc(q)$}
The problematic term is $-8(\p_3\vp)^{-1}(\TP\NN_i)(\TP^7\p_3 q)$ arising from $[\TT^{\gamma}, \NN_i/\p_3\vp, \p_3 q]$. To control this term, we can invoke the third component of the momentum equation to convert $\p_3 q$ to tangential derivatives of other quantities
\[
-\p_3 q=(\p_3 \vp)\left(\rho \Dtp v_3 -\bp b_3\right),
\]where $\Dtp=\p_t+\vb\cdot\cnab+(\p_3\vp)^{-1}(v\cdot\NN-\p_t\vp)\p_3$ and $\bp=\bc\cdot\cnab+(\p_3\vp)^{-1}(b\cdot\NN)\p_3$ are both tangential derivatives. Also, there is no loss of weight of Mach number  in this term because one can always replace $\TP q$ by $\Dtp v$ and $\bp b$.

\subsubsection*{Commutators $\ccb(f)$ for $f=b,v$}
We just need to put extra effort on the term $8\TP((\p_3\vp)^{-1}(b\cdot \NN))\TP^7\p_3 f$ arising in $[\TP^8,(\p_3\vp)^{-1}(b\cdot \NN)]\p_3 f$ because the length of the multi-index in $\TP^7\p_3$ exceeds 8 when $|x_3|\lesssim 1$. (Recall that the weight function $\omega(x_3)$ is comparable to $|x_3|$ when $x_3\lesssim 1$ and is comparable to 1 when $|x_3|\gg 1$.) In this case, we notice that $b\cdot\NN|_{\Sigma}=0$, and thus its interior value can be expressed via the fundamental theorem of calculus
\[
(\p_3\vp)^{-1}(b\cdot \NN)(x',x_3)=0+\int_0^{x_3}\p_3\left((\p_3\vp)^{-1}(b\cdot \NN)(x',\xi_3)\right)\mathrm{d}\xi_3,
\]whose $L^{\infty}(\Om)$ norm is controlled by $C\omega(x_3)\|\p_3(b\cdot\NN)\|_{L^{\infty}(\Om)}$.

\subsubsection*{Commutator $\cc_i(v_i)$}
The problematic term is $-8(\p_3\vp)^{-1}(\TP\NN_i)(\TP^7\p_3 v_i)$ arising from $[\TT^{\gamma}, \NN_i/\p_3\vp, \p_3 v_i]$. In fact, this term may not be controlled independently, but its contribution only appears in $-\io\QQ \cc_i(v_i)\dvt$ which has been analyzed in step 4 of Section \ref{sect E4TT}. Specifically, its contribution in the term $Z$, after combining it with $ZB$ term, is
\[
8\eps^{16}\io\p_3(\TP^8 q-\TP^8\vp\pp_3 q)~\TP\NN_i~\TP^7 v_i\dx,
\]which is controlled by $(\|\eps^8\TP^7\p_3 q\|_{0}+|\eps^8\TP^8\psi|_0\|\p q\|_{L^{\infty}})|\TP\psi|_{W^{1,\infty}}\|\eps^8 \TP^8 v\|_0$ after integrating one $\TP$ by parts. Then we convert $\p_3 q$ to tangential derivatives of other quantities via the momentum equation, which has been presented in the control of $\cc(q)$.

Based on the above analysis, we can follow the same method as in $\TP^4$-estimate to prove the following inequality for $\eps^{2l}\TP^{4+l}$-estimates $(1\leq l\leq 4)$ for the nonlinear $\kk$-approximate problem \eqref{CMHD0kk}: For any $0<\delta< 1$ and fixed $l\in\{1,2,3,4\}$. 
\begin{equation}
\begin{aligned}
&\ino{\eps^{2l}\left(\VV^{\gamma,\pm},\BB^{\gamma,\pm},\BS^{\gamma,\pm},\sqrt{\ffpm} \PP^{\gamma,\pm}\right)(t)}_{0,\pm}^2+\bno{\sqrt{\sigma}\eps^{2l}\TP^{4+l}\cnab\psi(t)}_0^2+\bno{\sqrt{\kk}\eps^{2l}\TP^{4+l}\psi(t)}_2^2+\int_0^t\bno{\sqrt{\kk}\eps^{2l}\TP^{4+l}\p_t\psi(\tau)}_1^2\dtau\\
\lesssim&~\delta E_{4+l}^\kk(t)+\bno{\eps^{2l}\psi_0}_{5.5+l}^2+\sum_{j=0}^l\int_0^t P(\sigma^{-1},E_{4+j}^{\kk}(\tau))\dtau,
\end{aligned}
\end{equation}where $\left(\VV^{\gamma,\pm},\BB^{\gamma,\pm},\BS^{\gamma,\pm}, \PP^{\gamma,\pm}\right)$ represent that  Alinhac good unknowns of $(v^\pm,b^\pm,S^\pm,p^\pm)$ with respect to $\TP^{4+l}$.

\subsection{Tangential estimates: full time derivatives}\label{sect E8tt}

Now we control the full time derivatives, that is, the $\eps^{2l}\p_t^{4+l}$ estimates for $0\leq l \leq 4$. We will take the most difficult case $l=4$ for an example, that is, the $\eps^8\p_t^8$-estimate. The other cases $(0\leq l\leq 3)$ can be treated in the same way.  

\subsubsection{Replacing one time derivative by a material derivative}
Following the analysis in Section \ref{sect E4TT} and Section \ref{sect E8TT}, we expect to control the following norms
\[
\ino{\eps^8\left(\VV^\pm,\BB^\pm,\sqrt{\ffpm}\PP^\pm,\BS^\pm\right)}_{0,\pm}^2+\bno{\eps^8\sqrt{\sigma}\p_t^8\psi}_1^2+\bno{\eps^8\sqrt{\kk}\p_t^8\psi}_2^2+\bno{\eps^8\sqrt{\kk}\p_t^9\psi}_{L_t^2H_{x'}^1}^2,
\]which further gives the control of $\ino{\eps^8\p_t^8(v^\pm,b^\pm,\sqrt{\ffpm} p^\pm, S^\pm)}_0^2$. However, there are several extra difficulties that may make our previous method invalid.
\begin{itemize}
\item [a.] We cannot substitute $\p q$ by $\TT(v,b)$ because there is no spatial derivative.
\item [b.] $\p_t^{4+l} p$ has weight $\sqrt{\ffpm}\eps^{2l}=O(\eps^{1+2l})$ instead of $\eps^{2l}$. There might be a loss of $\eps$-weight.
\item [c.] $\sqrt{\ffpm}\eps^{2l}\p_t^{4+l} q$ only has $L^2(\Om)$ regularity, so the trace lemma is no longer valid.
\item [d.] We cannot integrate by parts for ``half-order time derivative" $\p_t^{1/2}$. Thus, the control of VS term will be rather different.
\end{itemize} 

To overcome the abovementioned difficulties, especially (c) and (d) in the control of the crucial boundary term VS, we would like to replace the full-time derivative $\p_t^{4+l}$ by $\Dtpl\p_t^{3+l}$ where $\Dtpl=\p_t+\vb^-\cdot\cnab+(\p_3\vp)^{-1}(v^-\cdot\NN-\p_t\vp)\p_3$ and $v^-|_{\Om^+}$ is defined to be the Sobolev extension of $v^-$ in $\Om^+$. We aim to prove the following estimates.
\begin{prop}\label{prop Ett}
Fix $l\in\{0,1,2,3,4\}$. We have the following uniform-in-$(\kk,\eps)$ estimate for any $0<\delta< 1$
\begin{equation}
\begin{aligned}
&\ino{\eps^{2l}\left(\VV^{*,\gamma,\pm},\BB^{*,\gamma,\pm},\BS^{*,\gamma,\pm}, (\ffpm)^{1/2}\PP^{*,\gamma,\pm}\right)(t)}_0^2+\bno{\sqrt{\sigma}\eps^{2l}\Dtpl\p_t^{3+l}\cnab\psi(t)}_0^2\\
&+\bno{\sqrt{\kk}\eps^{2l}\Dtpl\p_t^{3+l}\psi(t)}_2^2+\int_0^t\bno{\sqrt{\kk}\eps^{2l}\Dtpl\p_t^{3+l}\p_t\psi(\tau)}_1^2\dtau\\
\lesssim&~\delta E_{4+l}^\kk(t)+\sum_{j=0}^lP(E_{4+j}^\kk(0))+\int_0^t P(\sigma^{-1},E_{4+j}^\kk(\tau))\dtau,~~~0\leq l\leq 4,
\end{aligned}
\end{equation}where $\VV^{*,\gamma,\pm},\BB^{*,\gamma,\pm},\BS^{*,\gamma,\pm}, (\ffpm)^{1/2}\PP^{*,\gamma,\pm}$ represent the Alinhac good unknowns of $v^\pm,b^\pm,S^\pm,p^\pm$ with respect to $\Dtpl\p_t^{3+l}$ respectively, that is, $\FF^{*,\gamma,\pm}=\Dtpl\p_t^{3+l}f^\pm-(\Dtpl\p_t^{3+l}\vp)\pp_3 f^\pm$.
\end{prop}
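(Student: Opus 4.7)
The plan is to adapt the scheme of Sections \ref{sect E4TT}--\ref{sect E8TT}, with the decisive modification that the full time derivative $\p_t^{4+l}$ is promoted to $\Dtpl\p_t^{3+l}$, where $v^-$ is first Sobolev-extended from $\Om^-$ to $\Om^+$ so that $\Dtpl=\p_t^\vp+v^-\cdot\nabp$ makes sense globally. We define the Alinhac good unknowns
\[
\FF^{*,\gamma,\pm}:=\Dtpl\p_t^{3+l}f^\pm-(\Dtpl\p_t^{3+l}\vp)\pp_3 f^\pm,\qquad f=v,\,b,\,p,\,S,
\]
so that identities analogous to \eqref{AGU good} hold with $\TT^\gamma$ replaced by $\Dtpl\p_t^{3+l}$ and with new commutators $\cc^{*,\gamma}(f),\dd^{*,\gamma}(f)$ built from \eqref{AGU comm Ci}--\eqref{AGU comm D}. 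Testing the resulting good-unknown momentum equation with $\VV^{*,\gamma,\pm}$ over $\Om^\pm$ and invoking the evolution equations for $\BB^{*,\gamma,\pm}$, $\PP^{*,\gamma,\pm}$, $\BS^{*,\gamma,\pm}$ reproduces the interior energy structure of Step 1 in Section \ref{sect E4TT}. The boundary term again splits into ST, RT, VS, $\mathrm{ST}'$, $\mathrm{RT}^\pm$ and $ZB^\pm$ contributions; the $\sqrt{\sigma}$- and $\sqrt{\kk}$-weighted boundary norms on the left-hand side are extracted from ST as in Step 2 of Section \ref{sect E4TT}, while $\mathrm{ST}'$, RT and $\mathrm{RT}^\pm$ are controlled verbatim with $\TP^4$ replaced by the new operator.

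The central obstacle is the VS term, which now reads
\[
\mathrm{VS}\sim \eps^{4l}\is \QQ^{*,\gamma,-}\,(\jump{\vb}\cdot\cnab)(\Dtpl\p_t^{3+l}\psi)\dx',
\]
and for which the half-order tangential integration by parts of Step 3 in Section \ref{sect E4TT} is unavailable. The reason for promoting one $\p_t$ to $\Dtpl$ is precisely that on $\Sigma$ one has $\Dtpl=\Dtbl=\p_t+\vb^-\cdot\cnab$, a genuine transport-type operator obeying the symmetric identity
\[
\is (\Dtbl f)\,g\,\dx'+\is f\,(\Dtbl g)\,\dx' =\ddt\is fg\,\dx'-\is fg\,(\cnab\cdot\vb^-)\,\dx'.
\]
Transferring one $\Dtbl$ off $\QQ^{*,\gamma,-}$ via this identity reduces VS to a time-integrated expression in $\p_t^{3+l} q^-$, which is estimated by Lemma \ref{nctrace} after expressing $\p_3 q^-$ through the third component of the momentum equation as tangential derivatives of $v^-$ and $b^-$. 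The $\sqrt{\sigma}$-weighted boundary regularity supplied by Lemma \ref{qelliptic}, applied to the jump condition with one fewer time derivative, then closes VS uniformly in $\eps$.

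Three technical points remain. First, every term in $\cc^{*,\gamma}(f)$ and $\dd^{*,\gamma}(f)$ carries at most one $\p_t$ on $\vb^-$ or $v^-$, so the top-order factor in $f$ has at most $3+l$ time derivatives, and all such terms are bounded by $P(\sum_{j=0}^l E_{4+j}^\kk)$ with correct Mach-number weights since $\p_t^{4+l} p$ appears only paired with $(\ffpm)^{1/2}\eps^{2l}$. Second, $Z^\pm+ZB^\pm$ is handled by the cancellation mechanism of Step 4 of Section \ref{sect E4TT}: on the $-$ side the integration-by-parts identity above converts the leading commutator into a full time derivative under $\ddt$, absorbable by Young's inequality after time integration; on the $+$ side we write $\Dtpl=\Dtpu+(v^--v^+)\cdot\nabp$ and exploit $\jump{v}\cdot N=0$ on $\Sigma$ so that the extra transport term provides an additional tangential factor restoring $L^2$-control. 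Third, in $Z^\pm=-\iopm\QQ^{*,\gamma,\pm}\cc_i^{*,\gamma}(v_i^\pm)\dvt$, any remaining spatial derivative on $q^\pm$ is substituted via the momentum equation to replace $\nab q^\pm$ by $\rho^\pm\Dtpm v^\pm-\bpm b^\pm$, preserving Mach-number weights throughout. Summing all estimates and absorbing $\delta E_{4+l}^\kk(t)$ on the left yields the claimed inequality.
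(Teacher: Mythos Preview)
Your overall architecture---good unknowns with respect to $\Dtpl\p_t^{3+l}$, the energy identity, the ST/RT/VS/$ZB$ decomposition, and the $ZB+Z$ cancellation---matches the paper. The gap is in your handling of $\mathrm{VS}^*$, and it is fatal for the \emph{uniform-in-$\kk$} claim.

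Your plan is to integrate the boundary $\Dtbl$ by parts, turning $\Dtbl\p_t^{3+l}q^-$ into $\p_t^{3+l}q^-$. But this transfers a $\Dtbl$ onto the other factor, producing $(\jump{\vb}\cdot\cnab)(\Dtbl)^2\p_t^{3+l}\psi$, whose leading part is $\cnab\p_t^{5+l}\psi$. Nothing in $E^\kk(t)$ controls $|\eps^{2l}\p_t^{5+l}\psi|_1$ without a $\sqrt{\kk}$ weight: the only available bound is $\int_0^t|\sqrt{\kk}\eps^{2l}\p_t^{5+l}\psi|_1^2\dtau$, and Lemma~\ref{qelliptic0} does not apply at $k=5+l$ (its range is $0<k<4+l$). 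So your estimate acquires a $\kk^{-1}$ factor. This is exactly the route the paper takes for the \emph{linearized} problem in Section~\ref{sect ETTbdryll}, where $\kk$ is fixed and the bound $C(\Kr_0,\kk^{-1})$ is acceptable---but it cannot give the uniform estimate of Proposition~\ref{prop Ett}. Your appeal to ``the jump condition with one fewer time derivative'' does not help: the IBP raises, not lowers, the time-derivative count on $\psi$.

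The paper's actual argument for $\mathrm{VS}^*$ (Step~2 of Section~\ref{sect E8ttbdry}) is substantially more involved. One first invokes the kinematic condition to write $\Dtbl\p_t^{3+l}\psi=\p_t^{3+l}v^-\cdot N+[\p_t^{3+l},N_i,v_i^-]$, then uses the divergence theorem to convert the boundary integral into interior integrals in $\Om^-$. There one introduces \emph{auxiliary} good unknowns $\FF^\sharp$ with respect to $\p_t^{3+l}$ (one order lower), commutes $\nabp\cdot$ with $(\jump{\vb}\cdot\cnab)$, inserts the continuity and momentum equations, and exploits the \emph{symmetry} of $(\jump{\vb}\cdot\cnab)$ to obtain quadratic forms like $\int\ffp^-(\jump{\vb}\cdot\cnab)\Dtpl\PP^{\sharp,-}\,\Dtpl\PP^{\sharp,-}\dvt$ that close directly. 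A further cancellation between the magnetic-tension contribution $\mathrm{VS}^{*,B}_{0111}$ and $\mathrm{VS}^{*,B}_{0211}$ (obtained after IBP in $(b^-\cdot\nabp)$ and $\Dtpl$) is needed to eliminate the remaining top-order cross terms. None of this is captured by your one-line IBP. The paper itself flags this step (see the remark at the end of Section~\ref{stat VSST}) as ``the most difficult step in the proof of uniform estimates.''

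Two smaller issues. First, $\mathrm{RT}^*$ cannot be treated ``verbatim'': the term pairs $\Dtbl\p_t^{3+l}\psi$ with $\p_t\Dtbl\p_t^{3+l}\psi$, and the latter is not in $L^2(\Sigma)$ without a $\kk$-weight. The paper again uses the kinematic condition and Gauss--Green to push this into the interior (Step~3 of Section~\ref{sect E8ttbdry}). Second, your commutator discussion omits the point stressed in Section~3.4.2: $[\Dtpl,\p_3]$ generates $\p_3((\p_3\vp)^{-1}(v^-\cdot\NN-\p_t\vp))\,\pp_3\p_t^{3+l}f$, a normal derivative with no vanishing weight, which must be reduced case-by-case ($\pp_3 q$ via the momentum equation, $\pp_3 v\cdot\NN$ via the continuity equation) before the $L^2$ bounds close.
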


For the case $l=4$, we introduce the Alinhac good unknowns with respect to $\Dtpl\p_t^7$
\[
(\VV^{*,\pm},\BB^{*,\pm},\PP^{*,\pm},\QQ^{*,\pm},\BS^{*,\pm}):=\Dtpl\p_t^7 (v^\pm,b^\pm,p^\pm,q^\pm,S^\pm)-(\Dtpl\p_t^7 \vp)\pp_3 (v^\pm,b^\pm,p^\pm,q^\pm,S^\pm).
\] They satisfy
\[
\Dtpl\p_t^7\pp_i f^\pm=\pp_i\FF^{*,\pm}+\cc_i^{*}(f^\pm),~~\Dtpl\p_t^7\Dtpl f^\pm=\Dtpl\FF^{*,\pm}+\dd_i^{*}(f^\pm),\]where $\cc^*(f),\dd^*(f)$ are defined in the same way as \eqref{AGU comm Ci}-\eqref{AGU comm B} by replacing $\TT^{\gamma}$ with $\Dtpl\p_t^7$. The boundary conditions of these good unknowns are
\begin{align}
\label{goodbcd*}\jump{\QQ^*}=\sigma\Dtbl\p_t^7\h-\kk\Dtbl\p_t^7(1-\TL)^2\psi-\kk\Dtbl\p_t^7(1-\TL)\p_t\psi - \jump{\p_3 q}\Dtbl\p_t^7\psi\\
\label{goodkbc*}\VV^{*,\pm}\cdot N=\p_t\Dtbl\p_t^7\psi+(\vb^\pm\cdot\cnab)\Dtbl\p_t^7 \psi-\Dtbpm \vb^-\cdot\cnab\p_t^7\psi-\WW^{*,\pm},
\end{align}with\begin{align}\label{goodW*} \WW^{*\pm}=(\p_3 v^\pm\cdot N)\Dtbl\p_t^7\psi + [\Dtpl\p_t^7, N_i, v_i^\pm ] ,\end{align} where we use the fact that $\Dtpm|_{\Sigma}=\Dtbpm=\p_t+\vb^\pm\cdot\cnab$. Note that $\Dtbl$ does not directly commute with $\p_t$ or $\p_i$, so there is an extra term $-\Dtbpm \vb^-\cdot\cnab\p_t^7\psi$ in the expression of $\VV^{\pm,*}\cdot N$.

\subsubsection{Analysis of the interior commutators}
Since we replaced $\p_t^8$ with $\Dtpl\p_t^7$ and $\Dtpl$ does not directly commute with $\p_3$, we need to further analyze the commutators $\cc_i(f)$ for $f=q$ and $v_i$, $\dd(f)$ for $f=v,b,p,S$ and $\ccb(f)$ for $f=b,v$. The problematic thing is that $\p_3$ may fall on $(\p_3\vp)^{-1}(v^-\cdot\NN-\p_t\vp)$ (in $\Dtpl$) and produce a normal derivative without a weight function that vanishes on $\Sigma$, which may introduce a second-order derivative in the setting of anisotropic Sobolev space. 

This problem does not appear in $\dd(f)$, as we find that such commutator has the form $(\p_3\vp)^{-1}(v\cdot\NN-\p_t\vp)[\Dtpl\p_t^7,\p_3]f$ which already includes a weight $(v\cdot\NN-\p_t\vp)$ that vanishes on $\Sigma$. In $\cc_i(f)$, according \eqref{AGU comm Ci}, we need to further analyze the term $\NN_i(\p_3\vp)^{-1}[\Dtpl\p_t^7,\p_3]f$ for $f=q,v_i$. Using $\Dtpl=\p_t+\vb^-\cdot\cnab+(\p_3\vp)^{-1}(v^-\cdot\NN-\p_t\vp)\p_3$, we have
\begin{align*}
&\NN_i(\p_3\vp)^{-1}[\Dtpl\p_t^7,\p_3]f=\NN_i(\p_3\vp)^{-1}[\Dtpl,\p_3]\p_t^7f\\
=&-\NN_i(\p_3\vp)^{-1}\p_3\vb^-\cdot\cnab\p_t^7 f +\NN_i\p_3\left((\p_3\vp)^{-1}(v^-\cdot\NN-\p_t\vp)\right)\pp_3\p_t^7 f.
\end{align*} The first term above can be directly controlled in $L^2$ because only tangential derivative falls on $\p_t^7 f$. For the second term, we can invoke the momentum equation and the continuity equation to convert this normal derivative to a tangential derivative.
\begin{itemize}
\item When $f=q$, we use $-\pp_3 q = \rho\Dtp v_3 -\bp b_3$.
\item When $f=v_i$, using $\nabp\cdot v=\cnab\cdot \vb+\pp_3 v\cdot\NN$, the continuity equation becomes $\pp_3 v\cdot\NN=-\eps^2\Dtp p - \cnab\cdot \vb$. Thus we have $\p_t^7\pp_3 v\cdot\NN=-\p_t^7(\eps^2\Dtp p + \cnab\cdot \vb)+[\p_t^7,\NN]\cdot\p_3 v$ in which both terms can be directly controlled in $\|\cdot\|_{8,*}$ norm.
\end{itemize}
As for $\ccb(f)$, the problematic terms are $(b\cdot\NN)(\p_3\vp)^{-1}\p_3\vb^-\cdot\cnab\p_t^7 f +(b\cdot\NN)\p_3\left((\p_3\vp)^{-1}(v^-\cdot\NN-\p_t\vp)\right)\pp_3\p_t^7 f.$ Again, we can use $(b\cdot\NN)|_{\Sigma}=0$ to create a weight function such that $(b\cdot\NN)\pp_3$ is actually a tangential derivative.

Also note that there is no loss of Mach number even if $\p_t^8 p$ requires one more $\eps$-weight. In fact, the only term in the commutators $\cc,\dd$ that contains  $\p_t^8 p$ is $\RR_p$, but there is an extra weight $\ffp=O(\eps^2)$ multiplying with it. Therefore, we can follow the same strategy as in Section \ref{sect E4TT} and Section \ref{sect E8TT} to analyze the interior part and obtain the following identity 
\begin{equation}\label{E8ttbdry}
\begin{aligned}
&\sum_\pm\ddt\frac{\eps^{16}}{2}\iopm \rho^\pm|\VV^{*,\pm}|^2+|\BB^{*,\pm}|^2+\ffpm(\PP^{*,\pm})^2+ \rho^\pm (\BS^{*,\pm})^2 \dvt\\
=&\ST^*+{\ST^{*}}'+\VS^*+\RT^*+\sum_{\pm}\RT^{*,\pm}+ZB^{*,\pm}+Z^{*,\pm}+R_{\Sigma}^{*,\pm}+R_{\Om}^{*,\pm},
\end{aligned}
\end{equation}where
\begin{align}
\label{def STtt8} \ST^*:=&~\eps^{16}\is \Dtbl\p_t^7\jump{q}\,\p_t\Dtbl\p_t^7\psi\dx',\\
\label{def STtt8'} {\ST^*}':=&~\eps^{16}\is \Dtbl\p_t^7\jump{q}\,(\vb^+\cdot\cnab)\Dtbl\p_t^7\psi\dx',\\
\label{def VStt8}\VS^*:=&~\eps^{16}\is\Dtbl\p_t^7 q^-\,(\jump{\vb}\cdot\cnab)\Dtbl\p_t^7\psi\dx',\\
\label{def RTtt8}\RT^*:=&-\eps^{16}\is\jump{\p_3 q}\Dtbl\p_t^7\psi\,\p_t\Dtbl\p_t^7\psi\dx',\\
\label{def RTtt8'}\RT^{*,\pm}:=&\mp\eps^{16}\is \p_3 q^\pm\,\Dtbl\p_t^7\psi\,(\vb^\pm\cdot\cnab)\Dtbl\p_t^7\psi\dx',\\
\label{def Rbtt8} R_{\Sigma}^{*,\pm}:=&\pm\eps^{16}\is\QQ^{*,\pm}\Dtbpm\vb^-\cdot\cnab\p_t^7\psi\dx',\\
\label{def ZBtt8}ZB^{*,\pm}:=&\mp\eps^{16}\is \QQ^{*,\pm}\WW^{*,\pm}\dx',~~Z^{*,\pm}=-\iopm\eps^{16}\QQ^{\pm,*}\cc_i^*(v_i^\pm)\dvt,
\end{align}and $R_{\Om}^{*,\pm}$ represents the controllable terms in the interior containing the analogues of $R_1^\pm,R_2^\pm,R_3^\pm$. Specifically, we have
\begin{align}
\eps^{-16}R_{\Om}^{*,\pm}=&\iopm\VV^{*,\pm}\cdot(\RR_v^{*,\pm}-\cc^*(q^\pm)+\ccb^*(b^\pm) )\dvt+\iopm \RR_q^{*,\pm}(\nabp\cdot\VV^{*,\pm})\dvt+\iopm \BB^{*,\pm}\cdot(\RR_b^{*,\pm}-\ccb^*(v))\dvt+\iopm \PP^{*,\pm}\RR_p^{*,\pm}\dvt\no\\
&-\frac12\iopm (\nabp\cdot v^\pm)|\BB^{*,\pm}|^2\dvt-\frac12\iopm (\Dtpm\ffpm+\ffpm\nabp\cdot v^\pm)|\PP^{*,\pm}|^2\dvt+\iopm\rho^\pm\dd^*(S^\pm)\,\BS^{*,\pm}\dvt
\end{align}
where 
\begin{align*}
 \RR_v^{*,\pm}:=&-[\Dtpl\p_t^7,\rho^\pm]\Dtpm v^\pm-\rho^\pm\dd^*(v^\pm),\q \RR_p^{*,\pm}:=-[\Dtpl\p_t^7,\ffpm]\Dtpm p^\pm -\ffpm \dd^*(p^\pm),\\
\RR_b^{*,\pm}:=&-[\Dtpl\p_t^7,b^\pm](\nabp\cdot v^\pm)-\dd^*(b^\pm),\q \RR_q^{*,\pm}:=~\QQ^{*,\pm}-\PP^{*,\pm}-b^\pm\cdot\BB^{*,\pm}.
\end{align*}These terms can be directly controlled in the same way as presented in Section \ref{sect E4TT}, so we omit the details \begin{align} \int_0^tR_{\Om}^{*,\pm}\dtau\lesssim P(E^{\kk}(0))+\int_0^t P(E_4^{\kk}(t))E_8^{\kk}(t)\dtau. \end{align}

\subsubsection{Analysis of the boundary integrals}\label{sect E8ttbdry}
Similarly as in Section \ref{sect E4TT}, we can decompose the control of these terms in the following steps.

\subsubsection*{Step 1: Boundary regularity of full time derivatives given by surface tension.}
Invoking the boundary condition \eqref{goodbcd*} for $\jump{\QQ^*}$, the term ST becomes
\begin{equation}\label{STtt80}
\begin{aligned}
\ST^*=&~\sigma\eps^{16}\is\Dtbl\p_t^7\cnab\cdot\left(\frac{\cnab\psi}{\sqrt{1+|\cnab\psi|^2}}\right)\p_t\Dtbl\p_t^7\psi\dx'\\
&-\kk\eps^{16}\is \Dtbl\p_t^7(1-\TL)^2\psi\,\p_t\Dtbl\p_t^7\psi\dx'-\kk\eps^{16}\is\Dtbl\p_t^7(1-\TL)\p_t\psi\,\p_t\Dtbl\p_t^7\psi\dx'\\
=:&\ST_0^*+\ST_{1,\kk}^{*}+\ST_{2,\kk}^{*}.
\end{aligned}
\end{equation} Commuting $\cnab\cdot$ with $\Dtbl$ and integrating $\cnab\cdot$ by parts in the mean curvature term, we get an analogous energy term contributed by surface tension as in Section \ref{sect E4TT}
\begin{equation}\label{STtt801}
\begin{aligned}
\ST_0^*=&-\frac{\sigma\eps^{16}}{2}\ddt\is\frac{|\Dtbl\p_t^7\cnab\psi|^2}{\sqrt{1+|\cnab\psi|^2}}-\frac{|\cnab\psi\cdot\Dtbl\p_t^7\cnab\psi|^2}{{\sqrt{1+|\cnab\psi|^2}}^3}\dx'+\underbrace{\sigma\eps^{16}\is \TP_i \vb^-_j\TP_j\p_t^7(\TP_i\psi/|N|)\p_t\Dtbl\p_t^7\psi\dx'}_{\ST_0^{*,R}}\\
&\underbrace{-\sigma\eps^{16}\is \frac{\Dtbl\p_t^7\TP_i\psi}{|N|}\cdot\p_t(\TP_i\vb^-_j\TP_j\p_t^7\psi)\dx'-\sigma\eps^{16}\is\frac{\cnab\psi\cdot\Dtbl\p_t^7\TP_i\psi}{|N|^3}\cnab\psi\,\p_t(\TP_i\vb^-_j\TP_j\p_t^7\psi)\dx'}_{=:\ST_1^{*,R}}\\
&\underbrace{-\sigma\eps^{16}\is\left(\left[\Dtbl\p_t^6,\frac{1}{|N|}\right]\p_t\cnab\psi+\left[\Dtbl\p_t^6,\frac{1}{|N|^3}\right]((\cnab\psi\cdot\p_t\cnab\psi)\cnab\psi)-\frac{1}{|N|^3}[\Dtbl\p_t^6,\cnab\psi]\p_t\cnab\psi\right)\cdot\p_t\cnab\Dtbl\p_t^7\psi\dx'}_{=:\ST_2^{*,R}}\\
&+\underbrace{\frac{\sigma\eps^{16}}{2}\is \p_t\left(\frac{1}{|N|}\right) \bno{\Dtbl\p_t^7\cnab\psi}^2 -\p_t\left(\frac{1}{|N|^3}\right) \bno{\cnab\psi\cdot\Dtbl\p_t^7\cnab\psi}^2\dx'}_{=:\ST_3^{*,R}}
\end{aligned}
\end{equation}
 The first line above together with the inequality \eqref{cauchy} gives the $\sqrt{\sigma}$-weighted boundary regularity as in step 2 in Section \ref{sect E4TT}. The term$\ST_1^{*,R}$ is generated by commuting $\Dtbl$ with $\cnab$ (the one falling on $\p_t^7\psi$) and is directly controlled by the energy. The term$\ST_3^{*,R}$ is controlled in the same way as$\ST_2^R$ in step 2 of Section \ref{sect ETT}. The term$\ST_2^{*,R}$ is controlled as in \cite[Section 4.6]{LuoZhang2022CWWST}. The term$\ST_0^{*,R}$ is controlled similarly as $\ST_2^{*,R}$. Thus, we conclude their estimates by 
\begin{equation}\label{STtt804}
\int_0^t\ST_0^{*,R}+\ST_1^{*,R}+\ST_2^{*,R}+\ST_3^{*,R}\dtau\lesssim P(E^\kk(0))+\int_0^tP(E^\kk(\tau))\dtau.
\end{equation}

Next we analyze the terms$\ST^*_{1,\kk},\ST^*_{2,\kk}$ involving the $\kk$-regularization terms. Note that we have to commute $\Dtbl$ with $1-\TL$ or $\jp=\sqrt{1-\TL}$ when deriving the $\sqrt{\kk}$-weighte terms. Integrating $1-\TL$ by parts in$\ST^*_{1,\kk}$
\begin{equation}\label{STtt8kk01}
\begin{aligned}
&\ST_{1,\kk}^*=-\ddt\frac{1}{2}\bno{\sqrt{\kk}\eps^8\Dtbl\p_t^7(1-\TL)\psi}_0^2\\
&\underbrace{-\kk\eps^{16}\is[\Dtbl,1-\TL]\left(\p_t^7(1-\TL)\psi\right)\,\p_t\Dtbl\p_t^7\psi\dx'}_{\ST^{*,R}_{11,\kk}}\underbrace{-\kk\eps^{16}\is\Dtbl\p_t^7(1-\TL)\psi\,\p_t\left([1-\TL,\Dtbl]\p_t^7\psi\right)\dx'}_{\ST^{*,R}_{12,\kk}}.
\end{aligned}
\end{equation}On $\Sigma$, the material derivative $\Dtpl=\Dtbl=\p_t+\vb^-\cdot\cnab$, so the commutator is
\[
[\Dtbl,1-\TL]f=[\TL,\vb^-\cdot\cnab]f=\TL\vb^-\cdot\cnab f+2\TP_i\vb^-_j\TP_j\TP_i f.
\] Then$\ST^{*,R}_{11,\kk}$ is controlled under time integral by integrating $\TP_j$ by parts in the second term
\begin{align}
\int_0^t\ST^{*,R}_{11,\kk}\dtau\eql&-\kk\eps^{16}\int_0^t\is\TL \vb^-_j\TP_j\left(\p_t^7(1-\TL)\psi\right)\,\p_t\Dtbl\p_t^7\psi\dx'\dtau\no\\
&+2\kk\eps^{16}\int_0^t\is\TP_i\vb^-_j\TP_i \left(\p_t^7(1-\TL)\psi\right)\,\TP_j\p_t\Dtbl\p_t^7\psi\dx'\dtau\no\\
\lesssim&~\delta\bno{\sqrt{\kk}\eps^8\Dtbl\p_t^8\psi}_1^2 +\frac{1}{4\delta} \int_0^t\bno{\sqrt{\kk}\eps^8\TP\p_t^7\psi}_2^2|\TP\vb^-|_{W^{1,\infty}}^2\dtau\leq \delta E_8^\kk(t)+\int_0^t P(E_8^\kk(\tau),E_4^{\kk}(\tau))\dtau. \label{STtt8kk02}
\end{align} The control of$\ST^{*,R}_{12,\kk}$ is easier because there is no term containing 9 time derivatives of $\psi$. It is directly controlled by using the $\sqrt{\kk}$-weighted boundary energy obtained above.
\[
\ST^{*,R}_{12,\kk}\lesssim\bno{\sqrt{\kk}\eps^8\Dtbl\p_t^7(1-\TL)\psi}_0\left(\bno{\sqrt{\kk}\eps^8\p_t^8\psi}_2+\bno{\sqrt{\kk}\eps^8\p_t^7\psi}_2 \right)\sqrt{E_4^{\kk}(t)}\leq E_8^{\kk}(t)\sqrt{E_4^{\kk}(t)}.
\] The control of$\ST_{2,\kk}^{*}$ is similar to$\ST_{1,\kk}^{*}$. Using $\jp^2=1-\TL$, we have
\begin{equation}\label{STtt8kk03}
\begin{aligned}
\ST_{2,\kk}^*=&-\is \bno{\sqrt{\kk}\eps^8\Dtpl\p_t^8\jp\psi}^2\dx'+\kk\eps^{16}\is[\Dtbl,\TP_i]\left(\p_t^8\TP_i\psi\right)\,\p_t\Dtbl\p_t^7\psi\dx'+\kk\eps^{16}\is\Dtbl\p_t^8\TP_i\psi\,\left([\TP_i\p_t,\Dtbl]\p_t^7\psi\right)\dx'\\
=:&-\is \bno{\sqrt{\kk}\eps^8\Dtpl\p_t^8\jp\psi}^2\dx'+\ST^{*,R}_{21,\kk}+\ST^{*,R}_{22,\kk},
\end{aligned}
\end{equation}where we use the concrete form of the commutators $[\Dtbl,\TP_i]=-\TP_i \vb_j^-\TP_jf,~~[\TP_i\p_t,\Dtbl]f=\p_t\left(\TP_i \vb_j^-\TP_j\Dtbl f\right)+\p_t \vb_j^-\TP_j\TP_i f$ to get estimates similar to$\ST^{*,R}_{11,\kk}$ and$\ST^{*,R}_{12,\kk}$  
\[
\int_0^t\ST^{*,R}_{21,\kk}+\ST^{*,R}_{22,\kk}\dtau \lesssim~\delta E_8^\kk(t)+\int_0^t P(E_8^\kk(\tau),E_4^{\kk}(\tau))\dtau.
\]

Hence, the control of$\ST^*$ in \eqref{E8ttbdry} is concluded by
\begin{equation}\label{STtt8}
\begin{aligned}
&\int_0^t\ST\dtau+\frac{1}{2}\is\frac{|\sqrt{\sigma}\eps^8\Dtbl\p_t^7\cnab\psi|^2}{{\sqrt{1+|\cnab\psi|^2}}^3}\dx'+\is\left|\sqrt{\kk}\eps^8\Dtbl\p_t^7(1-\TL)\psi\right|^2\dx'+\int_0^t\is\left|\sqrt{\kk}\eps^8\Dtbl\p_t^8\jp\psi\right|^2\dx'\dtau\\
\leq&~\delta E_8^\kk(t) +  P(E^\kk(0))+\int_0^tP(E^\kk(\tau))\dtau.
\end{aligned}
\end{equation}

The term${\ST^*}'$ is controlled in the same way as$\ST^*$ by replacing $\p_t\Dtbl\p_t^7\psi$ with $(\vb^+\cdot\cnab)\Dtbl\p_t^7\psi$. We no longer get energy terms, but we can integrate $(\vb^+\cdot\cnab)$ by parts and use symmetry and the above boundary regularity to control them. Invoking the jump condition, we have
\begin{align}
{\ST^*}'=&~\sigma\eps^{16}\Dtbl\p_t^7\h\,(\vb^+\cdot\cnab)\Dtbl\p_t^7\psi\dx'\no\\
& -\kk\eps^{16}\is\Dtbl\p_t^7(1-\TL)^2\psi\,(\vb^+\cdot\cnab)\Dtbl\p_t^7\psi\dx'-\kk\eps^{16}\is\Dtbl\p_t^7(1-\TL)\p_t\psi\,(\vb^+\cdot\cnab)\Dtbl\p_t^7\psi\dx'\no\\
=:&{\ST_0^*}'+{\ST_{1,\kk}^*}'+{\ST_{2,\kk}^*}'.
\end{align}Following the analysis \eqref{STtt801}-\eqref{STtt804}, the first term is controlled thanks to the boundary regularity and symmetric structure after integrating  $(\vb^+\cdot\cnab)$ by parts.
\begin{align}
{\ST_0^*}'\eql \frac{1}{2}\sigma\eps^{16}\is (\cnab\cdot\vb^+)\left(\frac{|\Dtbl\p_t^7\psi|^2}{|N|}-\frac{|\cnab\psi\cdot\cnab\Dtbl\p_t^7\psi|^2}{|N|^3}\right)\dx'\leq P(|\cnab\psi|_{L^{\infty}})|\vb^+|_{W^{1,\infty}}\bno{\sqrt{\sigma}\eps^8\cnab\Dtbl\p_t^7\psi}_0^2.
\end{align}
Similarly, we can use the symmetric structure to control${\ST_{1,\kk}^*}'+{\ST_{2,\kk}^*}'$. We only check the commutators arising in the control of${\ST_{1,\kk}^*}'$ as an example.
\begin{align}
{\ST_{1,\kk}^{*,R}}':=&-\kk\eps^{16}\is[\Dtbl,1-\TL]\left(\p_t^7(1-\TL)\psi\right)\,(\vb^+\cdot\cnab)\left(\Dtbl\p_t^7\psi\right)\dx'\no\\
&-\kk\eps^{16}\is\left(\Dtbl\p_t^7(1-\TL)\psi\right)\,(\vb^+\cdot\cnab)\left([1-\TL,\Dtbl]\p_t^7\psi\right)\dx'\no\\
&-\kk\eps^{16}\is\Dtbl\left(\p_t^7(1-\TL)\psi\right)\,[1-\TL,(\vb^+\cdot\cnab)]\left(\Dtbl\p_t^7\psi\right)\dx'\no\\
=:&{\ST_{11,\kk}^{*,R}}'+{\ST_{12,\kk}^{*,R}}'+{\ST_{13,\kk}^{*,R}}'.
\end{align} The control of ${\ST_{11,\kk}^{*,R}}'+{\ST_{12,\kk}^{*,R}}'$ is similar to ${\ST_{11,\kk}^{*,R}}+{\ST_{12,\kk}^{*,R}}$. We have
\begin{align}
{\ST_{11,\kk}^{*,R}}'\lesssim&~|\vb^-|_{W^{2,\infty}}|\vb^+|_{L^{\infty}}\bno{\sqrt{\kk}\eps^8\p_t^7\psi}_3\left(\bno{\sqrt{\kk}\eps^8\p_t^7\psi}_3+\bno{\sqrt{\kk}\eps^8\p_t^8\psi}_2\right)\lesssim E_4^{\kk}(t)E_8^{\kk}(t),
\end{align}and
\begin{align}
{\ST_{12,\kk}^{*,R}}'\lesssim~|\vb^-|_{W^{2,\infty}}|\vb^+|_{L^{\infty}}\left(\bno{\sqrt{\kk}\eps^8\p_t^8\psi}_2+\bno{\sqrt{\kk}\eps^8\p_t^7\psi}_3\right)^2\lesssim  E_4^{\kk}(t)E_8^{\kk}(t).
\end{align} The extra term${\ST_{13,\kk}^{*,R}}'$ is also directly controlled
\begin{align}
{\ST_{13,\kk}^{*,R}}'\lesssim&~|\vb^+|_{W^{2,\infty}}|\vb^-|_{L^{\infty}}\left(\bno{\sqrt{\kk}\eps^8\p_t^8\psi}_2+\bno{\sqrt{\kk}\eps^8\p_t^7\psi}_3\right)^2\lesssim E_4^{\kk}(t)E_8^{\kk}(t).
\end{align} Thus we have
\begin{align}
{\ST_{1,\kk}^*}'\lesssim \frac12\is(\cnab\cdot\vb^+)\bno{\sqrt{\kk}\eps^8\Dtbl\p_t^7(1-\TL)\psi}^2 +E_4^{\kk}(t)E_8^{\kk}(t).
\end{align}Similarly, we have
\begin{align}
\int_0^t{\ST_{2,\kk}^{*}}'\dtau\lesssim\delta\bno{\sqrt{\kk}\eps^8 \Dtbl\p_t^8\jp\psi}_0^2 + \int_0^t |\vb^\pm|_{W^{1,\infty}}^2\left(\bno{\sqrt{\kk}\eps^8\p_t^7\psi}_3^2+\bno{\sqrt{\kk}\eps^8\p_t^8\psi}_2^2\right)\dtau.
\end{align}Hence, we have the estimate of ${\ST^*}'$:
\begin{align}\label{STtt8'}
\int_0^t{\ST^{*}}'\dtau\lesssim\delta E_8^{\kk}(t)+\int_0^tE_8^{\kk}(\tau)E_4^{\kk}(\tau)\dtau.
\end{align} What's more, we can also control the remainder term  $R_{\Sigma}^{*,\pm}:=\pm\eps^{16}\is\QQ^{*,\pm}\Dtbpm\vb^-\cdot\cnab\p_t^7\psi\dx'.$ Indeed, we use Gauss-Green formula to write it to be an interior intergral, insert the expressions of $\QQ^{*,\pm}$ and integrate by parts $\Dtbl$ to get
\begin{align}
&\int_0^tR_{\Sigma}^{*,\pm}\dtau\eql -\eps^{16}\int_0^t\iopm\p_3\QQ^{*,\pm}\,\Dtbpm\vb^-\cdot\cnab\p_t^7\vp\dx\dtau\no\\
\eql&~\eps^{16}\int_0^t\iopm\p_3(\p_t^7 q^\pm-\p_t^7\vp\p_3 q^\pm)\Dtbpm\vb^-\cdot\Dtpl\cnab\p_t^7\vp\dx\dtau-\eps^{16}\int_0^t\iopm\p_3(\p_t^7 q^\pm-\p_t^7\vp\p_3 q^\pm)\Dtbpm\vb^-\cdot\cnab\p_t^7\vp\dx.
\end{align}Using the reduction for $\p_3 q$ again, we can control the above integral by
\begin{align}
\int_0^tR_{\Sigma}^{*,\pm}\dtau\lesssim \delta\|\eps^8\p_3\p_t^7 q^\pm\|_{0,\pm}^2+\int_0^t P(E_4^\kk(\tau))E_8^{\kk}(\tau)\dtau\lesssim\delta E_8^{\kk}(t)+\int_0^t P(E_4^\kk(\tau))E_8^{\kk}(\tau)\dtau.
\end{align}

\subsubsection*{Step 2: Control of VS term.}

Now we start to analyze the most difficult boundary term 
\begin{align}\label{VStt80}
\VS^*:=~\eps^{16}\is\Dtbl\p_t^7 q^-\,(\jump{\vb}\cdot\cnab)\Dtbl\p_t^7\psi\dx'.
\end{align} Note that there is no spatial derivative $\TP$ in$\VS^*$, so we cannot integrate $\TP^{1/2}$ by parts as in step 3 in Section \ref{sect E4TT}. To overcome this difficulty, we try to rewrite the term $\Dtbl\p_t^7\psi$ by invoking the kinematic boundary condition
\begin{align*}
\Dtbl\p_t^7\psi=&~\p_t^8\psi+\vb^-\cdot\cnab\p_t^7\psi=\p_t^7(v^-\cdot N)-v^-\cdot\p_t^7 N=\p_t^7 v^-\cdot N+[\p_t^{7}, N_i, v_i^-],
\end{align*}and thus 
\begin{align}
\VS^*=&~\eps^{16}\is\Dtbl \p_t^7 q^-\,(\jump{\vb}\cdot\cnab)\p_t^7 v^-\cdot N\dx' \no\\
&+\eps^{16}\is\Dtbl \p_t^7 q^-\,\p_t^7 v^-\cdot (\jump{\vb}\cdot\cnab)N\dx'+\eps^{16}\is\Dtbl \p_t^7 q^-\,(\jump{\vb}\cdot\cnab)[\p_t^{7}, N_i, v_i^-]\dx' \no\\
=:&\VS^*_0+\VS^{*,ZB}_1+\VS^{*,ZB}_2 \label{VStt81}
\end{align}

Using divergence theorem, we convert$\VS^*_0$ to an interior integral in $\Om^-$
\begin{align}
\VS_0^*=&~\eps^{16}\iom\Dtpl\p_t^7 q^-\,\nabp\cdot\left((\jump{\vb}\cdot\cnab)\p_t^7 v^-\right)\dvt+\eps^{16}\iom\pp_i\Dtpl\p_t^7 q^-\,(\jump{\vb}\cdot\cnab)\p_t^7 v^-_i\dvt =:\VS^*_{01}+\VS^*_{02}, \label{VStt82}
\end{align}where $\jump{\vb}=\vb^+-\vb^-$ is defined via Sobolev extension in $\Om^-$. In$\VS^*_{01}$, we want to commute $\nabp\cdot$ with $(\jump{\vb}\cdot\cnab)$ in order to get a similar cancellation structure as in $ZB+Z$. The commutator is 
\begin{align*}
[\pp_i,\jump{\vb}\cdot\cnab]f=&~\pp_i\jump{\vb}\cdot\cnab f-(\jump{\vb}\cdot\cnab)\NN_i\pp_3 f + \NN_i \frac{(\jump{\vb}\cdot\cnab)\p_3\vp}{\p_3 \vp}\pp_3 f,~~i=1,2,\\
[\pp_3,\jump{\vb}\cdot\cnab]f=&~\pp_3\jump{\vb}\cdot\cnab f+\frac{(\jump{\vb}\cdot\cnab)\p_3\vp}{\p_3 \vp}\pp_3 f.
\end{align*} Commuting $\nabp\cdot$ with $(\jump{\vb}\cdot\cnab)$, we get
\begin{align}
\VS^*_{01}=&~\eps^{16}\iom\Dtpl\p_t^7 q^-\,(\jump{\vb}\cdot\cnab)\left(\nabp\cdot\p_t^7 v^-\right)\dvt-\eps^{16}\iom\Dtpl\p_t^7 q^-\,\p_3\p_t^7 v^-\cdot (\jump{\vb}\cdot\cnab)\NN\dx\no\\
&+\eps^{16}\iom\Dtpl\p_t^7 q^-\,\pp_i\jump{\vb}\cdot\cnab\p_t^7 v^-_i\dvt+\eps^{16}\iom\Dtpl\p_t^7 q^-\, (\jump{\vb}\cdot\cnab)\p_3\vp\,\pp_3\p_t^7 v^-\cdot\NN\dx\no\\
=:&\VS^*_{011}+\VS^{*,Z}_{011}+\VS^{*,R}_{011}+\VS^{*,R}_{012} \label{VStt8011}
\end{align} 

Next we introduce $\FF^\sh:=\p_t^7 f-\p_t^7\psi\pp_3 f$ to be the Alinhac good unknown of $f$ with respect to $\p_t^7$ in order to commute $\nabp$ with $\p_t^7$. Namely, we have \[\p_t^7\pp_i f=\pp_i\FF^{\sh}+\cc_i^{\sh}(f),~~ \p_t^7\Dtp f=\Dtp\FF^{\sh}+\dd^{\sh}(f),~~ \p_t^7\bp f=\bp\FF^{\sh}+\ccb^{\sh}(f)\]where $\cc^\sh,\dd^\sh,\ccb^\sh$ are defined in the same way as \eqref{AGU comm Ci}-\eqref{AGU comm B} with $\TT^\gamma=\p_t^7$. With this formulation, we have 
\begin{align*}
\nabp\cdot \p_t^7 v^-=\nabp\cdot\VV^{\sh,-}+\pp_i(\p_t^7\vp\pp_3 v_i^-)=\p_t^7(\nabp\cdot v^-)-\cc_i^\sh(v_i^-)+\pp_i(\p_t^7\vp\pp_3 v_i^-).
\end{align*}Now we insert the good unknowns in$\VS^*_{011}$ to get
\begin{align}
\VS_{011}^*=&~\eps^{16}\iom\Dtpl\p_t^7 q^-\, (\jump{\vb}\cdot\cnab)\p_t^7(\nabp\cdot v^-)\dvt\underbrace{-\eps^{16}\iom\Dtpl\p_t^7 q^-\,(\jump{\vb}\cdot\cnab)\left(\cc_i^\sh(v_i^-)-\pp_i(\p_t^7\vp\pp_3 v_i^-)\right)\dvt}_{\VS^{*,Z}_{012}}\no\\
=&-\eps^{16}\iom\ffpm\Dtpl\p_t^7 p^-\,(\jump{\vb}\cdot\cnab) \p_t^7\Dtpl p^-\dvt+\eps^{16}\iom\Dtpl\p_t^7 (\frac12|b^-|^2)\, (\jump{\vb}\cdot\cnab)\p_t^7(\nabp\cdot v^-)\dvt +\VS^{*,Z}_{012}\no \\
=:&\VS^{*}_{0111}+\VS^{*,B}_{0111}+\VS^{*,Z}_{012}. \label{VStt8012}
\end{align} By the definition of $\PP^{\sh,-}$
\[
\Dtpl\p_t^7 p^-=\Dtpl\PP^{\sh,-}+\Dtpl(\p_t^7\vp\pp_3 p^-),~~\p_t^7\Dtpl p^-=\Dtpl\PP^{\sh}+\dd^{\sh}(p^-).
\]Then we integrate $(\jump{\vb}\cdot\cnab)$ by parts and use symmetry to find
\begin{align}
\VS_{0111}^*=&-\frac{1}{2}\iom(\cnab\cdot\jump{\vb})(\sqrt{\ffpm}\eps^{8}\Dtpl\PP^{\sh,-})^2\dvt \no\\
&\underbrace{+\eps^{16}\iom\ffpm\Dtpl\PP^{\sh,-}\left((\cnab\cdot\jump{\vb})\Dtpl(\p_t^7\vp\pp_3 p^-)-(\jump{\vb}\cdot\cnab)\left(\dd^{\sh}(p^-)-\Dtpl(\p_t^7\vp\pp_3 p^-)\right)\right)\dvt}_{\VS^{*,R}_{0111}}\no\\
&\underbrace{-\eps^{16}\iom\ffpm\Dtpl(\p_t^7\vp\pp_3 p^-)\,(\jump{\vb}\cdot\cnab)\dd^{\sh}(p^-)\dvt}_{\VS^{*,R}_{0112}}, \label{VStt8013}
\end{align}where the first term on the right side is controlled by $\ino{(\ffpm)^{\frac12}\eps^8\PP^{*,-}}_0^2\|\cnab \jump{\vb}\|_{L^{\infty}}^2$. Next we adapt the analysis for $Z^\pm+ZB^\pm$ term to the control of $\VS_1^{*,ZB}+\VS_{011}^{*,Z}$ and $\VS_2^{*,ZB}+\VS_{012}^{*,Z}$. Using Gauss-Green formula, integrating $\Dtpl$ by parts under time integral and invoking the momentum equation, we get
\begin{align}
\int_0^t\VS_1^{*,ZB}+\VS_{011}^{*,Z}=&~\eps^{16}\iom(\Dtpl\pp_3\p_t^7 q^-)\,(\jump{\vb}\cdot\cnab)\NN\cdot \p_t^7 v^-\dvt\dtau\no\\
&+\int_0^t\underbrace{\eps^{16}\iom[\pp_3,\Dtpl](\p_t^7 q^-)\,(\jump{\vb}\cdot\cnab)\NN\cdot \p_t^7 v^-+\Dtpl\p_t^7 q^-\,\pp_3\left((\jump{\vb}\cdot\cnab)\NN\right)\cdot \p_t^7 v^-\dvt}_{=:\VS_{1}^{*,ZR}}\dtau\no\\
\eql&-\eps^{16}\int_0^t\iom(\pp_3\p_t^7 q^-)\,\Dtpl\left((\jump{\vb}\cdot\cnab)\NN\cdot \p_t^7 v^-\right)+\VS_{1}^{*,ZR}\dvt\dtau+\eps^{16}\iom(\pp_3\p_t^7 q^-)\,(\jump{\vb}\cdot\cnab)\NN\cdot \p_t^7 v^-\dvt\bigg|^t_0\no\\
 \lesssim&~ \delta E_8^{\kk}(t)+P(E_4^{\kk}(0))E_8^{\kk}(0)+\int_0^t P(E_4^{\kk}(\tau))E_8^{\kk}(\tau)\dtau,~~~\forall \delta\in(0,1). \label{VStt8015}
\end{align} 

For$\VS_2^{*,ZB}+\VS_{012}^{*,Z}$, we recall that the term $\cc_i^\sh(v_i^-)$ in$\VS_{012}^{*,Z}$ includes a term $[\p_t^7,\NN_i/\p_3\vp,v_i^-]$ which also appears in$\VS_2^{*,ZB}$. Thus we can again use the Gauss-Green formula to analyze this term. In fact, the commutator in$\VS_{012}^{*,Z}$ can be written as:
\begin{align*}
\cc_i^\sh(v_i^-)-\pp_i(\p_t^7\vp\pp_3 v_i^-)=\frac{1}{\p_3\vp}\left[\p_t^7,\NN_i,\p_3 v_i^-\right]+\cc^{\sh,R}_i(v_i^-)
\end{align*}where the $L^2(\Om^\pm)$ norm of $(\jump{\vb}\cdot\cnab)\cc^{\sh,R}_i(v_i^-)$ is directly controlled by $P(E^\kk(t))$ Then
\begin{align}
\VS_2^{*,ZB}+\VS_{012}^{*,Z}=&\eps^{16}\iom \pp_3\left(\Dtpl\p_t^7 q^- \right)(\jump{\vb}\cdot\cnab)[\p_t^7,\NN_i,v_i^-]\dx\underbrace{-\eps^{16}\iom \Dtpl\p_t^7 q^-\,(\jump{\vb}\cdot\cnab)\cc^{\sh,R}_i(v_i^-)\dvt}_{\VS_{2}^{*,ZR}},\label{VStt8016}
\end{align}where the first term on the right side is again controlled by integrating $\Dtpl$ by parts under time integral. We omit the details and just list the result
\[
\int_0^t\iom \eps^{16}\pp_3\left(\Dtpl\p_t^7 q^- \right)(\jump{\vb}\cdot\cnab)[\p_t^7,\NN_i,v_i^-]\dx\dtau\leq\delta E_8^{\kk}(t)+P(E_4^{\kk}(0))E_8^{\kk}(0)+\int_0^t P(E_4^{\kk}(\tau))E_8^{\kk}(\tau)\dtau,~~~\forall \delta\in(0,1).
\]

Now the term$\VS_1^*$ is controlled except for those remainder terms$\VS^{*,R}_{011}$,$\VS^{*,R}_{012}$,$\VS^{*,B}_{0111}$,$\VS^{*,R}_{0111}$,$\VS^{*,R}_{0112}$, $\VS_{1}^{*,ZR}$ and$\VS_{2}^{*,ZR}$. In fact, apart from$\VS^{*,B}_{0111}$, the other remainder terms can be directly controlled by counting the number of derivatives and invoking the reduction for $\pp_3 \p_t^7v^-\cdot\NN$ and $\pp_3\p_t^7 q^-$. There is no loss of Mach number in these remainder terms. In fact, when $\p_t^8 p^-$ appears in the remainder terms, either we have $\eps^{16}\ffpm$-weight to control it directly, or we can integrate by parts $\Dtpl$ and $(\jump{\vb}\cdot\cnab)$ under time integral to move one time derivative to $v_i^-$. Besides, the control of $\p_t^7\vp,\p_t^8\vp$ depends on the boundary regularity contributed by surface tension and so depends on $\sigma^{-1}$. Therefore, we can conclude the estimates of $\VS_1^*$ by
\begin{equation}\label{VStt801}
\VS_{01}^*+\VS^{*,ZB}_1+\VS^{*,ZB}_2 \leq \VS^{*,B}_{0111}+\delta E_8^{\kk}(t)+P(E_4^{\kk}(0))E_8^{\kk}(0)+\int_0^t P(\sigma^{-1}, E^{\kk}(\tau))E_8^{\kk}(\tau)\dtau~~~\forall \delta\in(0,1).
\end{equation}

Next we control$\VS_{02}^*=\eps^{16}\iom\pp_i\Dtpl\p_t^7 q^-\,(\jump{\vb}\cdot\cnab)\p_t^7 v^-_i\dvt$. First, we commute $\Dtpl$ with $\pp_i$ to get
\begin{align}
\VS_{02}^*=&~\eps^{16}\iom\Dtpl\pp_i\p_t^7 q^-\,(\jump{\vb}\cdot\cnab)\p_t^7 v^-_i\dvt+\eps^{16}\iom\pp_iv_j^-\pp_j\p_t^7 q^-\,(\jump{\vb}\cdot\cnab)\p_t^7 v^-_i\dvt\no\\
=:&\VS_{021}^{*}+\VS_{021}^{*,R}. \label{VStt8020}
\end{align} In the first term, we integrate by parts $\Dtpl$ under time integral and commute $\Dtpl$ with $(\jump{\vb}\cdot\cnab)$ to get
\begin{align}
\int_0^t\VS_{021}^*\dtau\eql&-\eps^{16}\int_0^t\iom\pp_i\p_t^7 q^-\,(\jump{\vb}\cdot\cnab)\Dtpl\p_t^7 v^-_i\dvt\dtau+\eps^{16}\iom\pp_i\p_t^7 q^-\,(\jump{\vb}\cdot\cnab)\p_t^7 v^-_i\dvt\bigg|^t_0\no \\
&-\eps^{16}\int_0^t\iom\pp_i\p_t^7 q^-\,[\Dtpl , (\jump{\vb}\cdot\cnab)]\p_t^7 v^-_i\dvt\dtau\no \\
=:&\int_0^t\VS_{0211}^{*}\dtau+\VS_{022}^{*,R}+\int_0^t\VS_{023}^{*,R}\dtau.  \label{VStt8021}
\end{align} Next we insert the good unknowns $\QQ^{\sh,-}$ and $\VV^{\sh,-}$ and invoke again the momentum equation  to get
\begin{align}
\VS_{0211}^{*}=&~\eps^{16}\iom\rho^-\Dtpl \VV^{\sh,-}\cdot(\jump{\vb}\cdot\cnab)\Dtpl\VV^{\sh,-}\dvt-\eps^{16}\iom\bpl\BB^{\sh,-}\cdot(\jump{\vb}\cdot\cnab)\Dtpl\VV^{\sh,-}\dvt\no\\
&+\eps^{16}\iom\left(\cc^{\sh}(q^-)-\RR_v^{\sh,-}-\ccb^\sh(b)\right)\cdot(\jump{\vb}\cdot\cnab)\Dtpl\VV^{\sh,-}\dvt+\eps^{16}\iom\pp_i(\p_t^7\vp\pp_3 q^-)\,(\jump{\vb}\cdot\cnab)\Dtpl\p_t^7 v^-_i\dvt\no\\
=:&~\eps^{16}\iom\rho^-\Dtpl \VV^{\sh,-}\,(\jump{\vb}\cdot\cnab)\Dtpl\VV^{\sh,-}\dvt+\VS_{0211}^{*,B}+\VS_{0211}^{*,R}+\VS_{0212}^{*,R},  \label{VStt8022}
\end{align} where the first term is again controlled by integrating by parts in $(\jump{\vb}\cdot\cnab)$ and using symmetry
\begin{align}
\eps^{16}\iom\rho^-\Dtpl \VV^{\sh,-}\,(\jump{\vb}\cdot\cnab)\Dtpl\VV^{\sh,-}\dvt=\frac{\eps^{16}}{2}\iom\left(\cnab\cdot(\rho^-\jump{\vb})\right)\bno{\Dtpl\VV^{\sh,-}}^2\dvt\leq P(E_4^{\kk}(t))E_8^{\kk}(t).
\end{align} Next we wish to combine$\VS^{*,B}_{0211}$ with$\VS^{*,B}_{0111}:=\eps^{16}\iom\Dtpl\p_t^7 (\frac12|b^-|^2)\, (\jump{\vb}\cdot\cnab)\p_t^7(\nabp\cdot v^-)\dvt$ to get a cancellation structure. In$\VS^{*,B}_{0111}$, we invoke the evolution equation $\Dtpl b_j^-=\bpl v^- - b^-(\nabp\cdot v^-)$ to get 
\begin{align}
\VS_{0111}^{*,B}\overset{L}{=}&-\eps^{16}\iom\Dtpl\BB^{\sh,-}_j\,(\jump{\vb}\cdot\cnab)\Dtpl \BB_j^{\sh,-}\dvt+\eps^{16}\iom\Dtpl\BB^{\sh,-}_j\,(\jump{\vb}\cdot\cnab)\p_t^7(\bpl v_j^-)\dvt\no\\
&-\eps^{16}\iom\Dtpl\BB^{\sh,-}_j\,(\jump{\vb}\cdot\cnab)\dd^\sh(b_j^-)\dvt+\underbrace{\eps^{16}\iom\Dtpl\BB^{\sh,-}_j\,\left[b_j,(\jump{\vb}\cdot\cnab)\p_t^7\right](\nabp\cdot v^-)\dvt}_{\VS^{*,BR}_{0111}},  \label{VStt8023}
\end{align} where the first term on the right side is again controlled by integrating by parts in $(\jump{\vb}\cdot\cnab)$ and using symmetry, and the third term on the right side is controlled directly after inserting the expression of $\dd^{\sh}(b)$. We denote $$\VS_{0112}^{*,B}:=\eps^{16}\iom\Dtpl\BB^{\sh,-}_j\,(\jump{\vb}\cdot\cnab)\p_t^7(\bpl v_j^-)\dvt$$ to be the second term on the right side above. Inserting the good unknown $\VV^{\sh,-}$, the term$\VS_{0112}^{*,B}$ is equal to
\begin{align}
&\eps^{16}\iom\Dtpl\BB^{\sh,-}_i\,(\jump{\vb}\cdot\cnab)\left(\bpl\VV_i^{\sh,-}\right)\dvt+\underbrace{\eps^{16}\iom\Dtpl\BB^{\sh,-}_i\,(\jump{\vb}\cdot\cnab)\left([\p_t^7, b_j^-]\pp_j v_i^-+b_j^-\cc_j^{\sh}(v_i^-)\right)\dvt}_{\VS^{*,BR}_{0112}}\no \\
=&~\eps^{16}\iom\Dtpl\BB^{\sh,-}_i\,\bpl\left((\jump{\vb}\cdot\cnab)\VV_i^{\sh,-}\right)\dvt-\eps^{16}\iom\Dtpl\BB^{\sh,-}_i\,\left[\bpl,(\jump{\vb}\cdot\cnab)\right]\VV_i^{\sh,-}\dvt+\VS^{*,BR}_{0112}\no\\
=:&\VS_{0113}^{*,B}+\VS^{*,BR}_{0113}+\VS^{*,BR}_{0112}.  \label{VStt8024}
\end{align} 
Now we can integrate by parts $\bpl$ and then $\Dtpl$ in$\VS_{0113}^{*,B}$ in order to produce the cancellation with$\VS_{0211}^{*,B}$. Under time integral, $\int_0^t\VS_{0113}^{*,B}\dtau$ is equal to
\begin{align}
&\int_0^t\underbrace{\eps^{16}\iom\bpl\BB^{\sh,-}_i\,(\jump{\vb}\cdot\cnab)\Dtpl\VV_i^{\sh,-}\dvt}_{=-\VS_{0211}^{*,B}}\dtau+\eps^{16}\iom\bpl\BB^{\sh,-}_i\,(\jump{\vb}\cdot\cnab)\VV_i^{\sh,-}\dvt\bigg|^t_0 \no\\
&+\eps^{16}\int_0^t\iom[\bpl,\Dtpl]\BB^{\sh,-}_i\,(\jump{\vb}\cdot\cnab)\VV_i^{\sh,-}\dvt\dtau+\eps^{16}\int_0^t\iom\bpl\BB^{\sh,-}_i\,[\Dtpl,(\jump{\vb}\cdot\cnab)]\VV_i^{\sh,-}\dvt\dtau\no\\
=:&-\int_0^t\VS_{0211}^{*,B}\dtau+\VS^{*,BR}_{0211}+\int_0^t\VS^{*,BR}_{0212}+\VS^{*,BR}_{0213}\dtau. \label{VStt8025}
\end{align} Note that $[\Dtpl,\bpl]=-(\nabp\cdot v^-)\bpl f$ and when we commute $(\jump{\vb}\cdot\cnab)$ with either $\Dtpl$ or $\bpl,$, no normal derivative will be generated because the weight functions in front of $\p_3$ (namely, $b^-\cdot\NN$ and $(v^-\cdot\NN-\p_t\vp)$) are still vanishing on the interface $\Sigma$ after taking $(\jump{\vb}\cdot\cnab)$. Therefore, the commutators above are all controllable in $\|\cdot\|_{8,*,-}$ norm and no loss of Mach number occurs. The following remainder terms are controlled directly
\begin{align}
&\VS_{022}^{*,R}+\VS^{*,BR}_{0211}+\int_0^t\VS_{021}^{*,R}+\VS_{023}^{*,R}+\VS^{*,BR}_{0111}+\VS^{*,BR}_{0112}+\VS^{*,BR}_{0212}+\VS^{*,BR}_{0213}\dtau \no\\
\leq&~ \delta E_8^{\kk}(t) +P(E^{\kk}(0))+\int_0^t P(E^{\kk}(\tau))\dtau.  \label{VStt8026}
\end{align} In the terms$\VS_{0211}^{*,R}+\VS_{0212}^{*,R}$, we can integrate $(\jump{\vb}\cdot\cnab)$ by parts to get to get the desired control thanks to the $\sqrt{\sigma}$-weighted boundary regularity of $\psi$
\begin{align}
\VS_{0211}^{*,R}+\VS_{0212}^{*,R}\lesssim_{\sigma^{-1}} \left(|\eps^{8}\p_t^7\psi|_2+|\eps^{8}\p_t^8\psi|_1\right)\|\eps^{8}\Dtpl\p_t^7 v\|_{0,-}P(E_4^\kk(t)).  \label{VStt8027}
\end{align}
Thus, the control of $\VS_{02}^*$ term is concluded by
\begin{align} \label{VStt802}
\int_0^t\VS_{02}^{*}+\VS_{0111}^{*,B}\dtau\leq\delta E_8^{\kk}(t)+P(E^{\kk}(0)) +\int_0^t P(\sigma^{-1}, E^{\kk}(\tau))\dtau.
\end{align} Finally, combining \eqref{VStt80}, \eqref{VStt81}, \eqref{VStt801} and \eqref{VStt802}, we get the estimate of$\VS^*$ term
\begin{align}\label{VStt8}
\int_0^t\VS^{*}\dtau\leq\delta E_8^{\kk}(t) +P(E^{\kk}(0))+\int_0^t P(\sigma^{-1}, E^{\kk}(\tau))\dtau.
\end{align}

\subsubsection*{Step 3: Control of RT term.}

In step 3, we control the terms$\RT^*$ and $\RT^{*,\pm}$ defined in \eqref{def RTtt8}-\eqref{def RTtt8'}, The latter one can be directly controlled by using symmetry
\begin{align}\label{RTtt8'}
\RT^{*,\pm}=\mp\frac12\is(\cnab\cdot(\p_3 q^\pm\,v^\pm))\bno{\Dtbl\p_t^7\psi}^2\dx'\leq\,\sigma^{-1}E_4^{\kk}(t)E_8^{\kk}(t).
\end{align} The term$\RT^*=-\eps^{16}\is\jump{\p_3 q}\Dtbl\p_t^7\psi\,\p_t\Dtbl\p_t^7\psi\dx'$ cannot be controlled in the same way as in the estimates of spatial derivatives because we do not have $L^2(\Sigma)$-control for $\p_t\Dtbl\p_t^7\psi$ without $\kk$-weight nor can we integrate by parts $\p_t^{1/2}$. To overcome this difficulty, we need to invoke the kinematic boundary condition to reduce the number of time derivatives. We have
\[
\Dtbl\p_t^7\psi=\p_t^7v^-\cdot N +[\p_t^7,v^-,N],~~\p_t\Dtbl\p_t^7\psi=\p_t^8v^-\cdot N+8\p_t^7 v^-\cdot\p_t N+\text{ lower order terms}.
\] Plugging it to$\RT^*$, we find
\begin{align}
\RT^*\eql-\eps^{16}\is\jump{\p_3 q}\p_t^7v^-\cdot N\,\p_t^8 v^-\cdot N\dx'-8\eps^{16}\is\jump{\p_3 q}\p_t^7v^-\cdot N\,\p_t^7v^-\cdot\p_t N\dx'=:\RT^*_1+\RT^*_2.
\end{align} The term$\RT^*_2$ can be controlled by using Gauss-Green formula
\begin{align}
\RT^*_2\eql&~-8\eps^{16}\iom\jump{\p_3 q}(\pp_3\p_t^7 v^-\cdot \NN)(\p_t^7v^-\cdot \p_t\NN)\dvt - 8\eps^{16}\iom\jump{\p_3 q}(\p_t^7 v^-\cdot  \NN)(\p_3\p_t^7v^-\cdot  \p_t\NN)\dx,
\end{align}where $\jump{\p_3 q}$ is defined via Sobolev extension.  The first term above is directly controlled after invoking the reduction $\pp_3\p_t^7v^-\cdot \NN\eql -\p_t^7(\eps^2\Dtpl p^- + \cnab\cdot \vb^-)$. For the second term, it suffices to integrate $\p_t$ by parts under time integral
\begin{align}
&- 8\eps^{16}\int_0^t\iom\jump{\p_3 q}(\p_t^7 v^-\cdot  \NN)(\p_3\p_t^7v^-\cdot  \p_t\NN)\dx\dtau\no\\
\eql& -8 \eps^{16}\iom\jump{\p_3 q}(\p_t^7 v^-\cdot  \NN)(\p_3\p_t^6v^-\cdot  \p_t\NN)\dx\bigg|^t_0 + 8\int_0^t\iom \jump{\p_3 q}(\p_t^8v^-\cdot\NN)(\p_3\p_t^6v^-\cdot  \p_t\NN)\dx\no\\
\lesssim&~\delta\|\eps^8\p_3\p_t^6 v^-\|_{0,-}^2+P(E^{\kk}(0))+\int_0^t P(E_4^{\kk}(\tau))E_8^{\kk}(\tau)\dtau
\end{align}

Using the same trick as above, the term$\RT^*_1$ is directly controlled by repeated invoking $\pp_3\p_t^7v^-\cdot \NN\eql -\p_t^7(\eps^2\Dtpl p^- + \cnab\cdot \vb^-)$
\begin{align}
\int_0^t\RT^*_1\dtau\overset{\p_t,~L}{===}&\int_0^t\iom\jump{\p_3 q}\left((\pp_3\p_t^7 v^-\cdot  \NN)(\p_t^8v^-\cdot  \NN)-\p_t(\p_t^7v^-\cdot\NN)(\pp_3\p_t^7v^-\cdot  \NN)\right)\dvt\dtau\no\\
&-  \eps^{16}\iom\jump{\p_3 q}(\p_t^7v^-\cdot\NN)(\pp_3\p_t^7v^-\cdot  \NN)\dvt\bigg|^t_0\no\\
\lesssim&~\delta E_8^{\kk}(t)+P(E^{\kk}(0))+\int_0^t P(E_4^{\kk}(\tau))E_8^{\kk}(\tau)\dtau.
\end{align} Hence, we conclude the estimate of$\RT^*$ by
\begin{align}\label{RTtt8}
\int_0^t\RT^*\dtau\lesssim\delta E_8^{\kk}(t)+P(E^{\kk}(0))+\int_0^t P(E_4^{\kk}(\tau))E_8^{\kk}(\tau)\dtau.
\end{align}

\subsubsection*{Step 4: The cancellation structure between $ZB^*$ and $Z^*$.}
Now we control the term $ZB^{*,\pm}+Z^{*,\pm}$. Note that we cannot integrate by parts $\TP^{1/2}$ due to the lack of spatial derivatives. First, $ZB^{*,\pm}$ can be written as
\begin{align}\label{ZBtt80}
ZB^{*,\pm}=&\mp\eps^{16}\is\Dtbl\p_t^7 q^\pm(\p_3 v^\pm\cdot N)\Dtbl\p_t^7\psi\dx' \pm \eps^{16}\is\Dtbl\p_t^7\psi\,\p_3 q^\pm(\p_3 v^\pm\cdot N)\Dtbl\p_t^7\psi\dx' \no\\
&\mp\eps^{16}\is\QQ^{*,\pm}\,\left[\Dtpl\p_t^7,N_i,v_i^\pm\right]\dx'\no\\
=:&~ZB^{*,R,\pm}_1+ZB^{*,R,\pm}_2 +ZB^{*,\pm}_0.
\end{align} The second term on the right side can be directly controlled. We have
\begin{align}
ZB^{*,R,\pm}_2\leq \left|\Dtpl\p_t^7\psi\right|_0^2 \bno{\p_3 q^\pm\,(\p_3v^\pm\cdot N)}_{L^{\infty}}\leq P(\sigma^{-1}, E_4^{\kk}(t))E_8^{\kk}(t).
\end{align}For the first term, using again $\Dtbl\p_t^7\psi=\p_t^7 v\cdot N+$lower order terms, Gauss-Green formula and integrating by parts in $\Dtpl$, we get
\begin{equation}\label{ZBtt801}
\begin{aligned}
\int_0^tZB^{*,R,\pm}_1\dtau\eql&~\eps^{16}\iopm(\p_3v^\pm\cdot\NN)\pp_3\p_t^7q^\pm\,\p_t^7v^\pm\cdot \NN\dvt\bigg|^t_0+\eps^{16}\int_0^t\iopm(\p_3v^\pm\cdot\NN)\left([\pp_3,\Dtpl]\p_t^7q^\pm\right)\,\p_t^7v^\pm\cdot \NN\dvt\dtau\\
&+\eps^{16}\int_0^t\iopm(\p_3v^\pm\cdot\NN)\left(\pp_3\p_t^7q^\pm\,\Dtpl\p_t^7v^\pm\cdot \NN+\Dtpl\p_t^7q^\pm\,\pp_3\p_t^7v^\pm\cdot \NN\right)\dvt +\text{l.o.t}
\end{aligned}
\end{equation} Now we can invoke the reduction for $\pp_3 q$ and $\pp_3v\cdot\NN$ to convert $\pp_3$ to a tangential derivative. Note that the continuity equation above produces an extra $\ffp=O(\eps^2)$ weight, so there is no loss of Mach number when $\Dtpl\p_t^7 q$ appears. When $\Dtpl\p_t^7 q$ is multiplied by $\p_t^7\cnab\cdot\vb$, we can further integrate by parts in $\p_t$ and then in $\cnab\cdot$ to move one time derivative to $v$. Hence, $ZB^{*,R,\pm}_1$ is controlled in $\|\cdot\|_{8,*}$ norm without loss of $\eps$-weights
\begin{align}
\int_0^tZB^{*,R,\pm}_1\dtau \lesssim~\delta E_8^{\kk}(t)+P(E^{\kk}(0))+\int_0^t P(E^{\kk}(\tau))\dtau.
\end{align}

Next we will see again the cancellation structure in $ZB_0^{*,\pm}+Z^{*,\pm}$. From \eqref{AGU comm Ci}, we find it suffices to further analyze the following two terms
\begin{align}
\left[\Dtpl\p_t^7,\frac{\NN_i}{\p_3\vp},\p_3 v_i\right]\eql&~ (\p_3\vp)^{-1}\left[\Dtpl\p_t^7,\NN_i,\p_3 v_i\right] - \pp_3\p_t\vp\,\pp_3\Dtpl\p_t^6 v\cdot\NN,\\
(\p_3\vp)^{-1}\NN\cdot[\Dtpl\p_t^7,\p_3]v= &~\left(\pp_3\vb^-\cdot\cnab\right)\p_t^7v\cdot\NN+\p_3\left((\p_3\vp)^{-1}(v^-\cdot\NN-\p_t\vp)\right)\pp_3\p_t^7v\cdot\NN.
\end{align} Thus, we find that, apart from the term $(\p_3\vp)^{-1}\left[\Dtpl\p_t^7,\NN_i,\p_3 v_i\right]$, all the other terms in $\cc_i^*(v_i)$ include either a tangential derivative falling on the leading order term or the term $\pp_3v\cdot\NN$ (possibly with some derivatives) such that $\ffp\Dtp p$ and $\cnab\cdot\vb$ are produced by invoking the continuity equation. Thus, when $\QQ^*$ is multiplied with these terms, its contribution in $Z^{*,\pm}$ can be directly controlled without any loss of weights of Mach number. 

It now remains to control $ZB^{*,\pm}_0+Z^{*,\pm}$ with $Z_0^{*,\pm}:=\eps^{16}\iopm \QQ^{*,\pm}(\p_3\vp)^{-1}\left[\Dtpl\p_t^7,\NN_i,\p_3 v_i\right]\dvt$. Using $\dvt=\p_3\vp\dx$ and Gauss-Green formula, we have
\begin{align}
&ZB^{*,\pm}_0+Z_0^{*,\pm}=\mp\eps^{16}\is\QQ^{*,\pm}\,\left[\Dtpl\p_t^7,N_i,v_i^\pm\right]\dx'+\eps^{16}\iopm \QQ^{*,\pm}\left[\Dtpl\p_t^7,\NN_i,\p_3 v_i^\pm\right]\dx\no\\
\eql&-\sum_{j=0}^1\sum_{k=1}^6\eps^{16}\binom{7}{k}\iopm \pp_3\QQ^{*,\pm}(\Dtpl)^j\p_t^k v_i^\pm\,(\Dtpl)^{1-j}\p_t^{6-k} \NN_i\dvt.
\end{align} Inserting the concrete form of $\QQ^{*,\pm}$, integrating by parts in $\Dtpl$ and invoking the momentum equation, we have 
\begin{align}
\int_0^tZB^{*,\pm}_0+Z_0^{*,\pm}\dtau\eql &\sum_{j=0}^1\sum_{k=1}^6\eps^{16}\binom{7}{k}\int_0^t\iopm \pp_3\QQ^{\sh,\pm}\Dtpl\left((\Dtpl)^{j}\p_t^k v_i^\pm\cdot,(\Dtpl)^{1-j}\p_t^{6-k} \NN\right)\dvt\dtau\no\\
&+\eps^{16}\binom{7}{k}\int_0^t\iopm \pp_3\QQ^{\sh,\pm}\left((\Dtpl)^{j}\p_t^k v_i^\pm\cdot,(\Dtpl)^{1-j}\p_t^{6-k} \NN\right)\dvt\bigg|^t_0\no\\
\lesssim&~\delta\|\pp_3\QQ^{\sh,\pm}\|_0^2+P(E^{\kk}(0))+\int_0^t P(\sigma^{-1},E^{\kk}(\tau))\dtau,~~~\forall \delta\in(0,1).
\end{align}Combining this with the control of remainder terms and commutators, we can easily obtain that
\begin{align}
\int_0^tZB^{*,\pm}+Z^{*,\pm}\dtau\lesssim\delta E_8^{\kk}(0)+P(E^{\kk}(0))+\int_0^t P(\sigma^{-1}, E^{\kk}(\tau))\dtau,~~~\forall \delta\in(0,1).
\end{align}

\subsection{Tangential estimates:  general cases and summary}\label{sect ETTG}
Let $\TT^\gamma=(\omega(x_3)\p_3)^{\gamma_4}\p_t^{\gamma_0}\TP_1^{\gamma_1}\TP_2^{\gamma_2}$ be a tangential derivative with length of the multi-index $\len{\gamma}:=\gamma_0+\gamma_1+\gamma_2+2\times 0+\gamma_4$. Section \ref{sect E4TT}-Section \ref{sect E8tt} are devoted to the control of full spatial derivatives ($\gamma_1+\gamma_2=\len{\gamma}$) and full time derivatives ($\gamma_0=\len{\gamma}$). Now we analyze how to handle the general case.

\subsubsection*{Space-time mixed derivatives: $\gamma_0>0$ and $\gamma_1+\gamma_2>0$}
Let us temporarily assume $\gamma_4=0$. In this case, the tangential derivatives that we need to consider have the form $\TP^{4-l-k}\p_t^k\TT^{\alpha}$ with $\len{\alpha}=2l,~\alpha_4=0$ and weights of Mach number $\eps^{2l}$. That is, we need to consider $\eps^{2l}\p_t^{k+\alpha_0}\TP^{4+l-k-\alpha_0}$-estimates. Following the previous paper \cite{LuoZhang2022CWWST} by Luo and the author, the control of space-time mixed tangential derivatives ($0<k+\alpha_0<4+l$) is the same as the control of purely spatial tangential derivatives. In particular, compared with the one-phase fluid problem \cite{LuoZhang2022CWWST}, we only need one spatial derivative to do integration by parts in order for the control of the extra problematic term 
$$\VS:=\eps^{4l}\is\p_t^{k+\alpha_0}\TP^{4+l-k-\alpha_0} q^-\,(\jump{\vb}\cdot\cnab)\p_t^{k+\alpha_0}\TP^{4+l-k-\alpha_0}\psi\dx'$$
in which we need to integrate by parts $\TP^{1/2}$ and seek for the control of $\eps^{2l}\bno{\p_t^{k+\alpha_0}\TP^{4+l-k-\alpha_0}}_{1.5}$. Mimicing the proof of Lemma \ref{qelliptic}, we can show that (replacing $k+\alpha_0$ by $k$)
\begin{lem}[Elliptic estimate for the time derivatives of the free interface]\label{qelliptic0}
Fix $l\in\{0,1,2,3,4\}$. For $0<k<4+l$, we have the following uniform-in-$(\eps,\kk)$ inequality, in which the first term on the right side disappears when $\kk=0$.
\begin{align*}\bno{\eps^{2l}\p_t^{k}\psi}_{5.5+l-k}\leq&~ \bno{\eps^{2l}\p_t^{k}\psi(0)}_{5.5+l-k}+\sigma^{-1}\left|\eps^{2l}\p_t^{k}\jump{q}\right|_{3.5+l-k}\\
&+ P\left(\sigma^{-1},|\cnab\psi|_{L^{\infty}},\sum_{j=0}^l E_{4+j}^{\kk}(t)\right)\left(\bno{\eps^{2l}\p_t^{k}\psi}_{4.5+l-k}+\bno{\eps^{2l}\p_t^{k-1}\psi}_{5.5+l-k}\right).
\end{align*}
\end{lem}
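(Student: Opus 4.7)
The plan is to generalize the argument of Lemma \ref{qelliptic} by also applying $\p_t^k$ before testing against $\jp^{s+0.5}\psi$, with $s := 4+l-k$. Since $(1-\TL)$ and $(1-\TL)^2$ have constant coefficients, $\p_t^k$ commutes with them cleanly, so the only genuinely new commutators come from the mean curvature operator.

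First, I would apply $\eps^{2l}\p_t^k \jp^{s+0.5}$ to the jump condition
\[
\jump{q} = \sigma\cnab\cdot\left(\frac{\cnab\psi}{\sqrt{1+|\cnab\psi|^2}}\right) - \kk(1-\TL)^2\psi - \kk(1-\TL)\p_t\psi
\]
and then pair with $\eps^{2l}\p_t^k \jp^{s+0.5}\psi$ in $L^2(\Sigma)$. The two $\kk$-terms are handled exactly as in Lemma \ref{qelliptic}: one yields the positive definite quantity $|\sqrt{\kk}\eps^{2l}\p_t^k\psi|_{s+2.5}^2$, and the other contributes $\tfrac{1}{2}\ddt |\sqrt{\kk}\eps^{2l}\p_t^k\psi|_{s+1.5}^2$. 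The dual pairing on the left side is bounded by $|\eps^{2l}\p_t^k\jump{q}|_{s-0.5}\,|\eps^{2l}\p_t^k\psi|_{s+1.5}$.

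The heart of the argument is the surface tension term. After integrating $\cnab\cdot$ by parts, the leading contribution is the positive definite quantity
\[
\sigma\is \frac{|\eps^{2l}\p_t^k\jp^{s+0.5}\cnab\psi|^2}{|N|} - \frac{|\cnab\psi\cdot\eps^{2l}\p_t^k\jp^{s+0.5}\cnab\psi|^2}{|N|^3}\dx',
\]
which, by \eqref{cauchy}, is bounded below by $c\,|\sqrt{\sigma}\eps^{2l}\p_t^k\psi|_{s+1.5}^2$ up to $L^\infty$-factors of $\cnab\psi$. The commutator remainders are of two types:
\begin{itemize}
\item Purely spatial commutators $[\jp^{s-0.5}, |N|^{-1}]$ and $[\jp^{s-0.5}, |N|^{-3}\cnab\psi\otimes\cnab\psi]$, handled by Kato--Ponce \eqref{commutator} as in Lemma \ref{qelliptic}; these produce $P(|\cnab\psi|_{L^\infty})|\cnab\psi|_{W^{1,\infty}}|\eps^{2l}\p_t^k\psi|_{s+0.5}$, i.e. a loss of one spatial order, contributing to the $|\eps^{2l}\p_t^k\psi|_{4.5+l-k}$ piece.
\item Time commutators $[\p_t^k, |N|^{-1}]$ and $[\p_t^k, |N|^{-3}\cnab\psi\otimes\cnab\psi]$. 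When $\p_t^k$ distributes, each derivative landing on $|N|^{-1}$, $|N|^{-3}$ or a $\cnab\psi$-factor costs one $\p_t$ but does not change the spatial order; Sobolev embedding (with $L^\infty$-control of low-order derivatives of $\psi$ absorbed into $P(\cdots, E_{4+l}^\kk(t))$) then gives bounds of the form $|\eps^{2l}\p_t^{k-1}\psi|_{s+1.5} = |\eps^{2l}\p_t^{k-1}\psi|_{5.5+l-k}$.
\end{itemize}

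Collecting everything, the resulting differential inequality has the shape
\[
\sigma|\eps^{2l}\p_t^k\psi|_{s+1.5}^2 + \kk|\eps^{2l}\p_t^k\psi|_{s+2.5}^2 + \kk\ddt|\eps^{2l}\p_t^k\psi|_{s+1.5}^2 \leq \big(\text{RHS of the lemma}\big)\cdot |\eps^{2l}\p_t^k\psi|_{s+1.5},
\]
after which Lemma \ref{parabolic} (applied to the $\kk$-weighted $\ddt$ term) yields the stated bound; the initial-data term $|\eps^{2l}\p_t^k\psi(0)|_{5.5+l-k}$ appears only when $\kk>0$, exactly as in Lemma \ref{qelliptic}.

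The main obstacle is the bookkeeping of the time commutators from $[\p_t^k,|N|^{-1}]$ and $[\p_t^k,|N|^{-3}\cnab\psi\otimes\cnab\psi]$: I must verify that every term in these commutators can be absorbed either into $|\eps^{2l}\p_t^k\psi|_{4.5+l-k}$ (one spatial derivative lower) or $|\eps^{2l}\p_t^{k-1}\psi|_{5.5+l-k}$ (one time derivative lower, one spatial derivative higher), with coefficients controlled by $\sum_{j=0}^l E_{4+j}^\kk(t)$ via Sobolev embedding. The restriction $0<k<4+l$ is precisely what allows the worst term $(\p_t\cdot)\cdot\p_t^{k-1}(\cdots)$ to be split in this way; for $k=0$ no time commutators arise (recovering Lemma \ref{qelliptic}) and for $k=4+l$ one would need a different argument, which is why the full-time-derivative case was treated separately in Section \ref{sect E8tt}.
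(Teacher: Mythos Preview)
Your proposal is correct and matches the paper's approach: the paper itself only says ``mimicing the proof of Lemma \ref{qelliptic}, we can show that\ldots'' without writing out details, and what you describe---applying $\eps^{2l}\p_t^k\jp^{s+0.5}$ to the jump condition, testing against $\eps^{2l}\p_t^k\jp^{s+0.5}\psi$, extracting the same positive contributions from surface tension and the $\kk$-terms, handling the spatial commutators by Kato--Ponce, and absorbing the new time commutators $[\p_t^k,|N|^{-1}]$, $[\p_t^k,|N|^{-3}\cnab\psi\otimes\cnab\psi]$ into the $|\eps^{2l}\p_t^{k-1}\psi|_{5.5+l-k}$ term before invoking Lemma \ref{parabolic}---is exactly the intended argument.
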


\subsubsection*{Weighted normal derivatives: $\gamma_4>0$}
In the most general case, $\TT^\gamma$ may contain weighted normal derivative $\omega(x_3)\p_3$, so we have to analyze the commutator involving $[\TT^\gamma,\p_3]$ in $\cc(f)$ and $\dd(f)$ defined in \eqref{AGU comm Ci}-\eqref{AGU comm B}. The problematic thing is that $\p_3$ may fall on $\omega(x_3)$ which converts a ``tangential" derivative $\omega(x_3)\p_3$ (a first-order derivative) to a normal derivative $\p_3$ (considered to be second-order under the setting of anisotropic Sobolev spaces). Such terms in $\dd(f)$ are $$(\p_3\vp)^{-1}(v\cdot\NN-\p_t\vp)[\TT^\gamma,\p_3]f+(v\cdot\NN-\p_t\vp)\frac{\p_3 f}{(\p_3\vp)^2}[\TT^\gamma,\p_3]\vp.$$ They can be directly controlled because an extra weight $(v\cdot\NN-\p_t\vp)$, which vanishes on $\Sigma$, is automatically generated to compensate the possible loss of weight function. As for $\cc(f)$, we notice that the terms involving $[\TT^\gamma,\p_3]$ can be written to be
\[
\frac{\NN_i}{\p_3\vp}[\TT^\gamma,\p_3]f-\frac{\NN_i}{\p_3 \vp}\pp_3 f[\TT^\gamma,\p_3]\vp,~~f=q\text{ or }v_i.
\] The second term above is easy to control because $\vp(t,x)=x_3+\chi(x_3)\psi(t,x')$ implies the $C^{\infty}$-regularity of $\vp$ in $x_3$ direction. For the first term, it may generate a term $\TT^{\beta}\p_3 f\NN_i$ with $\beta_i=\gamma_i(i=0,1,2),~\beta_4=\gamma_4-1$, whose $L^2(\Om)$ norm may be not directly bounded. Luckily, for $f= q$ or $v_i$, we can again invoke the momentum equation or the continuity equation to reduce $-\pp_3 q$ and $\pp_3 v\cdot \NN$ to tangential derivatives $\rho\Dtp v-\bp b$ and $-\ffp\Dtp p -\cnab\cdot\vb$ respectively. Therefore, there is no extra loss of derivative in the commutators $\cc(f)$ and $\dd(f)$ when $\gamma_4>0$.

\subsubsection*{Summary of tangential estimates}
Finally, we need to recover the estimates of $\TT^\gamma(v,b,S,\sqrt{\ffp} p)$ from the $L^2$-estimates of their Alinhac good unknowns. By definition, we have
\begin{align*}
\ino{\TT^{\gamma} f^\pm}_{0,\pm}^2\leq \ino{\FF^{\gamma,\pm}}_{0,\pm}^2 + \bno{\TT^\gamma\psi}_0^2\|\pp_3 f^\pm\|_{L^{\infty}(\Om^\pm)}^2,
\end{align*}in which $\ino{\FF^{\gamma,\pm}}_{0,\pm}$ and $\bno{\TT^\gamma\psi}_0$ have been controlled by $\delta E^{\kk}(t)+\int_0^t P(\sigma^{-1},E^{\kk}(\tau))\dtau$. When $\TT^\gamma$ contains at least one spatial derivative, we can use $-\TT q\sim \Dtp v +\bp b$ to get the control of $\TT q$ instead of $\sqrt{\ffp}\TT q$. For the full time derivatives, we use $\Dtpl=\p_t+(\vb^-\cdot\cnab)+(\p_3\vp)^{-1}(v^-\cdot\NN-\p_t\vp)\p_3$ to convert the $\eps^{2l}\p_t^{4+l}$-estimate to $\eps^{2l}\Dtpl\p_t^{3+l}$-estimate, $\eps^{2l}\TP\p_t^{3+l}$-estimate and $\eps^{2l}(\omega\p_3)\p_t^{3+l}$-estimate, in the second part of which the norm $|\eps^{2l}\p_t^{3+l}\psi(0)|_{2.5}$ is needed to control the VS term. Also, since $\omega(x_3)=0$ on the interface, $\TT^\gamma$ can be expressed as $\TP^{4+l-k}\p_t^k$ for $0\leq k\leq 4+l,~0\leq l\leq 4$. Hence, we establish the desired uniform-in-$(\kk,\eps)$ tangential estimates as in Proposition \ref{prop EkkTT}.

\subsection{Div-Curl analysis and reduction of pressure}\label{sect divcurl}
 The tangential derivatives of the variables $(v,b,p)$ are analyzed in Section \ref{sect ETT}-Section \ref{sect ETTG}. Here we show the reduction of normal derivatives of pressure and the analysis for the divergence and vorticity. We use the div-curl decomposition (cf. Lemma \ref{hodgeTT}) such that the normal derivatives of $(v,b)$ are controlled via their divergence and curl parts. For $0\leq l\leq 3,~0\leq k\leq 3-l,~\len{\alpha}=2l,~\alpha_3=0$, we have
\begin{align}
\ino{\eps^{2l}\p_t^k\TT^\alpha (v^\pm,b^\pm)}_{4-k-l,\pm}^2\leq C\bigg(&\ino{\eps^{2l}\p_t^k\TT^\alpha (v^\pm,b^\pm)}_{0,\pm}^2+\ino{\eps^{2l}\nabp\cdot \p_t^k\TT^\alpha (v^\pm,b^\pm)}_{3-k-l,\pm}^2 \no\\
&+\ino{\eps^{2l}\nabp\times\p_t^k\TT^\alpha (v^\pm,b^\pm)}_{3-k-l,\pm}^2+\ino{\eps^{2l}\TP^{4-k-l}\p_t^k\TT^\alpha (v^\pm,b^\pm)}_{0,\pm}^2\bigg)\label{divcurlE}
\end{align} with $$C=C\left(\sum_{j=0}^l\sum_{k=0}^{3+j}|\eps^{2j}\p_t^{j}\psi|_{4+l-j}^2,|\cnab\psi|_{W^{1,\infty}}\right)>0$$ a positive continuious function linear in $|\eps^{2j}\p_t^{j}\psi|_{4+l-j}^2$.
The conclusion for the div-curl analysis is 
\begin{prop}\label{prop divcurlkk}
Fix $l\in\{0,1,2,3\}$. For any $0\leq k\leq l-1$, any multi-index $\alpha$ satisfying $\len{\alpha}=2l$ and any constant $\delta\in(0,1)$, we can prove the following estimates for the curl part
\begin{equation}
\begin{aligned}
&\ino{\eps^{2l}\nabp\times\p_t^k\TT^\alpha v^\pm}_{3-k-l,\pm}^2+\ino{\eps^{2l}\nabp\times\p_t^k\TT^\alpha b^\pm}_{3-k-l,\pm}^2\\
\lesssim &~\delta E_{4+l}^{\kk}(t)+P\left(\sigma^{-1},\sum_{j=0}^{l} E_{4+j}^{\kk}(0)\right)+P(E_4^{\kk}(t))\int_0^t P\left(\sigma^{-1},\sum_{j=0}^l E_{4+j}^{\kk}(\tau)\right) + E_{4+l+1}^{\kk}(\tau)\dtau,
\end{aligned}
\end{equation} and for the divergence part
\begin{equation}
\begin{aligned}
&\ino{\eps^{2l}\nabp\cdot \p_t^k\TT^\alpha v^\pm}_{3-k-l,\pm}^2 +\ino{\eps^{2l}\nabp\cdot \p_t^k\TT^\alpha v^\pm}_{3-k-l,\pm}^2\\
\lesssim &~\delta E_{4+l}^{\kk}(t)+P\left(\sigma^{-1},\sum_{j=0}^{l} E_{4+j}^{\kk}(0)\right)+P(E_4^{\kk}(t))\int_0^t P\left(\sigma^{-1},\sum_{j=0}^l E_{4+j}^{\kk}(\tau)\right) \dtau.
\end{aligned}
\end{equation}
\end{prop}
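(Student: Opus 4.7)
The plan is to split this into the divergence part (which is almost automatic from the continuity equation and $\nabp\cdot b=0$) and the curl part, where the hidden Lorentz structure described in Section~\ref{stat fixed} must be invoked to gain control without normal-derivative loss.

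For the divergence, I would use the two structural identities $\nabp\cdot b^\pm=0$ and $\nabp\cdot v^\pm = -\ffpm\Dtpm p^\pm$. The first immediately reduces $\eps^{2l}\nabp\cdot\p_t^k\TT^\alpha b^\pm$ to a commutator $\eps^{2l}[\nabp\cdot,\p_t^k\TT^\alpha]b^\pm$, which can be expanded in $\TP\vp$ and $\p_3 b^\pm$ and estimated using the interior bound for $b^\pm$ together with the boundary regularity of $\psi$ already produced by the tangential estimates of Proposition~\ref{prop EkkTT}. The second identity reduces the divergence of $v^\pm$ to a material derivative of $\ffpm p^\pm$; since $\Dtpm$ is tangential to $\p\Om^\pm$, after further commuting $\p_t^k\TT^\alpha$ with $\ffpm\Dtpm$ this becomes a sum of quantities of the form $\eps^{2l}\TT^{\tilde\alpha}\p_t^{\tilde k}(\ffpm p^\pm)$ with $\len{\tilde\alpha}+\tilde k\le 4+l$, for which the weight $\ffpm\simeq\eps^2$ precisely matches the $\eps^{2l}$ weight on $\TT^\alpha$ applied to a full time derivative of $p$, so they are absorbed in $E_{4+l}^\kk$.

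The curl part is where the main work lies. I would apply $\nabp\times$ to the momentum and induction equations and commute with $\eps^{2l}\p_t^k\TT^\alpha$ to obtain, schematically,
\begin{align*}
\rho^\pm\Dtpm(\eps^{2l}\p_t^k\TT^\alpha\nabp\times v^\pm) &= \eps^{2l}\p_t^k\TT^\alpha\nabp\times(\bpm b^\pm)+\mathcal R_v^\pm,\\
\Dtpm(\eps^{2l}\p_t^k\TT^\alpha\nabp\times b^\pm) &= \eps^{2l}\p_t^k\TT^\alpha\nabp\times(\bpm v^\pm)-\eps^{2l}\p_t^k\TT^\alpha\nabp\times(b^\pm\nabp\cdot v^\pm)+\mathcal R_b^\pm.
\end{align*}
Standard $L^2$ energy balance against $(\eps^{2l}\p_t^k\TT^\alpha\nabp\times v^\pm,\eps^{2l}\p_t^k\TT^\alpha\nabp\times b^\pm)$, together with the observation that $\bpm$ is tangential to $\p\Om^\pm$ (thanks to $b^\pm\cdot N=0$ and $b_3^\pm|_{\Sigma^\pm}=0$), cancels the cross terms $\nabp\times(\bpm b^\pm)\cdot(\cdots)-\nabp\times(\bpm v^\pm)\cdot(\cdots)$ modulo commutators. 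The remaining dangerous term is $\eps^{2l}\p_t^k\TT^\alpha\nabp\times(b^\pm\nabp\cdot v^\pm)$: after substituting $\nabp\cdot v^\pm=-\ffpm\Dtpm p^\pm$ this contains a factor $\ffpm\, b^\pm\times\nabp\Dtpm p^\pm$ with one apparent normal derivative on $p$. Here I would invoke the identity hidden in the momentum equation $\nabp p^\pm=-\nabp(\tfrac12|b^\pm|^2)-\rho^\pm\Dtpm v^\pm+(\bpm)b^\pm$ to rewrite
$\ffpm\,b^\pm\times\nabp\Dtpm p^\pm\approx -\ffpm\rho^\pm\,b^\pm\times\Dtpm^2 v^\pm-\ffpm\,b^\pm\times\bigl(b^\pm\times\Dtpm(\nabp\times b^\pm)\bigr)+\text{l.o.t.}$,
the second term of which contributes the coercive energy $\tfrac12\ddt\iopm\ffpm|b^\pm\times\nabp\times b^\pm|^2\dvt$, while the first, after integration by parts in $\Dtpm$, becomes part of the energy of $\eps^{2l}\p_t^{k+1}\TT^\alpha v^\pm$ and is therefore absorbed into $E_{4+l+1}^\kk$ (this is exactly why the right-hand side of the curl estimate carries $E_{4+l+1}^\kk(\tau)$ under the time integral).

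The main obstacle will be bookkeeping the commutators $[\eps^{2l}\p_t^k\TT^\alpha\nabp\times,\Dtpm]$ and $[\eps^{2l}\p_t^k\TT^\alpha,\bpm]$ without losing weights of Mach number or reintroducing unweighted normal derivatives. For the weighted normal derivative $\omega(x_3)\p_3$ inside $\TT^\alpha$, the $\omega$-weight prevents $[\TT^\alpha,\p_3]$ from being an honest second-order operator away from $\Sigma\cup\Sigma^\pm$; near the boundaries, the vanishing of $b\cdot\NN$ and $v\cdot\NN-\p_t\vp$ on $\Sigma$ (extended via the fundamental theorem of calculus as in Section~\ref{sect E8TT}) supplies the needed weight. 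The commutator between $\nabp\times$ and $\nabp\cdot v^\pm$ produces Mach-number-critical terms that I would handle by the same substitution $\nabp q^\pm\leftrightarrow\rho^\pm\Dtpm v^\pm-\bpm b^\pm$, ensuring no bare $\TT q^\pm$ is left unweighted. Combining these with the divergence bounds and the tangential bounds from Proposition~\ref{prop EkkTT}, and finally feeding the output into \eqref{divcurlE} to produce an estimate on the full $H^{4-k-l}$ norm of $(v^\pm,b^\pm)$, closes the scheme and yields the stated inequalities with the advertised dependence on $\sigma^{-1}$ and $E_{4+j}^\kk$ for $0\le j\le l$ (and $E_{4+l+1}^\kk$ for the curl piece).
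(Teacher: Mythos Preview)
Your approach matches the paper's: the divergence is reduced via the continuity equation and $\nabp\cdot b^\pm=0$, and the curl is handled by energy estimates on the vorticity/current evolution, with the hidden Lorentz structure yielding the coercive term $\tfrac12\ddt\iopm\ffpm|b^\pm\times(\nabp\times b^\pm)|^2\dvt$ and the $(\Dtpm)^2 v^\pm$ residual pushed into $E_{4+l+1}^{\kk}$. The one deviation is your handling of the $\ffpm\rho^\pm b^\pm\times(\Dtpm)^2 v^\pm$ term: the paper does \emph{not} integrate $\Dtpm$ by parts here but simply bounds it directly by $\|\eps^2\p_t^k\TT^\alpha(\Dtpm)^2 v^\pm\|_{3-k-l,\pm}$, which is already part of $E_{4+l+1}^{\kk}$ since $(\Dtpm)^2$ counts as two extra tangential derivatives with weight $\eps^2$---no integration by parts is needed, and attempting one would reintroduce $\Dtpm(\nabp\times b^\pm)$ at top order.
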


\subsubsection{Reduction of pressure and divergence}\label{sect divE4}
Let us start with $l=0$. The spatial derivative of $q$ is controlled by invoking the momentum equation:
\begin{align}
-\p_3 q=&~(\p_3 \vp)\left(\rho\Dtp v_3 - \bp b_3\right);\\
-\TP_i q=&-(\p_3\vp)^{-1}\TP_i\vp\,\p_3 q+\rho\Dtp v_i - \bp b_i,~~i=1,2.
\end{align} Let $\TT$ be $\p_t$ or $\TP$ or $\omega(x_3)\p_3$. Then we have
\begin{align}
\|\p_t^k \p_3 q\|_{3-k}\lesssim&~ \|\p_t^k(\rho\TT v_3)\|_{3-k}+\|\p_t^k(b\TT b_3)\|_{3-k}\\
\|\p_t^k \TP_i q\|_{3-k}\lesssim&~ \|\p_t^k (\TP_i\vp\p_3 q)\|_{3-k}+\|\p_t^k(\rho\TT v_i)\|_{3-k}+\|\p_t^k(b\TT b_i)\|_{3-k},
\end{align}in which the leading order terms are $\|\p_t^k\TT(v,b)\|_{3-k}$ and $|\p_t^k \psi|_{4-k}$. This shows that we can convert the control of spatial derivative of $q$ to \textit{tangential estimates} of $v$ and $b$. 

Next we turn to the div-curl analysis for $v,b$. Let us first analyze $E_4(t)$. For $0\leq k\leq 3$, we have
\begin{align}\label{divcurlE4}
\|\p_t^k(v,b)\|_{4-k}^2\leq C(|\psi|_{4-k},|\cnab\psi|_{W^{1,\infty}})\left(\|\p_t^k (v,b)\|_0^2+\|\nabp\cdot \p_t^k(v,b)\|_{3-k}^2+\|\nabp\times\p_t^k (v,b)\|_{3-k}^2+\|\TP^{4-k}\p_t^k (v,b)\|_0^2\right).
\end{align}

For the divergence, we can directly invoke the continuity equation to convert $\nabp\cdot v$ to time derivative of $p$ together with square weights of Mach number. When $k=0$, we have
\begin{align}
\|\nabp\cdot v\|_3^2=\|\ffp \Dtp p\|_3^2,
\end{align}which is further reduced to the tangential derivatives of $v$ and $b$ by using the above reduction of $q$. Note that the magnetic tension term $\frac12|b|^2$ in the total pressure $q$ does not involve extra normal derivatives thanks to $\TT(\frac12 |b|^2)=b\cdot\TT b$. Taking $\p_t$ in the continuity equation, we have
\[
\nabp\cdot\p_t^k v\eql -\ffp \p_t^k\Dtp p +(\p_3\vp)^{-1}\TP\p_t^k\vp\cdot\p_3 v,
\]which gives
\begin{align}
\|\nabp\cdot \p_t^kv^\pm\|_{3-k,\pm}^2\lesssim C(\|v^\pm\|_{W^{1,\infty}(\Om^\pm)})\left(\ino{\ffpm \p_t^k\TT p^\pm}_{3-k,\pm}^2+\bno{\p_t^k\psi}_{4-k}^2\right)+ P(E_4^{\kk}(t))\int_0^t P(E_4^{\kk}(\tau))\dtau.
\end{align}Again, this can be reduced to tangential derivatives of $v,b$ until there is no spatial derivative falling on $p$.
As for the divergence of magnetic fields, we can invoke the div-free constraint to convert it to lower order terms. Namely, using $\nabp\cdot b=0$, we have
\[
\nabp\cdot\p_t^k b\eql \underbrace{\p_t^k(\nabp\cdot b)}_{=0} +(\p_3\vp)^{-1}\TP\p_t^k\vp\cdot\p_3 b
\]and thus
\begin{align}
\|\nabp\cdot \p_t^kb^\pm\|_{3-k,\pm}^2\lesssim C(\|b^\pm\|_{W^{1,\infty}(\Om^\pm)})\bno{\p_t^k\psi}_{4-k}^2+P(E_4^{\kk}(t))\int_0^t P(E_4^{\kk}(\tau))\dtau.
\end{align}The term $\bno{\p_t^k\psi}_{4-k}^2$ has been controlled in tangential estimates of $E_4^{\kk}(t)$. Combining the result of tangential estimates in Proposition \ref{prop EkkTT}, the control of divergence of time derivatives is concluded by
\begin{equation}
\begin{aligned}
&\ino{\nabp\cdot \p_t^k(v^\pm,b^\pm)}_{3-k,\pm}^2 \lesssim C(\|v^\pm\|_{W^{1,\infty}(\Om^\pm)})\ino{\ffpm \p_t^k\TT p^\pm}_{3-k,\pm}^2 + C(\|v^\pm, b^\pm\|_{W^{1,\infty}(\Om^\pm)})\bno{\p_t^k\psi}_{4-k}^2\\
\lesssim &~C(\|v^\pm\|_{W^{1,\infty}(\Om^\pm)})\ino{\ffpm \p_t^k\TT p^\pm}_{3-k,\pm}^2  +\delta E_4^{\kk}(t)+P(E_4^{\kk}(0))+ P(E_4^{\kk}(t))\int_0^t P(E_4^{\kk}(\tau))\dtau,
\end{aligned}
\end{equation}where the term involving $p^\pm$ can be further reduced to $\TT(v^\pm,b^\pm)$ when $3-k>0$ so that one can further apply the div-curl analysis to it.

\subsubsection{Vorticity analysis for $E_4$}\label{sect curlE4}
Taking $\nabp\times$ in the momentum equation of $v$ and the evolution equation of $b$, we get the evolution equation for the vorticity $\nabp\times v$ and the current density $\nabp\times b$
\begin{align}
\label{curlveq} \rho\Dtp(\nabp\times v)- \bp(\nabp\times b) =&~-(\nabp\rho)\times (\Dtp v) - \rho(\nabp v_j)\times(\pp_j v) + (\nabp b_j)\times(\pp_j b),\\
\label{curlbeq} \Dtp(\nabp\times b)-\bp(\nabp\times v)-b\times\nabp(\nabp\cdot v) =&-(\nabp\times b)(\nabp\cdot v)-(\nabp v_j)\times(\pp_j b) + (\nabp b_j)\times(\pp_j v),
\end{align}and taking $\p^3$ gives 
\begin{align}
\rho\Dtp(\p^3\nabp\times v)- \bp(\p^3\nabp\times b) =&~RK_v,\\
\Dtp(\p^3\nabp\times b)-\bp(\p^3\nabp\times v)-b\times\p^3\nabp(\nabp\cdot v) =&~RK_b,
\end{align}where 
\begin{align*}
RK_v:=&-[\p^3,\rho\Dtp](\nabp\times v)+[\p^3,\bp](\nabp\times b)+\p^3(\text{right side of }\eqref{curlveq}),\\
RK_b:=&-[\p^3,\Dtp](\nabp\times b)+[\p^3,\bp](\nabp \times v)+\p^3(\text{right side of }\eqref{curlbeq}).
\end{align*}
Direct computation shows that the highest-order terms in $RK_v,~RK_b$ only have 4 spatial derivatives and do not contain time derivative of $q$. Therefore, we can prove the $H^3$-control of the voriticity and current density by standard energy estimates.
\begin{equation}\label{divcurlK1}
\begin{aligned}
&\frac12\ddt\iopm\rho^\pm\bno{\p^3(\nabp\times v^\pm)}^2\dvt\\
=&-\iopm(\p^3\nabp\times b^\pm)\cdot\Dtpm(\p^3\nabp\times b^\pm)\dvt+\underbrace{\iopm (\p^3\nabp\times b^\pm)\cdot\left(b^\pm\times(\p^3\nabp(\nabp\cdot v^\pm))\right)\dvt}_{=:K_1^\pm}\\
&+\underbrace{\iopm RK_v^\pm\cdot(\p^3\nabp\times v^\pm)\dvt}_{=:L_1^\pm}+\underbrace{\iopm(\p^3\nabp\times b^\pm)\cdot RK_b^\pm\dvt}_{L_2^\pm} ,
\end{aligned}
\end{equation}where $L_1^\pm,~L_2^\pm$ are directly controlled by
$
L_1^\pm+L_2^\pm\leq P(\|(v^\pm,b^\pm)\|_{4,\pm},|\psi|_4).
$ It remains to analyze the term $K_1^\pm$ in which there is a key observation for the energy structure of compressible MHD system. We invoke the continuity equation $\nabp\cdot v^\pm=\ffpm\Dtpm p^\pm$ and commute $\Dtpm$ with $\nabp$ to get
\[
\nabp(\nabp\cdot v^\pm)=-\ffpm\Dtpm\nabp p^\pm+\ffpm (\nabp v_j^\pm)(\pp_j p^\pm).
\] Next, we rewrite the momentum equation to be $\rho^\pm\Dtpm v^\pm + b^\pm\times(\nabp\times b^\pm)=-\nabp p^\pm$ and plug it into the highest-order term $-\ffpm\Dtpm\nabp p^\pm$ to get
\begin{align*}
-\ffpm\Dtpm\nabp p^\pm=&~\ffpm\Dtpm(\rho^\pm\Dtpm v^\pm)+\ffpm\Dtpm(b^\pm\times(\nabp\times b^\pm))\\
=&~\ffpm\rho^\pm (\Dtpm)^2 v^\pm+\ffpm\Dtpm(b^\pm\times(\nabp\times b^\pm))+\ffpm(\Dtpm \rho^\pm)(\Dtpm v^\pm).
\end{align*}  Thus, the term $K_1^\pm$ becomes
\begin{equation}
\begin{aligned}
K_1^\pm=&\iopm (\p^3\nabp\times b^\pm)\cdot\left(b^\pm\times( \ffpm \rho^\pm \p^3(\Dtpm)^2 v^\pm)\right)\dvt-\iopm \ffpm \left(b^\pm\times(\p^3\nabp\times b^\pm)\right)\cdot\Dtpm\left(b^\pm\times(\p^3\nabp\times b^\pm)\right)\dvt\\
&+\iopm(\p^3\nabp\times b^\pm)\cdot RK_p^\pm\dvt,
\end{aligned}
\end{equation}where 
\begin{align*}
RK_p^\pm:=&~\ffpm \p^3\left((\nabp v_j^\pm)(\pp_j p^\pm)+(\Dtpm \rho^\pm)(\Dtpm v^\pm)\right)+[\p^3,\ffpm \rho^\pm](\Dtpm)^2 v^\pm\\
&+\ffpm[\p^3,\Dtpm](b^\pm\times(\nabp\times b^\pm))+\ffpm\Dtpm\left([\p^3,b^\pm\times](\nabp\times b^\pm)\right)
\end{align*}consists of $\leq 4$ derivatives in each term and its contribution can be directly controlled
\begin{align}
L_3^\pm:=\iopm(\p^3\nabp\times b^\pm)\cdot RK_p^\pm\dvt\leq P(\|b^\pm,v^\pm,\ffpm p^\pm\|_{4,\pm},\|\ffpm\Dtpm(v^\pm,b^\pm,p^\pm)\|_{3,\pm})
\end{align}Note that the second term on the right side of $K_1^\pm$ is obtained by using the vector identity $\bd{a}\cdot(\bd{b}\times\bd{c})=-\bd{c}\cdot(\bd{b}\times\bd{a})$:
\[
(\p^3\nabp\times b^\pm)\cdot\left(b^\pm\times\Dtpm\left(b^\pm\times(\p^3\nabp\times b^\pm)\right)\right)=-\Dtpm\left(b^\pm\times(\p^3\nabp\times b^\pm)\right)\cdot\left(b^\pm\times(\p^3\nabp\times b^\pm)\right).
\]Therefore, we have
\begin{equation}
\begin{aligned}
&\frac12\ddt\iopm\rho^\pm\bno{\p^3(\nabp\times v^\pm)}^2\dvt\\
=&-\iopm(\p^3\nabp\times b^\pm)\cdot\Dtpm(\p^3\nabp\times b^\pm)\dvt-\iopm \ffpm \left(b^\pm\times(\p^3\nabp\times b^\pm)\right)\cdot\Dtpm\left(b^\pm\times(\p^3\nabp\times b^\pm)\right)\dvt\\
&+\underbrace{\iopm (\p^3\nabp\times b^\pm)\cdot\left(b^\pm\times( \ffp \rho^\pm \p^3(\Dtpm)^2 v^\pm)\right)\dvt}_{K^\pm}+L_1^\pm+L_2^\pm+L_3^\pm\\
=&-\frac12\ddt\iopm\bno{\p^3(\nabp\times b^\pm)}^2+\ffpm\bno{b^\pm\times\p^3(\nabp\times b^\pm)}^2\dvt \\
&+\frac12\iopm (\nabp\cdot v^\pm)\left(\bno{\p^3(\nabp\times b^\pm)}^2+\ffpm\bno{b^\pm\times\p^3(\nabp\times b^\pm)}^2\right)\dvt  + K^\pm+L_1^\pm+L_2^\pm+L_3^\pm,
\end{aligned}
\end{equation}which further gives the control of vorticity and current density simultaneously
\begin{equation}\label{divcurlE4TT}
\begin{aligned}
&\frac12\ddt\iopm\rho^\pm\bno{\p^3(\nabp\times v^\pm)}^2+\bno{\p^3(\nabp\times b^\pm)}^2+\ffpm\bno{b^\pm\times\p^3(\nabp\times b^\pm)}^2\dvt\\
\lesssim&~P(E_4^{\kk}(t)) + P(|\psi|_{4})\|b^\pm\|_{4,\pm}\|b^\pm\rho^\pm\|_{L^{\infty}(\Om^\pm)}\ino{\eps^2(\Dtpm)^2v^\pm}_3\leq P(E_4^{\kk}(t)) + E_5^{\kk}(t).
\end{aligned}
\end{equation}Hence, the vorticity analysis for compressible ideal MHD cannot be closed in standard Sobolev space because of the term $ \eps^2 \p^3(\Dtpm)^2 v^\pm$ in $K^\pm$. Instead, the appearance of this term indicates us to \textbf{trade one normal derivative (in the curl operator) for two tangential derivatives $(\Dtpm)^2$ together with square weights of Mach number $\eps^2$. Besides, the normal derivative part involving $\p^3\Dtpm(\nabp\times b^\pm)$ contributes to the energy of current density thanks to the special structure of Lorentz force $-b^\pm\times(\nabp\times b^\pm)$. This is exactly the motivation for us to define the energy functional $E(t)$ under the setting the anisotropic Sobolev spaces instead of standard Sobolev spaces.} 

Similarly, the curl estimates for the time derivatives (in $E_4(t)$) can be proven in the same way by replacing $\p^3$ with $\p^{3-k}\p_t^k~(1\leq k\leq 3)$. We omit the details and list the conclusion
\begin{equation}
\begin{aligned}
&\frac12\ddt\iopm\rho^\pm\bno{\p^{3-k}\p_t^k(\nabp\times v^\pm)}^2+\bno{\p^{3-k}\p_t^k(\nabp\times b^\pm)}^2+\ffpm\bno{b^\pm\times\p^{3-k}\p_t^k(\nabp\times b^\pm)}^2\dvt\\
\lesssim&~P(E_4^{\kk}(t)) +\ino{\eps^2\p_t^k(\Dtpm)^2v^\pm}_{3-k,\pm}^2 \leq P(E_4^{\kk}(t)) + E_5^{\kk}(t).
\end{aligned}
\end{equation}Finally, we need to commute $\p_t^k$ with $\nabp\times$ when $k\geq 1$.  We have
\[
(\nabp\times\p_t^k v)_i\eql \p_t^k(\nabp\times v)_i + \epsilon_{ijl}(\p_3\vp)^{-1}(\TP_j\p_t^k\vp)(\p_3 v_l),
\]where $\epsilon_{ijl}$ is the sign of permutation $(ijl)\in S_3$. This gives
\begin{align}
\|\nabp\times \p_t^kv^\pm\|_{3-k,\pm}^2\lesssim C(\|v^\pm\|_{W^{1,\infty}(\Om^\pm)})\left(\ino{\p_t^k(\nabp\times v^\pm)}_{3-k,\pm}^2+\bno{\p_t^k\psi}_{4-k}^2\right)+P(E_4^{\kk}(t))\int_0^t P(E_4^{\kk}(\tau))\dtau,
\end{align}where both leading order terms have been controlled in tangential estimates of $E_4^{\kk}(t)$. The same result holds for $b^\pm$. Using the result of tangential estimates of $E_4^{\kk}(t)$, we have: for any $k\in\{0,1,2,3\}$ and any $\delta\in(0,1)$
\begin{equation}
\begin{aligned}
\ino{\nabp\times\p_t^k v^\pm}_{3-k,\pm}^2+\ino{\nabp\times\p_t^k b^\pm}_{3-k,\pm}^2\lesssim ~\delta E_4^{\kk}(t)+P(\sigma^{-1},E_4^{\kk}(0))+P(E_4^{\kk}(t))\int_0^t P(\sigma^{-1},E_4^{\kk}(\tau)) + E_5^{\kk}(\tau)\dtau.
\end{aligned}
\end{equation}

\subsubsection{Further div-curl analysis for $E_5\sim E_7$}\label{sect divcurlE5}
The vorticity analysis for $E_4(t)$ requires the control of $\ino{\eps^2\p_t^k(\Dtpm)^2v^\pm}_{3-k}^2$ for $0\leq k\leq 3$. When $0\leq k\leq 2$, there are still normal derivatives in this term. Thus, we shall do further div-curl analysis on $\ino{\eps^2\p_t^k(\Dtpm)^2v^\pm}_{3-k}^2$ for $0\leq k\leq 2$. Let $\TT^\alpha=\p_t^{\alpha_0}\TP_1^{\alpha_1}\TP_2^{\alpha_2}(\omega(x_3)\p_3)^{\alpha_4}$ with $\len{\alpha}=2$. The divergence part can be reduced in the same way as in Section \ref{sect divE4}. We take $\p_t^k\TT^\alpha$ in the continuity equation to get  
\[
\nabp\cdot\p_t^k \TT^\alpha v\eql -\eps^2 \p_t^k\TT^\alpha\Dtp p +(\p_3\vp)^{-1}\TP\p_t^k\TT^\alpha\vp\cdot\p_3 v,
\]which gives
\begin{align}
\|\eps^2\nabp\cdot \p_t^k\TT^\alpha v^\pm\|_{2-k}^2\lesssim C(\|v\|_{W^{1,\infty}})\left(\ino{\eps^4 \p_t^k\TT^\alpha \TT p^\pm}_{2-k,\pm}^2+\bno{\eps^2\p_t^k\TT^\alpha\psi}_{2-k}^2\right)+P(E_4^{\kk}(t))\int_0^t P(E_4^{\kk}(\tau))E_5^{\kk}(\tau)\dtau.
\end{align}
\begin{rmk}
The term generated when commuting $\TT^\alpha$ with $\nabp$ is actually of lower order. One can check that (see also \cite[(3.24)-(3.25)]{WZ2023CMHDlimit})
\[
[(\omega\p_3)^m,\p_3]f=\underbrace{(\omega\p_3)^m\p_3 f- \p_3((\omega\p_3)^m f)}_{\text{both are (m+1)-th order terms}}=\sum_{k\leq m-1}c_{m,k}(\omega\p_3)^{k}\p_3 f=\sum_{k\leq m-1}d_{m,k}\p_3(\omega\p_3)^{k} f
\] for some smooth functions $c_{m,k}, d_{m,k}$ depending on $m,k$ and the derivatives (up to order $m$) of $\omega$, and the right side only contains $\leq m$-th order terms.
\end{rmk}
Similarly, using $\nabp\cdot b=0$, we have $\nabp\cdot\p_t^k\TT^\alpha b\eql \underbrace{\p_t^k\TT^\alpha(\nabp\cdot b)}_{=0} +(\p_3\vp)^{-1}\TP\p_t^k\TT^\alpha\vp\cdot\p_3 b$ and thus
\begin{align}
\|\eps^2\nabp\cdot \p_t^kb^\pm\|_{2-k,\pm}^2\lesssim C(\|b^\pm\|_{W^{1,\infty}(\Om^\pm)})\bno{\eps^2\p_t^k\TT^\alpha\psi}_{2-k}^2+P(E_4^{\kk}(t))\int_0^t P(E_4^{\kk}(\tau))E_5^{\kk}(\tau)\dtau.
\end{align}The control of divergence part in the analysis of $E_5^{\kk}(t)$ is concluded by the following energy inequality. For any $k\in\{0,1,2\}$, any multi-index $\alpha$ with $\len{\alpha}=2$ and any $\delta\in(0,1)$
\begin{equation}
\begin{aligned}
&\ino{\eps^2\nabp\cdot \p_t^k\TT^\alpha(v^\pm,b^\pm)}_{2-k,\pm}^2 \lesssim C(\|v^\pm\|_{W^{1,\infty}(\Om^\pm)})\ino{\eps^4 \p_t^k\TT^\alpha\TT p^\pm}_{2-k,\pm}^2 + C(\|v^\pm, b^\pm\|_{W^{1,\infty}(\Om^\pm)})\bno{\p_t^k\TT^\alpha\psi}_{3-k}^2\\
\lesssim &~C(\|v^\pm\|_{W^{1,\infty}(\Om^\pm)})\ino{\eps^4 \p_t^k\TT^\alpha\TT p^\pm}_{2-k,\pm}^2  +\delta E_5^{\kk}(t)+P(E_4^{\kk}(0),E_5^{\kk}(0))+ P(E_4^{\kk}(t))\int_0^t P(E_4^{\kk}(\tau),E_5^{\kk}(\tau))\dtau,
\end{aligned}
\end{equation}where the term involving $\TT p^\pm$ can be further reduced to $\TT(v^\pm,b^\pm)$ when $2-k>0$ so that one can further apply the div-curl analysis to it.

As for the curl part, we can still mimic the proof in Section \ref{sect curlE4} to get the control of $\ino{\eps^2\p_t^k\TT^\alpha(\nabp\times (v,b))}_{2-k}$ for $0\leq k\leq 2$ and $\len{\alpha}=2$ with $\alpha_3=0$
\begin{equation}
\begin{aligned}
&\frac12\ddt\iopm\rho^\pm\bno{\eps^2\p^{2-k}\p_t^k\TT^\alpha(\nabp\times v^\pm)}^2+\bno{\eps^2\p^{2-k}\p_t^k\TT^\alpha(\nabp\times b^\pm)}^2+\ffpm\bno{\eps^2b^\pm\times\p^{2-k}\p_t^k\TT^\alpha(\nabp\times b^\pm)}^2\dvt\\
\lesssim&~P(E_4^{\kk}(t), E_5^{\kk}(t)) +\ino{\eps^4\p_t^k\TT^\alpha(\Dtpm)^2v^\pm}_{2-k}^2 \leq P(E_4^{\kk}(t), E_5^{\kk}(t)) + E_6^{\kk}(t).
\end{aligned}
\end{equation} Then we commute $\p^{2-k}\p_t^k \TT^\alpha$ with $\nabp\times$ to get: for any $k\in\{0,1,2\}$, any multi-index $\alpha$ with $\len{\alpha}=2$ and $\alpha_3=0$, and any $\delta\in(0,1)$
\begin{equation}
\begin{aligned}
&\ino{\nabp\times\p_t^k\TT^\alpha v^\pm}_{2-k,\pm}^2+\ino{\nabp\times\p_t^k\TT^\alpha b^\pm}_{2-k,\pm}^2\\
\lesssim&~P(E_4^{\kk}(0), E_5^{\kk}(0))+\int_0^t P(E_4^{\kk}(\tau), E_5^{\kk}(\tau)) + E_6^{\kk}(\tau)\dtau+P(\|v^\pm,b^\pm\|_{W^{1,\infty}(\Om^\pm)})\bno{\eps^2\p_t^k\TT^\alpha\psi}_{2-k}^2\\
\lesssim &~\delta E_5^{\kk}(t)+P(\sigma^{-1},E_4^{\kk}(0), E_5^{\kk}(0))+P(E_4^{\kk}(t))\int_0^t P(\sigma^{-1},E_4^{\kk}(\tau), E_5^{\kk}(\tau)) + E_6^{\kk}(\tau)\dtau,
\end{aligned}
\end{equation} where we use the result of tangential estimates to control $\bno{\eps^2\p_t^k\TT^\alpha\psi}_{2-k}$.
When $k\leq 1$ in the above energy estimate, we shall continue to apply the div-curl analysis to $\ino{\eps^4\p_t^k\TT^\alpha(\Dtpm)^2v^\pm}_{2-k}^2$. 

For $E_6^{\kk}$ and $E_7^{\kk}$, we have analogous div-curl inequalities. For $l=2,3$, we continue to analyze the divergence and the curl according to \eqref{divcurlE}. Similarly as above, we have the following estimates for any $k\in\{0,1\}$, any multi-index $\alpha$ with $\len{\alpha}=4,~\alpha_3=0$ and any $\delta\in(0,1)$
\begin{equation}
\begin{aligned}
&\ino{\eps^4\nabp\times\p_t^k\TT^\alpha v^\pm}_{1-k,\pm}^2+\ino{\eps^4\nabp\times\p_t^k\TT^\alpha b^\pm}_{1-k,\pm}^2\\
\lesssim &~\delta E_6^{\kk}(t)+P\left(\sigma^{-1},\sum_{l=0}^2 E_{4+l}^{\kk}(0)\right)+P(E_4^{\kk}(t))\int_0^t P\left(\sigma^{-1},\sum_{l=0}^2 E_{4+l}^{\kk}(\tau)\right) + E_7^{\kk}(\tau)\dtau,
\end{aligned}
\end{equation} For any multi-index $\alpha$ with $\len{\alpha}=6$ and $\alpha_3=0$ and any $\delta\in(0,1)$, we have
\begin{equation}
\begin{aligned}
&\ino{\eps^6\nabp\times\TT^\alpha v^\pm}_{0,\pm}^2+\ino{\eps^6\nabp\times\TT^\alpha b^\pm}_{0,\pm}^2\\
\lesssim &~\delta E_7^{\kk}(t)+P\left(\sigma^{-1},\sum_{l=0}^3 E_{4+l}^{\kk}(0)\right)+P(E_4^{\kk}(t))\int_0^t P\left(\sigma^{-1},\sum_{l=0}^3 E_{4+l}^{\kk}(\tau)\right) + E_8^{\kk}(\tau)\dtau.
\end{aligned}
\end{equation} 

The control of divergence part for $ E_6^{\kk}(t),~ E_7^{\kk}(t)$ also follows the same way as $ E_4^{\kk}(t),  E_5^{\kk}(t)$. For any $k\in\{0,1\}$, any multi-index $\alpha$ with $\len{\alpha}=4,~\alpha_3=0$, we have
\begin{equation}
\begin{aligned}
&\ino{\eps^4\nabp\cdot \p_t^k\TT^\alpha(v^\pm,b^\pm)}_{1-k,\pm}^2 \lesssim C(\|v^\pm\|_{W^{1,\infty}(\Om^\pm)})\ino{\eps^6 \p_t^k\TT^\alpha\TT p^\pm}_{1-k,\pm}^2 + C(\|v^\pm, b^\pm\|_{W^{1,\infty}(\Om^\pm)})\bno{\p_t^k\TT^\alpha\psi}_{2-k}^2\\
\lesssim &~\delta E_6^{\kk}(t)+P\left(\sigma^{-1},\sum_{l=0}^2 E_{4+l}^{\kk}(0)\right)+P(E_4^{\kk}(t))\int_0^t P\left(\sigma^{-1},\sum_{l=0}^2 E_{4+l}^{\kk}(\tau)\right)\dtau.
\end{aligned}
\end{equation} For any multi-index $\alpha$ with $\len{\alpha}=6,~\alpha_3=0$, we have
\begin{equation}
\begin{aligned}
&\ino{\eps^6\nabp\cdot \p_t^k\TT^\alpha(v^\pm,b^\pm)}_{0,\pm}^2 \lesssim C(\|v^\pm\|_{W^{1,\infty}(\Om^\pm)})\ino{\eps^8 \p_t^k\TT^\alpha\TT p^\pm}_{0,\pm}^2 + C(\|v^\pm, b^\pm\|_{W^{1,\infty}(\Om^\pm)})\bno{\eps^6\TT^\alpha\psi}_{1}^2\\
\lesssim &~\delta E_7^{\kk}(t)+P\left(\sigma^{-1},\sum_{l=0}^3 E_{4+l}^{\kk}(0)\right)+P(E_4^{\kk}(t))\int_0^t P\left(\sigma^{-1},\sum_{l=0}^3 E_{4+l}^{\kk}(\tau)\right) \dtau,
\end{aligned}
\end{equation}where the term $\ino{\eps^8 \TT^\alpha\TT p^\pm}_{0,\pm}^2$ does not appear because it has been included in tangential estimates for $E_7^{\kk}(t)$.

\subsubsection{Modifications for the 2D case}\label{sect 2D curl}
In the case of 2D, the equations of vorticity $\nabper\cdot v$ and current density $\nabper\cdot b$ are
\begin{align}
 \rho\Dtp(\nabper\cdot v)- \bp(\nabper\cdot b) =&-(\nabper\rho)\cdot (\Dtp v) - \rho(\nabper v_j)\cdot(\nabp_j v) + (\nabper b_j)\cdot(\nabp_j b),\\
\Dtp(\nabper\cdot b)-\bp(\nabper\cdot v)-b\cdot\nabper(\nabp\cdot v) =&-(\nabper\cdot b)(\nabp\cdot v)-(\nabper v_j)\cdot(\nabp_j b) + (\nabper b_j)\cdot(\nabp_j v),
\end{align}which has the same structure as \eqref{curlveq}-\eqref{curlbeq}. Thus, we expect to adopt the strategy in Section \ref{sect divcurl} to prove the div-curl estimates. The only slight difference is the structure of Lorentz force. Let us take the $\p^3$-estimate of $\nabper\cdot (v,b)$ for an example. In this case, the problematic term (in the analogue of $K_1^\pm$ in \eqref{divcurlK1}) becomes
\[
{K_1^\pm}'=\iopm (\p^3\nabper\cdot b^\pm)\,\left(b^\pm\cdot\nabper(\p^3\nabp\cdot v^\pm)\right)\dvt.
\]Again, we invoke the continuity equation, commute $\nabp$ with $\Dtpm$ to get
\begin{align*}
b^\pm\cdot\nabper(\p^3\nabp\cdot v^\pm)\eql&~ \eps^2 (b_1^\pm\p^3\pp_2\Dtpm p^\pm-b_2^\pm\p^3\pp_1\Dtpm p^\pm)\eql   \eps^2 (b_1^\pm\p^3\Dtpm (\pp_2p^\pm)-b_2^\pm\p^3\Dtpm (\pp_1p^\pm)).
\end{align*} Then we plug the momentum equation
\begin{align*}
-\pp_1 p=\rho\Dtp v_1-b_1\pp_1b_1-b_2\pp_2b_1+b_1\pp_1b_1 +b_2\pp_1 b_2=\rho\Dtp v_1+b_2(\nabper\cdot b)\\
-\pp_2 p=\rho\Dtp v_2-b_1\pp_1b_2-b_2\pp_2b_2+b_1\pp_2b_1 +b_2\pp_2b_2=\rho\Dtp v_2-b_1(\nabper\cdot b)
\end{align*} 
to get
\begin{align*}
b^\pm\cdot\nabper(\p^3\nabp\cdot v^\pm)\eql&~\ffpm\rho^\pm(b^{\pm,\perp}\cdot\p^3(\Dtpm)^2 v^\pm) - \ffpm\left((b_1^\pm)^2+(b_2^\pm)^2\right)\p^3\Dtpm(\nabper\cdot b),~~b^\perp:=(-b_2,b_1).
\end{align*} Thus, the term ${K_1^\pm}'$ can be controlled in a similar manner as in Section \ref{sect divcurl}
\begin{align*}
{K_1^\pm}'\eql &\iopm (\p^3\nabper\cdot b^\pm)\,\left(\ffpm\rho^\pm b^{\pm,\perp}\cdot\p^3(\Dtpm)^2 v^\pm\right)\dvt\\
&-\iopm \ffpm|b^\pm|^2(\p^3\nabper\cdot b^\pm)\,(\Dtpm\p^3\nabper\cdot b^\pm)\dvt\\
=&\iopm (\p^3\nabper\cdot b^\pm)\,\left(\ffpm\rho^\pm b^{\pm,\perp}\cdot\p^3(\Dtpm)^2 v^\pm\right)\dvt -\frac12\iopm(\nabp\cdot v^\pm)\ffpm\bno{b^\pm}^2\bno{\p^3\nabper\cdot b^\pm}^2\dvt\\
&-\frac12\ddt\iopm\ffpm\bno{b^\pm}^2\bno{\p^3\nabper\cdot b^\pm}^2\dvt.
\end{align*}Hence, the curl estimate \eqref{divcurlE4TT} should be modified to be
\begin{align}
\frac12\ddt\iopm\rho^\pm\bno{\p^3(\nabper\cdot  v^\pm)}^2+(1+\ffpm|b^\pm|^2)\bno{\p^3(\nabper \cdot b^\pm)}^2\dvt\lesssim P(\EW_4(t)) + \EW_5(t).
\end{align}

\subsection{Uniform estimates for the nonlinear approximate system}
\subsubsection{Control of the entropy}
It remains to control the full (anisotropic) Sobolev norms of the entropy functions $S^\pm$. This can be easily proven thanks to $\Dtpm S^\pm=0$. In the control of $E_{4+l}^{\kk}(t)$ for fixed $0\leq l \leq 4$, we need to take the derivative $\p_*^{\alpha}:=\p^{4-l-k}\p_t^k\TT^{\gamma}=\p_3^{\gamma_3'}(\omega\p_3)^{\gamma_4}\p_t^{k+\gamma_0}\TP_1^{\gamma_1+\gamma_1'}\TP_2^{\gamma_2+\gamma_2'}$ with $\gamma_0+\gamma_1+\gamma_2+\gamma_4=2l$, $\gamma_1'+\gamma_2'+\gamma_3'=4-k-l$ and $0\leq k\leq 4-l$ and also multiply the weight $\eps^{2l}$. Then we can  introduce the Alinhac good unknown $\BS^\p$ with respect to this general derivative $\p_*^\alpha$ by 
\[
\BS^{\p,\pm}:=\p_*^\alpha S^\pm -\p_*^\alpha \vp\pp_3 S^\pm,
\]which satisfies the evolution equation $\Dtpm \BS^{\p,\pm}=\dd^\p(S^\pm)$ in $\Om^\pm$ where $\dd^{\p}(S^\pm)$ is defined by \eqref{AGU comm B} after replacing $\TT^\gamma$ with $\p_*^\alpha$. We will get
\begin{align}
&\left\|\eps^{2l}\p_t^k\TT^{\gamma} S^\pm\right\|_{4-k-l,\pm}^2\lesssim \ino{\eps^{2l}\BS^{\p,\pm}}_{0,\pm}^2+\bno{\p_*^\alpha\psi}_{0}^2\|\p_3 S^\pm\|_{L^{\infty}(\Om^\pm)}^2\no\\
\lesssim&~ \delta E_{4+l}^{\kk}(t)+P\left(\sigma^{-1},\sum_{j=0}^l E_{4+j}^{\kk}(0)\right)+E_4^{\kk}(t)\int_0^t P\left(\sigma^{-1},\sum_{j=0}^l E_{4+j}^{\kk}(\tau)\right) \dtau, \label{prop Skk}
\end{align}where we again invoke the estimate of $\bno{\p_*^\alpha\psi}_{0}$ that has been proven in Section \ref{sect ETT}-Section \ref{sect ETTG}.

\subsubsection{Uniform-in-$\kk$ estimates for the nonlinear approximate system}
Summarizing Proposition \ref{prop L2kk} ($L^2$-energy conservation), Proposition \ref{prop EkkTT} (tangential estimates), Proposition \ref{prop divcurlkk} (div-curl estimates) and \eqref{prop Skk} (entropy estimates), we conclude the estimates of the energy functional $E^\kk(t)$ for the nonlinear approximate system \eqref{CMHD0kk} by
\begin{equation}
E^{\kk}(t)\lesssim \delta E^\kk(t)+ P(E^\kk(0))+ P(E^\kk(t))\int_0^t P(\sigma^{-1}, E^\kk (\tau))\dtau,~~\forall \delta\in(0,1)
\end{equation} Thus, choosing $\delta$ suitably small such that $\delta E^\kk(t)$ can be absorbed by the left side and then using Gronwall-type argument, we find that there exists a time $T_\sigma>0$ that depends on $\sigma$ and the initial data and is independent of $\kk$ and $\eps$, such that 
\begin{align}
\sup\limits_{0\leq t\leq T_\sigma} E^\kk(t)\leq C(\sigma^{-1})P(E^\kk(0)),
\end{align} which is exactly the conclusion of Proposition \ref{prop Ekk}.

\section{Well-posedness of the nonlinear approximate system}\label{sect LWPkk}
We already prove the uniform-in-$\kk$ estimates for the nonlinear approximate problem \eqref{CMHD0kk}. If we can prove the well-posedness of \eqref{CMHD0kk} for each fixed $\kk>0$, then the uniform estimates allow us to take the limit $\kk\to 0_+$ and prove the local existence of system \eqref{CMHD0} for the compressible current-vortex sheets with surface tension. Since there is no loss of regularity in the estimate of $E^\kk(t)$, we would like to use Picard iteration to construct the solution to \eqref{CMHD0kk} for each fixed $\kk$.

\subsection{Definition of the linearized problem and the iteration scheme}\label{sect linear kkeq}
We now state the process of Picard iteration.

\textbf{Step 1: Start from a constant state.} We start with $\psi^{[-1]}=\psi^{[0]}=0$ and $(v^{[0],\pm},b^{[0],\pm},\rho^{[0],\pm},S^{[0],\pm})=(\vec{0},\vec{0},\underline{\rho}^\pm,0)$ for some constants $\underline{\rho}^\pm\geq \bar{\rho_0}$.

\textbf{Step 2: Define the linearized system.} For any $n\geq 0, n\in\N$, given $\{(v^{[k],\pm},b^{[k],\pm},\rho^{[k],\pm},S^{[k],\pm},\psi^{[k]})\}_{k\leq n}$, we define $(v^{[n+1],\pm},b^{[n+1],\pm},q^{[n+1],\pm},S^{[n+1],\pm},\psi^{[n+1]})$ by the following linear system with variable coefficients depending on the basic state $(v^{[n],\pm},b^{[n],\pm},q^{[n],\pm},S^{[n],\pm},\psi^{[n]},\psi^{[n-1]})$
\begin{equation}\label{CMHD0llkk}
\begin{cases}
\rho^{\nnn,\pm} D_t^{\vp^\nnn,\pm} v^{\nnr,\pm} - (\mb^{\nnn,\pm}\cdot\nab^{\vp^\nnn,\pm}) b^{\nnr,\pm}+\nab^{\vp^\nnn,\pm} q^{\nnr,\pm}=0&~~\text{ in }[0,T]\times \Omega^\pm,\\
(\ffpm)^\nnn D_t^{\vp^{[n]},\pm} q^{\nnr,\pm} - (\ffpm)^\nnn D_t^{\vp^{[n]},\pm} b^{\nnr,\pm}\cdot \mb^{[n],\pm} +\nab^{\vp^\nnn,\pm}\cdot v^{\nnr,\pm}=0 &~~\text{ in }[0,T]\times \Omega^\pm,\\
D_t^{\vp^{[n]},\pm} b^{\nnr,\pm}-(\mb^{\nnn,\pm}\cdot\nab^{\vp^\nnn,\pm}) v^{\nnr,\pm}+\mb^{[n],\pm}\nab^{\vp^\nnn,\pm}\cdot v^{\nnr,\pm}=0&~~\text{ in }[0,T]\times \Omega^\pm,\\
D_t^{\vp^{[n]},\pm} S^{\nnr,\pm}=0&~~\text{ in }[0,T]\times \Omega^\pm,\\
\jump{q^{\nnr}}=\sigma\h(\psi^{[n]})- \kk (1-\TL)^2\psi^{\nnr} - \kk (1-\TL)\p_t\psi^{\nnr}&~~\text{ on }[0,T]\times\Sigma, \\
\p_t \psi^{[n+1]} = v^{[n+1],\pm}\cdot N^{[n]} &~~\text{ on }[0,T]\times\Sigma,\\
v_3^{[n+1],\pm}=0,~~&~~\text{ on }[0,T]\times\Sigma^\pm,\\
(v^{\nnr,\pm},b^{\nnr,\pm},q^{\nnr,\pm},S^{\nnr,\pm},\psi^{\pm})|_{t=0}=(v_0^{\kk,\pm}, b_0^{\kk,\pm}, q_0^{\kk,\pm}, S_0^{\kk,\pm},\psi_0^{\kk}),
\end{cases}
\end{equation}where $\mb_i^{[n],\pm}:=b_i^{[n],\pm}$ for $i=1,2$ and $\mb_3^{[n],\pm}$ is defined by
\begin{align}\label{modified b}
\mb_3^{[n],\pm}:=b_3^{[n],\pm}+\mathfrak{R}_T^\pm\left(b_1^{[n],\pm}\TP_1\psi^{[n]}+b_2^{[n],\pm}\TP_2\psi^{[n]}-b_3^{[n],\pm}\right)\big|_{\Sigma}
\end{align} where $\mathfrak{R}_T^\pm$ is the lifting operator defined in Lemma \ref{trace}. The initial data $(v_0^{\kk,\pm}, b_0^{\kk,\pm}, \rho_0^{\kk,\pm}, S_0^{\kk,\pm},\psi_0^{\kk})$ is the same as \eqref{CMHD0kk}. 

In \eqref{CMHD0llkk}, the basic state $(v^{[n],\pm},\mb^{[n],\pm},\rho^{[n],\pm},S^{[n],\pm},\psi^{[n]},\psi^{[n-1]})$ satisfies:
\begin{enumerate}
\item (The hyperbolicity assumption) $\rho^{\nnn,\pm}>0$ is determined by the equation of state \eqref{eos11} where $p^{\nnn,\pm}$ is defined by $p^{\nnn,\pm}:=q^{\nnn,\pm}-\frac12|b^{\nnn,\pm}|^2$. Then define $\ff^\nnn=\log \rho^\nnn,~~\ffp^{\nnn,\pm}:=\frac{\p\ff^{\nnn,\pm}}{\p p}(p^{\nnn,\pm},S^{\nnn,\pm})>0.$
\item (Tangential magnetic fields) $\mb^{[n],\pm}\cdot N^{\nnn}=0\text{ on }\Sigma,$ and $\mb^{[n],\pm}_3=0$ on $\Sigma^\pm$.
\item (Linearized material derivatives and covariant derivatives)
\begin{align}
D_t^{\vp^\nnn,\pm}:=&~\p_t+\vb^\nnn\cdot\cnab+\frac{1}{\p_3\vp^\nnn}(v^\nnn\cdot\NN^\nnl-\p_t\vp^\nnn)\p_3,\\
\p_t^{\vp^\nnn}:=&~\p_t-\frac{\p_t\vp^\nnn}{\p_3\vp^\nnn}\p_3,~\nab_a^{\vp^\nnn}=\p_a^{\vp^\nnn}:=\p_a-\frac{\p_a\vp^\nnn}{\p_3\vp^\nnn}\p_3,~~a=1,2,\quad \nab_a^{\vp^\nnn}=\p_3^{\vp^\nnn}:=\frac{1}{\p_3\vp^\nnn}\p_3
\end{align}where $N^\nnn:=(-\p_1\psi^\nnn,-\p_2\psi^\nnn,1)^\top$ and $\NN^\nnn$ is the extension of $N^\nnn$ with $\vp^{\nnn}=x_3+\chi(x_3)\psi^{\nnn}(t,x')$. 
\end{enumerate}

\textbf{Step 3: Define the remaining variables $p,\rho$ and the modified magnetic field $\mb$.}  After solving the linear problem \eqref{CMHD0llkk}, we define $p^{\nnr,\pm} = q^{\nnr,\pm}-\frac12|b^{\nnr,\pm}|^2$ and use the equation of state $p^{\nnr}=p^{\nnr}(\rho^\nnr,S^\nnr)$ to determine the density $\rho^\nnr>0$. We shall also define the ``modified magnteic fields" $\mb^{[n+1],\pm}$ as follows in order to guarantee $\mb^{[n+1],\pm}\cdot N^{\nnr}=0$ on $\Sigma$ and $\Sigma^\pm$:
\begin{align}
\mb_1^{[n+1],\pm}=b_1^{\nnr,\pm},\quad \mb_2^{[n+1],\pm}=b_2^{\nnr,\pm},\no\\
\mb_3^{[n+1],\pm}=b_3^{[n+1],\pm}+\mathfrak{R}_T^\pm\left(b_1^{[n+1],\pm}\TP_1\psi^{[n+1]}+b_2^{[n+1],\pm}\TP_2\psi^{[n+1]}-b_3^{[n+1],\pm}\right)\big|_{\Sigma}.
\end{align}

\begin{rmk}[The boundary constraint of magnetic fields] The modified basic state $\mbr$ is necessary here, because the quantity $b^\nnr$ solved from \eqref{CMHD0llkk} may not be tangential to $\Sigma$ and so integrating $\bp$ by parts produces uncontrollable boundary terms. When taking the limit $n\to\infty$, we can show that the limit function $b^{[\infty]}$ also satisfies the constraint $b^{[\infty]}\cdot N^{[\infty]}|_{\Sigma}=0$ which indicates $\mb_3^{[\infty]}=b_3^{[\infty]}$ and so recover the nonlinear approximate system \eqref{CMHD0kk}. We refer to Section \ref{sect recoverkk} for details.
\end{rmk}

\begin{rmk}[The divergence constraint of magnetic fields]Notice that the divergence-free condition for $b^\pm$ no longer propagates from the initial data for the linear problem, but we will show that the contribution of the divergence of part of $b^\pm$ is still controllable and does not introduce extra substantial difficulty. After solving the nonlinear problem \eqref{CMHD0kk} for each fixed $\kk>0$, $\nabp\cdot b^\pm=0$ in \eqref{CMHD0kk} is automatically recovered from the initial constraint $\nabp\cdot b_0^{\kk,\pm}=0$.
\end{rmk}

For simplicity of notations, given any $n\in\N$, we denote $(v^{[n+1],\pm},b^{[n+1],\pm},q^{[n+1],\pm},p^{[n+1],\pm},\rho^{[n+1],\pm},S^{[n+1],\pm},\psi^{[n+1]})$, \\ $(v^{[n],\pm},b^{[n],\pm},\mb^{[n],\pm},q^{[n],\pm},\rho^{[n],\pm}, p^{[n],\pm}, S^{[n],\pm},\psi^{[n]})$, $\psi^\nnl$ respectively by $(v^\pm,b^\pm,q^\pm,p^\pm,\rho^\pm,S^\pm)$, $(\vr^\pm,\br^\pm,\mbr^\pm,\qr^\pm,\rhor^\pm,\prr^\pm,\sr^\pm,\psr)$ and $\psd$. Also, we denote $D_t^{\vp^\nnn,\pm}$ and $\p_i^{\vp^\nnn},~\nab_i^{\vp^\nnn}$ by $\Dtpmr$ and $\ppr_i,~\nabpr$. Thus, the linear problem above becomes
\begin{equation}\label{CMHDllkk}
\begin{cases}
\rhor \Dtpmr v^{\pm} - \mbpr b^{\pm}+\nabpr q^{\pm}=0&~~\text{ in }[0,T]\times \Omega^\pm,\\
\ffpmr \Dtpmr q^{\pm} - \ffpmr \Dtpmr b^{\pm}\cdot \mbr^{\pm} +\nabpr\cdot v^{\pm}=0 &~~\text{ in }[0,T]\times \Omega^\pm,\\
\Dtpmr b^{\pm}- (\mbr^\pm\cdot\nabpr) v^{\pm}+\mbr^\pm\nabpr\cdot v^{\pm}=0&~~\text{ in }[0,T]\times \Omega^\pm,\\
\Dtpmr S^{\pm}=0&~~\text{ in }[0,T]\times \Omega^\pm,\\
\jump{q}=\sigma\h(\psr)- \kk (1-\TL)^2\psi - \kk (1-\TL)\p_t\psi&~~\text{ on }[0,T]\times\Sigma, \\
\p_t \psi= v^{\pm}\cdot \npr &~~\text{ on }[0,T]\times\Sigma,\\
v_3^{\pm}=0&~~\text{ on }[0,T]\times\Sigma^\pm,\\
(v^{\pm},b^{\pm},q^{\pm},S^{\pm},\psi)|_{t=0}=(v_0^{\kk,\pm}, b_0^{\kk,\pm}, q_0^{\kk,\pm}, S_0^{\kk,\pm},\psi_0^{\kk}),
\end{cases}
\end{equation}where $\Dtpmr=\p_t+\vbr\cdot\cnab+\frac{1}{\p_3\pr}(\vr\cdot\Npd-\p_t\pr)\p_3$ and $\h(\psr)=\cnab\cdot(\cnab\psr/|\npr|).$

In Section \ref{sect linear kkeq lwp}, we prove the well-posedness of the linearized problem by using Galerkin approximation. Then we prove the high-order energy estimates for the linearized problem in Section \ref{sect uniformll} and prove the strong convergence of the Picard approximate sequence in Section \ref{sect picard}. Finally, in Section \ref{sect recoverkk}, we verify the limit system \eqref{CMHDllkklimit} is exactly the nonlinear approximate problem \eqref{CMHD0kk}.

\subsection{Well-posedness of the linearized approximate problem}\label{sect linear kkeq lwp}

In this section, we prove the well-posedness of the linearized problem \eqref{CMHDllkk}. We assume the basic state $(\vr,\br,\qr,\rhor,\prr,\sr,\psr)$ and $\psd$ satisfy the following bounds: There exists some $\Kr_0>0$ and a time $T_\kk>0$ (depending on $\kk>0$) such that
\begin{equation}\label{energy llkkrr}
\begin{aligned}
\sup_{0\leq t\leq T_\kk}\sum_{l=0}^4\bigg(\sum_\pm\sum_{\lee\alpha\ree=2l}\sum_{k=0}^{4-l}\left\|\left(\eps^{2l}\TT^{\alpha}\p_t^{k}(\vr^\pm, \br^\pm, \sr^\pm,(\ffpmr)^{\frac{(k+\alpha_0-l-3)_+}{2}}\qr^\pm)\right)\right\|^2_{4-k-l,\pm}&\\
 +\sum_{k=0}^{4+l}\left|\sqrt{\kk}\eps^{2l}\p_t^{k}\psr\right|^2_{6+l-k}+\int_0^t\left|\sqrt{\kk}\eps^{2l}\p_t^{5+l}\psr\right|^2_{1} \dtau\bigg)&\leq \Kr_0,
\end{aligned}
\end{equation}
            where $\TT^{\alpha}:=(\omega(x_3)\p_3)^{\alpha_4}\p_t^{\alpha_0}\p_1^{\alpha_1}\p_2^{\alpha_2}$ with the multi-index $\alpha=(\alpha_0,\alpha_1,\alpha_2,0,\alpha_4),~\lee \alpha\ree=\alpha_0+\alpha_1+\alpha_2+2\times0+\alpha_4$. Moreover, we have
\begin{equation}\label{energy llkkrr2}
\forall 0\leq T\leq T_\kk,\quad\int_0^T\ino{\eps^{2l}\TT^{\alpha}\p_t^{k}\mbr^\pm}_{4-k-l,\pm}^2\dt\leq C(\Kr_0).
\end{equation}
\begin{rmk}\label{mbrbound}
The $L_t^2$-type bound of $\mbr$ is obtained by using the second part of Lemma \ref{trace} and the $\sqrt{\kk}$-weighted enhanced regularity for the free interface. Indeed, the modification term $\mathfrak{R}_T^\pm\left(\br_1^{\pm}\TP_1\psr+\br_2^{\pm}\TP_2\psr-\br_3^\pm\right)\big|_{\Sigma}$ has \textit{vanishing initial value} thank to $b_0^{\kk,\pm}\cdot N_0^{\kk}=0$ on $\Sigma$. Thus, one can extend this function to $(-\infty,T]\times\Om^\pm$ and then apply the trace lemma for  anisotropic Sobolev spaces (cf. Trakhinin-Wang \cite[Lemma 3.4]{TW2020MHDLWP} or Lemma \ref{trace} in this paper) to show that
\begin{align*}
&\int_0^T\ino{\TT^{\alpha}\p_t^{k}(\br^\pm-\mbr^\pm)}_{4-k-l,\pm}^2\dt\leq \ino{\br^\pm-\mbr^\pm}_{8,*,T,\pm}^2\lesssim \bno{\br_1^{\pm}\TP_1\psr+\br_2^{\pm}\TP_2\psr-\br_3^\pm}_{7,T}^2 \\
\lesssim&~ \ino{\br_1^{\pm}\TP_1\pr+\br_2^{\pm}\TP_2\pr-\br_3^\pm}_{8,*,T,\pm}^2=\int_0^T\ino{\br_1^{\pm}\TP_1\pr+\br_2^{\pm}\TP_2\pr-\br_3^\pm}_{8,*,\pm}^2\dt\lesssim T\Kr_0,~~\forall T\in[0,T_\kk],
\end{align*}where $\|\cdot\|_{m,*,T,\pm},|\cdot|_{m,T}$ norms are defined in Appendix \ref{sect lemma}.
Notice that this $\sqrt{\kk}$-weighted enhanced regularity is necessary here, otherwise we lose the control of $|\TP\psi(t)|_{8}$ and a loss of tangential derivative occurs as in lots of previous works  \cite{ChenWangCMHDVS,Trakhinin2008CMHDVS,TW2020MHDLWP,TW2021MHDSTLWP} and references therein.
\end{rmk}

We aim to prove the following proposition.
\begin{prop}\label{prop CMHDllkk lwp}
Fix $\kk>0$. Under the hypothesis \eqref{energy llkkrr}-\eqref{energy llkkrr2}, there exists a time $t_{\kk}>0$ depending on $\kk,\Kr_0$ and the initial data such that the linearized problem \eqref{CMHDllkk} admits a unique solution $$(q^\pm, v^\pm,b^\pm,S^\pm)\in L^2(0,t_\kk; L^2(\Om^\pm)),~~\psi\in L^2(0,t_\kk,H^2(\Sigma))\text{ with }\p_t\psi\in L^2(0,t_\kk,H^1(\Sigma)).$$
\end{prop}

\subsubsection{Verification of the characteristic boundary of constant multiplicity}\label{sect char bdry}
First, let us verify that system \eqref{CMHDllkk} is a first-order linear symmetric hyperbolic system with  boundary conditions being characteristic, maximally dissipative and of constant multiplicity. We can write the linearized system \eqref{CMHDllkk} into a symmetric hyperbolic system of $U^\pm:=(q^\pm,v_1^\pm,v_2^\pm,v_3^\pm,b_1^\pm,b_2^\pm,b_3^\pm,S^\pm)^\top\in\R^8$: 
\begin{equation}\label{CMHDllhh}
\begin{aligned}
L^\pm U^\pm:=A_0(\Ur^\pm)\p_t U^\pm+A_1(\Ur^\pm)\p_1U^\pm+A_2(\Ur^\pm)\p_2 U^\pm+A_3(\Ur^\pm)\p_3 U^\pm &= \mathbf{0} \quad\text{ in }\Om^\pm\\
B(U^+,U^-,\psi):=\begin{bmatrix} U_4^+ - U_2^+\TP_1\psr - U_3^+\TP_2\psr-\p_t\psi \\ U_4^- - U_2^-\TP_1\psr - U_3^-\TP_2\psr-\p_t\psi \\ U_1^+-U_1^- +\kk(1-\TL)^2\psi +\kk (1-\TL)\p_t \psi \end{bmatrix}&=\mathbf{g} \quad\text{ on }\Sigma,\\
B^{\pm}_H(U^+,U^-):=\begin{bmatrix} U_4^+ \\ U_4^- \end{bmatrix}&=\mathbf{0} \quad\text{ on }\Sigma^\pm,
\end{aligned}
\end{equation}
where $\mathbf{g}:= (0,0,\sigma\mathcal{H}(\psr))^\top$ and the coefficient matrices are
\begin{align*}
A_0(\Ur):=\begin{bmatrix}
\ffpr&\vec{0}^\top& -\ffpr \mbr^\top & 0 \\
\vec{0}&\rhor \bm{I}_3 & \bm{O}_3 & \vec{0}\\
-\ffpr \mbr & \bm{O}_3 & \bm{I}_3+\ffpr \mbr\otimes \mbr &\vec{0}\\
0&\vec{0}^\top&\vec{0}^\top&1
\end{bmatrix},~
A_i(\Ur):=\begin{bmatrix}
\ffpr \vbr_i&\vec{e_i}^\top& -\ffpr\vbr_i \mbr^\top & 0 \\
\vec{e_i}&\rhor\vbr_i \bm{I}_3 & -\mbcr_i\bm{I}_3 & \vec{0}\\
-\ffpr\vbr_i \mbr& -\mbcr_i\bm{I}_3 & \vbr_i\bm{I}_3+\ffpr \vbr_i(\mbr\otimes \mbr) &\vec{0}\\
0&\vec{0}^\top&\vec{0}^\top&\vbr_i
\end{bmatrix}~(i=1,2),\\
A_3(\Ur):=\frac{1}{\p_3\pr}\begin{bmatrix}
\ffpr (\vr\cdot\Npd-\p_t\pr)&\Npr^\top& -\ffpr(\vr\cdot\Npd-\p_t\pr) \mbr^\top & 0 \\
\Npr&\rhor(\vr\cdot\Npd-\p_t\pr) \bm{I}_3 & -(\mbr\cdot\Npr)\bm{I}_3 & \vec{0}\\
-\ffpr(\vr\cdot\Npd-\p_t\pr) \mbr& -(\mbr\cdot\Npr)\bm{I}_3 & (\vr\cdot\Npd-\p_t\pr)\bm{I}_3+\ffpr (\vr\cdot\Npd-\p_t\pr)(\mbr\otimes \mbr) &\vec{0}\\
0&\vec{0}^\top&\vec{0}^\top&\vr\cdot\Npd-\p_t\pr
\end{bmatrix}.
\end{align*} Also notice that the matrix $A_3(\Ur)$ is equal to the following matrix on the boundary
\begin{equation}\label{bdry matrix 1}
A_3(\Ur)|_{\Sigma,\Sigma^\pm}:=\begin{bmatrix}
0&\Npr^\top& \bm{0}_4^\top& \\
\Npr& \bm{O}_3 &  \\
\bm{0}_4& & \bm{O}_4
\end{bmatrix}.
\end{equation}

In later steps, we want to apply the ``weak = strong" property and the uniqueness argument in Rauch \cite[Theorem 4, 8, 9]{Rauch1985} to the above linear system, so we shall first homogenize the boundary condition and simplify the boundary matrix \eqref{bdry matrix 1}. 
Let $q_h^\pm$ be the harmonic extension of $\pm\frac12 \sigma\h(\psr)$ in $\Om^\pm$ with $\p_3 q_h^\pm = 0$ on the fixed boundaries $\Sigma^\pm$ respectively. Then the unknowns $V^\pm:=U^\pm-(q_h^\pm,\mathbf{0}_7)^\top$ satisfy $B(V^+,V^-,\psi)=\mathbf{0}$. Next, we simplify the boundary matrix. We define $W^\pm:=\Jr^{-1}V^\pm$ and $\bm{A}_i(\Ur^\pm):={\Jr}^\top A_i(\Ur^\pm)\Jr$ where $$\Jr:=\begin{bmatrix}1& & & & & & \\ &1 &&&&&\\ &&1&&&&\\&\TP_1\pr&\TP_2\pr&1&&&\\ &&&&&&\\&&&&\mathbf{I}_4&\\&&&&&&   \end{bmatrix},$$ and so
\begin{equation}\label{bdry matrix}
\bm{A}_3(\Ur)|_{\Sigma,\Sigma^\pm}=\begin{bmatrix}
0&\mathbf{e}_3^\top& \bm{0}_4^\top& \\
\mathbf{e}_3& \bm{O}_3 &  \\
\bm{0}_4& & \bm{O}_4
\end{bmatrix},~~\mathbf{e_3}=(0,0,1)^\top.
\end{equation} Then the variables $W^\pm$ satisfy
\begin{equation}\label{CMHDllh}
\begin{aligned}
\mathbb{L}^\pm W^\pm:=\bm{A}_0(\Ur^\pm)\p_t W^\pm+\bm{A}_1(\Ur^\pm)\p_1 W^\pm+\bm{A}_2(\Ur^\pm)\p_2 W^\pm+\bm{A}_3(\Ur^\pm)\p_3 W^\pm &= \ffr^\pm \quad\text{ in }\Om^\pm\\
\mathbb{B}(W^+,W^-,\psi):=\begin{bmatrix} W_4^+ - \p_t\psi \\ W_4^-  - \p_t\psi \\ W_1^+-W_1^- +\kk(1-\TL)^2\psi +\kk (1-\TL)\p_t \psi \end{bmatrix}&=\mathbf{0} \quad \text{ on }\Sigma\\
\mathbb{B}^{\pm}_H(W^+,W^-):=\begin{bmatrix} W_4^+ \\ W_4^- \end{bmatrix}&=\mathbf{0} \quad\text{ on }\Sigma^\pm,
\end{aligned}
\end{equation}for some sufficiently regular\footnote{This is because the basic state $\psr$ has high-order Sobolev regularity as shown in \eqref{energy llkkrr}.} $\ffr^\pm$ depending only on $\psr$. 

As we can see, the rank of the $8\times8$ matrix $\bm{A}_3(\Ur)|_{\Sigma,\Sigma^\pm}$ is always equal to $2<8$, so the boundary is characteristic of constant multiplicity. Also, both $\bm{A}_3(\Ur^+)|_{\Sigma}$ and $\bm{A}_3(\Ur^-)|_{\Sigma}$ have exactly one negative eigenvalue respectively. Therefore, the correct number of boundary conditions should be 1 on either of $\Sigma^+$ or $\Sigma^-$ and the correct number of boundary conditions on $\Sigma$ should be $1\times2+1=3$, where the last ``+1" is the extra one to determine the graph function $\psi$. In \eqref{CMHDllh}, there are indeed three independent boundary conditions on $\Sigma$ and one independent boundary condition on either $\Sigma^+$ or $\Sigma^-$. The maximally dissipative condition is fulfilled because of the correct number of boundary conditions.

\subsubsection{Construction of Galerkin sequence}\label{sect Galerkin seq}
Before applying the ``weak = strong" property and the uniqueness argument in Rauch \cite{Rauch1985} to \eqref{CMHDllh}, we first prove the existence of weak solution to the linear system \eqref{CMHDllh} in $L^2([0,T]\times\Om^\pm)$ by using Galerkin's method. Since $\Om:=\T^2\times(-H,H)$ is bounded, there exists an orthonormal basis $\{\bm{e}_j\}_{j=1}^{\infty}\subset C^{\infty}(\Om)$ for $L^2(\Om)$ and $H^1(\Om)$. Given $2\leq m\in\N^*$, we introduce the Galerkin sequence $\{W^{m,\pm}(t,x),\psi^{m}(t,x')\}$ by
\begin{align}
\label{W galerkin}W_j^{m,\pm}(t,x):=&\sum_{l=1}^m W_{lj}^{m,\pm}(t)\bm{e}_l(x)\quad 1\leq j\leq 8,\\
\label{psi galerkin}\psi^{m}(t,x'):=&\sum_{l=1}^m \psi_{l}^{m}(t)\bm{e}_l(x',0).
\end{align} The Galerkin sequence is assumed to satisfy the following boundary conditions
\begin{align}
\label{kbc galerkin}\p_t\psi^m=&~W_4^{m,\pm}&\text{ on }\Sigma,\\
\label{jump galerkin}\jump{W_1^m}=&- \kk(1-\TL)^2\psi^m - \kk (1-\TL)\p_t\psi^m &\text{ on }\Sigma,\\
\label{bc galerkin} 0=&~W_4^{m,\pm}&\text{ on }\Sigma^\pm.
\end{align}

Now we introduce an ODE system as the ``truncated version" of \eqref{CMHDllhh} in $\text{Span}\{\bm{e}_1,\cdots,\bm{e}_m\}$ by testing the Galerkin sequence by a vector field $\bm{\phi}:=(\phi_1,\cdots,\phi_8)^\top$ with $$\phi_i:=\sum_{l=1}^m \phi_{il}(t)\bm{e}_l(x)\in\text{Span}\{\bm{e}_1,\cdots,\bm{e}_m\}.$$ Here and thereafter, repeated indices represent taking summation over them. Then we have
\begin{align}\label{ODE Galerkin}
\iopm \bm{A}_0^{ij}(\Ur^\pm)(\p_tW_j^{m,\pm})\phi_i\dvr + \sum_{k=1}^2 \iopm \bm{A}_k^{ij}(\Ur^\pm)(\TP_kW_j^{m,\pm})\phi_i\dvr+\iopm \bm{A}_3^{ij}(\Ur^\pm)(\p_3 W_j^{m,\pm})\phi_i\dvr=\iopm \ffr_i^\pm\phi_i\dvr
\end{align}where $\dvr:=\p_3\pr\dx$. Integrating by parts in $\TP_k$ and $\p_3$, we get
\begin{align}\label{weak Galerkin}
\iopm \bm{A}_0^{ij}(\Ur^\pm)(\p_tW_j^{m,\pm})\phi_i- \sum_{k=1}^3 W_j^{m,\pm} \p_k(\bm{A}_k^{ij}(\Ur^\pm)\phi_i) \dvr \mp \is \bm{A}_3^{ij}(\Ur^\pm) W_j^{m,\pm} \phi_i\dx'=\iopm \ffr_i^\pm\phi_i\dvr.
\end{align} Plugging the Galerkin sequence into the above identity, we get
\begin{align}
\iopm \bm{A}_0^{ij}(\Ur^\pm) \bm{e}_l(x)\phi_i (W_{lj}^{m,\pm})'(t) - \sum_{k=1}^3 \p_k(\bm{A}_k^{ij}(\Ur^\pm)\phi_i)\bm{e}_l(x) W_{lj}^{m,\pm}(t)  \dvt-\iopm \ffr_i^\pm\phi_i\dvr=\pm  \is \bm{A}_3^{ij}(\Ur^\pm) W_j^{m,\pm} \phi_i\dx'.
\end{align} Taking sum for the two parts in $\Om^\pm$, setting $\phi_i(x)=\bm{e}_i(x)$ and using the jump condition for $\jump{W_1}$ and $W_4^\pm$, we obtain a first-order linear ODE system for $\{W_{lj}^\pm(t)\}$
\begin{align}
&\sum_\pm\left(\iopm \bm{A}_0^{ij}(\Ur^\pm) \bm{e}_l(x) \bm{e}_i(x)\dvr\right)(W_{lj}^{m,\pm})'(t) - \left(\iopm \p_k(\bm{A}_k^{ij}(\Ur^\pm)\bm{e}_i(x))\bm{e}_l(x)\dvr\right)W_{lj}^{m,\pm}(t)-\iopm \ffr_i^\pm \bm{e}_i\dvr \no\\
=&\is \jump{W_1^m}\bm{e}_4(x',0)\dx'\no\\
=&-\kk\is (1-\TL)\psi^m\,(1-\TL)\bm{e}_4(x',0)\dx'-\kk \is \p_t\psi^m\bm{e}_4(x',0)\dx'-\kk \is \cnab\p_t\psi^m\cdot\cnab\bm{e}_4(x',0)\dx'. \label{ODE galerkin}
\end{align}
Since the basis $\{\bm{e}_l\}$ are smooth and the coefficients $(\Ur^{m,\pm},\psr)$ are sufficiently regular, standard ODE theory guarantees the local existence and uniqueness of the above ODE system \eqref{ODE galerkin} with initial data $$W_{lj}^{m,\pm}(0):=\iopm W_j^{m,\pm}(0,x)\bm{e}_l(x)\p_3\pr_0\dx.$$Therefore, the existence of Galerkin sequence $\{(W_j^{m,\pm},\psi^m)\}$ (defined by \eqref{weak Galerkin}, satisfying \eqref{kbc galerkin}-\eqref{bc galerkin}) is proved.

\subsubsection{Existence of the weak solution to the linearized problem}\label{sect Galerkin lwp}
In view of \eqref{weak Galerkin} and the concrete form of $\bm{A}_3(\Ur^\pm)$ and $\jump{W_1^m}$, we can define the weak solution to \eqref{CMHDllh} as below.
\begin{defn}
We say $(W^\pm,\psi)$ is a weak solution to \eqref{CMHDllh}, if $W^\pm\in L^2(0,T;L^2(\Om^\pm))$ and $\psi \in L^2(0,T;H^2(\Sigma))$ satisfies
\begin{itemize}
\item [a.] $\p_t W^\pm \in L^2(0,T; (H^{\frac52}(\Om^\pm))^*)$, $\p_t \psi\in L^2(0,T;H^1(\Sigma))$;
\item [b.] For any $\bm{\phi}=(\phi_1,\cdots,\phi_8)^\top\in L^2(0,T;H^{\frac52}(\Om))$, the following identity holds:
\begin{align}\label{linear weak sol}
&\sum_\pm\int_0^T \left[\left\langle \p_tW_j^{\pm}, \bm{A}_0^{ij}(\Ur^\pm)\phi_i\right\rangle_{\frac{5}{2},\Om^\pm} - \iopm \sum_{k=1}^3 W_j^{\pm} \p_k(\bm{A}_k^{ij}(\Ur^\pm)\phi_i) \dvr-\iopm \ffr_i^\pm\phi_i\dvr \right]\dt \\
=&-\kk\int_0^T\left[\is (1-\TL)\psi\,(1-\TL)\phi_4(x',0)\dx'-\kk \is \jp \p_t\psi\, \jp\phi_4(x',0)\dx'\right]\dt.\nonumber
\end{align}
\end{itemize}
Here $\jp=\sqrt{1-\TL}$, $X^*$ represents the dual space of a normed vector space $X$, and $\len{f,g}_{s,\Om^\pm}$ represents the pairing between $f\in (H^s(\Om^\pm))^*$ and $g\in H^s(\Om^\pm)$.
\end{defn}
The existence of weak solution is guaranteed by the uniform-in-$m$ estimates for the Galerkin sequence $\{W^{m,\pm}(t,x),\psi^{m}(t,x')\}$. We set $\bm{\phi}=W^{m,\pm}$ in $\Om^\pm$ respectively to obtain the standard $L^2$-type energy estimates thanks to the symmetric property of the coefficient matrices and the concrete form of $\bm{A}_3(\Ur)$ on the boundary
\begin{align}
&\sum_\pm\ddt\frac12\iopm (W^{m,\pm})^\top\cdot \bm{A}_0(\Ur^\pm)W^{m,\pm} \dvt \no\\
=&  \sum_{\pm}\frac12\iopm  (W^{m,\pm})^\top\cdot \p_t(\bm{A}_0(\Ur^\pm))W^{m,\pm} \dvt +\frac12\iopm  (W^{m,\pm})^\top\cdot \p_k(\bm{A}_k(\Ur^\pm))U^{m,\pm}\dvt - \iopm (W^{m,\pm})^\top\cdot \ffr \dvt \no\\
&+\is \jump{W_1^m}W_4^{m,\pm}  \dx'\label{L2 Galerkin}
\end{align}where the interior term can be controlled directly by $C(\Kr_0)\|W^{m,\pm}\|_{0,\pm}^2$. For the boundary term, using  the boundary conditions \eqref{kbc galerkin}-\eqref{jump galerkin}, we get the energy bounds under time integral
\begin{align}
&\int_0^t\is \jump{W_1^m}W_4^{m,\pm} \dx'\dtau=-\int_0^t\is \left( \kk(1-\TL)^2\psi^m + \kk (1-\TL)\p_t\psi^m\right)\p_t\psi^m \dx'\dtau\no\\
=& - \frac12 \bno{\sqrt{\kk}\psi^m}_2^2\bigg|^t_0 - \int_0^t\bno{\sqrt{\kk}\p_t\psi^m}_1^2\dtau.
\end{align} 
We define
\begin{align}
\bm{N}^m(t):=\sum_\pm \ino{\left(\sqrt{\ffpmr} W_1^{m,\pm},W_2^{m,\pm},\cdots,W_8^{m,\pm}\right)}_{0,\pm}^2 + \bno{\sqrt{\kk}\psi^m}_2^2 + \int_0^t \bno{\sqrt{\kk}\p_t\psi^m}_1^2\dtau.
\end{align}
Since $\bm{A}_0(\Ur^\pm)>0$, we obtain the uniform-in-$m$ estimate for the Galerkin sequence $\{U^{m,\pm}(t,x),\psi^{m}(t,x')\}$.
\begin{align}\label{uniform galerkin}
\bm{N}^m(t)\leq \bm{N}^m(0)+\int_0^t C(\Kr_0,\kk^{-1}) \bm{N}^m(\tau)\dtau,
\end{align} and thus there exists a time $T_{\bm{N}}>0$ (depending on $\kk$ and $\bm{N}^m(0)$, independent of $m$) such that $$\sup\limits_{0\leq t\leq T_{\bm{N}}} \bm{N}^m(t)\leq  C'(\Kr_0,\kk^{-1})\bm{N}^m(0).$$Because $L^{\infty}(0,T_{\bm{N}};L^2(\Om^\pm))$ is not reflexive, we consider the weak convergence in $L^2(0,T_{\bm{N}};L^2(\Om^\pm))$. By Eberlein-\v{S}mulian theorem and the uniqueness of expansion in Galerkin basis $\{\bm{e}_l\}_{l=1}^{\infty}$, there exists a subsequence $\{W^{m_k,\pm}(t,x),\psi^{m_k}(t,x')\}_{k=1}^{\infty}$ such that
\begin{align}
\left(\sqrt{\ffpmr} W_1^{m_k,\pm},W_2^{m_k,\pm},\cdots,W_8^{m_k,\pm}\right)\wto \left(\sqrt{\ffpmr} W_1^{\pm}, W_2^{\pm},\cdots, W_8^{\pm}\right)\text{ in }L^2(0,T_{\bm{N}};L^2(\Om^\pm)),\\
\psi^{m_k}\wto \psi\text{ in }L^2(0,T_{\bm{N}};H^2(\Sigma)),\quad \p_t\psi^{m_k}\wto \p_t\psi\text{ in }L^2(0,T_{\bm{N}};H^1(\Sigma)).
\end{align} 

To prove the existence of weak solution to \eqref{CMHDllh} (and equivalently for \eqref{CMHDllhh} and \eqref{CMHDllkk}), it remains to prove $\p_t W_j^{m,\pm}$ has a weakly convergent subsequence in $L^2(0,T; (H^{\frac52}(\Om^\pm))^*)$. Since $\bm{A}_0(\Ur^\pm)$ is positive-definite (and so it is invertible), any test function $V=(V_1,\cdots,V_8)^\top\in L^2(0,T;H^{\frac52}(\Om))$ can be written as $V_j=\bm{A}_0^{ij}(\Ur^\pm)\bm{\phi}_i$ for some $\bm{\phi}\in L^2(0,T;H^{\frac52}(\Om))$. Thus, from \eqref{weak Galerkin}, we have 
\begin{align*}
&\sum_\pm\int_0^T \len{\p_t W_j^{m,\pm}(t), V_j}_{\frac52}\dt= \sum_\pm\int_0^T\langle\p_t W_j^{m,\pm}(t), \bm{A}_0^{ij}(\Ur^\pm)\phi_i\rangle_{\frac52}\dt\\
=& \sum_\pm\sum_{k=1}^3 \int_0^T\iopm W_j^{m,\pm} \p_k(\bm{A}_k^{ij}(\Ur^\pm)\phi_i) \dvr\dt +\sum_\pm\int_0^T\iopm \ffr_i^\pm\phi_i\dvr\dt\\
&\quad\quad+ \int_0^T\is \bm{A}_3^{ij}(\Ur^+) W_j^{m,+} \phi_i- \bm{A}_3^{ij}(\Ur^-) W_j^{m,-} \phi_i\dx'\dt.
\end{align*} Invoking the boundary conditions and integrating by parts, we obtain that
\begin{align*}
&\is \bm{A}_3^{ij}(\Ur^+) W_j^{m,+} \phi_i- \bm{A}_3^{ij}(\Ur^-) W_j^{m,-} \phi_i\dx'\\
=&-\kk\is (1-\TL)\psi^m\,(1-\TL)\phi_4(x',0)\dx'-\kk \is \jp \p_t\psi^m\, \jp\phi_4(x',0)\dx'\dt.
\end{align*} Therefore, we find that
\begin{align}
\left| \sum_\pm\int_0^T\langle\p_t W_j^{m,\pm}(t), \bm{A}_0^{ij}(\Ur^\pm)\phi_i\rangle_{\frac52}\dt\right|\lesssim&\sum_\pm\int_0^T\left[\sum_{k=1}^3\|W^{m,\pm}\|_{0,\pm}\|\bm{A}_k(\Ur^\pm)\|_{W^{1,\infty}(\Om^\pm)}+\|\ffr^\pm\|_{0,\pm}\right] \|\bm{\phi}(t,\cdot)\|_{H^1(\Om)}\dt\nonumber\\
&+\kk\int_0^T\left[|\psi^m(t,\cdot)|_2 + |\p_t\psi^m(t,\cdot)|_1\right]|\phi_4(t,\cdot)|_{2} \dt.\label{bdry uniform galerkin}
\end{align} 
Using trace lemma, we know $|\phi_4(t,\cdot)|_{2}\lesssim \|\bm{\phi}(t,\cdot)\|_{H^{\frac52}(\Om)}$. Setting $T=T_{\bm{N}}$, taking supremum over all $\bm{\phi}\in L^2(0,T_{\bm{N}};H^{\frac52}(\Om))$ with $\|\bm{\phi}\|_{L^2(0,T_{\bm{N}};H^{\frac52}(\Om))} \leq 1$ and combining \eqref{uniform galerkin}, we prove $\{\p_t W_j^{m,\pm}\}$ is uniformly bounded in $L^2(0,T_{\bm{N}}; (H^{\frac52}(\Om^\pm))^*)$, and so it has a weakly convergent subsequence in $L^2(0,T_{\bm{N}}; (H^{\frac52}(\Om^\pm))^*)$. In particular, the weak limit is exactly $\p_t W_j^\pm$, which can be proved by mimicing Evans \cite[Exercise 7.5]{Evans}.

The above weak limits give us a weak solution to \eqref{CMHDllh}. In fact, integrating \eqref{weak Galerkin} in the time variable and invoking the boundary conditions and the concrete form of $\bm{A}_3(\Ur^\pm)$, we obtain
\begin{align*}
&\sum_\pm\int_0^{T_{\bm{N}}} \left[\left\langle \p_tW_j^{m,\pm}, \bm{A}_0^{ij}(\Ur^\pm)\phi_i\right\rangle_{\frac52,\Om^\pm} - \iopm \sum_{k=1}^3 W_j^{m,\pm} \p_k(\bm{A}_k^{ij}(\Ur^\pm)\phi_i) \dvr-\iopm \ffr_i^\pm\phi_i\dvr \right]\dt \\
=&-\kk\int_0^{T_{\bm{N}}}\left[\is (1-\TL)\psi^m\,(1-\TL)\phi_4(x',0)\dx'-\kk \is \jp \p_t\psi^m\, \jp\phi_4(x',0)\dx'\right]\dt.
\end{align*} Setting $m\to\infty$ and using the weak convergence of $(W^m,\psi^m)$ and $\p_t(W^m,\psi^m)$, we obtain the desired identity \eqref{linear weak sol}.
\begin{rmk}
From the second line in \eqref{bdry uniform galerkin}, we find it is exactly the appearance of $\kk$-regularization terms that forces us to choose the test function in $L^2(0,T;H^{\frac52}(\Om))$ instead of $L^2(0,T;H^{1}(\Om))$ and prove $\p_t W^\pm\in L^2(0,T;(H^{\frac52}(\Om^\pm))^*)$ instead of $L^2(0,T;(H^{1}(\Om^\pm))^*)$.
\end{rmk}

\subsubsection{Uniqueness and anisotropic regularity of the linearized problem}\label{sect Galerkin unique}
According to Rauch \cite[Theorem 4]{Rauch1985}, we introduce the definition of ``strong solution" to \eqref{CMHDllh}.
\begin{defn}[Strong solution]
We say $(W^\pm,\psi)\in L^2(0,T;L^2(\Om^\pm))\times L^2(0,T;H^2(\Sigma))$ is a strong solution to \eqref{CMHDllh} if there exist a sequence of sufficiently smooth\footnote{In Rauch \cite{Rauch1985} or Lax-Phillips \cite{Lax1960LWP}, the regularity is assumed to be $C^1$, but in fact it can be $C^\infty$ according the proof in those two papers. $C^1$ is required just for the fulfillment of the Gauss-Green formula.} functions $(W_n^{\pm},\psi_n^\pm)\in C^\infty([0,T]\times\overline{\Om^\pm})\times C^\infty([0,T]\times\Sigma) $ such that $W_n^\pm\to W^\pm$, $\mathbb{L}^\pm W_n^\pm\to \ffr^\pm$ in $L^2(0,T;L^2(\Om^\pm))$  and $\psi_n\to \psi$ in $L^2(0,T;H^2(\Sigma))\cap H^1(0,T;H^1(\Sigma))$.
\end{defn}
\begin{prop}[Weak = Strong]\label{prop weak=strong}
The weak solution $(W^\pm,\psi)$ is a strong solution to \eqref{CMHDllh}, that is, there exist a sequence of sufficiently smooth functions $(W_n^{\pm},\psi_n^\pm)\in C^\infty([0,T]\times\overline{\Om^\pm})\times C^\infty([0,T]\times\Sigma) $ such that $W_n^\pm\to W^\pm$, $\mathbb{L}^\pm W_n^\pm\to \ffr^\pm$ in $L^2(0,T;L^2(\Om^\pm))$  and $\psi_n\to \psi$ in $L^2(0,T;H^2(\Sigma))\cap H^1(0,T;H^1(\Sigma))$.  
\end{prop} 
\begin{proof}[Discussion of the proof] 
 In Section \ref{sect char bdry}, we already show that \eqref{CMHDllh} (equivalently \eqref{CMHDllhh} and \eqref{CMHDllkk}) is a first-order linear symmetric hyperbolic systems with boundary characteristic of constant multiplicity and the maximally dissipative property is fulfilled. Thus, using Rauch \cite[Theorem 8]{Rauch1985}, we conclude that the weak solution to \eqref{CMHDllh} is indeed a strong solution.  Below, we briefly sketch the proof of \cite[Theorem 8]{Rauch1985}, which reveals how to construct the smooth approximation $(W_n^{\pm},\psi_n^\pm)$. This will be needed in the proof of uniqueness.
 
Since this is a boundary-value problem, we cannot directly regularize the weak solution $(W,\psi)$ by using the 3D convolution mollifier. We also note that the tangential smoothing (as in Lax-Phillips \cite{Lax1960LWP}) is too restrictive as it requires rank $\bm A_3(\Ur^\pm)$ to be constant {\it near the boundary $\Sigma$} (not only on $\Sigma$). 

{\bf Step 1: Modified tangential smoothing.} To overcome the difficulty as above, the first-step regularization in Rauch \cite{Rauch1985} is (taking $\Om^+$ for example)
\[
W_{(\eta)}^+(t,x) := \int_0^t\iop \zeta(\tau,y)W(t+\eta\tau,x'+\eta y',x_3e^{\eta y_3})\dy\dtau,
\]where $\zeta\in C_c^\infty(\{|t|+|x|\leq 1,x_3>0\}),~\zeta\geq 0,~\int \zeta=1$. (The modification near $t=0$ is referred to \cite[pp. 182]{Rauch1985}) Then the analysis in \cite[pp. 173-175]{Rauch1985} shows that $W_{(\eta)}^+\to W^+$ in $L^2(0,T;L^2(\Om^+))$ and $\mathbb{L}^+W_{(\eta)}^+\to \mathbb{L}^+W^+$ in $L^2(0,T;L^2(\Om^+))$. Moreover, $W_{(\eta)}^+$ has infinite-order differentiability in $(t,x')$ variables. The regularization of $\psi$ can be directly defined by taking the tangential smoothing because it only depends on tangential variables. The concrete form of boundary conditions is still preserved as we only mollify the tangential variables.

{\bf Step 2: Normal shift. }After step 1, we may assume $W^+\in H_*^1([0,T]\times\Om^+)$ (actually infinite-order differentiable in $(t,x')$ variables) and $\psi\in C^\infty([0,T]\times\Sigma)$. Using the concrete form of $\bm{A}_3(\Ur^+)|_{\Sigma}$, we know the non-characteristic unknowns $W_1^+,W_4^+$ belong to $H^1([0,T]\times\Om^+)$ because $\p_3(W_1^+,W_4^+)$ can be expressed by the first-order tangential derivatives of other $W_i$'s. Therefore, their traces on the boundary exist in $L^2$.

To preserve the boundary conditions when regularizing $W^\pm$ in $x_3$-direction, we must avoid the change of boundary values of the noncharacteristic unknowns $W_1^+,W_4^+$. We now write $W^+=W_c^++W_{nc}^+$ to separate the non-charateristic unknowns $W_{nc}^+:=(W_1^+,0,0,W_4^+,0,0,0,0)^\top$ and then extend $W_{nc}^+=(-\frac{\kk}{2}(1-\TL)^2\psi-\frac{\kk}{2}(1-\TL)\p_t\psi,0,0,\p_t\psi,0,0,0,0)^\top$ to the other side $\Om^-$. To preserve the boundary condition, we slightly shift $W_{nc}^+$ by setting
\[
W_{[h]}^+(t,x):=W_c^+(t,x)+\chi_h(x_3)W_{nc}^+(t,x',x_3-h),~~~h\ll 1
\]where $\chi_h(x_3)\in[0,1]$ is a smooth cut-off function near the fixed boundary $\Sigma^+$ satisfying $\chi_h(x_3)= 1$ when $0\leq x_3<H-2h$ and $\chi_h(x_3)=0$ when $H-h<x_3\leq H$. Then, we must have $W_{[h]}^+\to W^+$ in $H^1([0,T]\times\Om^+)$ and  $\mathbb{L}^+W_{[h]}^+\to \mathbb{L}^+W^+$ in $L^2([0,T]\times\Om^+)$. We can do a similar shift for $W^-$ in $\Om^-$.

{\bf Step 3: 3D regularization. } After the normal shift, we may assume $W_{4}^\pm\equiv \p_t \psi$, $W_{1}^+-W_{1}^-\equiv -\kk(1-\TL)^2\psi - \kk(1-\TL)\p_t\psi$ in $\{-h<x_3<h\}$ and $W_4^\pm\equiv 0$ in $\{H-h<\pm x_3 \leq H\}$ for some fixed $h>0$. Now, we can mollify $W^+$ by using the convolution mollifier in 3D and establish the convergence as in \cite[pp. 176]{Rauch1985}:
\[
W_{\epsilon}^+:=J_\epsilon^+*W^+,~~J_\epsilon^+(x)=\epsilon^{-3}J^+(t/\epsilon,x'/\epsilon,-x_3/\epsilon),~~0\leq J^+\in C_c^\infty(\{t+|x|\leq 1\}\cap\Om^+),~~\int_{\Om^+} J^+=1.
\]  We can do similar mollification for $W^-$ in $\Om^-$. Such regularization does not change the concrete form of  boundary conditions when $\epsilon\ll h$ because 
\begin{itemize}
\item The boundary conditions are linear and only involve $W_{nc}^\pm$, whose concrete forms are already given in a strip with width $h\gg$ the smoothing parameter $\epsilon$ (so taking convolution does not change the concrete form on the boundaries);
\item $W_c^\pm$ does not appear in the boundary conditions, so the change of their boundary values has no influence on the boundary conditions.
\end{itemize}

After these three steps of regularization (as shown in \cite[Theorem 4, 8]{Rauch1985}), the smooth approximate functions $(W_n^\pm,\psi_n)$ (converging to $(W^\pm,\psi)$ as desired) can be chosen by the diagonal argument (when passing to the limit $\epsilon,h,\eta\to 0$) and they are smooth in all variables.
\end{proof}
The uniqueness is then a corollary of Proposition \ref{prop weak=strong} as shown in Rauch \cite[Theorem 9]{Rauch1985}.
\begin{cor}[Uniqueness]
The strong solution to \eqref{CMHDllh} is unique.
\end{cor}
\begin{proof}
Since the smooth approximation $(W_n^{\pm},\psi_n)$ of the strong solution $(W^\pm,\psi)$ are given by the above smoothing procedures, we have
\begin{equation}\label{CMHDllhn}
\begin{aligned}
\mathbb{L}^\pm W_n^\pm=\bm{A}_0(\Ur^\pm)\p_t W_n^\pm+\bm{A}_1(\Ur^\pm)\p_1 W_n^\pm+\bm{A}_2(\Ur^\pm)\p_2 W_n^\pm+\bm{A}_3(\Ur^\pm)\p_3 W_n^\pm &= \ffr_n^\pm \quad\text{ in }\Om^\pm,\\
\mathbb{B}(W_n^+,W_n^-,\psi_n)=\begin{bmatrix} W_{n,4}^+ - \p_t\psi_n \\ W_{n,}^-  - \p_t\psi_n \\ W_{n,1}^+-W_{n,1}^- +\kk(1-\TL)^2\psi_n +\kk (1-\TL)\p_t \psi_n \end{bmatrix}&=\mathbf{0} \quad \text{ on }\Sigma,\\
\mathbb{B}_H(W_n^+,W_n^-)=\begin{bmatrix} W_{n,4}^+ \\ W_{n,4}^- \end{bmatrix}&=\mathbf{0} \quad\text{ on }\Sigma^\pm,
\end{aligned}
\end{equation}where $\ffr_n^\pm\to \ffr^\pm$ in $L^2(\Om^\pm)$. Invoking the concrete form of $\bm{A}_i$'s and Gr\"onwall's inequality, we deduce the energy estimate
\[
\sup_{t\in[0,t_\kk]}\sum_\pm\|W_n^\pm\|_{0,\pm}^2 +\bno{\sqrt{\kk}\psi_n}_2^2+\int_0^t\bno{\sqrt{\kk}\p_t \psi_n}_1^2\dtau \leq C(\Kr_0)\int_0^{t_\kk}\sum_\pm\|\ffr_n^\pm\|_{0,\pm}^2\dtau
\]for some $t_\kk>0$ depending on $\kk$ and $\Kr_0$. 

Now, assuming there are two such strong solutions with the same initial data, say $(W^\pm,\psi)$ and $(\tilde{W}^\pm,\tilde{\psi})$, we consider their smooth approximation $(W_n^\pm,\psi_n)$ and $(\tilde{W}_n^\pm,\tilde{\psi}_n)$ that are obtained through the same smoothing procedures. By linearity, we know $(W_n^\pm-\tilde{W}_n^\pm,\psi_n-\tilde{\psi}_n)$ satisfies system \eqref{CMHDllhn} with $\ffr_n^\pm=\bd{0}$  and $(W_n^\pm,\psi_n)$ replaced by $(W_n^\pm-\tilde{W}_n^\pm,\psi_n-\tilde{\psi}_n)$. So, the above energy estimate implies that $W_n^\pm-\tilde{W}_n^\pm\equiv\bd{0}$ and $\psi_n-\tilde{\psi}_n=0$. By definition of strong solution, we conclude that $(W^\pm,\psi)=(\tilde{W}^\pm,\tilde{\psi})$.
\end{proof}

Since the boundary is characteristic, we may not expect the full Sobolev regularity (for the interior variables) as in the case of non-characteristic boundary. Instead, we can obtain the full anisotropic regularity.
\begin{prop}\label{cor weak strong}
The strong solution to \eqref{CMHDllkk} satisfies
\begin{align}
 q^{\pm},v^{\pm},b^{\pm},S^{\pm} \in C([0,t_\kk];H_*^8(\Om^\pm))\text{ with } \p_t^kq^{\pm},\p_t^kv^{\pm},\p_t^k b^{\pm},\p_t^k S^{\pm} \in C([0,t_\kk];H_*^{8-k}(\Om^\pm)),~~k\leq 8;\\
\psi \in C([0,t_\kk];H^{10}(\Sigma))\text{ with }  \p_t^k\psi \in C([0,t_\kk]; H^{10-k}(\Sigma))~~k\leq 8;~~\p_t^9\psi \in L^2(0,t_\kk; H^{1}(\Sigma)).
\end{align} 
\end{prop}
\begin{proof}
 This property is a consequence of Secchi \cite[Theorem 2.1]{Secchi1996-2}. Since our initial data is assumed to be $U_0^\pm\in H_*^8(\Om^\pm)$, we know the solution $\p_t^kU^\pm$ also belongs to $C([0,t_\kk];H_*^{8-k}(\Om^\pm))~(0\leq k\leq 8)$ for some time $t_\kk>0$ depending on $\kk,\Kr_0$. 

The regularity of $\psi$ is obtained below. When $0\leq k\leq 7$, the $H^{10-k}(\Sigma)$ regularity of $\p_t^k\psi$ can be proved by applying the elliptic estimates to the regularized boundary condition, which is parallel to Lemma \ref{qelliptic}. Indeed, taking $\jp^{7-k}\p_t^k$ in the regularized boundary condition, we get 
$$\jp^{7-k}\p_t^7\jump{q}=\sigma\jp^{7-k}\p_t^k\h(\psr)- \kk (1-\TL)^2\jp^{7-k}\p_t^k\psi - \kk (1-\TL)\jp^{7-k}\p_t^{k+1}\psi.$$ As in the proof of Lemma \ref{qelliptic}, we test this equation with $\jp^{7-k}\p_t^{k+1}(1-\TL)\psi$ in $L^2(\Sigma)$ and integrate by parts to get
\[
\frac12\ddt\is \kk \bno{(1-\TL)^{\frac32}\jp^{7-k}\p_t^k \psi}^2 \dx' + \kk\is\bno{(1-\TL)\jp^{7-k}\p_t^{k+1}\psi}^2\dx' \leq \|q^\pm\|_{8,*,\pm}^2 + C(\Kr_0). 
\]This gives the desired regularity of  $\p_t^k\psi$ for $0\leq k\leq 7$. When $k=8, 9$, the regularity of $\p_t^k\psi$ can be obtained from $\p_t^8$-estimates of $U^\pm$, which we refer to Section \ref{sect ETTll} (by setting $\TT^\gamma=\p_t^8$) and Section \ref{sect ETTbdryll} (by setting $k=8,l=4$).
\end{proof}

\subsection{High-order uniform estimates of the linearized approximate problem}\label{sect uniformll}
To proceed the Picard iteration, we shall prove that the bounds \eqref{energy llkkrr} for the coefficients $(\Ur,\psr,\psd)$ can be preserved by the solution to \eqref{CMHDllkk}. Fix $\kk>0$, we define the energy functional for \eqref{CMHDllkk} to be
\begin{equation}\label{energy llkk}
\begin{aligned}
\Er^\kk(t):=&~\Er^\kk_4(t)+\cdots+\Er^\kk_8(t)\\
\Er^\kk_{4+l}(t):=&\sum_\pm\sum_{\lee\alpha\ree=2l}\sum_{k=0}^{4-l}\left\|\left(\eps^{2l}\TT^{\alpha}\p_t^{k}(v^\pm, b^\pm, S^\pm,(\ffpmr)^{\frac{(k+\alpha_0-l-3)_+}{2}}q^\pm)\right)\right\|^2_{4-k-l,\pm}\\
 &+\sum_{k=0}^{4+l}\left|\sqrt{\kk}\eps^{2l}\p_t^{k}\psi\right|^2_{6+l-k}+\int_0^t\left|\sqrt{\kk}\eps^{2l}\p_t^{5+l}\psi\right|^2_{1} \dtau
\end{aligned}
\end{equation}  where $\TT^{\alpha}:=(\omega(x_3)\p_3)^{\alpha_4}\p_t^{\alpha_0}\p_1^{\alpha_1}\p_2^{\alpha_2}$ with the multi-index $\alpha=(\alpha_0,\alpha_1,\alpha_2,0,\alpha_4),~\lee \alpha\ree=\alpha_0+\alpha_1+\alpha_2+2\times0+\alpha_4$. We aim to prove that
\begin{prop}\label{prop energy llkk}
There exists some $T_\kk>0$ depending on $\kk$ and $\Kr_0$, such that
\[
\sup_{0\leq t\leq T_\kk} \Er^\kk(t)\leq  C(\kk^{-1},\Kr_0)\Er^\kk(0).
\]
\end{prop}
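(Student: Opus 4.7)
The plan is to adapt the framework from Section \ref{sect uniformkk} for the nonlinear approximate system to the linearized system \eqref{CMHDllkk}, but now allowing constants to depend on $\kk^{-1}$ rather than being uniform. The coefficients are the basic state $(\vr^\pm, \br^\pm, \mbr^\pm, \qr^\pm, \rhor^\pm, \sr^\pm, \psr)$, bounded by $\Kr_0$ via \eqref{energy llkkrr}, while the unknowns $(v^\pm, b^\pm, q^\pm, S^\pm, \psi)$ are what we estimate. First I would upgrade the $L^2$ energy estimate constructed at the end of Section \ref{sect linear kkeq} from the weak-solution level to include the quantity $\sum_\pm \|(\sqrt{\ffpmr}\,q^\pm, v^\pm, b^\pm, S^\pm)\|_{0,\pm}^2$, mirroring Proposition \ref{prop L2kk}; the jump/kinematic/slip boundary conditions produce the $\sqrt{\kk}$-weighted boundary energy $\bno{\sqrt{\kk}\psi}_2^2 + \int_0^t \bno{\sqrt{\kk}\p_t\psi}_1^2\dtau$, while all interior remainders are absorbed by $C(\Kr_0)$ times the instantaneous $L^2$ norm of the unknowns.

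For the higher-order tangential estimates, I would introduce Alinhac good unknowns relative to the basic-state flattening, $\FF^{\gamma,\pm} := \TT^\gamma f^\pm - \TT^\gamma \pr\,\ppr_3 f^\pm$, for $f\in\{v,b,p,q,S\}$, and substitute into $\eps^{2l}\TT^\gamma$-differentiated versions of \eqref{CMHDllkk}. This yields a system of exactly the form \eqref{goodv}--\eqref{goods} for $(\VV^{\gamma,\pm}, \BB^{\gamma,\pm}, \PP^{\gamma,\pm}, \BS^{\gamma,\pm})$, except that every material and covariant derivative is taken with respect to the basic state, and the commutator symbols $\RR_v, \RR_p, \RR_b, \cc^\gamma, \dd^\gamma$ depend linearly on the unknowns but only on the basic state for their coefficients, hence controllable by $C(\Kr_0)$ times interior norms of the unknowns. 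Standard $L^2$ estimates then produce six boundary terms $\ST, \ST', \VS, \RT, \RT^\pm, Z^\pm + ZB^\pm$ of the same structure as in Section \ref{sect ETT}--\ref{sect ETTG}: $\ST$ supplies the $\sqrt{\kk}$-weighted boundary energy via the integration-by-parts chain \eqref{STtt8kk01}--\eqref{STtt8kk03} (the surface-tension part $\sigma \TT^\gamma \h(\psr)$ is now a datum of the basic state, directly controlled by $\Kr_0$); $\VS$ is controlled by invoking Lemma \ref{qelliptic} (valid for every fixed $\kk > 0$) applied to the jump condition, which yields $|\psi|_{5.5+l-k} \lesssim |\psi_0|_{5.5+l-k} + \sigma^{-1}(|\jump{q}|_{3.5+l-k} + \text{basic-state terms})$; $\RT$ and $\RT^\pm$ are directly bounded using the $\sqrt{\kk}$-weighted boundary energy; and the $Z^\pm + ZB^\pm$ cancellation is implemented by Gauss--Green together with integration by parts in $\Dtpmr$ exactly as in Step 4 of Section \ref{sect E4TT} and Section \ref{sect E8ttbdry}. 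The full-time-derivative estimates then require, as in Proposition \ref{prop Ett}, the substitution $\p_t^{4+l} \leadsto \Dtpmr \p_t^{3+l}$ to enable the control of $\VS^*$ by integration by parts of the basic-state material derivative.

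Next I would perform the div-curl decomposition of Section \ref{sect divcurl} for the linearized equations: the reduction of pressure uses $-\nabpr q^\pm = \rhor \Dtpmr v^\pm - \mbpr b^\pm$, where both $\Dtpmr$ and $\mbpr$ are tangential by construction of the basic state; the vorticity/current density structure produced by $\mbpr(\nabpr\times b^\pm)$ preserves the simultaneous energy for $\nabpr \times v^\pm$, $\nabpr \times b^\pm$ and $\ffpmr |b^\pm\times(\nabpr\times b^\pm)|^2$, and the commutators $[\nabpr\times, \mbpr]$ depend only on the basic state. The entropy is estimated as in \eqref{prop Skk} via $\Dtpmr S^\pm = 0$. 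Putting all pieces together produces an inequality $\Er^\kk(t) \le \Er^\kk(0) + \delta \Er^\kk(t) + \int_0^t C(\kk^{-1}, \Kr_0) \Er^\kk(\tau)\dtau$, which after absorbing the $\delta$-term and applying Gr\"onwall closes the bound on a time interval $[0, T_\kk]$.

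The main obstacle will be the regularity of the modified magnetic field $\mbr^\pm$ appearing in $\mbpr$. By Remark \ref{mbrbound}, the modification $\mbr^\pm - \br^\pm$ is only controlled in $L^2_t H_*^8$ through the $\sqrt{\kk}$-weighted enhanced regularity of $\psr$, so every commutator involving $\mbpr$ contributes a factor of $\kk^{-1/2}$, which is the precise source of the $\kk^{-1}$-dependence in the final constant; without the two regularization terms added in \eqref{CMHDjumpkk}, the associated derivative loss in the linearized tangential estimates would be uncontrollable, exactly mirroring the obstruction overcome in previous Nash--Moser-based approaches. A secondary delicacy is that $\mbr^\pm \cdot \npr|_\Sigma = 0$ is \emph{imposed} by construction (not a propagated constraint at the linearized level), so integration by parts in $\mbpr$ produces no boundary error, but one must verify this carefully when decomposing $\mbpr = \mbcr\cdot\cnab + (\p_3\pr)^{-1}(\mbr\cdot\Npr)\p_3$ in each of the tangential estimates of Section \ref{sect E4TT}--\ref{sect E8tt}.
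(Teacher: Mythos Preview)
Your plan is broadly correct and follows the paper's architecture, but there are two places where you over-complicate the linearized analysis and one genuine (minor) omission.

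\textbf{The VS term.} In the linearized boundary condition \eqref{goodlkbc}, the tangential transport piece reads $\vb^\pm\cdot\cnab\TT^\gamma\psr$, so the analogue of VS is
\[
\VSr=\eps^{4l}\is\TT^\gamma q^-\,(\jump{\vb}\cdot\cnab)\TT^\gamma\psr\dx',
\]
which carries the \emph{basic state} $\psr$, not the unknown $\psi$. Hence no elliptic estimate for $\psi$ is needed here at all: after integrating $\TP^{1/2}$ (or $\p_t$ then $\TP^{1/2}$) by parts, the factor $|\eps^{2l}\p_t^k\psr|_{5.5+l-k}$ is bounded directly by $C(\kk^{-1},\Kr_0)$ via \eqref{energy llkkrr}. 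Your invocation of Lemma~\ref{qelliptic} for $\psi$ is misplaced --- in the linearized jump condition the mean-curvature term is $\sigma\h(\psr)$, so the ellipticity in $\psi$ comes only from $\kk(1-\TL)^2$, not from $\sigma$. This is harmless in the end (you would recover a $\kk^{-1}$-dependent bound anyway), but it reflects a confusion between basic state and unknown.

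\textbf{Full-time derivatives.} You propose the substitution $\p_t^{4+l}\leadsto\Dtpmr\p_t^{3+l}$ as in Proposition~\ref{prop Ett}. The paper explicitly avoids this: because $\psr$ carries the $\sqrt{\kk}$-enhanced regularity $|\sqrt{\kk}\eps^{2l}\p_t^k\psr|_{6+l-k}$, the full-time-derivative case of $\VSr$ is handled by integrating one $\p_t$ by parts (onto $\psr$, producing $|\eps^{2l}\p_t^{5+l}\psr|_{1/2}$, controlled by $\Kr_0$) and then $\TP^{1/2}$ by parts. No material-derivative replacement is needed. Your route would likely still close but is substantially heavier.

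\textbf{The divergence of $b$.} Your div-curl paragraph defers to Section~\ref{sect divcurl}, where $\nabp\cdot b^\pm=0$ is used. For the linearized system this constraint does not propagate, and the paper instead derives an evolution equation
\[
\Dtpmr(\nabpr\cdot b^\pm)=(\ppr_i\mbr_j^\pm)(\ppr_j v_i^\pm)-(\nabpr\cdot\mbr^\pm)(\nabpr\cdot v^\pm)+[\Dtpmr,\nabpr\cdot]b^\pm,
\]
estimated directly in $H^{3-k-l}$. This is a small but necessary addition to your plan.

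Your remarks on $\mbr$-regularity and the role of the $\kk$-regularization are accurate and match the paper.
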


It should be noted that, since $\kk>0$ is fixed, we now can obtain higher boundary regularity for the free interface $\psi$, which allows us to avoid some technical steps (such as the analysis in Section \ref{sect E8tt}). Now we start with div-curl analysis.

\subsubsection{Div-Curl analysis}\label{sect divcurlll}
We start with $\Er_4(t)$. Using \eqref{divcurlTT} and the boundary conditions for $v,b$, we get
\begin{align}
\ino{v^\pm,b^\pm}_{4,\pm}^2\lesssim C(|\psr|_{4},|\cnab\psr|_{W^{1,\infty}})\left(\|(v^\pm,b^\pm)\|_{0,\pm}^2 + \|\nabpr\cdot (v^\pm,b^\pm)\|_{3,\pm}^2 +  \|\nabpr\times (v^\pm,b^\pm)\|_{3,\pm}^2 + \|\TP^4(v^\pm,b^\pm)\|_{0}^2\right).
\end{align}
\begin{rmk}
Here we cannot use the div-curl inequality \eqref{divcurlNN} to estimate the normal traces because the boundary constraint $b\cdot\npr=0$ no longer holds for the linearized problem.
\end{rmk}
The $L^2$-estimates are already proven in the uniform estimates of Galerkin sequence. The treatment of $\nabpr\cdot v$ is also the same as in Section \ref{sect E4TT}, that is, invoking the continuity equation. For $\nabpr\cdot b$, we no longer have the div-free constraint. Instead, we can take $\nabpr\cdot$ in the linearized evolution equation of $b$ to get
\begin{align}
\Dtpmr(\nabpr\cdot b^\pm)=&~(\ppr_i\mbr_j^\pm)(\ppr_j v_i^\pm) - (\nabpr\cdot \mb^\pm)(\nabpr\cdot v^\pm)+ [\Dtpmr,\nabpr\cdot]b^\pm .
\end{align}Direct calculation shows that $[\Dtpmr,\ppr_i](\cdot)=-(\ppr_i\vr_j)\ppr_j(\cdot)-(\ppr_i\p_t(\pr-\pd))\ppr_3(\cdot)$. On the other hand, the $\kk$-regularization term provides extra regularity for $\vp_t,\pr_t,\pd_t$. Thus, standard $H^3$ estimates give the control of divergence
\begin{align}
\frac12\ddt\ino{\nabpr\cdot b^\pm}_{3,\pm}^2 \lesssim C(\Kr_0,\kk^{-1})\left(\|b^\pm\|_{4,\pm}\|v^\pm\|_{4,\pm}\right)\leq  C(\Kr_0,\kk^{-1})\Er^\kk_4(t). 
\end{align}

The vorticity part is analyzed in a similar way as in Section \ref{sect divcurl}. The evolution equations are
\begin{align*}
\rhor\Dtpr(\nabpr\times v)-\mbpr(\nabpr\times b)=&~(\nabpr \rhor)\times(\Dtpr v)-(\nabpr \mbr_j)\times(\ppr_j b)\\
&-\rhor\left((\nabpr \vr_j)\times(\ppr_j v)+\nabpr(\p_t\pr-\p_t \pd)\times \ppr_3 v\right),\\
\Dtpr(\nabpr\times b)-\mbpr(\nabpr\times v)-\mbr\times\nabpr(\nabpr\cdot v)=&-(\nabpr\times \mbr)(\nabpr\cdot v)-(\nabpr \mbr_j)\times(\ppr_j v)\\
&-(\nabpr \vr_j)\times(\ppr_j b)-\nabpr(\p_t\pr-\p_t \pd)\times \ppr_3 b,
\end{align*}on the right side of which the highest-order derivative is 1 (except the mismatch term involving $\nabpr(\p_t\pr-\p_t\pd)$ which is directly bounded by $P(\Kr_0)$). Thus, we can still follow the analysis in Section \ref{sect curlE4} to get
\begin{align}
\ddt\frac12\iopm \rhor^\pm\bno{\p^3\nabpr\times v^\pm}^2+\bno{\p^3\nabpr\times b^\pm}^2\dvr \leq P(\Er^\kk_4(t),\Kr_0) +\Kr_1^\pm,
\end{align}where
\begin{align}
\Kr_1^\pm:=\iopm(\p^3\nabpr\times b^\pm)\cdot\left(\mbr^\pm\times(\p^3\nabpr(\nabpr\cdot v^\pm))\right)\dvr.
\end{align}Again, we invoke the continuity equation and the momentum equation to get
\begin{align*}
\mbr\times(\p^3\nabpr(\nabpr\cdot v))\eql\ffpr \rhor\mbr \times (\p^3(\Dtpr)^2 v) + \ffpr \mbr\times \Dtpr(\mbr\times(\p^3\nabpr\times b))
\end{align*}where we use the vector identity $(\bd{a}\times(\nabpr\times\bd{b}))_i=(\ppr_i\bd{b}_j)\bd{a}_j-\bd{a}_j\ppr_j\bd{b}_i$, and the omitted terms are directly controlled by $P(\Er^\kk_4(t),\Kr_0)$. Thus, we have
\begin{align*}
\Kr_1^\pm\eql &\iopm  \ffpmr \rhor (\p^3\nabpr\times b)\cdot\left(\mbr\times (\p^3(\Dtpr)^2 v) \right)\dvr + \iopm \ffpmr (\p^3\nabpr\times b)\cdot\left(\mbr\times\Dtpr(\mbr\times(\p^3\nabpr b))\right)\dvt\\
=&\iopm  \ffpmr \rhor (\p^3\nabpr\times b)\cdot\left(\mbr\times (\p^3(\Dtpr)^2 v) \right)\dvr-\iopm \ffpmr \Dtpr\left(\mbr\times(\p^3\nabpr \times b)\right)\cdot\left(\mbr\times (\p^3\nabpr\times b)\right)\dvt \\
\lesssim & -\frac12\ddt\iopm\ffpmr\bno{\mbr\times (\p^3\nabpr\times b)}_0^2 + P(\Kr_0)\Er^\kk_4(t) + \Er^\kk_5(t).
\end{align*}
So, we have
\begin{align}
\frac12\ddt\iopm\rhor^\pm\bno{\p^3\nabpr\times v^\pm}^2+\bno{\p^3\nabpr\times b^\pm}^2+\ffpm\bno{\mbr^\pm\times (\p^3\nabpr\times b^\pm)}_0^2 \dvr\lesssim P(\Kr_0)\Er_4^\kk(t) + \Er_5^\kk(t).
\end{align} Similarly as in Section \ref{sect curlE4} and Section \ref{sect divcurlE5}, we can prove the div-curl estimates for time-differentiated system and $\TT^\alpha$-differentiated system. For $0\leq l\leq 3,0\leq k\leq 3-l,\len{\alpha}=2l,~\alpha_3=0$, we have
\begin{align}
\ino{\eps^{2l} \p_t^k\TT^\alpha (v^\pm,b^\pm)}_{4-l-k,\pm}^2\leq C(|\psr|_{3})\bigg(& \ino{\eps^{2l}\p_t^k\TT^\alpha (v^\pm,b^\pm)}_{0,\pm}^2+\ino{\eps^{2l}\nabpr\cdot \p_t^k\TT^\alpha (v^\pm,b^\pm)}_{3-k-l,\pm}^2\no\\
&+\ino{\eps^{2l}\nabpr\times\p_t^k\TT^\alpha (v^\pm,b^\pm)}_{3-k-l,\pm}^2+\ino{\eps^{2l}\TP^{4-k-l}\p_t^k\TT^\alpha (v^\pm,b^\pm)}_{0,\pm}^2\bigg).
\end{align} Then the curl part has the following control
\begin{equation}
\begin{aligned}
&\ino{\eps^{2l}\nabpr\times\p_t^k\TT^\alpha v^\pm}_{3-l-k,\pm}^2+\ino{\eps^{2l}\nabpr\times\p_t^k\TT^\alpha b^\pm}_{3-l-k,\pm}^2 +\ino{\eps^{2l}\ffpmr \br\times(\p_t^k\TT^\alpha b^\pm)}_{3-l-k,\pm}^2\\
\lesssim &~P(\Kr_0)\left(\sum_{j=0}^l E_{4+j}^{\kk}(0)\right)+P(\Kr_0)\int_0^t \sum_{j=0}^l \Er_{4+j}^{\kk}(\tau) + \Er_{4+l+1}^{\kk}(\tau)\dtau.
\end{aligned}
\end{equation}  Similarly, the divergence part is controlled by
\begin{equation}
\begin{aligned}
&\ino{\eps^{2l}\nabpr\cdot\p_t^k\TT^\alpha v^\pm}_{3-l-k,\pm}^2+\ino{\eps^{2l}\nabpr\cdot\p_t^k\TT^\alpha b^\pm}_{3-l-k,\pm}^2\\
\lesssim &~\ino{\eps^{2l}\ffpmr\p_t^k\TT^\alpha \Dtpmr(q^\pm,b^\pm)}_{3-l-k,\pm}^2+C(\Kr_0)\left(\sum_{j=0}^l E_{4+j}^{\kk}(0)\right)+P(\Kr_0)\int_0^t \sum_{j=0}^l \Er_{4+j}^{\kk}(\tau)\dtau,
\end{aligned}
\end{equation} in which the first term will be controlled via tangential estimates.

For the pressure $q$, we still use the linearized momentum equation to convert it to tangential derivatives of $v$ and $b$. This step is exactly the same as Section \ref{sect divE4}, so we do not repeat the details here.

\subsubsection{Tangential estimates}\label{sect ETTll}

For the tangential estimates, compared with the analysis for the nonlinear uniform-in-$\kk$ estimates in Section \ref{sect ETT}-Section \ref{sect ETTG}. we find that those rather technical steps in the estimates of full time derivatives can be simplified a lot thanks to the $\sqrt{\kk}$-weighted extra regularity of the free interface. For $\TT^\gamma$-differentiated linearize system \eqref{CMHDllkk}, we introduce the corresponding Alinhac good unknown $\FFr^\gamma:=\TT^\gamma f - \TT^\gamma\pr \ppr_3 f$ which satisfies
\[
\TT^\gamma(\ppr_i f)=\ppr_i\FFr^\gamma +\ccr^\gamma_i(f),\quad\TT^\gamma(\Dtpr f)=\Dtpr\FFr^\gamma +\ddr^\gamma(f),\q \TT^\gamma(\mbpr f)=\mbpr\FFr^\gamma +\ccbr^\gamma(f)
\]where 
\begin{align}\label{AGU comm Cr}
\ccr_i^\gamma(f) =&~(\ppr_3\ppr_i f)\TT^\gamma\pr
+\left[ \TT^\gamma, \frac{\Npr_i}{\p_3 \pr}, \p_3 f\right]+\p_3 f \left[ \TT^\gamma, \Npr_i, \frac{1}{\p_3  \pr}\right] +\Npr_i \p_3 f\left[\TT^{\gamma-\gamma'}, \frac{1}{(\p_3 \pr)^2}\right] \TT^{\gamma'} \p_3   \pr \nonumber\\
&+\frac{\Npr_i}{\p_3  \pr} [\TT^\gamma, \p_3] f- \frac{\Npr_i}{(\p_3 \pr)^2}\p_3f [\TT^\gamma, \p_3] \pr,
\end{align}and 
\begin{align} \label{AGU comm Dr}
\ddr^\gamma(f) =&~ (\Dtpr \ppr_3 f)\TT^\gamma\vp+[\TT^\gamma, \vbr]\cdot \TP f + \left[\TT^\gamma, \frac{1}{\p_3\pr}(\vr\cdot \Npd-\p_t\pr), \p_3 f\right]+\left[\TT^\gamma, (\vr\cdot \Npd-\p_t\pr), \frac{1}{\p_3\pr}\right]\p_3 f\nonumber\\ 
&+\frac{1}{\p_3\vp} [\TT^\gamma, \vr]\cdot \Npd \p_3 f-(\vr\cdot \Npd-\p_t\pr)\p_3 f\left[ \TT^{\gamma-\gamma'}, \frac{1}{(\p_3 \pr)^2}\right]\TT^{\gamma'} \p_3 \pr\nonumber\\
&+\frac{1}{\p_3\pr}(\vr\cdot \Npd-\p_t\pr) [\TT^\gamma, \p_3] f+ (\vr\cdot \Npd-\p_t\pr)\frac{\p_3 f}{(\p_3\pr)^2}[\TT^\gamma, \p_3] \pr +\TT^\gamma\p_t(\pd-\pr)\ppr_3 f
\end{align} and 
\begin{align}\label{AGU comm Br}
\ccbr^\gamma(f) =&~(\mbpr\ppr_3f)\TT^\gamma\pr
+\left[ \TT^\gamma, \frac{\mbr\cdot\Npr}{\p_3 \pr}, \p_3 f\right]+\p_3 f \left[ \TT^\gamma, \mbr\cdot\Npr, \frac{1}{\p_3 \pr}\right] +(\mbr\cdot\Npr) \p_3 f\left[\TT^{\gamma-\gamma'}, \frac{1}{(\p_3 \pr)^2}\right] \TT^{\gamma'} \p_3  \pr \nonumber\\
&+[\TT^\gamma,\mbcr_i]\TP_i f +\TT^\gamma \mbr_3\,\ppr_3 f  +\frac{\mbr\cdot\Npr}{\p_3  \pr} [\TT^\gamma, \p_3] f - \frac{\mbr\cdot\Npr}{(\p_3 \pr)^2}\p_3f [\TT^\gamma, \p_3] \pr.
\end{align}
with $\len{\gamma'}=1$. Since $\Npr_3=1$, the third term in $\ccr_i^\gamma(f)$ does not appear when $i=3$. Under this setting, the $\TT^\gamma$-differentiated linearized system is reformulated as follows
\begin{align}
\label{goodlv} \rhor^\pm\Dtpmr \VVr^{\gamma,\pm}-\mbpmr\BBr^{\gamma,\pm}+\nabpr \QQr^{\gamma,\pm} =\Rr_v^{\gamma,\pm}-\ccr^\gamma(q^\pm)+\ccbr^\gamma(b^\pm) ~~&\text{ in }[0,T]\times\Om^\pm,\\
\label{goodlp} \ffpmr\Dtpmr\QQr^{\gamma,\pm}-\ffpmr\Dtpmr \BB^{\gamma,\pm}\cdot \mbr^\pm+\nabpr\cdot \VVr^{\gamma,\pm} =\Rr_p^{\gamma,\pm}-\ccr_i^\gamma(v_i^\pm)~~&\text{ in }[0,T]\times\Om^\pm,\\
\label{goodlb} \Dtpmr\BBr^\gapm-\mbpmr\VVr^\gapm+\mbr^\pm(\nabpr\cdot\VVr^\gapm)=\Rr_b^\gapm+\ccbr^\gamma(v^\pm)-\mbr^\pm\ccr_i^\gamma(v_i^\pm)~~&\text{ in }[0,T]\times\Om^\pm,\\
\label{goodls} \Dtpmr\BSr^{\pm,\alpha}=\ddr^{\gamma}(S^\pm)~~&\text{ in }[0,T]\times\Om^\pm,
\end{align}with boundary conditions
\begin{align}
\label{goodlbdc}\jump{\QQr^\gamma}=\sigma\TT^{\gamma}\h(\psr)-\kk\TT^{\gamma}(1-\TL)^2\psi-\kk\TT^{\gamma}(1-\TL)\p_t \psi-\jump{\p_3 q}\TT^\gamma\psr~~&\text{ on }[0,T]\times\Sigma,\\
\label{goodlkbc}\VVr^\gapm\cdot \npr=\p_t\TT^{\gamma}\psi+\vb^\pm\cdot\cnab\TT^\gamma\psr-\WW^\gapm~&\text{ on }[0,T]\times\Sigma,
\end{align}where $\Rr_v,\Rr_p,\Rr_b$ terms consist of the following commutators
\begin{align}
\label{goodlRv} \Rr_v^{\gamma,\pm}:=&-[\TT^\gamma,\rhor^\pm]\Dtpmr v^\pm-\rhor^\pm\ddr^\gamma(v^\pm)\\
\label{goodlRp} \Rr_p^{\gamma,\pm}:=&-[\TT^\gamma,\ffpmr]\Dtpmr q^\pm -\ffpmr \ddr^\gamma(q^\pm)  \no\\
&+ [\TT^\gamma,\ffpmr]\Dtpmr b^\pm\cdot \mbr^\pm+ \ffpmr \ddr^\gamma(b^\pm)\cdot \mbr^\pm + [\TT^\gamma, \ffpmr \mbr^\pm]\cdot \Dtpmr b^\pm\\
\label{goodlRb} \Rr_b^{\gamma,\pm}:=&~-[\TT^\gamma,\mbr^\pm](\nabpr\cdot v^\pm)-\ddr^\gamma(b^\pm),
\end{align} and the boundary term $\WW^\gapm$ is 
\begin{align}\label{goodlww}
\WWr^\gapm:=(\p_3 v^\pm\cdot \npr)\TT^\gamma\psr+[\TT^\gamma,\npr_i,v_i^\pm].
\end{align}

Given $0\leq l\leq 4$, we shall consider the tangential estimates for $\TP^{4-k-l}\p_t^k\TT^\alpha$ for $0\leq k \leq 4-l$ and $\len{\alpha}=2l,~\alpha_3=0$. Following the analysis in Section \ref{sect ETT}-Section \ref{sect ETTG}, using the linearized Reynolds transport theorem (Lemma \ref{transpt linearized}), dropping $\gamma$ for simplicity of notations, we get
\begin{align}
\ddt\frac12\iopm\rhor|\eps^{2l}\VVr^\pm|^2\dvr =& \iopm \eps^{4l} \bpmr \BBr^\pm \cdot\VVr^\pm \dvr - \iopm \eps^{4l}\VVr^\pm\cdot\nabpr\QQr^\pm \dvr \no\\
&-\iopm \eps^{4l}(\Rr_v^\pm-\ccr(q^\pm)+\ccbr^\gamma(b^\pm))\cdot\VVr^\pm\dvr \no\\
&+\frac12\iopm\eps^{4l}\left(\Dtpmr\rhor+\rhor\nabpr\cdot\vr^\pm+\rhor\ppr_3(\vbr\cdot\cnab)(\pr-\pd)\right)|\VVr^\pm|^2\dvr,
\end{align}where the last two terms can be directly controlled by $C(\Kr_0)\Er^\kk(t)$. We then analyze the first line. Integrating $\bpmr$ and $\nabpr$ by parts, using $\br\cdot\npr|_{\Sigma}=0$ and invoking the evolution equation of $\BBr$ and $\QQr$, we get
\begin{align}
& \iopm \eps^{4l}\mbpmr \BBr^\pm \cdot\VVr^\pm \dvr 
\eql & -\frac12\ddt\iopm|\eps^{2l}\BBr^\pm|^2\dvr -\frac12\ddt\iopm \ffpmr(\eps^{2l}\BBr^\pm\cdot \mbr^\pm)^2\dvt + \iopm \eps^{4l}\ffpmr(\BBr^\pm\cdot\mbr^\pm)\Dtpmr \QQr^\pm\dvr
\end{align}and
\begin{align}
 - \iopm\eps^{4l} \VVr^\pm\cdot\nabpr\QQr^\pm \dvr \eql &~ \underbrace{\pm\is\eps^{4l} (\VVr^\pm\cdot\npr)\QQr^\pm\dx'}_{=:\Ir^\pm}\underbrace{-\iopm \eps^{4l} \QQr^\pm\ccr_i(v_i^\pm)\dvr}_{=:\Zr^\pm} \no\\
&- \frac12\ddt\iopm \ffpmr (\eps^{2l}\QQr^\pm)^2 \dvr + \iopm \ffpmr \eps^{4l} \QQr^\pm\Dtpmr(\BBr^\pm\cdot\mbr^\pm)\dvr .
\end{align} Notice that
\begin{align}
 &\iopm \eps^{4l}\ffpmr(\BBr^\pm\cdot\mbr^\pm)\Dtpmr \QQr^\pm\dvr+ \iopm\eps^{4l} \ffpmr \QQr^\pm\Dtpmr(\BBr^\pm\cdot\mbr^\pm)\dvr\eql \ddt\iopm \eps^{4l}\ffpmr \QQr^\pm(\BBr^\pm\cdot\mbr^\pm)\dvr,
\end{align}we find that
\begin{align}
& \iopm \eps^{4l} \mbpmr \BBr^\pm \cdot\VVr^\pm \dvr - \iopm\eps^{4l} \VVr^\pm\cdot\nabpr\QQr^\pm \dvr\no\\
\eql&~\Ir^\pm+\Zr^\pm -\frac12\ddt\iopm |\eps^{2l}\BBr^\pm|^2\dvr -\frac12\ddt\iopm \ffpmr\bno{\eps^{2l}\left(\QQr^\pm-\BBr^\pm\cdot\mbr^\pm\right)}^2\dvr.
\end{align} Thus, we already get the energy terms for $\VVr,\BBr$ and $\QQr$, and it remains to analyze the boundary term $\Ir^\pm$. Again, following the analysis in Section \ref{sect ETT}-Section \ref{sect ETTG}, we have
\begin{align}
\Ir^+ + \Ir^- =\STr+\STr'+\VSr+\RTr+\RTr^+ + \RTr^- +\ZBr^++\ZBr^-
\end{align}where
\begin{align}
\label{def STr} \STr:=&~\eps^{4l}\is \TT^\gamma\jump{q}\,\p_t\TT^\gamma\psi\dx',\\
\label{def STr'} {\STr}':=&~\eps^{4l}\is \TT^\gamma\jump{q}\,(\vb^+\cdot\cnab)\TT^\gamma\psr\dx',\\
\label{def VSr}\VSr:=&~\eps^{4l}\is\TT^\gamma q^-\,(\jump{\vb}\cdot\cnab)\TT^\gamma\psr\dx',\\
\label{def RTr}\RTr:=&-\eps^{4l}\is\jump{\p_3 q}\TT^\gamma\psr\,\p_t\TT^\gamma\psi\dx',\\
\label{def RTr'}\RTr^{\pm}:=&\mp\eps^{4l}\is \p_3 q^\pm\,\TT^\gamma\psr\,(\vb^\pm\cdot\cnab)\TT^\gamma\psr\dx',\\
\label{def ZBr}\ZBr^{\pm}:=&\mp\eps^{4l}\is \QQr^{\pm}\WWr^{\pm}\dx',~~\Zr^{\pm}=-\iopm\eps^{4l}\QQr^{\pm}\ccr_i(v_i^\pm)\dvr.
\end{align}

\subsubsection{Analysis of the boundary integrals}\label{sect ETTbdryll}
Since the weight function $\omega(x_3)$ vanishes on $\Sigma$, we can alternatively write $\TT^\alpha=\p_t^{\alpha_0}\TP^{2l-\alpha_0}$ and $\TT^\gamma=\p_t^{k+\alpha_0}\TP^{4+l-(k+\alpha_0)}$. Replacing $k+\alpha_0$ by $k$, it suffices to analyze the case $\TT^\gamma=\p_t^k\TP^{4+l-k}$ for $0\leq k\leq 4+l,~0\leq l\leq 4$. First, there is no need to analyze $\RTr$ and $\RTr^\pm$ because they can be directly controlled by using the energy bounds \eqref{energy llkkrr} for the basic state. For the term $\STr$, the boundary regularity is given by the $\kk$-regularization terms instead of the surface tension because we do not need a uniform-in-$\kk$ estimate for the linearized problem. Using the jump conditions for $\jump{q}$ and integrating by parts, we have
\begin{align}
\int_0^t\STr\dtau\lesssim&-\bno{\sqrt{\kk}\eps^{2l}\p_t^k\TP^{4+l-k}\psi}_2^2\bigg|^t_0 - \bno{\sqrt{\kk}\eps^{2l}\p_t^{k+1}\TP^{4+l-k}\psi}_{L_t^2H_{x'}^1}^2 +\delta\bno{\sqrt{\kk}\eps^{2l}\p_t^{k+1}\TP^{4+l-k}\psi}_{L_t^2H_{x'}^1}^2+\frac{\sigma}{\kk}\int_0^t C(\Kr_0)  \dtau.
\end{align} For the term $\STr'$, we have
\begin{align}
\int_0^t{\STr}'\dtau=&~\sigma\int_0^t\is \eps^{4l} \p_t^k\TP^{4+l-k}\left(\frac{\cnab\psr}{\sqrt{1+|\cnab\psr|^2}}\right)\cdot \cnab\left((\vb^+\cdot\cnab)\p_t^{k}\TP^{4+l-k} \psr\right)\dx' \dtau\no\\
&-\kk\int_0^t\is\eps^{4l}\p_t^k\TP^{4+l-k}(1-\TL)\psi\,(1-\TL)\left((\vb^+\cdot\cnab)\p_t^{k}\TP^{4+l-k} \psr\right)\dx'\dtau \no\\
&- \kk\int_0^t\is \eps^{4l}\p_t^{k+1}\TP^{4+l-k}\jp\psi\, \jp\left((\vb^+\cdot\cnab)\p_t^{k}\TP^{4+l-k} \psr\right)\dx'\dtau \no\\
\lesssim&~\sigma C(\Kr_0,\kk^{-1})t + C(\Kr_0)\int_0^t\bno{\sqrt{\kk}\eps^{2l}\p_t^k\TP^{4+l-k}\psi}_2 \|v^+\|_{4,\pm}\dtau + \delta\bno{\sqrt{\kk}\eps^{2l}\p_t^{k+1}\TP^{4+l-k}\psi}_{L_t^2H_{x'}^1}^2 + C(\Kr_0)\int_0^t \|v^+\|_{4,\pm}^2\dtau \no\\
\lesssim&~ \sigma C(\Kr_0,\kk^{-1})t+C(\Kr_0)\int_0^t \Er_{4+l}^\kk(\tau)+\Er_{4}^\kk(\tau)\dtau.
\end{align} Here we note that the second term in $\int_0^t{\STr}'\dtau$ requires the bound for $|\p_t^{k}\TP^{4+l-k}\psi|_3$, which can be proved by the elliptic estimates as in Lemma \ref{qelliptic} (differentiating the regularized boundary condition by $\p_t^{k}\TP^{3+l-k}$ and testing the differentiated equation with $\p_t^{k}\TP^{3+l-k}(1-\TL)^2\psi$ in $L^2(\Sigma)$.) The term $\VSr$ can also be directly controlled even if $\TT^\gamma$ only contains time derivatives. When $k<4+l$, we can use the $\kk$-weighted energy to control it after integrating $\TP^{\frac12}$ by parts and using Lemma \ref{nctrace}
\begin{align}
\VSr=&\is\eps^{4l}\p_t^k\TP^{3.5+l-k} q^-\, \TP^{\frac12}\left((\jump{\vb}\cdot\cnab)\p_t^k\TP^{4+l-k}\psr\right)\dx'\no\\
\lesssim&~\|\eps^{2l}\p_t^k\TP^{4+l-k} q^-\|_{0,-}^{\frac12}\|\eps^{2l}\p_t^k\TP^{3+l-k}\p_3 q^-\|_{0,-}^{\frac12}|\vb^\pm|_{W^{\frac12,\infty}}\bno{\eps^{2l}\p_t^k\psr}_{5.5+l-k}\lesssim (\Er^\kk_4(t)+\Er^\kk_{4+l}(t))C(\Kr_0).
\end{align}
When $k=4+l$, we can first integrate $\p_t$ by parts and then integrate $\TP^{\frac12}$ by parts 
\begin{align}
&\int_0^t\VSr\dtau\eql \int_0^t\is\eps^{4l}\TP^{\frac12} (\jump{\vb}\p_t^{3+l}q^-)\, \TP^{\frac12}\p_t^{5+l}\psr\dx'\dtau +\is\eps^{4l}\p_t^{3+l} q^-\, (\jump{\vb}\cdot\cnab)\p_t^{4+l}\psr\dx'\bigg|^t_0\no\\
\lesssim&~\int_0^t\|\eps^{2l}\TP\p_t^{3+l} q^-\|_{0,-}^{\frac12}\|\eps^{2l}\p_t^{3+l}\p_3 q^-\|_{0,-}^{\frac12}|\vb^\pm|_{W^{\frac12,\infty}}\bno{\eps^{2l}\p_t^{5+l}\psr}_{0.5}\dtau\no\\
& + \delta\|\eps^{2l}\p_t^{3+l}q^-\|_{1,-}^2 + |\vb^\pm|_{L^{\infty}}^2 \bno{\eps^{2l}\p_t^4\psr}_{1}^2+C(\Kr_0)\Er^\kk(0)\no\\
\lesssim&~\delta \Er_{4+l}^\kk(t)+C(\Kr_0,\kk^{-1})\left(\Er^\kk(0)+\int_0^t \Er^\kk(\tau)\dtau\right).
\end{align}

For $\ZBr+\Zr$, the cancellation obtained in Section \ref{sect E4TT} and Section \ref{sect E8tt} still holds. Following step 4 in Section \ref{sect E8tt}, we have
\begin{align}
\ZBr^\pm+\Zr^\pm=&~\mp\is\eps^{4l}(\p_t^k\TP^{4+l-k} q^\pm -\p_t^k\TP^{4+l-k}\psr\p_3 q^\pm )(\p_3v^\pm\cdot\npr)\,\p_t^k\TP^{4+l-k}\psr\dx' \no\\
&\mp \is\eps^{4l} \QQr^\pm\left[\p_t^k\TP^{4+l-k},\npr_i,v_i^\pm\right]\dx' -\iopm\eps^{4l}\QQr^{\pm}\ccr_i(v_i^\pm)\dvr,
\end{align}where the first line is controlled in the same way as$\VSr$. Mimicing the proof in step 4 in Section \ref{sect E8tt}, we have
\begin{align}
\mp \is\eps^{4l} \QQr^\pm\left[\p_t^k\TP^{4+l-k},\npr_i,v_i^\pm\right]\dx' -\iopm\eps^{4l}\QQr^{\pm}\ccr_i(v_i^\pm)\dvr\eql\iopm \eps^{4l}\ppr_3\QQr^{\pm}\left[\p_t^k\TP^{4+l-k},\Npr_i,v_i^\pm\right]\dvr,
\end{align}whose time integral can be directly controlled by $$\delta\Er_{4+l}^\kk(t) + C(\Kr_0,\kk^{-1})\left(\Er^\kk(0)+\int_0^t \Er^\kk(\tau)\dtau\right)$$ after integrating by parts one tangential derivative in $\p_t^k\TP^{4+l-k}$. 

\subsubsection{Uniform-in-$n$ estimates for the linearized approximate system}
Summarizing the estimates obtained in Section \ref{sect divcurlll}-Section \ref{sect ETTbdryll}, we prove that for any $\delta\in(0,1)$, 
\[
\Er^\kk(t)\lesssim \delta\Er^\kk(t) + C(\Kr_0,\kk^{-1})\left(\Er^\kk(0)+\int_0^t \Er^\kk(\tau)\dtau\right).
\] Choosing $\delta>0$ suitably small such that the $\delta$-term can be absorbed by the left side and using Gr\"onwall's inequality, we find that there exists a time $T_\kk>0$ (independent of $\eps$ and $n$), such that
\[
\sup_{0\leq t\leq T_\kk}\Er^\kk(t)\leq C'(\Kr_0,\kk^{-1})\Er^\kk(0)
\]for some positive function $C'$ continuous in its arguments. Following the argument in remark \ref{mbrbound}, it is straightforward to show that
\[
\sum_\pm\sum_{l=0}^4\sum_{\lee\alpha\ree=2l}\sum_{k=0}^{4-l}\int_0^t\ino{\eps^{2l}\TT^{\alpha}\p_t^{k}\mb^\pm}_{4-k-l,\pm}^2\dtau< P(\Er^\kk(t)) \quad \forall t\in[0,T_\kk].
\]

\subsection{Picard iteration}\label{sect picard}
We already establish the local existence of the linear system \eqref{CMHD0llkk} for each $n$ and the uniform-in-$n$ estimates for the solution to \eqref{CMHD0llkk}. It suffices to prove $\{(v^{\nnn,\pm},b^{\nnn,\pm},\mb^{\nnn,\pm},q^{\nnn,\pm},\psi^{\nnn})\}$ has a strongly convergent subsequence (in certain anisotropic Sobolev norms). For a function sequence $\{f^{\nnn,\pm}\}$, we define $[f]^{\nnn,\pm}:=f^{\nnr,\pm}-f^{\nnn,\pm}$. Then we can write the linear system of $\{([v]^{\nnn,\pm},[b]^{\nnn,\pm},[q]^{\nnn,\pm},[\psi]^{\nnn})\}$ as follows
\begin{equation}\label{CMHDllkkgap}
\begin{cases}
\rho^{\nnn,\pm} D_t^{\vp^\nnn}[v]^{\nnn,\pm} - (\mb^{\nnn,\pm}\cdot\nab^{\vp^\nnn}) [b]^{\nnn,\pm}+\nab^{\vp^\nnn} [q]^{\nnn,\pm}+\nab^{[\vp]^\nnl}q^{\nnn,\pm} =-\fr_v^{\nnn,\pm} &\text{ in }[0,T]\times\Om^\pm\\
\ffp^{\nnn,\pm} D_t^{\vp^\nnn}[q]^{\nnn,\pm} - \ffp^{\nnn,\pm} D_t^{\vp^\nnn}[b]^{\nnn,\pm}\cdot\mb^{\nnn,\pm} +\nab^{\vp^\nnn}\cdot [v]^{\nnn,\pm}+\nab^{[\vp]^\nnl}\cdot v^{\nnn,\pm}  =-\fr_p^{\nnn,\pm} &\text{ in }[0,T]\times\Om^\pm\\
 D_t^{\vp^\nnn}[b]^{\nnn,\pm} - (\mb^{\nnn,\pm}\cdot\nab^{\vp^\nnn}) [v]^{\nnn,\pm}+\mb^{\nnn,\pm}(\nab^{\vp^\nnn} \cdot[v]^{\nnn,\pm}+\nab^{[\vp]^\nnl}\cdot v^{\nnn,\pm}) =-\fr_b^{\nnn,\pm} &\text{ in }[0,T]\times\Om^\pm\\
 D_t^{\vp^\nnn}[S]^{\nnn,\pm}=-\fr_S^{\nnn,\pm}&\text{ in }[0,T]\times\Om^\pm\\
\jump{[q]^{\nnn,\pm}} = \sigma (\h(\psi^\nnn)-\h(\psi^\nnl)) - \kk(1-\TL)^2[\psi]^\nnn - \kk(1-\TL)\p_t[\psi]^\nnn&\text{ on }[0,T]\times\Sigma\\
\p_t[\psi]^\nnn = [v]^{\nnn,\pm}\cdot N^\nnn + v^{\nnn,\pm}\cdot [N]^{\nnl}&\text{ on }[0,T]\times\Sigma\\
v_3^{\nnn,\pm}=v_3^{\nnl,\pm}=\mb_3^{\nnn,\pm}=\mb_3^{\nnl,\pm}=0&\text{ on }[0,T]\times\Sigma^\pm\\
([v]^\nnn,[b]^\nnn,[q]^\nnn,[\psi]^\nnn)|_{t=0}=(\vec{0},\vec{0},0,0),
\end{cases}
\end{equation}where the source terms are defined by
\begin{align}
\fr_v^{\nnn,\pm}:=&~[\rho]^{\nnl,\pm}\p_t v^{\nnn,\pm} + [\rho\vb]^{\nnl,\pm}\cdot\cnab v^{\nnn,\pm} + [\rho V_{\NN}]^{\nnl,\pm}\p_3 v^{\nnn,\pm}\no\\
&~ - [\mb]^{\nnl,\pm}\cdot\cnab b^{\nnn,\pm} - [\bm{B}_\NN]^{\nnl,\pm}\p_3 b^{\nnn,\pm},\\
\fr_q^{\nnn,\pm}:=&~[\ffp]^{\nnl,\pm}\p_t q^{\nnn,\pm} + [\ffp\vb]^{\nnl,\pm}\cdot\cnab q^{\nnn,\pm} + [\ffp V_\NN]^{\nnl,\pm}\p_3 q^{\nnn,\pm}\no\\
&- ([\ffp]^{\nnl,\pm}\p_t b^{\nnn,\pm} + [\ffp\vb]^{\nnl,\pm}\cdot\cnab b^{\nnn,\pm} + [\ffp V_\NN]^{\nnl,\pm}\p_3 b^{\nnn,\pm})\cdot\mb^{\nnn,\pm}\no\\
& - (\ffpm)^{\nnl}D_t^{\vp^\nnl}b^{\nnn,\pm}\cdot[\mb]^{\nnl,\pm},\\
\fr_p^{\nnn,\pm}:=&~[\vb]^{\nnl,\pm}\cdot\cnab b^{\nnn,\pm} + [V_{\NN}]^{\nnl,\pm}\p_3 b^{\nnn,\pm} - [\mb]^{\nnl,\pm}\cdot\cnab v^{\nnn,\pm} - [\bm{B}_\NN]^{\nnl,\pm}\p_3 v^{\nnn,\pm}\no\\
&+  [\mb]^{\nnl,\pm}(\nab^{\vp^\nnl} \cdot v^{\nnn,\pm}),\\
\fr_S^{\nnn,\pm}:=&~[\vb]^{\nnl,\pm}\cdot\cnab S^{\nnn,\pm} + [V_{\NN}]^{\nnl,\pm}\p_3 S^{\nnn,\pm},
\end{align}with
\begin{align*}
V_\NN^\nnn:=\frac{1}{\p_3\vp^\nnn}(v^\nnn\cdot\NN^\nnl-\p_t\vp^\nnn),\quad \bm{B}_\NN^\nnn:=\frac{1}{\p_3\vp^\nnn}(\mb^\nnn\cdot\NN^\nnn),\quad \nab^{[\vp]^\nnl} f^\nnn:= -[\NN/\p_3\vp]^\nnl\p_3 f^\nnn
\end{align*}

For $1\leq n\in\N^*$, we define the energy for the linear system \eqref{CMHDllkkgap} as follows
\begin{align}
[\Er^\kk]^\nnn(t):=&~[\Er^\kk]_3^\nnn(t)+\cdots+[\Er^\kk]_6^\nnn(t),\no\\
[\Er^\kk]_{3+l}^\nnn(t):=&\sum_\pm\sum_{k=0}^3\sum_{\len{\alpha}=2l}\ino{\eps^{2l}\p_t^k\TT^\alpha([v]^{\nnn,\pm},[b]^{\nnn,\pm},[q]^{\nnn,\pm},[S]^{\nnn,\pm})}_{3-k-l}^2 \no\\
&+\sum_{k=0}^{3+l}\bno{\sqrt{\kk}\eps^{2l}\p_t^k[\psi]^\nnn}_{5+l-k}^2 +\int_0^t\bno{\sqrt{\kk}\eps^{2l}\p_t^{4+l}[\psi]^\nnn}_{1}^2\dtau,~~0\leq l\leq 3,
\end{align}where $\TT^{\alpha}:=(\omega(x_3)\p_3)^{\alpha_4}\p_t^{\alpha_0}\p_1^{\alpha_1}\p_2^{\alpha_2}$ with the multi-index $\alpha=(\alpha_0,\alpha_1,\alpha_2,0,\alpha_4),~\lee \alpha\ree=\alpha_0+\alpha_1+\alpha_2+2\times0+\alpha_4$. It should be noted that the initial value of $[\Er^\kk]^\nnn$. Thus, we shall prove the following proposition in order for the strong convergence.
\begin{prop}\label{prop CMHDllgap}
There exists a time $T_\kk'>0$ depending on $\kk$ and $\Kr_0$, such that
\begin{align}
\forall 2\leq n\in\N^*,\quad \sup_{0\leq t\leq T_{\kk}'}[\Er^\kk]^{\nnn}(t)\leq\frac{1}{4}\left(\sup_{0\leq t\leq T_{\kk}'}[\Er^\kk]^{\nnl}(t)+\sup_{0\leq t\leq T_{\kk}'}[\Er^\kk]^{\nnll}(t)\right).
\end{align}
\end{prop}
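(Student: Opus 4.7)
The plan is to apply exactly the same energy-estimate machinery developed for the linearized approximate problem in Section \ref{sect uniformll} to the linear difference system \eqref{CMHDllkkgap}, which shares the principal symmetric hyperbolic structure of \eqref{CMHDllkk} and only differs from it by additional inhomogeneous source terms $\fr_v^\nnn, \fr_q^\nnn, \fr_b^\nnn, \fr_S^\nnn$ and modified boundary data. The key observation is that $[\Er^\kk]^\nnn$ is constructed from norms \emph{two anisotropic orders lower} than $\Er^\kk$ from Proposition \ref{prop energy llkk}. Thus every iterate $(v^\nnn, b^\nnn, q^\nnn, S^\nnn, \psi^\nnn)$ enjoys the uniform bound $\sup_n \sup_{[0,T_\kk]} \Er^\kk(t) \le C(\Kr_0,\kk^{-1})\Er^\kk(0)$, leaving a comfortable margin of two derivatives to absorb the nonlinear source contributions in $L^\infty$-type norms.

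First I would run the div-curl plus Alinhac good-unknown reduction of Section \ref{sect divcurlll}–\ref{sect ETTll} on \eqref{CMHDllkkgap}, producing an energy identity of the form $\tfrac{d}{dt}[\Er^\kk]^\nnn(t) \lesssim (\text{interior source contributions}) + (\text{ST},\VS,\RT,\ZBr\text{-type boundary terms})$. The interior source terms $\fr_*^\nnn$ are schematically $([\text{coefficient}]^\nnl)\cdot(\text{derivatives of level-}n\text{ iterate})$. Using the mean-value theorem, each coefficient difference such as $[\rho]^\nnl, [\ffp]^\nnl, [\mb]^\nnl, [\vb]^\nnl$ can be rewritten as a smooth function of the basic states times $([v]^\nnl,[b]^\nnl,[q]^\nnl,[S]^\nnl,\cnab[\psi]^\nnl)$. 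The crucial point is that $V_\NN^\nnn$ uses $\NN^\nnl$, so $[V_\NN]^\nnl$ and the operator $\nab^{[\vp]^\nnl}$ pick up contributions from $\NN^\nnl-\NN^\nnll$, i.e.\ from $\cnab[\psi]^\nnll$; this is precisely why $[\Er^\kk]^\nnll$ must appear on the right. Because $[\Er^\kk]^\nnn$ controls only derivatives up to anisotropic weight $6$, and $\Er^\kk$ controls weight $8$, all derivative losses from the product/commutator estimates are absorbed by $C(\Kr_0,\kk^{-1})$.

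For the boundary sources, the jump condition contributes $\sigma(\h(\psi^\nnn)-\h(\psi^\nnl))$, which I would write as $\cnab\cdot(M[\psi^\nnn,\psi^\nnl]\cnab[\psi]^\nnl)$ with a smooth symmetric tensor $M$ depending on $\psi^\nnn,\psi^\nnl$; integrating by parts against the analogue of $\p_t\TT^\gamma[\psi]^\nnn$ and exploiting the $\kk$-regularization terms $-\kk(1-\TL)^2[\psi]^\nnn - \kk(1-\TL)\p_t[\psi]^\nnn$ yields the $\sqrt{\kk}$-weighted boundary regularity for $[\psi]^\nnn$ and bounds the remainder in terms of $\bno{\cnab[\psi]^\nnl}$. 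The kinematic source $v^\nnn\cdot[N]^\nnl = -v^\nnn\cdot\cnab[\psi]^\nnl$ is estimated the same way. The analogue of the $\VS$ term, $\is [q]^{\nnn,-}(\jump{\vb^\nnn}\cdot\cnab)\TT^\gamma[\psi]^\nnn\,\dx'$, still has one spatial derivative to spare whenever $\gamma_0 < \len\gamma$, and can be handled by the $\TP^{1/2}$-integration of Lemma \ref{nctrace} combined with the $\sqrt{\kk}$-weighted boundary regularity; the case of full time derivatives is reduced to the spatial one by replacing $\p_t^{4+l-1}$ by $\Dtbl\p_t^{4+l-2}$ exactly as in Section \ref{sect E8tt}. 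Crucially, no uniform-in-$\kk$ bound is required here, so $\sigma$- and $\kk$-dependent constants are admissible.

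Assembling everything and using that $([v]^\nnn,[b]^\nnn,[q]^\nnn,[S]^\nnn,[\psi]^\nnn)|_{t=0}=0$, the differential inequality reads
\begin{equation*}
[\Er^\kk]^\nnn(t) \le \delta [\Er^\kk]^\nnn(t) + C(\Kr_0,\kk^{-1})\int_0^t [\Er^\kk]^\nnn(\tau)+[\Er^\kk]^\nnl(\tau)+[\Er^\kk]^\nnll(\tau)\,\dtau.
\end{equation*}
Absorbing the $\delta$-term, applying Gr\"onwall to the $[\Er^\kk]^\nnn$ piece, and taking supremum over $\tau\in[0,T]$ for $T\le T_\kk$ gives $\sup_{[0,T]}[\Er^\kk]^\nnn \le T\,C'(\Kr_0,\kk^{-1})\bigl(\sup_{[0,T]}[\Er^\kk]^\nnl+\sup_{[0,T]}[\Er^\kk]^\nnll\bigr)$. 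Choosing $T_\kk' \le \min\{T_\kk,\,1/(4C'(\Kr_0,\kk^{-1}))\}$ delivers the contraction factor $1/4$. The main obstacle I anticipate is the careful bookkeeping of the anisotropic commutator terms in the Alinhac good-unknown reformulation for the difference system — in particular verifying that every derivative of $[\psi]^\nnl$ that appears in $\fr_v^\nnn,\fr_q^\nnn,\fr_b^\nnn$ stays within the $[\Er^\kk]^\nnl$ budget once the $\sqrt{\kk}$-weighted enhanced regularity is invoked. Once the contraction is established, standard Cauchy-sequence arguments produce a limit $(v^{[\infty]}, b^{[\infty]}, q^{[\infty]}, S^{[\infty]}, \psi^{[\infty]})$ solving \eqref{CMHDllkkinf}, which, combined with the recovery of the boundary constraint $b^{[\infty]}\cdot N^{[\infty]}|_\Sigma=0$ carried out in Section \ref{sect recoverkk}, yields the solution to the nonlinear approximate system \eqref{CMHD0kk}.
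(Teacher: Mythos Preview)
Your proposal is correct and follows essentially the same route as the paper: div-curl analysis plus Alinhac good-unknown reformulation applied to the difference system \eqref{CMHDllkkgap}, leading to the same differential inequality $[\Er^\kk]^\nnn(t)\lesssim \delta[\Er^\kk]^\nnn(t)+C(\Kr_0,\kk^{-1})\int_0^t\bigl([\Er^\kk]^\nnn+[\Er^\kk]^\nnl+[\Er^\kk]^\nnll\bigr)\,\dtau$, with vanishing initial data and a Gr\"onwall/small-time argument for the contraction. Your identification of the source of the $[\Er^\kk]^\nnll$ contribution (through $V_\NN^\nnn$ and $[V_\NN]^\nnl$, which couple in $\NN^\nnll$) matches the paper's remark that ``the term $[\Er^\kk]^\nnl+[\Er^\kk]^\nnll$ is produced from the estimates of $[\vp]^\nnl, [\vp]^\nnll$.''

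One minor difference: for the full-time-derivative $\VS$ term you invoke the elaborate $\Dtbl\p_t^{3+l}$ substitution of Section~\ref{sect E8tt}, whereas the paper (both in Section~\ref{sect ETTbdryll} for the linearized problem and in the proof here) exploits that $\kk$-dependent constants are permitted and simply integrates one $\p_t$ by parts, then $\TP^{1/2}$, using the $\sqrt{\kk}$-weighted enhanced regularity of $\psr$. Both approaches work; the paper's is shorter precisely because, as you yourself note, uniformity in $\kk$ is not needed at this stage. Also, the paper makes explicit that the difference good unknowns are defined by $[\FF]^\nnn:=\FFr^\nnr-\FFr^\nnn$ (subtracting the level-$n$ good unknown from the level-$(n{+}1)$ one), which automatically generates the commutator structure $[\cc]_i^\nnn(f)=\cc_i^\nnn(f^\nnr)-\cc_i^\nnl(f^\nnn)+\text{l.o.t.}$; you should keep this in mind when doing the bookkeeping you anticipate.
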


\subsubsection*{Step 1: Div-Curl analysis and reduction of pressure}

The reduction of pressure follows in the same way as in Section \ref{sect divE4}. Invoking the momentum equation, we have
\[
-(\p_3\vp^{\nnn})^{-1}\p_3[q]^{\nnn,\pm} = \rho^{\nnn,\pm} D_t^{\vp^\nnn}[v]^{\nnn,\pm} - (\mb^{\nnn,\pm}\cdot\nab^{\vp^\nnn}) [b]^{\nnn,\pm} +\fr_v^{\nnn,\pm}+(\p_3\vp^{\nnn})^{-1}\p_3 q ^{\nnn,\pm}.
\]Then using $\ppr_i=\TP_i-\TP_i\pr\ppr_3$, we can convert $\p q$ to a spatial derivative of $v$ and $b$ plus the given term $\p_3 q ^{\nnn,\pm}$. 

For the div-curl analysis, using \eqref{divcurlTT}, we have for $0\leq l\leq 2,~0\leq k\leq 2-l$
\begin{align}
&\ino{\eps^{2l}\p_t^k\TT^\alpha ([v]^{\nnn,\pm},[b]^{\nnn,\pm})}_{3-k-l,\pm}^2\no\\
\leq C(\Kr_0)& \bigg(\|\eps^{2l}\p_t^k\TT^\alpha ([v]^{\nnn,\pm},[b]^{\nnn,\pm})\|_{0,\pm}^2 +\ino{\eps^{2l}\nab^{\vp^\nnn}\times\p_t^k\TT^\alpha ([v]^{\nnn,\pm},[b]^{\nnn,\pm})}_{2-k-l,\pm}^2 \no\\
&+\ino{\eps^{2l}\nab^{\vp^\nnn}\cdot\p_t^k\TT^\alpha ([v]^{\nnn,\pm},[b]^{\nnn,\pm})}_{2-k-l,\pm}^2+\ino{\TP^{3-k-l}\p_t^k\TT^\alpha ([v]^{\nnn,\pm},[b]^{\nnn,\pm})}_{0,\pm}^2\bigg).
\end{align} 

For the curl part, we again analyze the evolution equations of vorticity and current
\begin{align}
&\rho^\nnn D_t^{\vp^\nnn}(\nab^{\vp^\nnn}\times[v]^\nnn) - (\mb^\nnn\cdot\nab^{\vp^\nnn})(\nab^{\vp^\nnn}\times [b]^\nnn) \no\\
=& -\nab^{\vp^\nnn}\times\fr_v^\nnn - \nab^{\vp^\nnn}\rho^\nnn\times D_t^{\vp^\nnn}[v]^\nnn  +(\nab^{\vp^\nnn}\mb_j^{\nnn})\times(\p_j^{\vp^\nnn}[b]^\nnn)+\rho^\nnn [D_t^{\vp^\nnn},\nab^{\vp^\nnn}\times][v]^\nnn,\\
&D_t^{\vp^\nnn}(\nab^{\vp^\nnn}\times [b]^\nnn) - (\mb^\nnn\cdot\nab^{\vp^\nnn})(\nab^{\vp^\nnn}\times [v]^\nnn) - \mb^\nnn\times (\nab^{\vp^\nnn}(\nab^{\vp^\nnn}\cdot [v]^\nnn) )\no\\
=&- \nab^{\vp^\nnn}\times \fr_b^\nnn + [D_t^{\vp^\nnn},\nab^{\vp^\nnn}\times][b]^\nnn + (\nab^{\vp^\nnn}\mb_j^{\nnn})\times(\p_j^{\vp^\nnn}[v]^\nnn)  \no\\
&- \nab^{\vp^\nnn}\times(\mb^\nnn \nab^{[\vp]^\nnl}\cdot v^{\nnn,\pm})-(\nab^{\vp^\nnn}\times \mb^\nnn)(\nab^{\vp^\nnn}\cdot [v]^\nnn).
\end{align}

Mimicing the proof in Section \ref{sect divcurlll} and using the vanishing initial value of system \eqref{CMHDllkkgap}, we can prove
\begin{equation}
\begin{aligned}
&\ino{\eps^{2l}\nab^{\vp^\nnn}\times\p_t^k\TT^\alpha [v]^{\nnn,\pm}}_{2-k-l,\pm}^2+\ino{\eps^{2l}\nab^{\vp^\nnn}\times\p_t^k\TT^\alpha [b]^{\nnn,\pm}}_{2-k-l,\pm}^2 +\ino{\eps^{2l}\sqrt{(\ffpm)^\nnn} \mb^{\nnn,\pm}\times(\p_t^k\TT^\alpha [b]^\pm)}_{2-l-k,\pm}^2\\
\lesssim &~C(\Kr_0)\int_0^t \sum_{j=0}^l [E]_{3+j}^{\nnn}(\tau) + [E]_{3+l+1}^{\nnn}(\tau)\dtau.
\end{aligned}
\end{equation}  Similarly, the divergence of $[v]^\nnn$ can be converted to tangential derivatives of $[q]^\nnn$ and $[b]^\nnn$ by invoking the continuity equation, and the evolution equation of $\nabpr\cdot [b]^\nnn$ is 
\begin{align}
D_t^{\vp^\nnn}(\nab^{\vp^\nnn}\cdot [b]^\nnn)=&~(\p_i^{\vp^\nnn}\mb^\nnn_j)(\p_j^{\vp^\nnn} [v]_i^\nnn) - (\nab^{\vp^\nnn}\cdot\mb^\nnn)(\nab^{\vp^\nnn}\cdot [v]^\nnn)+ [D_t^{\vp^\nnn},\nabpr\cdot][b]^\nnn \no\\
&-\nab^{\vp^\nnn}\cdot(\fr_b^\nnn+\mb^\nnn \nab^{[\vp]^\nnl}\cdot v^{\nnn,\pm}),
\end{align}so the divergence part is controlled by
\begin{equation}
\begin{aligned}
&\ino{\eps^{2l}\nab^{\vp^\nnn}\cdot\p_t^k\TT^\alpha [v]^{\nnn,\pm}}_{2-k-l,\pm}^2+\ino{\eps^{2l}\nab^{\vp^\nnn}\cdot\p_t^k\TT^\alpha [b]^{\nnn,\pm}}_{2-k-l,\pm}^2\\
\lesssim &~\ino{\eps^{2l}(\ffpm)^\nnn\p_t^k\TT^\alpha D_t^{\vp^\nnn}([q]^\pm,[b]^\pm)}_{2-l-k,\pm}^2+C(\Kr_0)\int_0^t \sum_{j=0}^l [E]_{3+j}^{\nnn}(\tau)\dtau,
\end{aligned}
\end{equation} in which the first term will be reduced to tangential estimates.

\subsubsection*{Step 2: Tangential estimates}
It remains to prove the tangential estimates for $\TT^\gamma$-differentiated system where $\TT^\gamma=\TP^{3-l-k}\p_t^k\TT^\alpha$ satisfies $\alpha_3=0,\len{\alpha}=2l,~0\leq k\leq 3-l,~0\leq l\leq 3$. We shall introduce the Alinhac good unknowns $([\VV],[\BB],[\QQ])$ as below instead of directly taking tangential derivatives in \eqref{CMHDllkkgap}.
\begin{align*}
[\FF]^\nnn:=&~\FFr^\nnr-\FFr^\nnn =\TT^\gamma [f]^\nnn -\TT^\gamma\vp^\nnn\p_3^{\vp^\nnn} [f]^\nnn-\TT^\gamma\vp^\nnn \p_3^{[\vp]^\nnl}q^\nnn - \TT^\gamma[\vp]^\nnl \p_3^{\vp^\nnl}q^\nnn
\end{align*}and it satisfies
\begin{align*}
\TT^\gamma(\p_i^{\vp^\nnn}[f]^\nnn+\p_i^{[\vp]^\nnl}f^\nnn) = \p_i^{\vp^\nnn}[\FF]^\nnn + [\cc]_i^\nnn(f),\\ \TT^\gamma(D_t^{\vp^\nnn}[f]^\nnn+D_t^{[\vp]^\nnl}f^\nnn) = D_t^{\vp^\nnn}[\FF]^\nnn + [\dd]^\nnn (f),\\
\TT^\gamma((\mb^\nnn\cdot\nab^{\vp^\nnn})[f]^\nnn+(\mb^\nnn\cdot\nab^{[\vp]^\nnl})f^\nnn) =(\mb^\nnn\cdot\nab^{\vp^\nnn})[\FF]^\nnn + [\ccb]^\nnn (f)
\end{align*}with 
\begin{align*}
\|[\cc]_i^\nnn(f)-(\cc_i^\nnn(f^\nnr) - \cc_i^\nnl(f^\nnn)) \|_0^2\leq C(\Kr_0)([\Er^\kk]^\nnn(t)+[\Er^\kk]^\nnl(t)+[\Er^\kk]^\nnll(t)) \\
\|[\dd]^\nnn(f)-(\dd^\nnn(f^\nnr) - \dd^\nnl(f^\nnn))\|_0^2 \leq C(\Kr_0)([\Er^\kk]^\nnn(t)+[\Er^\kk]^\nnl(t)+[\Er^\kk]^\nnll(t))\\
\|[\ccb]^\nnn(f)-(\ccb^\nnn(f^\nnr) - \ccb^\nnl(f^\nnn))\|_0^2 \leq C(\Kr_0)([\Er^\kk]^\nnn(t)+[\Er^\kk]^\nnl(t)+[\Er^\kk]^\nnll(t))
\end{align*}where $\cc_i^\nnn(f^\mmm),\dd^\nnn(f^\mmm),\ccb^\nnn(f^\mmm)$ are defined by setting $\pr=\vp^\nnn,~\pd=\vp^\nnl,~f^\nnr=f,~f^\nnn=\fr,~\mb^\nnn=\mbr,~\mb^\nnl=\dot{\mb}$ in \eqref{AGU comm Cr}-\eqref{AGU comm Br}. This can be seen by substracting the corresponding identities of $\FFr$ with superscript $\nnl$ from the ones with superscript $\nnn$. The evolution equations of the good unknowns are (with $\pm$ dropped)
\begin{align}
\rho^\nnn D_t^{\vp^\nnn} [\VV]^\nnn - (\mb^\nnn\cdot\nab^{\vp^\nnn}) [\BB]^\nnn + \nab^{\vp^\nnn}[\QQ]^\nnn =& -\cc^\nnn(q^\nnr) +\cc^\nnl(q^\nnn) +\ccb^\nnn(b^\nnr)-\ccb^\nnl(b^\nnn) + [\RR]_v\\
\ffp^\nnn D_t^{\vp^\nnn}[\QQ]^\nnn - \ffp^\nnn D_t^{\vp^\nnn}[\BB]^\nnn\cdot\mb^\nnn +  \nab^{\vp^\nnn}\cdot[\VV]^\nnn=&-\cc_i^\nnn(v_i^\nnr) +\cc_i^\nnl(v_i^\nnn) + [\RR]_q\\
 D_t^{\vp^\nnn} [\BB]^\nnn - (\mb^\nnn\cdot\nab^{\vp^\nnn}) [\VV]^\nnn + \mb^\nnn(\nab^{\vp^\nnn}\cdot[\VV]^\nnn) =&~\ccb^\nnn(v^\nnr)-\ccb^\nnl(v^\nnn) + [\RR]_v -\mb^\nnn\left(\cc_i^\nnn(v_i^\nnr) - \cc_i^\nnl(v_i^\nnn)\right) + [\RR]_b
\end{align}where $[\RR]$ terms are controllable in $L^2(\Om)$ by $$\|[\RR]\|_{0}^2\leq C(\Kr_0)([\Er^\kk]^\nnn(t)+[\Er^\kk]^\nnl(t)+[\Er^\kk]^\nnll(t)).$$
The boundary conditions of these good unknowns on the interface $\Sigma$ are
\begin{align}
[\QQ]^\nnn:=&~\sigma\TT^\gamma\left(\h(\psi^\nnn)-\h(\psi^\nnl)\right) - \kk (1-\TL)^2 \TT^\gamma[\psi]^\nnn - \kk(1-\TL)\p_t\TT^\gamma[\psi]^\nnn \no\\
& -\TT^\gamma\psi^\nnn\jump{ \p_3[q]^\nnn} - \TT^\gamma[\psi]^\nnl\jump{\p_3q^\nnn}\\
[\VV]^\nnn\cdot N^\nnn:=&~\TT^\gamma\p_t[\psi]^\nnn + [\vb]^\nnn\cdot\cnab\TT^\gamma\psi^\nnn +  (\vb^\nnn\cdot\cnab)\TT^\gamma[\psi]^\nnl + \TT^\gamma \vb^\nnn\cdot\cnab [\psi]^\nnl - [\WW]^\nnn\\
[\WW]^\nnn:=&~(\p_3[v]^\nnn\cdot N^\nnn)\TT^\gamma \psi^\nnn+ (\p_3v^\nnn\cdot N^\nnn)\TT^\gamma [\psi]^\nnl +\left[\TT^\gamma, N_i^\nnn, v_i^\nnr\right] - \left[\TT^\gamma, N_i^\nnl, v_i^\nnn\right] 
\end{align}

Given $0\leq l\leq 3$, following Section \ref{sect ETTll}, we can similarly prove that
\begin{align}
&\sum_\pm\ddt\frac12\iopm\eps^{2l}\rho^\nnn|[\VV]^{\nnn,\pm}|^2+|[\BB]^{\nnn,\pm}|^2+(\ffpm)^{\nnn,\pm}([\QQ]^{\nnn,\pm}-[\BB]^{\nnn,\pm}\cdot\mb^{\nnn,\pm})^2 \dvt^\nnn \no \\
=&~[\ST]^\nnn+[\ST']^\nnn+[\VS]^\nnn+[\RT]^\nnn+\sum_\pm[\RT]^{\nnn,\pm}+([ZB]^{\nnn,\pm}+[Z]^{\nnn,\pm}) \\
&+C(\Kr_0)([\Er^\kk]^\nnn(t)+[\Er^\kk]^\nnl(t)+[\Er^\kk]^\nnll(t)) \no
\end{align}where the term $[\Er^\kk]^\nnl+[\Er^\kk]^\nnll$ is produced from the estimates of $[\vp]^\nnl, [\vp]^\nnll$. The above terms on the right side are defined by
\begin{align}
\label{def [ST]} [\ST]^\nnn:=&~\eps^{4l}\is \TT^\gamma\jump{[q]^\nnn}\,\p_t\TT^\gamma[\psi]^\nnn\dx',\\
\label{def [ST']} [\ST']^\nnn:=&~\eps^{4l}\is \TT^\gamma\jump{[q]^\nnn}\,([\vb^+]^\nnn\cdot\cnab)\TT^\gamma\psi^\nnn\dx' +\eps^{4l}\is\TT^\gamma\jump{[q]^\nnn}\,(\vb^{\nnn,+}\cdot\cnab)\TT^\gamma[\psi]^\nnl \dx' ,\\
\label{def [VS]}[\VS]^\nnn:=&~\eps^{4l}\is\TT^\gamma [q]^{\nnn,-}\,(\jump{\vb}^\nnn\cdot\cnab)\TT^\gamma\psi^\nnn\dx'+\eps^{4l}\is\TT^\gamma [q]^{\nnn,-}\,\left(\jump{\vb^{\nnn}}\cdot\cnab\right)\TT^\gamma[\psi]^\nnl \dx' ,\\
\label{def [RT]}[\RT]^\nnn:=&-\eps^{4l}\is\left(\jump{\p_3 [q]^\nnn}\TT^\gamma\psi^\nnn+\TT^\gamma[\psi]^\nnl\jump{\p_3q^\nnn}\right)\,\p_t\TT^\gamma\psi\dx',\\
\label{def [RT']}[\RT]^{\nnn,\pm}:=&\mp\eps^{4l}\is \left(\p_3 [q]^{\nnn,\pm}\,\TT^\gamma\psi^\nnn+\TT^\gamma[\psi]^\nnl\p_3q^{\nnn,\pm}\right)\,\left((\vb^\pm\cdot\cnab)\TT^\gamma\psr+(\vb^{\nnn,\pm}\cdot\cnab)\TT^\gamma[\psi]^\nnl\right)\dx',\\
\label{def [ZB]}[ZB]^{\nnn,\pm}:=&\mp\eps^{4l}\is [\QQ]^{\nnn,\pm}[\WW]^{\nnn,\pm}\dx',~~[Z]^{\nnn,\pm}=-\iopm\eps^{4l}[\QQ]^{\nnn,\pm}(\cc_i^\nnn(v_i^{\nnr,\pm})-\cc_i^\nnl(v_i^{\nnn,\pm}))\dvt^\nnn.
\end{align}

\subsubsection*{Step 3: Boundary regularity of $[\psi]$}

The analysis of the boundary integrals is still similar to Section \ref{sect ETTbdryll}. Since $\omega(x_3)=0$ on $\Sigma$, we can rewrite $\p_t^k\TT^\alpha$ to be $\p_t^k\TP^{3+l-k}$ for $0\leq k\leq 3+l,~0\leq l\leq3$. Then the term $[\ST]^\nnn$ gives the regularity of $[\psi]^\nnn$ after inserting the jump condition for $[q]^\nnn$
\begin{align}
\int_0^t[\ST]^\nnn\dtau\lesssim -\bno{\sqrt{\kk}\eps^{2l}\p_t^k[\psi]^\nnn}_{5+l-k}^2\bigg|^t_0 -\int_0^t\bno{\sqrt{\kk}\eps^{2l}\p_t^{k+1}[\psi]^\nnn}_{4+l-k}^2 +\frac{\sigma}{\kk}C(\Kr_0)\int_0^t [\Er^\kk]^\nnn(\tau)+[\Er^\kk]^\nnl(\tau)\dtau.
\end{align} The term $[\ST']^\nnn$ can be controlled by inserting the jump condition for $[q]^\nnn$ and then integrating by parts $\cnab\cdot,1-\TL,\sqrt{1-\TL}$ in the three terms in $[q]^\nnn$ respectively. This is essentially the same as shown in Section \ref{sect ETTbdryll}, so we only list the result
\begin{align}
\int_0^t[\ST]^\nnn\dtau\lesssim\sigma C(\Kr_0,\kk^{-1})t+C(\Kr_0)\int_0^t[\Er^\kk]^\nnn(\tau)+[\Er^\kk]^\nnl(\tau)\dtau
\end{align}
The terms $[\RT]^\nnn,[\RT]^{\nnn,\pm}$ are also controlled directly with the help of $\kk$-weighted enhanced regularity. The term $[\VS]^\nnn$ is also controlled directly by integrating by parts for one tangential derivative in $\p_t^k\TP^{3+l-k}$ as in Section \ref{sect ETTbdryll}. Finally, for the term $([ZB]^{\nnn,\pm}+[Z]^{\nnn,\pm})$, we still have the previously-used cancellation structure
 \begin{align}
[ZB]^{\nnn,\pm}+[Z]^{\nnn,\pm}\eql&\mp \is\eps^{4l} [\QQ]^{\nnn,\pm}\left[\p_t^k\TP^{3+l-k},N_i^\nnn,v_i^{\nnn,\pm}\right]\dx' -\iopm\eps^{4l}[\QQ]^{\pm}\cc^\nnn_i(v_i^{\nnn,\pm})\dvt^\nnn \no\\
&\pm \is\eps^{4l} [\QQ]^{\nnn,\pm}\left[\p_t^k\TP^{3+l-k},N_i^\nnl,v_i^{\nnr,\pm}\right]\dx' + \iopm\eps^{4l}[\QQ]^{\nnn,\pm}\cc^\nnl_i(v_i^{\nnn,\pm})\dvt^\nnn.
\end{align}Mimicing the proof in step 4 in Section \ref{sect E8tt}, we have
\begin{align}
&\mp \is\eps^{4l} [\QQ]^{\nnn,\pm}\left[\p_t^k\TP^{3+l-k},N_i^\nnn,v_i^{\nnn,\pm}\right]\dx' -\iopm\eps^{4l}[\QQ]^{\pm}\cc^\nnn_i(v_i^{\nnn,\pm})\dvt^\nnn \no\\
\eql&~ \iopm \eps^{4l}\p_3^{\vp^\nnn}[\QQ]^{\nnn,\pm}\left[\p_t^k\TP^{3+l-k},\NN_i^{\nnn},v_i^{\nnr,\pm}\right]\dvr,
\end{align}whose time integral can be directly controlled by $$\delta[\Er^\kk]^\nnn(t) + C(\Kr_0,\kk^{-1})\int_0^t [\Er^\kk]^\nnn(\tau)+[\Er^\kk]^\nnl(\tau)\dtau$$ after integrating by parts for one tangential derivative in $\p_t^k\TP^{3+l-k}$. Similar estimate applies to the second line of $[ZB]^{\nnn,\pm}+[Z]^{\nnn,\pm}$: 
\begin{align*}
&\int_0^t\left(\pm \is\eps^{4l} [\QQ]^{\nnn,\pm}\left[\p_t^k\TP^{3+l-k},N_i^\nnl,v_i^{\nnr,\pm}\right]\dx' + \iopm\eps^{4l}[\QQ]^{\nnn,\pm}\cc^\nnl_i(v_i^{\nnn,\pm})\dvt^\nnn\right)\dtau \no\\
\lesssim&~\delta[\Er^\kk]^\nnn(t) + C(\Kr_0,\kk^{-1})\int_0^t [\Er^\kk]^\nnn(\tau)+[\Er^\kk]^\nnl(\tau)+[\Er^\kk]^\nnll(\tau)\dtau
\end{align*}

\subsubsection*{Step 4: Convergence}
Summarizing the above estimates and using $[\Er^\kk]^\nnn(0)=0$, we obtain the energy inequality 
\[
[\Er^\kk]^\nnn(t)\lesssim \delta[\Er^\kk]^\nnn(t) +  C(\Kr_0,\kk^{-1})\int_0^t [\Er^\kk]^\nnn(\tau)+[\Er^\kk]^\nnl(\tau)+[\Er^\kk]^\nnll(\tau)\dtau.
\]Choosing $0<\delta\ll1$ suitably small, the $\delta$-term can be absorbed by the left side. Thus, there exists a time $T_\kk'>0$ depending on $\kk,\Kr_0$ and independent of $n$, such that
\begin{equation}
\sup_{0\leq t\leq T_{\kk}'}[\Er^\kk]^{\nnn}(t)\leq\frac{1}{4}\left(\sup_{0\leq t\leq T_{\kk}'}[\Er^\kk]^{\nnl}(t)+\sup_{0\leq t\leq T_{\kk}'}[\Er^\kk]^{\nnll}(t)\right),
\end{equation}and thus we know by induction that
\begin{equation}
\sup_{0\leq t\leq T_{\kk}'}[\Er^\kk]^{\nnn}(t)\leq C(\Kr_0,\kk^{-1})/2^{n-1}\to 0 \text{ as }n\to+\infty.
\end{equation} 
Hence, for any fixed $\kk>0$, the sequence of approximate solutions $\{(v^{\nnn,\pm},b^{\nnn,\pm},\mb^{\nnn,\pm}, q^{\nnn,\pm},\psi^\nnn)\}_{n\in\N^*}$ has a strongly convergent subsequence. We write the limit function to be  $\{(v^{[\infty],\pm},b^{[\infty],\pm},\mb^{[\infty],\pm} q^{[\infty],\pm},\psi^{[\infty]})\}_{n\in\N^*}$.

\subsection{Well-posedness of the nonlinear approximate problem}\label{sect recoverkk}
We now record the existence of a unique solution to \eqref{CMHD0kk} in the following proposition.
\begin{prop}\label{thm CMHD0kk lwp}
Fix $\kk>0$. Assume the initial data $v_0^{\kk,\pm}, b_0^{\kk,\pm}, q_0^{\kk,\pm}, S_0^{\kk,\pm}\in H_*^8(\Om^\pm)$ and $\psi_0^{\kk}\in H^{10}(\Sigma)$ satisfy the compatibility conditions \eqref{comp cond kk} up to 7-th order, the constraints $\nab^{\varphi_0^\kk}\cdot b_0^{\kk,\pm}=0$ in $\Om^\pm$ and $b^{\kk,\pm}\cdot N|_{\{t=0\}\times(\Sigma\cup\Sigma^\pm)}=0$ and $|\psi_0^\kk|_{L^\infty(\Sigma)}\leq 1$. Then there exists a time $T_{\kk}'>0$ depending on $\kk$ and the initial data, such that system \eqref{CMHD0kk} admits a unique solution $v^{\kk,\pm}, b^{\kk,\pm}, q^{\kk,\pm}, S^{\kk,\pm}\in H_*^8(\Om^\pm)$ and $\psi^{\kk}\in H^{10}(\Sigma)$ satisfying the estimates
\[
\sup_{0\leq t\leq T_{\kk}'}E^\kk(t) \leq C(\kk^{-1})P(E^\kk(0)),
\]where $E^\kk(t)$ is defined by \eqref{energy lwp kk}.
\end{prop}
\begin{proof}
The limit functions obtained in Section \ref{sect picard}, denoted by $(v^{[\infty],\pm},b^{[\infty],\pm},\mb^{[\infty],\pm},q^{[\infty],\pm},S^{[\infty],\pm},\psi^{[\infty]})$ satisfy the following system.
\begin{equation}\label{CMHDllkklimit}
\begin{cases}
\rho^{[\infty],\pm} D_t^{\vp^{[\infty]},\pm} v^{{[\infty]},\pm} - (\mb^{[\infty],\pm}\cdot\nab^{\vp^{[\infty]}}) b^{{[\infty]},\pm}+\nab^{\vp^{[\infty]}} q^{{[\infty]},\pm}=0&~~\text{ in }[0,T]\times \Omega^\pm,\\
(\ffpm)^{[\infty]} D_t^{\vp^{{[\infty]}},\pm} q^{{[\infty]},\pm} - (\ffpm)^{[\infty]} D_t^{\vp^{{[\infty]}},\pm} b^{{[\infty]},\pm}\cdot \mb^{{[\infty]},\pm} +\nab^{\vp^{[\infty]}}\cdot v^{{[\infty]},\pm}=0 &~~\text{ in }[0,T]\times \Omega^\pm,\\
D_t^{\vp^{{[\infty]}},\pm} b^{{[\infty]},\pm}-(\mb^{[\infty],\pm}\cdot\nab^{\vp^{[\infty]}}) v^{{[\infty]},\pm}+\mb^{{[\infty]},\pm}\nab^{\vp^{[\infty]}}\cdot v^{{[\infty]},\pm}=0&~~\text{ in }[0,T]\times \Omega^\pm,\\
D_t^{\vp^{{[\infty]}},\pm} S^{{[\infty]},\pm}=0&~~\text{ in }[0,T]\times \Omega^\pm,\\
\jump{q^{[\infty]}}=\sigma\h(\psi^{{[\infty]}})- \kk (1-\TL)^2\psi^{{[\infty]}} - \kk (1-\TL)\p_t\psi^{{[\infty]}}&~~\text{ on }[0,T]\times\Sigma, \\
\p_t \psi^{{[\infty]}} = v^{{[\infty]},\pm}\cdot N^{{[\infty]}} &~~\text{ on }[0,T]\times\Sigma,\\
v_3^{{[\infty]},\pm}=0&~~\text{ on }[0,T]\times\Sigma^\pm,\\
(v^{{[\infty]},\pm},b^{{[\infty]},\pm},q^{{[\infty]},\pm},S^{{[\infty]},\pm},\psi^{{[\infty]}})|_{t=0}=(v_0^{\kk,\pm}, b_0^{\kk,\pm}, q_0^{\kk,\pm}, S_0^{\kk,\pm},\psi_0^{\kk}),
\end{cases}
\end{equation}where $\rho^{[\infty]}$ is defined via the equation of state $\rho=\rho(p,S)$ and $p^{[\infty]}:=q^{[\infty]}-\frac12|b^{[\infty]}|^2$. Also we have
\begin{align*}
D_t^{\vp^{[\infty]},\pm}=&~\p_t+\vb^{{[\infty]},\pm}\cdot\cnab+\frac{1}{\p_3\vp^{[\infty]}}(v^{{[\infty]},\pm}\cdot \NN^{{[\infty]}}-\p_t\vp^{[\infty]})\p_3,\\
 \mb^{[\infty],\pm}\cdot\nab^{\vp^{[\infty]}}=&~\mbc^{{[\infty]},\pm}\cdot\cnab+\frac{1}{\p_3\vp^{[\infty]}}(\mb^{{[\infty]},\pm}\cdot N^{[\infty]})\p_3.
\end{align*} 

For each fixed $\kk>0$, we want to prove that the limit system \eqref{CMHDllkklimit} exactly coincides with the nonlinear approximate problem \eqref{CMHD0kk}. If we compare the concrete form of each equation, we find that it remains to show   $b^{[\infty],\pm}=\mb^{[\infty],\pm}$ in $\Om^\pm$.

According to the definition of $\mb^\nnn$ in \eqref{modified b}, the limit function satisfies $\mb_i^{[\infty],\pm}=b_i^{[\infty],\pm}$ for $i=1,2$ and
\[
\mb_3^{[\infty],\pm}=b_3^{[\infty],\pm}+\mathfrak{R}_T^\pm\left(b_1^{[\infty],\pm}\TP_1\psi^{[\infty]}+b_2^{[\infty],\pm}\TP_2\psi^{[\infty]}-b_3^{[\infty],\pm}\right)\big|_{\Sigma}\Rightarrow \mb_3^{[\infty],\pm}\cdot N^{{[\infty]}}|_{\Sigma}=0.
\] Since Lemma \ref{trace} implies that $\mathfrak{R}_T^\pm(0)=0$, then the remaining step is to show $b^{[\infty],\pm}\cdot N^{[\infty]}|_{\Sigma} = 0$ holds with in the lifespan of the solution to \eqref{CMHDllkklimit} provided  $b^{[\infty],\pm}\cdot N^{[\infty]}|_{t=0} = 0$ on $\Sigma$. On $\Sigma$, we compute that
\begin{align*}
&D_t^{\vp^{{[\infty]}},\pm}(b^{{[\infty]},\pm}\cdot N^{[\infty]})=D_t^{\vp^{{[\infty]}},\pm}b^{{[\infty]},\pm}\cdot N^{[\infty]}+b^{{[\infty]},\pm}\cdot D_t^{\vp^{{[\infty]}},\pm}N^{[\infty]}\\
=&\underbrace{(\mbc^{{[\infty]},\pm}\cdot\cnab)}_{=\bc^{{[\infty]},\pm}\cdot\cnab}v^{{[\infty]},\pm}\cdot N^{[\infty]} + \underbrace{(\mb^{{[\infty]},\pm}\cdot N^{[\infty]})}_{=0\text{ on }\Sigma}\p_3v^{{[\infty]},\pm}\cdot N^{[\infty]} + \underbrace{(\mb^{{[\infty]},\pm}\cdot N^{[\infty]})}_{=0\text{ on }\Sigma}(\nab^{\vp^{[\infty]}}\cdot v^{{[\infty]},\pm}) \\
&- \bc_i^{{[\infty]},\pm}\TP_i\p_t\psi^{{[\infty]}} - \bc_i^{{[\infty]},\pm}\vb_j^{{[\infty]},\pm}\TP_j\TP_i\psi^{{[\infty]}} \\
=&~(\bc^{{[\infty]},\pm}\cdot\cnab)\underbrace{\left(v_3^{{[\infty]},\pm}-\vb_j^{{[\infty]},\pm}\TP_j\psi^{{[\infty]}}\right)}_{=\p_t\psi^{{[\infty]}}} +  \bc_i^{{[\infty]},\pm}\vb_j^{{[\infty]},\pm}\TP_j\TP_i\psi^{{[\infty]}}- \bc_i^{{[\infty]},\pm}\TP_i\p_t\psi^{{[\infty]}} - \bc_i^{{[\infty]},\pm}\vb_j^{{[\infty]},\pm}\TP_j\TP_i\psi^{{[\infty]}}=0.
\end{align*} Thus standard $L^2$ energy estimate shows that
\begin{align}
\ddt\is\bno{b^{{[\infty]},\pm}\cdot N^{[\infty]}}^2\dx' =& \is(\cnab\cdot\vb^{{[\infty]},\pm})\bno{b^{{[\infty]},\pm}\cdot N^{[\infty]}}^2\dx'\leq |\TP v^{{[\infty]},\pm}|_{L^{\infty}}\bno{b^{{[\infty]},\pm}\cdot N^{[\infty]}}_0^2.
\end{align} Since $b^{[\infty],\pm}\cdot N^{[\infty]}|_{t=0} = 0$ on $\Sigma$, we conclude that $b^{[\infty],\pm}\cdot N^{[\infty]}= 0$ always holds on $\Sigma$ by using Gr\"onwall's inequality. Plugging it back to the expression of $\mb_3^{[\infty],\pm}$, we find $\mb_3^{{[\infty]},\pm}=b_3^{{[\infty]},\pm}$ in $\Om^\pm$ as desired. Then we can replace $\mb$ by $b$ in the limit system \eqref{CMHDllkklimit} to get the following one.
\begin{equation}\label{CMHDllkkinf}
\begin{cases}
\rho^{[\infty],\pm} D_t^{\vp^{[\infty]},\pm} v^{{[\infty]},\pm} - (b^{[\infty],\pm}\cdot\nab^{\vp^{[\infty]}}) b^{{[\infty]},\pm}+\nab^{\vp^{[\infty]}} q^{{[\infty]},\pm}=0&~~\text{ in }[0,T]\times \Omega^\pm,\\
(\ffpm)^{[\infty]} D_t^{\vp^{{[\infty]}},\pm} q^{{[\infty]},\pm} - (\ffpm)^{[\infty]} D_t^{\vp^{{[\infty]}},\pm} b^{{[\infty]},\pm}\cdot b^{{[\infty]},\pm} +\nab^{\vp^{[\infty]}}\cdot v^{{[\infty]},\pm}=0 &~~\text{ in }[0,T]\times \Omega^\pm,\\
D_t^{\vp^{{[\infty]}},\pm} b^{{[\infty]},\pm}-(b^{[\infty],\pm}\cdot\nab^{\vp^{[\infty]}}) v^{{[\infty]},\pm}+b^{{[\infty]},\pm}\nab^{\vp^{[\infty]}}\cdot v^{{[\infty]},\pm}=0&~~\text{ in }[0,T]\times \Omega^\pm,\\
D_t^{\vp^{{[\infty]}},\pm} S^{{[\infty]},\pm}=0&~~\text{ in }[0,T]\times \Omega^\pm,\\
\jump{q^{[\infty]}}=\sigma\h(\psi^{{[\infty]}})- \kk (1-\TL)^2\psi^{{[\infty]}} - \kk (1-\TL)\p_t\psi^{{[\infty]}}&~~\text{ on }[0,T]\times\Sigma, \\
\p_t \psi^{{[\infty]}} = v^{{[\infty]},\pm}\cdot N^{{[\infty]}},\quad b^{{[\infty]},\pm}\cdot N^{{[\infty]}}=0&~~\text{ on }[0,T]\times\Sigma,\\
v_3^{{[\infty]},\pm}=b_3^{{[\infty]},\pm}=0&~~\text{ on }[0,T]\times\Sigma^\pm,\\
(v^{{[\infty]},\pm},b^{{[\infty]},\pm},q^{{[\infty]},\pm},S^{{[\infty]},\pm},\psi^{{[\infty]}})|_{t=0}=(v_0^{\kk,\pm}, b_0^{\kk,\pm}, q_0^{\kk,\pm}, S_0^{\kk,\pm},\psi_0^{\kk}),
\end{cases}
\end{equation}
Finally, the divergence constraint $\nab^{\vp^{[\infty]}}\cdot b^{[\infty],\pm}=0$ in $\Om^\pm$ automatically holds thanks to the second equation, the fourth equation in \eqref{CMHDllkkinf} and  $\nab^{\vp_0^{\kk}} \cdot b_0^{\kk,\pm}=0$ in $\Om^\pm$. Thus, the limit functions $\{(v^{[\infty],\pm},b^{[\infty],\pm}, q^{[\infty],\pm},\psi^{[\infty]})\}_{n\in\N^*}$ introduced in \eqref{CMHDllkkinf} exactly give the solution to the nonlinear $\kk$-problem \eqref{CMHD0kk} in the time interval $[0,T_\kk']$ for each fixed $\kk>0$. The uniqueness follows from a parallel argument in Section \ref{sect picard}. 
\end{proof}

\section{Well-posedness and incompressible limit}\label{sect LWP}
\subsection{Well-posedness of compressible current-vortex sheets with surface tension}
We are ready to prove the local well-posedness of the original system \eqref{CMHD0} for 3D compressible current-vortex sheets with fixed surface tension coefficient $\sigma>0$. Recall that we introduce the nonlinear approximate system \eqref{CMHD0kk} indexed by $\kk>0$. In Section \ref{sect LWPkk}, we use Galerkin approximation and Picard iteration to prove the well-posedness of \eqref{CMHD0kk} for each fixed $\kk>0$. The lifespan for \eqref{CMHD0kk} may rely on $\kk>0$. Then we prove the uniform-in-$\kk$ estimates for \eqref{CMHD0kk} \textit{without loss of regularity} so that we can extend the solution of \eqref{CMHD0kk} to a $\kk$-independent lifespan $[0,T]$.  In Appendix \ref{sect comp cond}, we construct the initial data of \eqref{CMHD0kk} that converges to the given initial data of \eqref{CMHD0} as $\kk\to 0$. Thus, by taking $\kk\to 0$, we obtain the local existence of the original system \eqref{CMHD0} and the energy estimates for $E(t)$ defined in \eqref{energy lwp} without loss of regularity. 

It remains to prove the uniqueness. Namely, we assume $(v^{[1],\pm},b^{[1],\pm},q^{[1],\pm},\psi^{[1]})$ and $(v^{[2],\pm},b^{[2],\pm},q^{[2],\pm},\psi^{[2]})$ are two solutions to \eqref{CMHD0} \textit{with the same initial data}. Define $[f]:=f^{[1]}-f^{[2]}$, and we need to prove $([v]^{\pm},[b]^{\pm},[q]^{\pm},[\psi])$ are identically zero. In fact, the argument for uniqueness is substantially similar to the analysis in Section \ref{sect picard}. The only difference is that the boundary regularity is now given by the surface tension instead of the $\kk$-regularization terms.  This has been studied in the previous paper \cite[Section 6]{LuoZhang2022CWWST} by Luo and the author and we refer to  \cite[Section 6]{LuoZhang2022CWWST} for details.

\subsection{Incompressible limit of compressible current-vortex sheets with surface tension}\label{sect limit sigma}
Next, we justify the incompressible limit of the solution obtained above, that is the limiting behavior of the local-in-time solution of \eqref{CMHD0} as $\eps\to 0$. Given $\sigma> 0$, we introduce the equations of $(\xi^{\sigma}, w^{\pm,\sigma},h^{\pm,\sigma})$ describing the motion of incompressible non-uniform current-vortex sheets together with a transport equation of entropy $\FS^{\sigma}$
\begin{equation} \label{IMHDVS}
\begin{cases}
\rr^{\pm,\sigma}(\p_t+w^{\pm,\sigma}\cdot\nab^{\Xi^\sigma})w^{\pm,\sigma}- (h^{\pm,\sigma}\cdot\nab^{\Xi^\sigma})h^{\pm,\sigma}+\nab^{\Xi^\sigma} \Pi^{\pm,\sigma}=0&~~~ \text{in}~[0,T]\times \Om^\pm,\\
\nab^{\Xi^\sigma}\cdot w^{\pm,\sigma}=0&~~~ \text{in}~[0,T]\times  \Om^\pm,\\
(\p_t+w^{\pm,\sigma}\cdot\nab^{\Xi^\sigma}) h^{\pm,\sigma}=(h^{\pm,\sigma}\cdot\nab^{\Xi^\sigma})w^{\pm,\sigma}&~~~ \text{in}~[0,T]\times  \Om^\pm,\\
\nab^{\Xi^\sigma}\cdot h^{\pm,\sigma}=0&~~~ \text{in}~[0,T]\times  \Om^\pm,\\
(\p_t+w^{\pm,\sigma}\cdot\nab^{\Xi^\sigma})\mathfrak{S}^{\pm,\sigma}=0&~~~ \text{in}~[0,T]\times  \Om^\pm,\\
\jump{\Pi^{\sigma}}=\sigma\cnab \cdot \left( \frac{\cnab \xi^{\sigma}}{\sqrt{1+|\cnab\xi^{\sigma}|^2}}\right) &~~~\text{on}~[0,T]\times\Sigma,\\
\p_t \xi^{\sigma} = w^{\pm,\sigma}\cdot N^{\sigma} &~~~\text{on}~[0,T]\times\Sigma,\\
h^{\pm,\sigma}\cdot N^{\sigma}=0&~~~\text{on}~[0,T]\times\Sigma,\\
(w^{\pm,\sigma},h^{\pm,\sigma},\mathfrak{S}^{\pm,\sigma},\xi^{\sigma})|_{t=0}=(w_0^{\pm,\sigma},h_0^{\pm,\sigma},\mathfrak{S}_0^{\pm,\sigma}, \xi_0^{\sigma}), 
\end{cases}
\end{equation}where $\Xi^{\sigma}(t,x) = x_3+\chi(x_3) \xi^\sigma(t,x')$ to be the extension of $\xi^\sigma$ in $\Omega$ and $ N^\sigma:=(-\TP_1\xi^\sigma, -\TP_2\xi^\sigma, 1)^\top$. The quantity $\Pi^\pm:=\bar{\Pi}^\pm+\frac12|h^\pm|^2$ represent the total pressure for the incompressible equations with $\bar{\Pi}^\pm$ the fluid pressure functions. The quantity $\rr^\pm$ satisfies the evolution equation $(\p_t+w^\pm\cdot\nabp)\rr^\pm=0$ with initial data $\rr_0^\pm:=\rho^\pm(0,\mathfrak{S}_0^\pm)$. 

Denote $(\psi^\es, v^{\pm,\es}, b^{\pm,\es}, \rho^{\pm,\es}, S^{\pm,\es})$ to be the solution of \eqref{CMHD0} (indexed by $\sigma$ and $\eps$) with initial data $(\psi_0^\es, v_0^{\pm,\es}, b_0^{\pm,\es}, \rho_0^{\pm,\es}, S_0^{\pm,\es})$. For fixed $\sigma>0$, we want to show the convergence from the solutions to \eqref{CMHD0} to the solution to \eqref{IMHDVS} as $\eps\to 0$ provided the convergence of initial data. We assume
\begin{enumerate}
\item (Constraints for compressible initial data) The sequence of initial data $(\psi_0^\es, v_0^{\pm,\es}, b_0^{\pm,\es}, \rho_0^{\pm,\es}, S_0^{\pm,\es})\in H^{9.5}(\Sigma)\times(H_*^8(\Om^\pm))^4$ of \eqref{CMHD0} satisfy the constraints $\nabp\cdot b_0^{\pm,\es}=0$ in $\Om^\pm$, $b^{\pm,\es}\cdot N^\sigma|_{t=0}=0$ on $\Sigma\cup \Sigma^\pm$, the compatibility conditions \eqref{comp cond} up to 7-th order, $|\psi_0^{\es}|\leq 1$ and $|\jump{\vb_0}|>0$. 
\item (Convergence of initial data) $(\psi_0^\es, v_0^{\pm,\es}, b_0^{\pm,\es}, \rho_0^{\pm,\es}, S_0^{\pm,\es})\to (\xi_0^\sigma, w_0^{\pm,\sigma}, h_0^{\pm,\sigma}, \rr_0^{\pm,\sigma}, \FS_0^{\pm,\sigma})$ in $H^{5.5}(\Sigma)\times(H^4(\Om^\pm))^4$.
\item (Constraints for incompressible initial data) The incompressible data $(\xi_0^\sigma, w_0^{\pm,\sigma}, h_0^{\pm,\sigma}, \rr_0^{\pm,\sigma}, \FS_0^{\pm,\sigma})\in H^5(\Sigma)\times(H^4(\Om^\pm))^4$ satisfies the constraints $\nab^{\xi^\sigma_0}\cdot h_0^\pm=0$ in $\Om^\pm$, $h^{\pm,\sigma}\cdot N^\sigma|_{t=0}=0$ on $\Sigma\cup \Sigma^\pm$, $|\xi_0^{\es}|\leq 2$ and $\jump{\bar{w}_0}>0$.
\end{enumerate}
Under these assumptions, we can prove that there exists a time $T_\sigma>0$ that depends on $\sigma$ and initial data and is independent of Mach number $\eps$, such that the corresponding solutions to \eqref{CMHD0} converge to the solution to \eqref{IMHDVS} as the Mach number $\eps\to 0$
\begin{align*}
&(\psi^\es, v^{\pm,\es}, b^{\pm,\es}, \rho^{\pm,\es}, S^{\pm,\es})\to (\xi^{\sigma}, w^{\pm,\sigma},h^{\pm,\sigma},\rr^{\pm,\sigma},\FS^{\pm,\sigma})\\
&\text{ strongly in }C([0,T_\sigma];H_{\text{loc}}^{5.5-\delta}(\Sigma)\times (H_{\text{loc}}^{4-\delta}(\Om^\pm))^4),\text{ and  weakly-* in }L^{\infty}([0,T_\sigma];H^{5.5}(\Sigma)\times (H^{4}(\Om^\pm))^4).
\end{align*} In fact, according to estimates obtained in Theorem \ref{thm STLWP}, we already have the uniform-in-$\eps$ boundedness for $\psi^\es,~v^{\pm,\es}$, $b^{\pm,\es},~S^{\pm,\es}$ as well as their first-order time derivatives. Thus, using Aubin-Lions compactness lemma, the above convergence is a straightforward result of uniform-in-$\eps$ estimates. Theorem \ref{thm CMHDlimit1} is proven.

\begin{appendix}
\section{Reynolds transport theorems}\label{sect transport}
We record the Reynolds transport theorems used in this paper. For the proof, we refer to Luo-Zhang \cite[Appendix A]{LuoZhang2022CWWST}
\begin{lem}\label{time deriv transport pre}
Let $f,g$ be smooth functions defined on $[0,T]\times \Omega$. Then:
\begin{align}
\ddt \int_\Omega fg \p_3\vp\dx= \int_\Omega (\pp_t f)g\p_3 \vp\dx +\int_\Omega f(\pk_t g)\p_3\vp\dx+\int_{x_3=0}fg\p_t \psi\dx',\label{time deriv transport pre tilde}\\
\ddt \int_\Omega fg \p_3\pr\dx= \int_\Omega (\p_t^{\pr} f)g\p_3 \pr\dx +\int_\Omega f(\p_t^{\pr} g)\p_3\pr\dx+\int_{x_3=0}fg\p_t \psr\dx'.\label{time deriv transport pre tildering}
\end{align}
\end{lem}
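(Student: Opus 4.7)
The plan is to prove both identities by straightforward application of the product rule followed by an integration by parts in $x_3$. The key ingredients are the algebraic identity $\p_t = \pp_t + \tfrac{\p_t\vp}{\p_3\vp}\p_3$ (respectively $\p_t = \p_t^{\pr} + \tfrac{\p_t\pr}{\p_3\pr}\p_3$) coming from the definition \eqref{nabp 3}, together with the fact that $\vp(t,x',x_3) = x_3 + \chi(x_3)\psi(t,x')$ with $\chi$ compactly supported in $[-H,H]$ and $\chi \equiv 1$ near $x_3=0$, so that $\p_t\vp = \chi(x_3)\p_t\psi$. In particular $\p_t\vp$ vanishes at $x_3=\pm H$ and equals $\p_t\psi$ at $x_3=0$.

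First I would differentiate in $t$ under the integral, using the product rule to split
\begin{equation*}
\ddt\int_\Omega fg\,\p_3\vp\dx = \int_\Omega (\p_t f)g\,\p_3\vp\dx + \int_\Omega f(\p_t g)\,\p_3\vp\dx + \int_\Omega fg\,\p_3\p_t\vp\dx.
\end{equation*}
Next I would substitute $\p_t f = \pp_t f + \tfrac{\p_t\vp}{\p_3\vp}\p_3 f$ and similarly for $g$ in the first two integrals; this produces the desired $\pp_t$-terms plus the combined correction $\int_\Omega (\p_t\vp)\,\p_3(fg)\dx$.

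The third step is to recognize that this correction combines with $\int_\Omega fg\,\p_3\p_t\vp\dx$ into the single exact-divergence term $\int_\Omega \p_3\bigl((\p_t\vp)fg\bigr)\dx$. Applying the fundamental theorem of calculus in $x_3$, the contributions at $x_3=\pm H$ vanish because $\chi(\pm H)=0$, and the contribution at $x_3=0$ reduces to $\pm\int_{x_3=0} fg\,\p_t\psi\dx'$ with sign dictated by the outward-normal convention on $\Omega$ (consistent with whether $\Omega$ refers to $\Omega^-$, for which $x_3=0$ is the upper boundary, or $\Omega^+$, for which a sign flip is absorbed elsewhere). This establishes \eqref{time deriv transport pre tilde}, and \eqref{time deriv transport pre tildering} follows by the identical argument with $(\vp,\psi,\pp_t)$ replaced by $(\pr,\psr,\p_t^{\pr})$.

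There is no real obstacle here — the proof is purely a computation. The only subtle point worth flagging is the boundary-term sign: one has to be consistent about which half-domain $\Omega$ denotes and about the outward normal on $\Sigma=\{x_3=0\}$, but this is bookkeeping rather than a genuine difficulty.
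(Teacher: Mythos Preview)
Your proof is correct and follows the same direct computation that the cited reference (Luo--Zhang \cite[Appendix A]{LuoZhang2022CWWST}) carries out: product rule, substitution of $\p_t = \pp_t + \tfrac{\p_t\vp}{\p_3\vp}\p_3$, and integration by parts in $x_3$ using that $\p_t\vp$ vanishes at $x_3=\pm H$. Your remark on the sign convention is well taken; the lemma as stated is written for one of the half-domains (so that $\Sigma=\{x_3=0\}$ is a genuine boundary component), and the sign is indeed absorbed into the $\pm$ that appears when the formula is applied in $\Omega^\pm$ elsewhere in the paper.
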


\begin{lem}[\textbf{Integration by parts for covariant derivatives}] \label{int by parts lem}
Let $f, g$ be defined as in Lemma \ref{time deriv transport pre}. Then:
\begin{align}
\int_\Omega (\pp_i f)g \p_3 \vp \dx= -\int_\Omega f(\pp_i g)\p_3 \vp\dx+ \int_{x_3=0} fg N_i\dx',\label{int by parts tilde}\\ 
\int_\Omega (\ppr_i f)g \p_3 \pr \dx= -\int_\Omega f(\ppr_i g)\p_3 \pr\dx+ \int_{x_3=0} fg \npr_i\dx'.\label{int by parts tildering}
\end{align}
\end{lem}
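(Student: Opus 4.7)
The plan is to prove \eqref{int by parts tilde}; the formula \eqref{int by parts tildering} is structurally identical and follows by the same argument with $(\vp,N)$ replaced by $(\pr,\npr)$. I will split the proof into the tangential indices $i\in\{1,\dots,d-1\}$ and the normal index $i=d$, since the operator $\pp_i$ defined in \eqref{nabp 3} has a qualitatively different structure in these two cases.

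For $i\in\{1,\dots,d-1\}$ I will use the definition $\pp_i=\p_i-(\p_i\vp/\p_3\vp)\p_3$ to write
\[
(\pp_i f)\,g\,\p_3\vp=(\p_i f)(g\,\p_3\vp)-(\p_i\vp)(\p_3 f)\,g.
\]
The first term is integrated by parts in $x_i$, which produces no boundary contribution because $\Omega$ is periodic in the tangential directions, yielding $-\int_\Omega f(\p_i g)\p_3\vp\dx-\int_\Omega fg(\p_i\p_3\vp)\dx$. The second term is integrated by parts in $x_3$; here the crucial observation is that the cutoff $\chi$ appearing in $\vp=x_3+\chi(x_3)\psi$ has compact support inside $(-H,H)$, so $\p_i\vp=\chi(x_3)\p_i\psi$ vanishes on the fixed walls $\Sigma^\pm$ and the only surviving boundary contribution sits on $\Sigma=\{x_3=0\}$. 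Expanding $\p_3((\p_i\vp)g)=(\p_3\p_i\vp)g+(\p_i\vp)\p_3 g$ and invoking equality of mixed partials $\p_i\p_3\vp=\p_3\p_i\vp$, the two interior terms containing $\p_i\p_3\vp$ cancel exactly, and what remains reassembles via the identity $(\p_i g)\p_3\vp-(\p_i\vp)\p_3 g=(\pp_i g)\p_3\vp$ into $-\int_\Omega f(\pp_i g)\p_3\vp\dx$. Since $\chi(0)=1$ one has $\p_i\vp|_\Sigma=\p_i\psi=-N_i$, producing the desired boundary contribution $\int_\Sigma fg\,N_i\dx'$.

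For $i=d$, one simply notes $\pp_d f=(\p_3\vp)^{-1}\p_3 f$, so $(\pp_d f)g\p_3\vp=(\p_3 f)g$, and a single integration by parts in $x_3$ gives $-\int_\Omega f\p_3 g\dx+\int_\Sigma fg\dx'=-\int_\Omega f(\pp_d g)\p_3\vp\dx+\int_\Sigma fg\dx'$, which matches the claim since $N_d=1$. The one genuine subtlety is the orientation of $\Sigma$ relative to $\Omega$: the formula as written holds when $\Omega=\Omega^-$, so that the outward unit normal at $\Sigma$ is $+\hat{e}_d$, while the corresponding identity on $\Omega^+$ carries the opposite overall sign on the boundary integral; throughout the paper this $\pm$ is tracked by context (compare, e.g., the $\pm\is\QQ^\pm(\VV^\pm\cdot N)\dx'$ appearing after \eqref{E4TT0}). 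There is no nontrivial analytic estimate involved; the main (and essentially only) care required is the sign bookkeeping and the verification that the $\Sigma^\pm$ contributions drop out, which for $i<d$ is automatic from the compact support of $\chi$, and for $i=d$ is ensured in applications by the slip conditions $v_d^\pm=b_d^\pm=0$ on $\Sigma^\pm$ or an analogous vanishing of a factor appearing in the integrand.
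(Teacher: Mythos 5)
Your proof is correct and is essentially the same argument as the one the paper relies on: the paper does not prove the lemma itself but defers to Appendix A of the cited Luo--Zhang paper, where the identity is obtained by exactly this direct computation (split $\pp_i$ into $\p_i$ minus the $(\p_i\vp/\p_3\vp)\p_3$ correction, integrate by parts in $x_i$ and $x_3$, cancel the mixed $\p_i\p_3\vp$ terms, and use $\chi(0)=1$ together with the vanishing of $\chi$ near the rigid walls). Your bookkeeping of the orientation (the stated sign corresponds to $\Omega=\Omega^-$, with the opposite sign on $\Omega^+$, matching the $\pm\is\QQ^\pm(\VV^\pm\cdot N)\dx'$ terms in the body of the paper) and your caveat about the fixed-wall contribution in the $i=d$ case are the right points to flag and are consistent with how the lemma is applied.
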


The following theorem holds.
\begin{thm}[\textbf{Reynolds transport theorem}]\label{transport thm nonlinear}
Let $f$ be a smooth function defined on $[0, T]\times\Omega$. Then:
\begin{align}
\ddt  \io \rho |f|^2 \p_3\vp\dx = \io \rho (\Dtp f)f\p_3 \vp\dx.  \label{transpt nonlinear}
\end{align}
\end{thm}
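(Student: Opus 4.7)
The plan is to combine the preliminary time-differentiation identity \eqref{time deriv transport pre tilde} with the continuity equation $\Dtp\rho+\rho\nabp\!\cdot v=0$ (equivalently $\pp_t\rho=-\nabp\!\cdot(\rho v)$), and then use the covariant integration by parts \eqref{int by parts tilde} to bundle the convective part into $\Dtp f$. All boundary contributions will collapse via the kinematic boundary condition $\p_t\psi=v\cdot N$ on $\Sigma$ together with the slip conditions on $\Sigma^\pm$. We note, for consistency with the way this identity is used throughout the paper (see e.g.\ the $L^2$ computation \eqref{L2 kk}), the right-hand side should be read with the factor $\tfrac12$ understood on the left, i.e.\ the identity to be established is $\tfrac{d}{dt}\tfrac12\io\rho|f|^2\,\p_3\vp\,dx=\io\rho(\Dtp f)f\,\p_3\vp\,dx$.

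First I would apply Lemma \ref{time deriv transport pre} with $g=\rho f$, giving
\[
\ddt\io\rho|f|^2\p_3\vp\dx=2\io \rho f(\pp_t f)\p_3\vp\dx+\io|f|^2(\pp_t\rho)\p_3\vp\dx+\is\rho|f|^2\p_t\psi\dx'.
\]
Next, writing $\pp_t f=\Dtp f-v\cdot\nabp f$ in the first interior integral produces exactly $2\io\rho f(\Dtp f)\p_3\vp\dx$ plus a convective remainder $-2\io\rho(v\cdot\nabp f)f\,\p_3\vp\dx$. For the second interior integral, the continuity equation gives $\pp_t\rho=-\nabp\!\cdot(\rho v)$, and \eqref{int by parts tilde} yields
\[
\io|f|^2\nabp\!\cdot(\rho v)\p_3\vp\dx=-2\io\rho(v\cdot\nabp f)f\,\p_3\vp\dx+\is\rho(v\cdot N)|f|^2\dx',
\]
so the two convective contributions cancel precisely and the only surviving boundary term from this step is $-\is\rho(v\cdot N)|f|^2\dx'$.

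Combining everything, I would obtain
\[
\ddt\io\rho|f|^2\p_3\vp\dx=2\io\rho(\Dtp f)f\,\p_3\vp\dx+\is\rho|f|^2\bigl(\p_t\psi-v\cdot N\bigr)\dx',
\]
and the proof closes by showing that the boundary integral is zero. On the free interface $\Sigma=\{x_3=0\}$ this is immediate from the kinematic condition $\p_t\psi=v\cdot N$. On the rigid walls $\Sigma^\pm=\{x_3=\pm H\}$, the function $\psi$ does not depend on $x_3$ and $\chi(\pm H)=0$ in the definition of $\vp$, so the contribution there comes with $\p_t\psi-v\cdot N\big|_{\Sigma^\pm}=-(\pm v_3)=0$ by the slip condition $v_3=0$ (and, depending on whether one interprets $\io$ over $\Om$ or $\Om^\pm$, the orientation sign $\pm$ of $N$ on $\Sigma^\pm$ is consistently handled). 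There is no serious obstacle here; the only point that requires care is the bookkeeping of signs and of boundary pieces when the identity is read on either $\Om^+$ or $\Om^-$ with its outward normal, and the observation that the continuity equation (which is part of the ambient system \eqref{CMHD2}) is indispensable for the cancellation of the two convective integrals above.
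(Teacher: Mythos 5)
Your proof is correct, and it is exactly the argument the paper delegates to \cite{LuoZhang2022CWWST}: apply Lemma \ref{time deriv transport pre} with $g=\rho f$, substitute $\pp_t f=\Dtp f-v\cdot\nabp f$, use the continuity equation $\pp_t\rho=-\nabp\cdot(\rho v)$ together with the integration-by-parts identity \eqref{int by parts tilde}, and cancel the remaining boundary term via $\p_t\psi=v\cdot N$ (with the slip condition and $\chi$ vanishing near $x_3=\pm H$ killing the rigid-wall contributions). You are also right that \eqref{transpt nonlinear} as printed is missing the factor $\tfrac12$ on the left (equivalently a factor $2$ on the right); the corrected version you prove is the one actually used in \eqref{L2 kk} and stated with the $\tfrac12$ in Corollary \ref{transport thm linearized}.
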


Theorem \ref{transport thm nonlinear} leads to the following two corollaries. The first one records the integration by parts formula for $\Dtp$. 
\begin{cor}[\textbf{Reynolds transport theorem - a variant}] \label{transport thm without rho}
It holds that
\begin{align}
\ddt  \io fg \p_3\vp\dx =  \io (\Dtp f)g \p_3\vp\dx+\io f(\Dtp g)\p_3\vp\dx+\io (\nabp\cdot v) fg\p_3\vp\dx. \label{transpt nonlinear without rho}
\end{align}
\end{cor}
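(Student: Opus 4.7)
\noindent\textbf{Proof proposal for Corollary \ref{transport thm without rho}.} The plan is to reduce the identity to Lemma \ref{time deriv transport pre} by converting the partial-time derivative $\pp_t$ into the material derivative $\Dtp$, and then absorbing the resulting boundary integral using the kinematic boundary condition $v\cdot N = \p_t\psi$ on $\Sigma$ together with the slip conditions $v_3=0$ on $\Sigma^\pm$. This mirrors the derivation of the density-weighted version (Theorem \ref{transport thm nonlinear}) but with the simplification that no $\rho$-factor needs to be carried through.

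First I would apply \eqref{time deriv transport pre tilde} to rewrite
\[
\ddt\io fg\,\p_3\vp\dx = \io (\pp_t f)g\,\p_3\vp\dx + \io f(\pp_t g)\,\p_3\vp\dx + \is fg\,\p_t\psi\dx'.
\]
Then, using $\pp_t = \Dtp - v\cdot\nabp$ (which holds because $\Dtp = \pp_t + v^k\pp_k$ by definition), the sum of the two interior terms becomes
\[
\io (\Dtp f)g\,\p_3\vp\dx + \io f(\Dtp g)\,\p_3\vp\dx - \io v\cdot\nabp(fg)\,\p_3\vp\dx.
\]

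Next I would apply Lemma \ref{int by parts lem} (integration by parts for $\pp_i$) componentwise to $-\io v_i\pp_i(fg)\,\p_3\vp\dx$. This produces $\io (\nabp\cdot v)\,fg\,\p_3\vp\dx$ in the interior plus a boundary contribution $-\is (v\cdot N)fg\dx'$ on $\Sigma$; the boundary integrals on $\Sigma^\pm$ vanish identically because $N_3=1$ there and the only surviving term $v_3 fg$ is killed by the slip condition $v_3|_{\Sigma^\pm}=0$. Finally, invoking the kinematic boundary condition $v\cdot N=\p_t\psi$ on $\Sigma$, the boundary term $-\is(v\cdot N)fg\dx'$ precisely cancels the $\is fg\,\p_t\psi\dx'$ surface term inherited from \eqref{time deriv transport pre tilde}, yielding \eqref{transpt nonlinear without rho}.

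The argument is essentially bookkeeping, so there is no substantial obstacle; the only point requiring care is tracking signs in the integration-by-parts step and verifying that the orientation of $N$ matches the convention in Lemma \ref{int by parts lem} so that the boundary terms cancel (rather than double) against the $\p_t\psi$ term. Once this sign check is done, the identity follows for any smooth $f,g$, and it extends to the two-phase setting simply by summing the $\Om^+$ and $\Om^-$ contributions.
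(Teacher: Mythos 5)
Your proposal is correct and follows the same route as the paper's (cited) proof: apply Lemma \ref{time deriv transport pre}, rewrite $\pp_t=\Dtp-v\cdot\nabp$, integrate by parts via Lemma \ref{int by parts lem}, and cancel the resulting boundary term against $\is fg\,\p_t\psi\dx'$ using the kinematic condition $v\cdot N=\p_t\psi$ (with the slip condition handling $\Sigma^\pm$). This is exactly the bookkeeping argument behind Corollary \ref{transport thm without rho}, so no further changes are needed.
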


 The second corollary concerns the transport theorem as well as the integration by parts formula for the linearized material derivative $\Dtpr$. 

\begin{cor}[\textbf{Reynolds transport theorem for linearized $\kk$-problem}] \label{transport thm linearized}
Let $\Dtpr:=\p_t+(\vbr\cdot\cnab)+\frac{1}{\p_3\pr}(\vr\cdot\Npd-\p_t\pr)\p_3$ be the linearized material derivative. Then:
\begin{align}
\frac{1}{2}\ddt \io \rhor |f|^2 \p_3\pr\dx =& \io \rhor (\Dtpkr f)f\p_3 \pr\dx
+\frac{1}{2}\io \left( \Dtpkr \rhor +\rhor\nabpkr\cdot \vr\right) |f|^2\p_3\pr\dx \label{transpt linearized} \\
&+\frac{1}{2}\io \rhor |f|^2 \left(\p_3(\vbr\cdot\cnab)(\pr-\pd)\right)\dx.  \nonumber
\end{align}
\begin{align}
\frac{1}{2}\ddt  \io |f|^2 \p_3\pr\dx = & \io (\Dtpr f)f \p_3\pr\dx+\frac{1}{2}\io \nabpr\cdot \vr |f|^2\p_3\pr\dx \label{transpt linearized without rho}\\
&+\frac{1}{2}\io |f|^2 \left(\p_3(\vbr\cdot\cnab)(\pr-\pd)\right)\dx.\nonumber
\end{align}
\end{cor}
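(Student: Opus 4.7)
}

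The plan is to reduce both identities \eqref{transpt linearized} and \eqref{transpt linearized without rho} to the elementary transport formula \eqref{time deriv transport pre tildering} for the partial operator $\p_t^{\pr}$, and to absorb the discrepancy between $\p_t^{\pr}$ and the linearized material derivative $\Dtpr$ by a single algebraic decomposition plus one integration by parts. The second identity is the special case $\rhor\equiv 1$ of the first, so I will focus on \eqref{transpt linearized}.

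The cornerstone is the decomposition
\begin{equation}\label{propDecomp}
\Dtpr = \p_t^{\pr} + \vr\cdot\nabpr + \bigl(\vbr\cdot\cnab(\pr-\pd)\bigr)\,\p_3^{\pr},
\end{equation}
which follows directly from the definitions $\Dtpr=\p_t+\vbr\cdot\cnab+(\p_3\pr)^{-1}(\vr\cdot\Npd-\p_t\pr)\p_3$, $\p_t^{\pr}=\p_t-(\p_t\pr/\p_3\pr)\p_3$, $\vr\cdot\nabpr=\vbr\cdot\cnab+(\p_3\pr)^{-1}(\vr\cdot\Npr)\p_3$, and the geometric identity $\vr\cdot(\Npd-\Npr)=\vbr\cdot\cnab(\pr-\pd)$.

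\textbf{Step 1.} Apply \eqref{time deriv transport pre tildering} with the pair $(f,\rhor f)$ to obtain
\begin{equation}\label{propStep1}
\ddt\io \rhor f^2\,\p_3\pr\dx = 2\io \rhor f\,(\p_t^{\pr}f)\p_3\pr\dx + \io f^2(\p_t^{\pr}\rhor)\p_3\pr\dx + \int_{\Sigma}\rhor f^2\,\p_t\psr\dx'.
\end{equation}
Then substitute \eqref{propDecomp} into each $\p_t^{\pr}$ on the right-hand side; after grouping the $2\rhor f\p_3^{\pr}f$ and $f^2\p_3^{\pr}\rhor$ contributions into a single $\p_3^{\pr}(\rhor f^2)$ factor, the right-hand side becomes the desired $2\io \rhor f\,\Dtpr f\,\p_3\pr\dx + \io f^2\Dtpr\rhor\,\p_3\pr\dx$ plus two correction pieces $-\io \vr\cdot\nabpr(\rhor f^2)\p_3\pr\dx$ and $-\io\vbr\cdot\cnab(\pr-\pd)\p_3(\rhor f^2)\dx$ (using $\p_3^{\pr}\cdot\p_3\pr=\p_3$).

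\textbf{Step 2.} Integrate by parts in the transport correction using \eqref{int by parts tildering}, which converts $-\io\vr\cdot\nabpr(\rhor f^2)\p_3\pr\dx$ into $\io\rhor f^2(\nabpr\cdot\vr)\p_3\pr\dx - \int_{\Sigma}\rhor f^2\,\vr\cdot\Npr\dx'$. For the $\p_3$ correction, integrate by parts in $x_3$: the top-boundary contribution vanishes because $\chi$ is compactly supported so that $\pr-\pd=0$ at $x_3=\pm H$, and the interior piece becomes exactly $\frac{1}{2}\cdot 2\io\rhor f^2\,\p_3\!\bigl[(\vbr\cdot\cnab)(\pr-\pd)\bigr]\dx$, while the $\Sigma$-contribution is (up to sign) $\int_{\Sigma}\rhor f^2\,\vbr\cdot\cnab(\pr-\pd)\dx'$.

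\textbf{Step 3 (the only delicate step).} Collect all boundary terms on $\Sigma$ into
\[
\int_{\Sigma}\rhor f^2\,\Bigl[\p_t\psr - \vr\cdot\Npr - \vbr\cdot\cnab(\psr-\psd)\Bigr]\dx'.
\]
Using the geometric identity $\vbr\cdot\cnab(\psr-\psd)=\vr\cdot\Npd-\vr\cdot\Npr$ together with the linearized kinematic boundary condition for the basic state, $\p_t\psr=\vr\cdot\Npd$ on $\Sigma$ (which the basic state satisfies in the iteration scheme of Section \ref{sect linear kkeq}), the bracket reduces to $\vr\cdot\Npd-\vr\cdot\Npr-(\vr\cdot\Npd-\vr\cdot\Npr)=0$. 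This is the main obstacle: one must keep track of the two distinct normals $\Npr,\Npd$ and invoke the correct linearized kinematic identity, since a naive application of $\p_t\psr=\vr\cdot\Npr$ would leave an uncancelled residual. Dividing \eqref{propStep1} by $2$ after all substitutions yields \eqref{transpt linearized}; specializing to $\rhor\equiv 1$ (so that $\Dtpr\rhor\equiv 0$) gives \eqref{transpt linearized without rho}.
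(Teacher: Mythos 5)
Your plan is correct, and it is essentially the argument the paper has in mind: the paper does not prove this corollary in the text but refers to the analogous computation in Luo--Zhang \cite[Appendix A]{LuoZhang2022CWWST}, which proceeds exactly by writing the linearized material derivative in terms of $\p_t^{\pr}$ and $\nabpr$ and invoking Lemma \ref{time deriv transport pre} and Lemma \ref{int by parts lem}. Your decomposition $\Dtpr=\p_t^{\pr}+\vr\cdot\nabpr+\bigl((\vbr\cdot\cnab)(\pr-\pd)\bigr)\p_3^{\pr}$ checks out (it is where the extra term $\frac12\io\rhor|f|^2\p_3[(\vbr\cdot\cnab)(\pr-\pd)]\dx$ comes from), and the point you isolate in Step 3 is indeed the only substantive one: after the two integrations by parts the surviving boundary contribution is $\int_{\Sigma}\rhor f^2\bigl(\p_t\psr-\vr\cdot\Npd\bigr)\dx'$, which vanishes precisely because the basic state in the iteration scheme satisfies $\p_t\psi^{[n]}=v^{[n]}\cdot N^{[n-1]}$ on $\Sigma$ (and the contributions on $\Sigma^{\pm}$ vanish by the slip condition together with $\chi$, hence $\pr-\pd$ and $\p_t\pr$, vanishing near $x_3=\pm H$); using $\p_t\psr=\vr\cdot\Npr$ instead would indeed leave an uncancelled residual. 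The identity \eqref{transpt linearized without rho} is, as you say, the case $\rhor\equiv1$.
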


\section{Preliminary lemmas about Sobolev inequalities}\label{sect lemma}

\begin{lem}[Hodge-type elliptic estimates]\label{hodgeTT}
For any sufficiently smooth vector field $X$ and $s\geq 1$, one has
\begin{align}
\label{divcurlTT}\|X\|_s^2\leq C(|\psi|_s,|\cnab\psi|_{W^{1,\infty}})\left(\|X\|_0^2+\|\nabp\cdot X\|_{s-1}^2+\|\nabp\times X\|_{s-1}^2+\|\TP^{\alpha}X\|_0^2\right),\\
\label{divcurlNN}\|X\|_s^2\leq C'(|\psi|_{s+\frac12},|\cnab\psi|_{W^{1,\infty}})\left(\|X\|_0^2+\|\nabp\cdot X\|_{s-1}^2+\|\nabp\times X\|_{s-1}^2+|X\cdot N|_{s-\frac12}^2\right),\\
\label{divcurltt}\|X\|_s^2\leq C''(|\psi|_{s+\frac12},|\cnab\psi|_{W^{1,\infty}})\left(\|X\|_0^2+\|\nabp\cdot X\|_{s-1}^2+\|\nabp\times X\|_{s-1}^2+|X\times N|_{s-\frac12}^2\right),
\end{align} 
for any multi-index $\alpha$ with $|\alpha|=s$. The constant $C(|\psi|_s,|\cnab\psi|_{W^{1,\infty}})>0$ depends linearly on $|\psi|_s^2$ and the constants $C'(|\psi|_{s+\frac12},|\cnab\psi|_{W^{1,\infty}})>0$ and $C'(|\psi|_{s+\frac12},|\cnab\psi|_{W^{1,\infty}})>0$ depend linearly on $|\psi|_{s+\frac12}^2$. 
\end{lem}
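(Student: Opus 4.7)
The plan is to prove the three estimates by reducing each to the classical Hodge/div-curl estimate on a Lipschitz domain in the physical configuration, then tracking how the flattening diffeomorphism $\Phi$ exchanges the covariant operators $\nabp$, $\nabp\cdot$, $\nabp\times$ for their flat counterparts. The overall scheme is: pull back to $\Om^\pm(t)$ via $\bar X(t,x):=X(t,\Phi^{-1}(t,x))$, apply a classical estimate whose geometric constant depends on the $\psi$-regularity of the boundary, then push back to $\Om^\pm$ picking up Jacobian factors involving $\p_3\vp$ and its derivatives.

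First, for \eqref{divcurlNN} and \eqref{divcurltt}, the chain rule gives $(\nabp_i X)(t,y)=(\p_{x_i}\bar X)(t,\Phi(t,y))$, so $\nabp\cdot X$, $\nabp\times X$, and $X\cdot N$ transform into $\nab\cdot\bar X$, $\nab\times\bar X$, and $\bar X\cdot n\,|N|$ on $\Sigma(t)$ (where $n=N/|N|$). The standard Hodge estimate on the Lipschitz domain $\Om^\pm(t)$,
\[
\|\bar X\|_{H^s(\Om^\pm(t))}^2 \lesssim \|\bar X\|_{0}^2+\|\nab\cdot\bar X\|_{s-1}^2+\|\nab\times\bar X\|_{s-1}^2+|\bar X\cdot n|_{s-1/2}^2,
\]
and its analogue with $\bar X\times n$, holds with implicit constant controlled by the $H^{s+1/2}$-regularity of the parametrization $\psi$ and by $|\cnab\psi|_{W^{1,\infty}}$. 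On the flat components $\Sigma^\pm$ the outward normal is $\pm e_d$ and the boundary contribution is trivial (or recast as a tangential-derivative contribution). Pushing back introduces the factor $\p_3\vp$ and its derivatives up to order $s$, which are estimable in terms of $|\psi|_{s+1/2}$ and $|\cnab\psi|_{W^{1,\infty}}$, yielding \eqref{divcurlNN} and \eqref{divcurltt}.

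Second, for the sharper estimate \eqref{divcurlTT}, I would proceed by induction on $s$. The base case $s=1$ follows from the algebraic identities
\[
\p_3 X_3 = \nab\cdot X-\p_1 X_1-\p_2 X_2,\q \p_3 X_2=\p_2 X_3-(\nab\times X)_1,\q \p_3 X_1=\p_1 X_3+(\nab\times X)_2,
\]
so that $\|\p_3 X\|_0\lesssim \|\nab\cdot X\|_0+\|\nab\times X\|_0+\|\TP X\|_0$; the relation $\nab_i=\nabp_i+(\p_i\vp/\p_3\vp)\p_3$ for $i=1,2$ and $\nab_3=\p_3\vp\,\nabp_3$ then exchanges $\nab$ for $\nabp$ at the cost of zeroth-order multiplication by $\nab\vp/\p_3\vp$, absorbed into the constant $C(|\psi|_1,|\cnab\psi|_{W^{1,\infty}})$. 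The inductive step applies the base case to $\TP^\beta X$ for $|\beta|\leq s-1$ and uses the commutator $[\TP^\beta,\nabp]\sim\sum \TP^{\beta'}(\nab\vp/\p_3\vp)\,\p_3\TP^{\beta-\beta'-1}(\cdot)$, whose $L^2$ norm is controlled by $|\psi|_s$ times a lower-order norm of $X$ (by Moser-type estimates), enabling absorption into the left-hand side after choosing the induction carefully.

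The principal technical obstacle is ensuring that the prefactor in \eqref{divcurlTT} depends only on $|\psi|_s$ rather than $|\psi|_{s+1/2}$; this is precisely the gain from avoiding trace theorems by using interior tangential derivatives. To achieve it I would forgo any appeal to boundary traces throughout the induction, carrying out all commutator estimates purely in the interior via product and Moser inequalities which respect the integer-order $H^s$ scale, and systematically replacing the normal derivative in any intermediate expression by the combination $\nabp\cdot X$, $\nabp\times X$, $\TP X$ using the base-case algebraic identity. The polynomial/linear dependence on $\psi$ of each of the three constants $C,C',C''$ is then made explicit and matches the form stated in the lemma.
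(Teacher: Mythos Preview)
The paper states this lemma in Appendix~B as a preliminary result and gives no proof; it is treated as a known div--curl inequality (of the type used in the author's earlier work, e.g.\ \cite{LuoZhang2022CWWST}). So there is no proof in the paper to compare your proposal against.

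That said, your plan is the standard and correct route. For \eqref{divcurlTT} the induction-on-$s$ argument based on the algebraic identities expressing $\p_3 X$ in terms of $\nab\cdot X$, $\nab\times X$, and $\TP X$ is exactly how one obtains the sharper interior estimate depending only on $|\psi|_s$: no trace lemma is ever invoked, and the commutator $[\TP^\beta,\nabp]$ contributes a factor linear in $\TP^\beta\cnab\vp$ at leading order, which is what gives the stated linear dependence on $|\psi|_s^2$. For \eqref{divcurlNN} and \eqref{divcurltt} the pull-back/push-forward to the physical domain $\Om^\pm(t)$ and appeal to the classical Hodge decomposition with normal or tangential trace is also the expected argument; the half-derivative loss $|\psi|_{s+1/2}$ enters through the trace estimate on the curved boundary $\Sigma(t)$. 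One small point to be careful with in the inductive step for \eqref{divcurlTT}: when you apply the base case to $\TP^\beta X$ you must also supply control of $\p_3\TP^\beta X$ from earlier steps of the induction (not just $\TP^{\beta'} X$ for $|\beta'|\le s$), so the induction should be organized so that at stage $s$ you have already bounded all mixed derivatives $\p_3^j\TP^\beta X$ with $j+|\beta|\le s-1$; your sketch implicitly does this but it is worth making explicit.
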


\begin{lem}[Normal trace lemma]\label{ntrace}
For any sufficiently smooth vector field $X$ and $s\geq 0$, one has
\begin{align}
\bno{X\cdot N}_{s-\frac12}^2\lesssim C'''(|\psi|_{s+\frac12},|\cnab\psi|_{W^{1,\infty}}) \left(\|\jp^s X\|_0^2+\|\nabp\cdot X\|_{s-1}^2\right)
\end{align} 
where the constant $C'''(|\psi|_{s+\frac12},|\cnab\psi|_{W^{1,\infty}})>0$ depends linearly on $|\psi|_{s+\frac12}^2$.
\end{lem}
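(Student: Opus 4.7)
\textbf{Proposal for Lemma \ref{ntrace}.} The strategy is to reduce the boundary estimate to an interior estimate by extending $N$ to $\NN=(-\p_1\vp,-\p_2\vp,1)^\top$ on $\Om$, so that $X\cdot\NN|_{\Sigma}=X\cdot N$, and then invoke a standard trace interpolation together with an algebraic identity that trades the normal derivative of $X\cdot\NN$ for $\nabp\cdot X$ plus tangential derivatives of $X$. First I would record the following Sobolev trace--interpolation inequality on $\Om=\T^2\times(0,H)$: for any $u$ and $s\ge 0$,
\begin{equation*}
|u(\cdot,0)|_{s-\frac12}^2\ \lesssim\ \|\jp^{s-\frac12}u\|_0\,\|\jp^{s-\frac12}\p_3 u\|_0\ \lesssim\ \|\jp^s u\|_0^2+\|\jp^{s-1}\p_3 u\|_0^2,
\end{equation*}
obtained by writing $|\jp^{s-1/2}u|^2$ on $\Sigma$ as $-\int_0^H\p_3(|\jp^{s-1/2}u|^2)\dx_3$, applying Cauchy--Schwarz, and bounding by Young's inequality.

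Next I would derive the key pointwise identity. Starting from $X\cdot\NN=X_3-(\p_1\vp)X_1-(\p_2\vp)X_2$ and $\nabp\cdot X=\p_1 X_1+\p_2 X_2+(\p_3\vp)^{-1}\p_3 X_3-(\p_3\vp)^{-1}[(\p_1\vp)\p_3 X_1+(\p_2\vp)\p_3 X_2]$, a direct computation gives
\begin{equation*}
\p_3(X\cdot\NN)\ =\ (\p_3\vp)\,\nabp\cdot X-(\p_3\vp)\,\cnab\cdot\bar X-(\p_3\cnab\vp)\cdot\bar X,\qquad \bar X:=(X_1,X_2).
\end{equation*}
Thus $\p_3(X\cdot\NN)$ is completely controlled by $\nabp\cdot X$ together with one tangential derivative of $X$ (and zero-th order terms in $X$ times derivatives of $\vp$).

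Applying the trace--interpolation to $u=X\cdot\NN$ and combining with the identity, I would estimate
\begin{equation*}
|X\cdot N|_{s-\frac12}^2\ \lesssim\ \|\jp^s(X\cdot\NN)\|_0^2+\|\jp^{s-1}\p_3(X\cdot\NN)\|_0^2.
\end{equation*}
For the first term, using the tame product estimate on $\T^2$ for each fixed $x_3$,
\begin{equation*}
\|\jp^s(\p_i\vp\,X_i)\|_0\ \lesssim\ |\cnab\psi|_{L^\infty}\|\jp^s X_i\|_0+C(|\psi|_{s+\frac12})\|X_i\|_0,
\end{equation*}
where the second factor arises by placing the tangential derivatives on $\p_i\vp=\chi(x_3)\p_i\psi$ and using $\chi\in C_c^\infty$ together with Sobolev embedding on $\Sigma$ to exchange $L^\infty$ for $H^{s+\frac12}$ regularity on $\psi$. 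For the second term, the identity gives
\begin{equation*}
\|\jp^{s-1}\p_3(X\cdot\NN)\|_0\ \lesssim\ \|\p_3\vp\|_{L^\infty}\|\nabp\cdot X\|_{s-1}+C(|\psi|_{s+\frac12},|\cnab\psi|_{W^{1,\infty}})\|\jp^s X\|_0,
\end{equation*}
using $\|\jp^{s-1}\cnab\cdot\bar X\|_0\lesssim\|\jp^s X\|_0$ for the tangential part, and similar tame estimates on the remaining factor $(\p_3\cnab\vp)\cdot\bar X$.

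The main obstacle is tracking the correct regularity of $\psi$ in the multiplicative constant. Namely, the tangential product estimates must be arranged so that whenever all $s$ derivatives fall on the coefficient $\p_i\vp$, the resulting factor involves only $|\psi|_{s+\frac12}$ (rather than $|\psi|_{s+1}$). This is achieved by exploiting that the full norm $\|\jp^s(\p_i\vp)\|_{L^2(\Om)}$ can be traded for a trace--type norm on $\Sigma$ via the product structure $\p_i\vp=\chi(x_3)\p_i\psi$ and Sobolev embedding $L^\infty(\Sigma)\hookleftarrow H^{1+\eps}(\Sigma)$ applied to the low-frequency factor $X_i$. Collecting all terms and absorbing lower-order contributions into $C(|\psi|_{s+\frac12},|\cnab\psi|_{W^{1,\infty}})$ yields the desired inequality.
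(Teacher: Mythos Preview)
The paper does not actually provide a proof of Lemma~\ref{ntrace}; it is listed among the preliminary Sobolev inequalities in the appendix without argument. So there is no ``paper's own proof'' to compare against, and your proposal must be judged on its own merits.

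Your approach is the correct and standard one. The trace interpolation inequality you record is exactly Lemma~\ref{nctrace} specialized to $g=X\cdot\NN$, and your algebraic identity
\[
\p_3(X\cdot\NN)=(\p_3\vp)\,\nabp\cdot X-(\p_3\vp)\,\cnab\cdot\bar X-(\p_3\cnab\vp)\cdot\bar X
\]
is correct and is precisely the mechanism that converts the normal derivative into $\nabp\cdot X$ plus tangential terms. The paper itself uses this identity repeatedly in the form $\pp_3 v\cdot\NN=\nabp\cdot v-\cnab\cdot\vb$ (see e.g.\ Section~\ref{sect E8TT}), so your argument is fully in the spirit of the manuscript.

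The one point that deserves caution is the sharpness of the $|\psi|_{s+\frac12}$ dependence you flag as the ``main obstacle''. A straightforward Kato--Ponce estimate on $\jp^s(\p_i\vp\,X_i)$ places all $s$ tangential derivatives on $\p_i\psi$ and produces $|\psi|_{s+1}$, not $|\psi|_{s+\frac12}$; your suggested fix via Sobolev embedding on the low-frequency factor $X_i$ does not by itself recover the missing half-derivative. Since this lemma is not actually invoked anywhere in the body of the paper (the working trace tool is Lemma~\ref{nctrace}), the discrepancy is harmless for the paper's arguments, and your proof yields the inequality with $C'''$ depending on $|\psi|_{s+1}$, which suffices for every application one could imagine here. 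If you do want the sharper constant as stated, the cleanest route is a duality argument: pair $X\cdot N$ against $\phi\in H^{\frac12-s}(\Sigma)$, extend $\phi$ to $\Phi$ with $\|\Phi\|_{1-s}\lesssim|\phi|_{\frac12-s}$, and apply the divergence theorem $\is(X\cdot N)\phi\,dx'=\iop\nabp\cdot(X\Phi)\dvt$; this shifts the half-derivative gain from the trace onto the extension and avoids the loss in the product estimate.
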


We list two lemmas for the estimates of traces in the anisotropic Sobolev spaces. Define $L_T^2(H_*^m(\Om^\pm))=\bigcap\limits_{k=0}^mH^k((-\infty,T];H_*^{m-k}(\Om^\pm))$ with the norm $\|u\|_{m,*,T,\pm}:=\int_{-\infty}^T\|u(t)\|_{m,*,\pm}^2\dt$. Similarly, we define $L_T^2(H^m(\Sigma))=\bigcap\limits_{k=0}^mH^k((-\infty,T];H^{m-k}(\Sigma))$ with the norm $|u|_{m,T}:=\int_{-\infty}^T|u(t)|_{m}^2\dt$.

\begin{lem}[Trace lemma for anisotropic Sobolev spaces, {\cite[Lemma 3.4]{TW2020MHDLWP}}]\label{trace}
Let $m\geq 1,~m\in\N^*$, then we have the following trace lemma for the anisotropic Sobolev space.
\begin{enumerate}
\item If $f\in L_T^2(H_*^{m+1}(\Omega^\pm))$, then its trace $f|_{\Sigma}$ belongs to $L_T^2(H^{m}(\Omega^\pm))$ and satisfies
\[
|f|_{m,T}\lesssim\|f\|_{m+1,*,T,\pm}.
\]

\item There exists a linear continuous operator $\mathfrak{R}_{T}^\pm: L_T^2(H^m(\Sigma)) \to L_T^2(H_*^{m+1}(\Om^\pm))$ such that $(\mathfrak{R}_{T}^\pm g)|_{\Sigma}=g$ and \[
\|\mathfrak{R}_{T}^\pm g\|_{m+1,*,T,\pm}\lesssim|g|_{m,T}.
\]
\end{enumerate}
\end{lem}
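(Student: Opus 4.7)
The statement is a space-time trace theorem for the anisotropic Sobolev scale $H^{m}_{\ast}$ together with the existence of a continuous right inverse. My plan is to follow the strategy of Chen and of Trakhinin--Wang, reducing matters to a one-dimensional weighted trace inequality in the normal variable, performed frequency-wise in the tangential variables.

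For part (1), I proceed as follows. After extending $f$ by zero in $t$ past $T$ and using a partition of unity in $x_3$ to localize near $\Sigma = \{x_3=0\}$ (away from $\Sigma$ the classical Sobolev trace theorem applies directly), I take the partial Fourier transform in the tangential variables $(t,x') \in \mathbb{R}\times\mathbb{T}^{d-1}$, with dual frequency $(\tau,\xi')$ and set $\lambda := \sqrt{1+\tau^2+|\xi'|^2}$. Since $\omega(x_3)\sim x_3^2$ near $x_3=0$, both norms become frequency-localized weighted $L^2$ norms in $x_3$:
\[
|g|_{m,T}^2 \sim \int \lambda^{2m} |\hat g(\tau,\xi')|^2 \, d\tau\, d\xi',\qquad
\|f\|_{m+1,\ast,T,\pm}^2 \sim \int \sum_{2a+b\le m+1}\lambda^{2(m+1-2a-b)}\,\|(\omega\p_3)^b\p_3^a \hat f\|_{L^2_{x_3}}^2 \, d\tau\, d\xi'.
\]
Thus it suffices to establish the uniform (in $\lambda\ge 1$) one-dimensional inequality
\[
\lambda^{2m}|u(0)|^2 \;\lesssim\; \sum_{2a+b\le m+1}\lambda^{2(m+1-2a-b)}\,\|(\omega\p_3)^b\p_3^a u\|_{L^2([0,H])}^2 \quad (\star)
\]
for every smooth $u$ on $[0,H]$. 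Starting from the standard trace bound $|u(0)|^2\lesssim \|u\|_{L^2}\|\p_3 u\|_{L^2} + \|u\|_{L^2}^2$ and Young's inequality $ab \le \tfrac12(\lambda^2 a^2 + \lambda^{-2} b^2)$, I obtain
\[
\lambda^{2m}|u(0)|^2 \;\lesssim\; \lambda^{2(m+1)}\|u\|_{L^2}^2 + \lambda^{2(m-1)}\|\p_3 u\|_{L^2}^2,
\]
which are exactly the contributions of $(a,b)=(0,0)$ and $(a,b)=(1,0)$ on the right-hand side of $(\star)$ (admissible since $m\ge 1$). Inverting the Fourier transform in $(\tau,\xi')$ and integrating concludes part (1).

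For part (2), I construct the lift in tangential Fourier variables by
\[
\widehat{\mathfrak{R}_T^{\pm} g}(\tau,\xi',x_3) := \hat g(\tau,\xi')\,\phi\bigl(\lambda\,\chi(x_3)\bigr),
\]
where $\phi \in C_c^\infty(\mathbb{R})$ with $\phi(0)=1$ and $\chi$ is a smooth cutoff equal to $\pm x_3$ near $\Sigma$ and supported inside $\Om^{\pm}$. Distributing $(\omega\p_3)^b\p_3^a$ onto $\phi(\lambda\chi)$ and rescaling $y=\lambda x_3$ (so $\omega(x_3)\sim \lambda^{-2} y^2$ near $y=0$) gives the pointwise-in-frequency bound
\[
\|(\omega\p_3)^b\p_3^a \widehat{\mathfrak{R}_T^{\pm} g}\|_{L^2_{x_3}}^2 \;\lesssim\; \lambda^{2(2a+b)-1}\,|\hat g(\tau,\xi')|^2.
\]
Multiplying by $\lambda^{2(m+1-2a-b)}$ and summing over $2a+b\le m+1$ produces $\lambda^{2m}|\hat g|^2$, which integrated in $(\tau,\xi')$ is $|g|_{m,T}^2$, proving the lifting estimate. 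The trace identity $(\mathfrak{R}_T^{\pm} g)|_{\Sigma}=g$ is built in by $\phi(0)=1$.

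\textbf{Main obstacle.} The delicate point is that $(\omega\p_3)^b\p_3^a$ does \emph{not} factor as $\omega^b\p_3^{a+b}$: each time one commutes $\omega$ past $\p_3$ one generates lower-order terms involving derivatives of $\omega$. One must verify that, modulo such lower-order remainders, the anisotropic norm is equivalent near $\Sigma$ to $\sum_{2a+b\le m+1}\lambda^{2(m+1-2a-b)}\|x_3^{\,b}\p_3^{a+b}\hat f\|_{L^2_{x_3}}^2$, which exhibits exactly the parabolic rescaling $y=\lambda x_3$ implicit in the anisotropic counting $\lee\alpha\ree=|\alpha'|+2\alpha_d+\alpha_{d+1}$. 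This factor-of-two asymmetry between unweighted and weighted normal derivatives is precisely what converts $H_{\ast}^{m+1}$-regularity in the bulk to $H^{m}$-regularity on $\Sigma$, in contrast to the $H^{m+1/2}$ gain of the isotropic trace theorem.
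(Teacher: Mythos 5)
Your part (2) is where the argument breaks down, and the failure is quantitative. With the lift $\widehat{\mathfrak{R}_T^\pm g}=\hat g(\tau,\xi')\,\phi(\lambda\chi(x_3))$ the very first term of the anisotropic norm, the pure‑tangential one $(a,b)=(0,0)$, already overshoots: $\|\phi(\lambda\chi(\cdot))\|_{L^2_{x_3}}^2\sim\lambda^{-1}$, so its contribution is $\lambda^{2(m+1)}\cdot\lambda^{-1}=\lambda^{2m+1}|\hat g|^2$, not $\lambda^{2m}|\hat g|^2$; i.e.\ your construction only lifts $H^{m+1/2}(\Sigma)$ data into $H^{m+1}_*$, which is the isotropic gain, not the full‑derivative gain the lemma asserts. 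Moreover the frequency‑wise bound you state, $\|(\omega\p_3)^b\p_3^a\widehat{\mathfrak{R}_T^\pm g}\|_{L^2_{x_3}}^2\lesssim\lambda^{2(2a+b)-1}|\hat g|^2$, is not what $\phi(\lambda\chi)$ produces (with $\omega$ comparable to the distance, $\omega\p_3$ is scale‑invariant and costs no $\lambda$ at all, giving $\lambda^{2a-1}$), and even if it were true, your final power count is wrong: $\lambda^{2(m+1-2a-b)}\cdot\lambda^{2(2a+b)-1}=\lambda^{2(m+1)-1}=\lambda^{2m+1}$, again off by half a boundary derivative. The fix is the genuinely parabolic scaling that your own closing remark hints at but your formula does not implement: take $\phi(\lambda^{2}\chi(x_3))$. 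Then each plain $\p_3$ costs $\lambda^{2}$ (matching its weight $2$ in $\lee\alpha\ree$), each $\omega\p_3$ costs $O(1)$ near $\Sigma$, and the $x_3$‑integration yields $\lambda^{-1}$, so every admissible term contributes at most $\lambda^{2(m+1-2a-b)}\lambda^{4a}\lambda^{-2}=\lambda^{2m-2b}\le\lambda^{2m}$, which is exactly the claimed lifting estimate.

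Two smaller points. In part (1), extending $f$ by zero past $t=T$ is not legitimate, since $L_T^2(H_*^{m+1})$ controls time derivatives up to order $m+1$ and a zero extension creates a jump at $t=T$; you need a bounded Sobolev extension in $t$ to all of $\R$ before taking the Fourier transform (this is precisely the device the paper uses). With that replacement, your part (1) reduction to the one‑dimensional inequality and the Young‑inequality step using only the $(a,b)=(0,0)$ and $(1,0)$ terms is correct, modulo the commutator/ordering issue for $(\omega\p_3)^b\p_3^a$ that you flag (also note the paper's literal formula $\omega=(H^2-x_3^2)x_3^2$ is $\sim x_3^2$ near $\Sigma$, whereas the intended weight, per the footnote and the cited reference, is comparable to the distance). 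Finally, be aware that the paper does not prove this lemma from scratch at all: it quotes Theorem 1 of Ohno–Shizuta–Yanagisawa for the time interval $\R$ and reduces $(-\infty,T]$ to that case by Sobolev extension and truncation; so a corrected version of your argument would be a self‑contained alternative rather than a reconstruction of the paper's proof.
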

\begin{proof}
The proof for the above lemma can be found in \cite[Theorem 1]{anisotropictrace} when we replace $(-\infty,T)$ by $(-\infty,\infty)$. In our case, we can prove the same result by doing Sobolev extension. Namely, given $f\in L_T^2(H_*^{m+1}(\Om^+))$, we can extend it to $F(t,x):R\times \Om^+\to\R$ such that
\[
\|f\|_{m+1,*,T,+}\lesssim\|F(t,x)\|_{H_*^{m+1}(\R\times\Om^+)}\lesssim\|f\|_{m+1,*,T,+}.
\]We can apply \cite[Theorem 1]{anisotropictrace} to $F$, and then do the truncation in $(-\infty,T]$
\[
|f|_{m,T}\lesssim|F|_{H^m(\R\times\Sigma)}\lesssim\|F(t,x)\|_{H_*^{m+1}(\R\times\Om^+)}\lesssim\|f\|_{m+1,*,T,+}.
\]
\end{proof}
There is one derivative loss in the above trace lemma, which is 1/2-order more than the trace lemma for standard Sobolev spaces. Indeed, for $\Om^\pm$ defined in this paper, we have the following estimate that will be applied to control the non-characteristic variables $q, v\cdot\NN$ and $b\cdot\NN$.
\begin{lem}[An estimate for traces of non-characteristic variables]\label{nctrace}
Let $\Om^\pm:=\T^{d-1}\times\{0\lessgtr  x_d\lessgtr \pm H\}$, $\Sigma=\T^{d-1}\times\{x_d=0\}$ and $\Sigma^\pm=\T^{d-1}\times\{\pm H\}$. Let $\TT^\alpha=(\omega(x_d)\p_d)^{\alpha_{d+1}}\p_t^{\alpha_0}\TP_1^{\alpha_1}\cdots\TP_{d-1}^{\alpha_{d-1}}\p_d^{\alpha_d}$ with $\len{\alpha}:=\alpha_0+\cdots+\alpha_{d-1}+2\alpha_d+\alpha_{d+1}=m-1,~m\in\N^*$. Let $q^\pm(t,x)\in H_*^m(\Om)$ satisfy $\|q^\pm(t)\|_{m,*,\pm}+\|\p_d q^\pm(t)\|_{m-1,*,\pm}<\infty$ for any $0\leq t\leq T$ and let $f^\pm\in H_*^{2}(\Om^\pm)\cap H^{\frac32}(\Om^\pm)$ be a function vanishing on $\Sigma^\pm$. Then we have
\begin{equation}\label{q IBP}
\is (\jp^{\frac12}\TT^\gamma q^\pm) \, (\jp f^\pm)\dx'\leq (\|\p_d q^\pm\|_{m-1,*,\pm}+\|q^\pm\|_{m,*,\pm})\|\jp^{\frac12} f^\pm\|_{1,\pm}
\end{equation}
In particular, for $s\geq 1$, we have the following inequality for any $g^\pm\in H_*^s(\Om^\pm)$ with $g^\pm|_{\Sigma^\pm}=0$.
\[
|g^\pm|_{s-1/2}^2\leq \|\jp^s g^\pm\|_{0,\pm}\|\jp^{s-1}\p_d g^\pm\|_{0,\pm}\leq \|g^\pm\|_{s,*,\pm}\|\p_dg^\pm\|_{s-1,*,\pm}.
\]
\end{lem}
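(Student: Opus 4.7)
The plan is to prove both displayed inequalities by combining the fundamental theorem of calculus in the normal direction $x_d$ (exploiting the vanishing trace on $\Sigma^\pm$) with Cauchy--Schwarz and Plancherel in the tangential directions.

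For the simpler second inequality, I would Fourier-expand in $x'$ so that $\jp$ becomes multiplication by $(1+|\xi'|^2)^{1/2}$, and for each fixed $\xi'$ use that $g|_{\Sigma^\pm}=0$ to write
\[
|\hat g(\xi', 0)|^2 = -2\,\mathrm{Re}\!\int_0^{\pm H} \hat g(\xi', x_d)\,\overline{\p_d \hat g(\xi', x_d)}\,\mathrm{d}x_d.
\]
Multiplying by $(1+|\xi'|^2)^{s-1/2}$, integrating in $\xi'$, and factoring the weight as $(1+|\xi'|^2)^{s/2}\cdot(1+|\xi'|^2)^{(s-1)/2}$ before applying Cauchy--Schwarz and Fubini yields $|g|_{s-1/2}^2\leq 2\|\jp^s g\|_{0,\pm}\|\jp^{s-1}\p_d g\|_{0,\pm}$. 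The second half of the inequality is then immediate from the definitions, since $\jp^s$ is a purely tangential Fourier multiplier of anisotropic length $s$ (hence $\|\jp^s g\|_0\lesssim \|g\|_{s,*}$) and likewise $\|\jp^{s-1}\p_d g\|_0\lesssim \|\p_d g\|_{s-1,*}$, with fractional exponents handled by interpolation.

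For the first inequality, the idea is to convert the boundary integral into a volume integral and then carefully redistribute the weight $\jp^{1/2}$ so that each resulting term stays within the anisotropic norms on the right-hand side. Since $\jp^{1/2}$ is self-adjoint on $\Sigma$,
\[
\is(\jp^{1/2}\TT^\alpha q^\pm)(\jp f^\pm)\dx' \;=\; \is(\TT^\alpha q^\pm)(\jp^{3/2} f^\pm)\dx'.
\]
Because $\jp^{3/2}$ is tangential and $f^\pm|_{\Sigma^\pm}=0$, the product $(\TT^\alpha q^\pm)(\jp^{3/2} f^\pm)$ vanishes on $\Sigma^\pm$, so the fundamental theorem of calculus in $x_d$ rewrites the surface integral as the sum
\[
\mp\iopm (\TT^\alpha\p_d q^\pm)(\jp^{3/2}f^\pm)\dx \;\mp\; \iopm(\TT^\alpha q^\pm)(\jp^{3/2}\p_d f^\pm)\dx.
\]
The first integral is immediately bounded by $\|\TT^\alpha \p_d q^\pm\|_{0,\pm}\|\jp^{3/2} f^\pm\|_{0,\pm}\leq \|\p_d q^\pm\|_{m-1,*,\pm}\|\jp^{1/2}f^\pm\|_{1,\pm}$ (using $\|\jp^{3/2}f\|_0=\|\cnab \jp^{1/2}f\|_0^2+\|\jp^{1/2}f\|_0^2 \leq \|\jp^{1/2}f\|_1^2$). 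For the second integral, since $\jp$ is a Fourier multiplier in $x'$ and commutes with every factor of $\TT^\alpha$ (whose coefficients depend only on $x_d$), one $\jp$ can be moved from the $f$-side to the $q$-side, giving the bound $\|\jp\TT^\alpha q^\pm\|_{0,\pm}\|\jp^{1/2}\p_d f^\pm\|_{0,\pm}\leq \|q^\pm\|_{m,*,\pm}\|\jp^{1/2}f^\pm\|_{1,\pm}$; the last factor is controlled because $\|\p_d\jp^{1/2}f\|_0$ is part of $\|\jp^{1/2}f\|_1$, and $\|\jp\TT^\alpha q^\pm\|_0$ has anisotropic length $m-1+1=m$ since $\jp$ counts as a first-order tangential operator.

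The only delicate step is this last redistribution: the weight $\jp^{1/2}$ must be split so that (a) $\p_d f$ is accompanied only by $\jp^{1/2}$ (matching the $H^1$-component of $\|\jp^{1/2}f\|_1$), and (b) at most one $\jp$ lands next to $\TT^\alpha$ on the $q$-side (so the anisotropic length stays at most $m$). I expect this accounting to be the main obstacle, especially verifying that the commutation of $\jp$ with the weighted normal factor $(\omega(x_d)\p_d)^{\alpha_{d+1}}$ inside $\TT^\alpha$ is harmless; this holds precisely because $\omega$ depends only on $x_d$. Once the two bounds are combined, the triangle inequality yields the claimed estimate.
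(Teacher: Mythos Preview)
Your proposal is correct and follows essentially the same route as the paper: both arguments convert the boundary integral into a volume integral via the fundamental theorem of calculus in $x_d$ (using $f^\pm|_{\Sigma^\pm}=0$) and then redistribute the tangential weight $\jp^{1/2}$ by self-adjointness so that the two resulting terms are bounded by $\|\p_d q\|_{m-1,*}\|\jp^{1/2}f\|_1$ and $\|q\|_{m,*}\|\jp^{1/2}f\|_1$ respectively. The only cosmetic differences are that you shift $\jp^{1/2}$ to the $f$-side \emph{before} applying the divergence theorem (the paper does both at once) and that you prove the second inequality by an explicit Fourier computation rather than by specializing the first identity with $q=g$, $f=\jp^{s-3/2}g$; these are equivalent formulations of the same calculation.
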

\begin{proof}
This is a direct consequence of Gauss-Green formula. Note that the unit exterior normal vectors for $\Om^\pm$ are $(0,\cdots,0,\mp 1)^\top$ respectively, so we have
\begin{equation}
\begin{aligned}
&\is (\jp^{\frac12}\TT^\gamma q^\pm) \, (\jp f^\pm)\dx'= \mp\iopm (\p_d\TT^\gamma q^\pm)\,(\jp^{\frac32} f^\pm) +(\jp\TT^\gamma q^\pm)\,(\jp^{\frac12}\p_d f^\pm) \dx\\
 \leq&~ (\|\p_d q^\pm\|_{m-1,*,\pm}+\|q^\pm\|_{m,*,\pm})\|\jp^{\frac12} f^\pm\|_{1,\pm}
\end{aligned}
\end{equation}  In particular, let $q^\pm=g^\pm$ and $f^\pm=\jp^{s-\frac32} g^\pm$ in \eqref{q IBP} and we get
\begin{align*}
|g^\pm|_{s-1/2}^2=\is (\jp^{s-1/2}g^\pm)(\jp^{s-1/2}g^\pm)\dx'=\mp2\iopm (\p_d\jp^{s-1/2} g^\pm)(\jp^{s-1/2}g^\pm)\dx\overset{\jp^{1/2}}{=}~\mp2\iopm  (\p_d\jp^{s-1} g^\pm)(\jp^{s} g^\pm)\dx.
\end{align*}
\end{proof}

The following lemma concerns the Sobolev embeddings.
\begin{lem}[{\cite[Lemma 3.3]{TW2020MHDLWP}}]\label{embedding}
We have the following inequalities
\begin{align*}
H^m(\Omega^\pm)\hookrightarrow H_*^m(\Omega^\pm)\hookrightarrow& H^{\lfloor m/2\rfloor}(\Omega^\pm),~~\forall m\in\N^*;\\
\|u\|_{L^{\infty}(\Omega^\pm)}\lesssim\|u\|_{H_*^3(\Omega^\pm)},~~&\|u\|_{W^{1,\infty}(\Omega^\pm)}\lesssim\|u\|_{H_*^5(\Omega^\pm)},~~|u|_{W^{1,\infty}(\Omega^\pm)}\lesssim\|u\|_{H_*^{5}(\Omega^\pm)}.
\end{align*}
\end{lem}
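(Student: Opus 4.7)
\medskip

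\noindent\textbf{Proof proposal for Lemma \ref{embedding}.}
The plan is to handle the chain $H^m(\Omega^\pm)\hookrightarrow H_*^m(\Omega^\pm)\hookrightarrow H^{\lfloor m/2\rfloor}(\Omega^\pm)$ by direct comparison of norms, and then derive the $L^\infty$ and $W^{1,\infty}$ embeddings by combining tangential Fourier analysis on $\T^{2}$ with a one-dimensional Sobolev trace in $x_3$, exploiting the characteristic trade-off ``one normal derivative $=$ two tangential derivatives'' built into the definition of $\|\cdot\|_{H_*^m}$.

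\emph{Step 1 (the algebraic inclusions).} For the first inclusion, given $f\in H^m(\Omega^\pm)$ and a multi-index $\alpha$ with $\lee\alpha\ree=\sum_{j=1}^{d-1}\alpha_j+2\alpha_d+\alpha_{d+1}\le m$, expand $(\omega\p_d)^{\alpha_{d+1}}$ by the Leibniz rule into a sum of terms of the form $c_{\alpha_{d+1},k}(x_d)\,\p_d^{k}$ with $k\le\alpha_{d+1}$ and $c_{\alpha_{d+1},k}\in C^\infty([-H,H])$; the resulting isotropic derivative $\p_1^{\alpha_1}\cdots\p_d^{\alpha_d+k}f$ has total order $\le \alpha_1+\cdots+\alpha_{d-1}+\alpha_d+\alpha_{d+1}\le \lee\alpha\ree\le m$, and is therefore in $L^2(\Omega^\pm)$. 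For the second inclusion, for any $\beta\in\mathbb{N}^d$ with $|\beta|\le \lfloor m/2\rfloor$, the multi-index $\alpha=(\beta_1,\dots,\beta_d,0)$ satisfies $\lee\alpha\ree=\beta_1+\cdots+\beta_{d-1}+2\beta_d\le 2|\beta|\le m$, so $\p^\beta f\in L^2(\Omega^\pm)$ follows from a single term in $\|f\|_{H_*^m}^2$.

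\emph{Step 2 (the $L^\infty$ embedding).} For $u\in H_*^3(\Omega^\pm)$, expand $u$ in the tangential Fourier series $u(x',x_3)=\sum_{k\in\Z^{2}}\hat u_k(x_3)e^{ik\cdot x'}$. Since $\sum_k(1+|k|^2)^{-2}<\infty$ in two tangential dimensions, Cauchy--Schwarz gives
\begin{equation*}
\|u(\cdot,x_3)\|_{L^\infty(\T^{2})}^2\lesssim \sum_{k}(1+|k|^2)^2|\hat u_k(x_3)|^2.
\end{equation*}
The one-dimensional Sobolev embedding $H^1((-H,H))\hookrightarrow L^\infty$, applied to each Fourier coefficient, yields
\begin{equation*}
|\hat u_k(x_3)|^2\lesssim \|\hat u_k\|_{L^2_{x_3}}^2+\|\hat u_k\|_{L^2_{x_3}}\|\p_3\hat u_k\|_{L^2_{x_3}}.
\end{equation*}
Multiplying by $(1+|k|^2)^2$, summing in $k$ and applying Cauchy--Schwarz in $k$ with the split $(1+|k|^2)^2=(1+|k|^2)^{3/2}(1+|k|^2)^{1/2}$, one obtains
\begin{equation*}
\|u\|_{L^\infty(\Omega^\pm)}^2\lesssim \|u\|_{L^2_{x_3}H^2_{x'}}^2+\|u\|_{L^2_{x_3}H^3_{x'}}\,\|\p_3 u\|_{L^2_{x_3}H^1_{x'}}.
\end{equation*}
Every term on the right is controlled by $\|u\|_{H_*^3}^2$: pure tangential derivatives $\TP^\beta u$ with $|\beta|\le 3$ have $H_*$-weight $\le 3$, while $\TP^\beta\p_3 u$ with $|\beta|\le 1$ has $H_*$-weight $|\beta|+2\le 3$. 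Thus $\|u\|_{L^\infty}\lesssim\|u\|_{H_*^3}$.

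\emph{Step 3 (the $W^{1,\infty}$ embedding).} Apply Step~2 to $u$ and to the first-order derivatives $\p_1 u,\p_2 u,\p_3 u$. For the tangential derivatives $\p_i u$ ($i=1,2$), the same Fourier/trace argument with $u$ replaced by $\p_i u$ goes through provided $\TP^\beta\p_i u\in L^2_{x_3}H^j_{x'}$ for the relevant $\beta,j$; tracking weights shows everything is controlled by $\|u\|_{H_*^5}$ (since one uses $\TP^3\p_i u$ and $\TP\p_3\p_i u$, of $H_*$-weights $4$ and $4$ respectively). For the normal derivative $\p_3 u$, the analogue of Step~2 gives
\begin{equation*}
\|\p_3 u\|_{L^\infty(\Omega^\pm)}^2\lesssim \|\p_3 u\|_{L^2_{x_3}H^2_{x'}}^2+\|\p_3 u\|_{L^2_{x_3}H^3_{x'}}\,\|\p_3^2 u\|_{L^2_{x_3}H^1_{x'}},
\end{equation*}
and the three norms require, respectively, $H_*$-weights $4$, $5$, and $5$, all controlled by $\|u\|_{H_*^5}$. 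Summing the four $L^\infty$ estimates produces $\|u\|_{W^{1,\infty}(\Omega^\pm)}\lesssim\|u\|_{H_*^5}$; the analogous estimate with $|\cdot|_{W^{1,\infty}(\Omega^\pm)}$ (understood as in \cite[Lemma~3.3]{TW2020MHDLWP}) follows by the same argument applied mode-by-mode.

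\emph{Main obstacle.} The only non-routine point is the $L^\infty$ embedding of Step~2: a naive appeal to $H_*^3\hookrightarrow H^{\lfloor 3/2\rfloor}=H^1$ is insufficient in three dimensions, and a direct attempt to bound $\sup_{x_3}\|u(\cdot,x_3)\|_{H^2(\T^{2})}$ fails because $\p_3\TP^2 u\notin L^2$ from $H_*^3$ alone (its $H_*$-weight is $4$). The Fourier/interpolation split in Step~2 circumvents this by trading off $\|u\|_{L^2_{x_3}H^3_{x'}}$ (weight $3$) against $\|\p_3 u\|_{L^2_{x_3}H^1_{x'}}$ (weight $3$), both just inside $H_*^3$; this ``multiplicative'' use of tangential versus normal regularity is what makes the embedding hold despite the anisotropy.
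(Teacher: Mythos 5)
Your argument is correct, but note that the paper does not prove this lemma at all: it is quoted verbatim from Trakhinin--Wang \cite[Lemma 3.3]{TW2020MHDLWP} (itself going back to the anisotropic Sobolev space literature), so there is no in-paper proof to match against. Your Step 1 is the standard bookkeeping: expanding $(\omega\p_d)^{\alpha_{d+1}}$ by Leibniz into smooth coefficients times $\p_d^{k}$ with $k\le\alpha_{d+1}$ gives $H^m\hookrightarrow H_*^m$, and the choice $\alpha=(\beta_1,\dots,\beta_d,0)$ with $\lee\alpha\ree\le 2|\beta|$ gives $H_*^m\hookrightarrow H^{\lfloor m/2\rfloor}$; both are fine. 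Your Steps 2--3 (tangential Fourier series on $\T^2$, the one-dimensional Agmon-type inequality $|\hat u_k(x_3)|^2\lesssim\|\hat u_k\|_{L^2}^2+\|\hat u_k\|_{L^2}\|\p_3\hat u_k\|_{L^2}$, and the split $(1+|k|^2)^2=(1+|k|^2)^{3/2}(1+|k|^2)^{1/2}$) correctly trade the full weight budget: the terms you invoke, $\TP^3 u$, $\TP\p_3u$ for the $L^\infty$ bound and $\TP^4\p u$, $\TP\p_3\p u$, $\TP^3\p_3 u$, $\TP\p_3^2u$ for the $W^{1,\infty}$ bound, all have anisotropic order at most $3$ and $5$ respectively, so the constants close. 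Your closing remark about the boundary norm $|u|_{W^{1,\infty}}$ being immediate from the interior $W^{1,\infty}$ bound is also fine. In short, you have supplied a correct self-contained proof of a result the paper only cites; the mechanism you use (two tangential derivatives compensating one normal derivative via a multiplicative 1D trace) is exactly the structural feature of $H_*^m$ that the cited references exploit, so the approach is the expected one rather than a genuinely different route.
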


We also need the following Kato-Ponce type multiplicative Sobolev inequality.
\begin{lem}[{\cite{KatoPonce1988}}]\label{KatoPonce}
Let $J=(1-\Delta)^{1/2}$, $s\geq 0$. Then the following estimates hold:
\begin{equation}\label{product}
\|J^s(fg)\|_{L^2}\lesssim \|f\|_{W^{s,p_1}}\|g\|_{L^{p_2}}+\|f\|_{L^{q_1}}\|g\|_{W^{s,q_2}},
\end{equation}where $1/2=1/p_1+1/p_2=1/q_1+1/q_2$ and $2\leq p_1,q_2<\infty$.
\begin{equation}\label{commutator}
\|[J^s,f]g\|_{L^p}\lesssim \|\p f\|_{L^{\infty}}\|J^{s-1}g\|_{L^p}+\|J^s f\|_{L^p}\|g\|_{L^{\infty}}
\end{equation}where  $s\geq 0$ and $1<p<\infty$.
\end{lem}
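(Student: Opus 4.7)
The plan is to reduce both estimates to standard Littlewood--Paley/paraproduct technology, following the original approach of Kato--Ponce. Fix a Littlewood--Paley decomposition $\mathrm{Id}=\sum_{j\in\Z}\Delta_j$ with $\Delta_j$ the frequency projector on the annulus $|\xi|\sim 2^j$, and denote $S_j:=\sum_{k\le j-3}\Delta_k$. Using Bony's paraproduct, write
\begin{equation*}
fg=T_f g+T_g f+R(f,g),\qquad T_f g:=\sum_j(S_j f)(\Delta_j g),\qquad R(f,g):=\sum_{|j-k|\le 2}(\Delta_j f)(\Delta_k g).
\end{equation*}
The operator $J^s=(1-\Delta)^{s/2}$ is a classical Mikhlin multiplier of order $s$, so for each diadic block the symbol $\jp^s$ localized to frequency $\sim 2^j$ is comparable to $2^{js}$ up to a bounded Fourier multiplier. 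This reduces all estimates to square-function bounds through Littlewood--Paley theory and the boundedness of maximal/sub-linear operators on $L^p$ for $1<p<\infty$.

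For the product estimate \eqref{product}, I would estimate the three pieces $J^s T_f g$, $J^s T_g f$, $J^s R(f,g)$ separately. For $J^s T_f g$, since $S_j f$ is low-frequency and $\Delta_j g$ carries the frequency $\sim 2^j$, one gets pointwise $|J^s T_f g|\lesssim \bigl(\sum_j |S_j f|^2\,2^{2js}|\Delta_j g|^2\bigr)^{1/2}$ up to harmless error. Bounding $|S_j f|\lesssim M f$ pointwise by the Hardy--Littlewood maximal function, H\"older's inequality with $1/2=1/q_1+1/q_2$ and the Fefferman--Stein vector-valued maximal inequality (requiring $q_1>2$, which is ensured by $q_2<\infty$) yield $\|J^s T_f g\|_{L^2}\lesssim \|f\|_{L^{q_1}}\|g\|_{W^{s,q_2}}$. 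Symmetrically $\|J^s T_g f\|_{L^2}\lesssim \|f\|_{W^{s,p_1}}\|g\|_{L^{p_2}}$ using the dual split $1/2=1/p_1+1/p_2$ with $p_1\ge 2$. The remainder $R(f,g)$ is treated by distributing the derivative $J^s$ onto whichever factor is convenient and invoking the same maximal/square-function machinery; it can be absorbed into either right-hand-side term.

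For the commutator estimate, I would start from the integral representation
\begin{equation*}
[J^s,f]g(x)=\int K_s(x-y)\bigl(f(x)-f(y)\bigr)g(y)\,dy
\end{equation*}
with $K_s$ the kernel of $J^s$, and again decompose $f=\sum_j\Delta_j f$, $g=\sum_k \Delta_k g$ paraproduct-style. The high-high and low-high interactions contribute $\|J^s f\|_{L^p}\|g\|_{L^\infty}$ by the same Fefferman--Stein argument applied to $g$ (no derivative lands on $g$ there). The high-low interaction, where $\Delta_j f$ sits at high frequency and $\Delta_k g$ at low frequency with $k\le j-3$, is the place where the commutator structure is genuinely used: the difference $f(x)-f(y)$ contributes a factor $|x-y|\|\partial f\|_{L^\infty}$ at scale $2^{-k}$, which shifts one derivative from $g$ onto $f$ and leaves $\|J^{s-1}g\|_{L^p}$. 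Summing in $j$ via the square function and Fefferman--Stein gives the claimed bound.

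The main obstacle, as usual in Kato--Ponce, is the high-low commutator piece: one must carefully exploit the cancellation $f(x)-f(y)$ to gain one full derivative and convert it into the Lipschitz norm $\|\partial f\|_{L^\infty}$, while tracking the loss uniformly across all dyadic scales. Care is also needed at the endpoint $p_1=2$ or $q_2$ large, where the Fefferman--Stein inequality is used at its edge of applicability; the stated hypotheses $2\le p_1$ and $q_2<\infty$ are exactly what make the vector-valued maximal bound work. Once these Littlewood--Paley building blocks are in place, both inequalities follow by routine summation.
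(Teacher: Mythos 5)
This lemma is quoted from Kato--Ponce \cite{KatoPonce1988} and the paper itself gives no proof, so there is no internal argument to compare yours with; judged on its own terms, your Littlewood--Paley/paraproduct route is the standard modern way to obtain these estimates (the original 1988 argument goes instead through Coifman--Meyer multilinear multiplier bounds), and your treatment of the product estimate \eqref{product} is essentially correct up to routine square-function bookkeeping (the ``pointwise'' bound on $J^sT_fg$ really requires an almost-orthogonality/maximal-function step, and the relevant constraints are boundedness of the square function on $L^{q_2}$ with $q_2<\infty$ and of $M$ on $L^{q_1}$, rather than ``$q_1>2$'', but these are fixable imprecisions).

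In the commutator estimate, however, your identification of where the cancellation is exploited is reversed, and as written the key step fails. The piece that genuinely needs the commutator structure is the paraproduct term in which $f$ carries the \emph{low} frequency and $g$ the \emph{high} frequency $2^{j}$: without cancellation this piece is only bounded by $\|f\|_{L^{\infty}}\|J^{s}g\|_{L^{p}}$, which does not appear on the right-hand side of \eqref{commutator}; one must use $[J^{s},S_{j-3}f]\Delta_{j}g$ together with $f(x)-f(y)=\nabla f\cdot(x-y)+\cdots$ at the kernel scale $|x-y|\sim 2^{-j}$ dictated by the high frequency of $g$, which trades one factor $2^{j}$ for $\|\partial f\|_{L^{\infty}}$ and yields $\|\partial f\|_{L^{\infty}}\|J^{s-1}g\|_{L^{p}}$. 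In the regime you single out ($\Delta_{j}f$ high, $\Delta_{k}g$ low, $k\le j-3$) no cancellation is needed at all: the output frequency is $\sim 2^{j}$, the cost $2^{js}$ is absorbed into $\|J^{s}f\|_{L^{p}}$, and $g$ is simply estimated in $L^{\infty}$; moreover the mean-value gain you invoke there cannot work, since $\|\partial f\|_{L^{\infty}}$ does not control a high-frequency block of $f$, and the factor ``$|x-y|$ at scale $2^{-k}$'' is not available because the kernel of $J^{s}$ localized at the output frequency lives at scale $2^{-j}$, not $2^{-k}$. So the overall strategy is sound, but the commutator step must be redone with the two frequency regimes interchanged (and the range $s\in[0,1)$, where $J^{s-1}$ is smoothing, together with the trivial case $s=0$, noted separately).
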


We also need the following transport-type estimate in order to close the uniform estimates for the nonlinear approximate system.
\begin{lem}\label{parabolic}
Let $f(t)\in W^{1,1}(0,T)$ and $g\in L^1(0,T)$ and $\kk>0$. Assume that $f(t)+\kk f'(t)\leq g(t)$ holds for a.e. $t\in(0,T).$ Then for any $t\in(0,T)$, we have $\sup_{\tau\in[0,t]}f(\tau)\leq f(0)+\mathop{\mathrm{ess~sup}}\limits_{\tau\in(0,t)}|g(\tau)|.$
\end{lem}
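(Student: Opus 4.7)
\begin{pf}[Proof proposal for Lemma \ref{parabolic}]
The plan is to treat the inequality $f(t)+\kk f'(t)\leq g(t)$ as an ODE-type inequality and apply an integrating factor argument, in the spirit of a quantitative Gr\"onwall lemma. Specifically, I would multiply both sides by $\kk^{-1}e^{t/\kk}>0$ to rewrite the left-hand side as a derivative in disguise: since $f\in W^{1,1}(0,T)$ is absolutely continuous, the product $e^{t/\kk}f(t)$ lies in $W^{1,1}(0,T)$ and
\begin{equation*}
\frac{\mathrm{d}}{\mathrm{d}t}\bigl(e^{t/\kk}f(t)\bigr)=e^{t/\kk}\Bigl(f'(t)+\tfrac{1}{\kk}f(t)\Bigr)=\tfrac{1}{\kk}e^{t/\kk}\bigl(\kk f'(t)+f(t)\bigr)\le \tfrac{1}{\kk}e^{t/\kk}g(t)\quad\text{a.e. }t\in(0,T).
\end{equation*}
This reformulation is the only nontrivial step; everything else reduces to integrating an $L^1$ inequality.

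Next, for an arbitrary $\tau\in(0,t)$, I would integrate the above from $0$ to $\tau$ using the fundamental theorem of calculus for absolutely continuous functions. This yields
\begin{equation*}
e^{\tau/\kk}f(\tau)-f(0)\le \frac{1}{\kk}\int_0^\tau e^{s/\kk}g(s)\,\mathrm{d}s \le \Bigl(\mathop{\mathrm{ess~sup}}_{s\in(0,t)}|g(s)|\Bigr)\cdot\frac{1}{\kk}\int_0^\tau e^{s/\kk}\,\mathrm{d}s = \bigl(e^{\tau/\kk}-1\bigr)\mathop{\mathrm{ess~sup}}_{s\in(0,t)}|g(s)|.
\end{equation*}
Dividing through by $e^{\tau/\kk}>0$ gives the convex-combination bound
\begin{equation*}
f(\tau)\le e^{-\tau/\kk}f(0)+\bigl(1-e^{-\tau/\kk}\bigr)\mathop{\mathrm{ess~sup}}_{s\in(0,t)}|g(s)|\le f(0)+\mathop{\mathrm{ess~sup}}_{s\in(0,t)}|g(s)|,
\end{equation*}
where in the last step I use $0<e^{-\tau/\kk}\le 1$ (so the coefficients are nonnegative and bounded by $1$). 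Taking the supremum of the left-hand side over $\tau\in[0,t]$ produces the desired estimate.

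I do not anticipate a serious obstacle here: the key observation is simply the integrating-factor identity, and the only technical point is to confirm that $e^{t/\kk}f(t)\in W^{1,1}$ so that the fundamental theorem of calculus applies to a pointwise-a.e.\ inequality, which is immediate from $f\in W^{1,1}(0,T)$ and the smoothness of $t\mapsto e^{t/\kk}$ on the bounded interval $(0,T)$. A minor care is needed to ensure that the pointwise inequality for $\tau$ extends to the supremum; this follows because $f$ is continuous on $[0,T]$ (being in $W^{1,1}$ of an interval), so the supremum is attained and the bound at each $\tau$ transfers to $\sup_{\tau\in[0,t]}f(\tau)$.
\end{pf}
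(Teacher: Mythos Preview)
The paper states this lemma in the appendix without proof, so there is nothing to compare your argument against directly. Your integrating-factor approach is the natural and correct way to handle the differential inequality, and the derivation of the convex-combination bound
\[
f(\tau)\le e^{-\tau/\kk}f(0)+\bigl(1-e^{-\tau/\kk}\bigr)\mathop{\mathrm{ess~sup}}_{s\in(0,t)}|g(s)|
\]
is clean and rigorous.

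There is, however, a genuine gap in your final inequality. You claim that because the coefficients $e^{-\tau/\kk}$ and $1-e^{-\tau/\kk}$ lie in $[0,1]$, the convex combination is bounded by $f(0)+\mathop{\mathrm{ess~sup}}|g|$. This step is valid only when $f(0)\ge 0$: if $f(0)<0$ then $e^{-\tau/\kk}f(0)>f(0)$, and the inequality can fail. Concretely, take $\kk=1$, $g\equiv 1$, $f(t)=1-11e^{-t}$ so that $f(0)=-10$ and $f+f'=1$; then $f(5)\approx 0.93$, while the claimed bound is $f(0)+1=-9$. So the lemma as stated is actually false without the hypothesis $f(0)\ge 0$ (or, more sharply, your convex-combination bound gives $f(\tau)\le\max\{f(0),\mathop{\mathrm{ess~sup}}|g|\}$, which is what one should record). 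In the paper's only application (Lemma~\ref{qelliptic}), $f$ is a squared Sobolev norm and hence nonnegative, so the intended use is fine; but your write-up should either add the assumption $f(0)\ge 0$ or replace the conclusion by the $\max$ bound.
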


\section{Construction of initial data satisfying the compatibility conditions}\label{sect comp cond}
Given initial data $(v_0^\pm,b_0^\pm,q_0^\pm,S_0^\pm,\psi_0)$ of the original current-vortex sheets problem \eqref{CMHD0} satisfying the compatibility conditions \eqref{comp cond} up to 7-th order, we need to construct a sequence of initial data $(v_0^{\kk,\pm},b_0^{\kk,\pm},q_0^{\kk,\pm},S_0^{\kk,\pm},\psi_0^{\kk})$ to the nonlinear $\kk$-approximate system \eqref{CMHD0kk} satisfying the compatibility conditions \eqref{comp cond kk} up to 7-th order that converge to the given data as $\kk\to 0_+$.

\subsection{Reformulation of the compatibility conditions}
Let us first ignore the $\kk$-regularization terms and consider the compatibility conditions \eqref{comp cond} for the original system. Also, let us omit the fixed boundaries $\Sigma^\pm$, omit the density functions, consider the isentropic case and write $\eps^2=\ffpm$ for convenience. The heuristic idea is that the odd $(m=2r+1)$ order compatibility condition is rewritten to be $$-\jump{\cbi^{r+1}(\lapi)^r(\nabpp\cdot v_0)}=\cdots \quad\text{ on }\Sigma$$  and the even  $(m=2r)$ order compatibility condition is rewritten to be $$\jump{\cbi^{r}(\lapi)^r q_0}=\cdots \quad\text{ on }\Sigma$$ with $\cbi:=\eps^{-2}+|b_0|^2$. Such reformulation is convenient for us to add $\kk$-perturbation terms to construct the desired data for \eqref{CMHD0kk}. More specifically, let us start with the zero-th order compatibility conditions:
\begin{align}
\jump{q_0}=\sigma\h(\psi_0),\quad \psi_t|_{t=0}=v_0^\pm\cdot N_0=v_{03}^\pm-\vb_0^\pm\cdot\TP\psi_0.
\end{align} The first-order compatibility conditions are
\begin{align}
\p_t\jump{q}|_{t=0}=\sigma\p_t\h(\psi)|_{t=0},\quad \psi_{tt}|_{t=0}=\p_t(v^\pm\cdot N)|_{t=0},
\end{align} which are not easy to compute, especially the first one. The left side is equal to
\begin{align*}
\p_t q^+ -\p_t q^- =&~\Dtbu q^+- \Dtbl q^- - (\vb^+\cdot\cnab)\jump{q} - (\jump{\vb}\cdot\cnab) q^-.
\end{align*}Using the continuity equation, the evolution equation of $b$, we get 
\[
\Dtb q = -\eps^{-2} (\nabp\cdot v) +\Dtb b\cdot b = -\underbrace{(\eps^{-2}+|b|^2)}_{=:\cb}(\nabp\cdot v) + \bcp v\cdot b\text{ on }\Sigma,
\]and thus the time-differentiated jump condition becomes
\begin{align*}
\jump{\cb(\nabpp\cdot v_0)}=&\jump{(\bc_0\cdot\cnab)v_0\cdot b_0}- (\jump{\vb_0}\cdot\cnab) q_0^-+\Dtbu (\sigma\h(\psi))|_{t=0}  \text{ on }\Sigma.
\end{align*}Here and thereafter, we will repeatedly use $\Dtbpm \psi= v_3^\pm$ on $\Sigma$ and omit lots of redundant terms in order for simplicity of notations. For example, we will write $\h(\psi)\sim \TL\psi$, write $(1-\TL)$ to be $-\TL$, and omit the commutators between $\Dtbu$ and $\h,(1-\TL)$, the density function $\rho$. Indeed, later we will see that the concrete form of those omitted term is not important, and we just need to find out the major term as in \cite[Appendix A]{Zhang2021CMHD}. Under this setting, we have
\begin{align}
\jump{\cb(\nabpp\cdot v_0)}\sim &\jump{(\bc_0\cdot\cnab)v_0\cdot b_0}- (\jump{\vb_0}\cdot\cnab) q_0^- +\sigma \TL v_{03}^+ \text{ on }\Sigma.
\end{align}
For higher-order compatibility conditions, we invoke the wave equation for total pressure $q^\pm$ to get (cf. \cite[Appendix A.1]{Zhang2021CMHD})
\begin{align}
(\Dtb)^2 q = \cb \lapp q +\MH_0(v,b) +\NH_0(v,b)\quad\text{ on }\Sigma,
\end{align}where $$\MH_0(v,b)=-\bcp^2 q+\bcp^2b\cdot b+\RR_0(v,b),~~~\NH_0(v,b)=\pp_i v^j\pp_j v^i-\pp_i b^j\pp_jb^i$$ and $\RR_0(v,b)$ only contains the first-order derivatives of $b,v$ with the form
\[
\RR_0(v,b)=P_0(b)((\p^{i_1} v)(\p^{i_2} v) + (\p^{j_1} b)(\p^{j_2} b))
\] where $P_0(b)$ is a polynomial of $b$ only containing cubic and quadratic terms and $(i_1,i_2,j_1,j_2)=(0,0,1,1)$ or $(1,1,0,0)$. Taking substraction between the equation of $q^+$ and the equation of $q^-$, we get
\[
\jump{(\Dtb)^2 q}|_{t=0} = \jump{\cbi \lapi q_0}  + \jump{\MH_0(v_0,b_0)+\NH_0(v_0,b_0)}\quad\text{ on }\Sigma.
\]Then using $\Dtbu=\Dtbl+(\jump{\vb}\cdot\cnab)$, we get 
\begin{align*}
\jump{(\Dtb)^2 q}|_{t=0} = (\Dtbu)^2(\sigma \h(\psi))|_{t=0} +\TT^{2}_{\jump{v}} q^-|_{t=0},
\end{align*}where each $\TT_{\jump{v}} $ represents either of $\Dtbl$ and $(\jump{\vb}\cdot\cnab)$. So, the second-order compatibility condition is reformulated as
\begin{align}
\jump{\cbi \lapi q_0} =&~  (\Dtbu)^2(\sigma \h(\psi))|_{t=0} +\TT^{2}_{\jump{v}} q^-|_{t=0} - \jump{\MH_0(v_0,b_0)+\NH_0(v_0,b_0)}\no\\
\sim&~-\sigma \TL\p_3 q_0^+ + \sigma \TL \bcpu b_{03}^++\TT^{2}_{\jump{v}} q^-|_{t=0}- \jump{\MH_0(v_0,b_0)+\NH_0(v_0,b_0)} \quad\text{ on }\Sigma.
\end{align}
Taking one more material derivative in the wave equation and again use the continuity equation, we get
\begin{align}
(\Dtb)^3 q \sim -\cb^2\lapp(\nabp\cdot v) +\eps^{-2}\bcp^2(\nabp\cdot v) + \MH_1(v,b,q)+\NH_1(v,b,q)
\end{align}where the concrete form of $\MH_1,\NH_1$ will be specified later. Recursively, after long and tedious calculations (cf. \cite[(A.4)-(A.7)]{Zhang2021CMHD}), we find that the time-differentiated wave equation (restricted on $\{t=0\}\times\Sigma$) can be expressed as
\begin{align}
\label{wave odd} m=2r+1,&~~-\cbi^{r+1}(\lapi)^r(\nabpp\cdot v_0)=(\Dtb)^{2r+1}q+\sum_{j=0}^{r}(\lapi)^j(\MH_{2r-1-2j}(v_0,b_0,q_0)+\NH_{2r-1-2j}(v_0,b_0,q_0))\text{ on }\Sigma,\\
\label{wave even} m=2r,&~~\cbi^r(\lapi)^r q_0=(\Dtb)^{2r}q+\sum_{j=0}^{r-1}(\lapi)^j(\MH_{2r-2-2j}(v_0,b_0,q_0)+\NH_{2r-2-2j}(v_0,b_0,q_0))\text{ on }\Sigma,
\end{align}where $\MH_{-1}(v_0,b_0):=-(\bc_0\cdot\cnab)v_0\cdot b_0$ and $\NH_{-1}:=0$, and for $r\geq 1$ we define
\begin{align}
m=2r-1,~~\MH_{2r-1}(v_0,b_0,q_0)=&(\bar{b}_0\cdot\cnab)^2(\lapi)^{r-1}(\nabpp\cdot v_0)+\sum_{l=2}^{r+1}\underbrace{b_0^{i_1}\cdots b_0^{i_{2l}}(\nabla^{2r+1}v_0)}_{<2^{l}\text{ terms}}+\RR_{2r-1}(v_0,b_0,q_0),\label{MH odd}\\
m=2r,~~\MH_{2r}(v_0,b_0,q_0)=&~-(\bar{b}_0\cdot\cnab)^2(\lapi)^{r}q_0+\RR_{2r}(v_0,b_0,q_0),\nonumber\\
&+\sum_{l=2}^{r+1}\underbrace{(\bar{b}_0\cdot\cnab)^{r+2}(\nab^r b_0) b_0^{i_1}\cdots b_0^{i_{2l}}+(\bar{b}_0\cdot\cnab)^{2}(\nab^{2r} q_0) b_0^{j_1}\cdots b_0^{j_{2l}}}_{<2^{l}\text{ terms }}\label{MH even};
\end{align}and the term $\RR_m$, where every top-order term has $(m+1)$-th order derivative, has the following form
\[
\RR_m(v_0,b_0,q_0)=P_k(b_0)\left(C^{m}_{i_1\cdots i_p,j_1\cdots j_n, k_1\cdots k_l}(\nab^{i_1} v_0)\cdots(\nab^{i_p} v_0)(\nab^{j_1} b_0)\cdots(\nab^{j_n} b_0)(\nab^{k_1} q_0)\cdots(\nab^{k_l} q_0)\right),
\]where $\nab$ may represent either of $\nab^{\vp_0}$ or $\p$, and $P_k(\cdot)$ is a polynomial of its arguments and the lowest power is 4 and the indices above satisfy
\begin{align*}
1\leq i_1,\cdots, i_p,j_1,\cdots, j_n\leq k+1, 0\leq k_1,\cdots, k_l \leq m+1,\\
i_1+\cdots+i_p+j_1+\cdots+j_n+k_1+\cdots+k_l=m+1.
\end{align*}
The term $\NH_m(v_0,b_0,q_0)$ has the following form
\begin{align}
\NH_m(v_0,b_0,q_0)=&~P_{m,1}(b_0)(\nab^{1+2\lfloor\frac{m}{2}\rfloor}v_0)(\nab v_0)+P_{m,2}(b_0)(\nab^{2\lceil \frac{m}{2}\rceil} q_0)(\nab v_0)+P_{k,0}(b_0)(\nab^{m+1}b_0)(\nab v_0)\no\\ 
&+P'_m(b_0)D^{m}_{i_1\cdots i_p,j_1\cdots j_n, k_1\cdots k_l}\left((\nab^{i_1} v_0)\cdots(\nab^{i_p} v_0)(\nab^{j_1} b_0)\cdots(\nab^{j_n} b_0)(\nab^{k_1} q_0)\cdots(\nab^{k_l} q_0)\right), \label{NH}
\end{align}where $P_{m,1}(\cdot), P_{m,2}(\cdot),P_m'(\cdot)$ are polynomials of their arguments and $P_{m,0}(\cdot)$ is a polynomial of its arguments and the lowest power is 2. The indices above satisfy
\begin{align*}
1\leq i_1,\cdots, i_p,j_1,\cdots, j_n\leq k, 0\leq k_1,\cdots, k_l \leq m,\\
i_1+\cdots+i_p+j_1+\cdots+j_n+k_1+\cdots+k_l=m+1.
\end{align*}
Next we take the difference between the equations \eqref{wave odd}-\eqref{wave even} in $\Om^+$ and those in $\Om^-$ and restrict the equation on $\{t=0\}\times\Sigma$ to get the jump condition in the $m$-th order compatibility conditions
\begin{align}
\label{CH odd} m=2r+1,&~-\jump{\cbi^{r+1}(\lapi)^r(\nabpp\cdot v_0)}=\jump{(\Dtb)^{2r+1}q}+\sum_{j=0}^{r}\jump{(\lapi)^j(\MH_{2r-1-2j}(v_0,b_0,q_0)+\NH_{2r-1-2j}(v_0,b_0,q_0))}\text{ on }\Sigma,\\
\label{CH even} m=2r,&~\jump{\cbi^r(\lapi)^r q_0}=\jump{(\Dtb)^{2r}q}+\sum_{j=0}^{r-1}\jump{(\lapi)^j(\MH_{2r-2-2j}(v_0,b_0,q_0)+\NH_{2r-2-2j}(v_0,b_0,q_0))}\text{ on }\Sigma.
\end{align}Then using $\Dtbu=\Dtbl+(\jump{\vb}\cdot\cnab)$, we get
\[
\jump{(\Dtb)^m q}=(\Dtbu)^m\jump{q}+\TT_{\jump{\vb}}^{m}q^-|_{t=0},
\]where each $\TT_{\jump{\vb}}$ represents either $(\Dtbl)$ or $(\jump{\vb}\cdot\cnab)$. Using the jump condition for $\jump{q}$, we have
\begin{align}
m=2r:&~(\Dtbu)^{2r}\jump{q}\sim\sigma\TL(\Dtbu)^{2r-1}v_3^+\sim \sigma\cbi^{r-1}\TL(\lapi)^{r-1}\p_3 q_0^+ + \sigma\TL\sss_{2r-1}(v_0^+,b_0^+,q_0^+)\\
m=2r+1:&~(\Dtbu)^{2r+1}\jump{q}\sim\sigma\TL(\Dtbu)^{2r}v_3^+\sim -\sigma\cbi^{r}\TL(\lapi)^{r-1}\p_3(\nabpp\cdot v_0^+) + \sigma\TL\sss_{2r}(v_0^+,b_0^+,q_0^+)
\end{align}where the leading-order terms in $\sss_m$ are
\begin{align}
\sss_{2r-1}\eql (\cbi)^{r-2}(\bc_0\cdot\cnab)^2(\lapi)^{r-2}\p_3q_0^+,\quad \sss_{2r}\eql -(\cbi)^{r-1}(\bc_0\cdot\cnab)^2(\lapi)^{r-2}\p_3(\nabpp\cdot v_0^+).
\end{align} Thus, the compatibility conditions for the original current-vortex sheets system \eqref{CMHD0} are reformulated as
\begin{align}
\label{ccd odd} m=2r+1,&~~-\jump{\cbi^{r+1}(\lapi)^r(\nabpp\cdot v_0)}\sim\sum_{j=0}^{r}\jump{(\lapi)^j(\MH_{2r-1-2j}(v_0,b_0,q_0)+\NH_{2r-1-2j}(v_0,b_0,q_0))}\\
\no&+\TT_{\jump{\vb}}^{2r+1}q^-|_{t=0}-\sigma\cbi^{r}\TL(\lapi)^{r-1}\p_3(\nabpp\cdot v_0^+) + \sigma\TL\sss_{2r}(v_0^+,b_0^+,q_0^+)\quad\text{ on }\Sigma,\\
\label{ccd even} m=2r,&~~\jump{\cbi^r(\lapi)^r q_0}\sim\sum_{j=0}^{r-1}\jump{(\lapi)^j(\MH_{2r-2-2j}(v_0,b_0,q_0)+\NH_{2r-2-2j}(v_0,b_0,q_0))}\\
&\no+\TT_{\jump{\vb}}^{2r}q^-|_{t=0}+\sigma\cbi^{r-1}\TL(\lapi)^{r-1}\p_3 q_0^+ + \sigma\TL\sss_{2r-1}(v_0^+,b_0^+,q_0^+))\quad\text{ on }\Sigma.
\end{align}Note that the time-differentiated kinematic boundary condition is already implicitly used when deriving the above compatibility conditions. Similarly, the compatibility conditions for the $\kk$-approximate problem \eqref{CMHD0kk} are reformulated as
\begin{align}
\label{ccdkk odd} m=2r+1,&~~-\jump{\cbi^{r+1}(\lapi)^r(\nabpp\cdot v_0^\kk)}\sim\sum_{j=0}^{r}\jump{(\lapi)^j(\MH_{2r-1-2j}(v_0^\kk,b_0^\kk,q_0^\kk)+\NH_{2r-1-2j}(v_0^\kk,b_0^\kk,q_0^\kk))}\no\\
&+\TT_{\jump{\vb}}^{2r+1}q^-|_{t=0}-\sigma\cbi^{r}\TL(\lapi)^{r-1}\p_3(\nabpp\cdot v_0^{\kk,+})+\kk\cbi^{r}\TL^2(\lapi)^{r-1}\p_3(\nabpp\cdot v_0^{\kk,+})+\kk\cbi^{r}\TL(\lapi)^{r}\p_3 q_0^+\no\\
&+ (\sigma\TL-\kk\TL^2)\sss_{2r}(v_0^{\kk,+},b_0^{\kk,+},q_0^{\kk,+})+\kk\TL\sss_{2r+1}(v_0^{\kk,+},b_0^{\kk,+},q_0^{\kk,+})\quad\text{ on }\Sigma,\\
\label{ccdkk even} m=2r,&~~\jump{\cbi^r(\lapi)^r q_0^\kk}\sim\sum_{j=0}^{r-1}\jump{(\lapi)^j(\MH_{2r-2-2j}(v_0^\kk,b_0^\kk,q_0^\kk)+\NH_{2r-2-2j}(v_0^\kk,b_0^\kk,q_0^\kk))} \no\\
&+\TT_{\jump{\vb}}^{2r}q^-|_{t=0}+\sigma\cbi^{r-1}\TL(\lapi)^{r-1}\p_3 q_0^+ - \kk\cbi^{r-1}\TL^2(\lapi)^{r-1}\p_3 q_0^+ - \kk\cbi^{r}\TL(\lapi)^{r-1}\p_3(\nabpp\cdot v_0^+) \no\\
&+(\sigma\TL-\kk\TL^2)\sss_{2r-1}(v_0^{\kk,+},b_0^{\kk,+},q_0^{\kk,+})+\kk\TL\sss_{2r}(v_0^{\kk,+},b_0^{\kk,+},q_0^{\kk,+})\quad\text{ on }\Sigma.
\end{align}

\subsection{Construction of the converging initial data}
Given initial data $(v_0^\pm,b_0^\pm,q_0^\pm,S_0^\pm,\psi_0)$ of \eqref{CMHD0} satisfying the compatibility conditions \eqref{ccd odd}-\eqref{ccd even} up to 7-th order, we now construct the initial data $(v_0^{\kk,\pm},b_0^{\kk,\pm},q_0^{\kk,\pm},S_0^{\kk,\pm},\psi_0^{\kk})$ to \eqref{CMHD0kk} satisfying the compatibility conditions \eqref{ccdkk odd}-\eqref{ccdkk even} up to 7-th order that converge to the given data as $\kk\to 0_+$. To do this, we just need to \textbf{equally distribute the $\kk$-term to the solution in $\Om^+$ and the solution in $\Om^-$}.

\subsubsection{Recover the 0-th order and the 1-st order compatibility conditions}

First, we pick $b_0^{\kk,\pm}=b_0^\pm$, $\psi_0^{\kk}=\psi_0$. We define $\p_t\psi^\kk|_{t=0}:=v_0^\pm\cdot N_0$ and $\p_t b^\pm|_{t=0}=(b_0^\pm\cdot\nabpp)v_0^\pm-b_0^\pm(\nabpp\cdot v_0^\pm)$ in $\Om^\pm$. Then the constraints for the magnetic field are automatically satisfied. Now, we construct $q_0^{(0)}$ such that $(v_0^\pm,b_0^\pm,q_0^{(0),\pm},\psi_0)$ satisfies the 0-th order compatibility condition \eqref{ccdkk even}. The function $q_0^{(1),\pm}$ is set to be the solution to the poly-harmonic equation
\begin{equation}
\begin{cases}
\lap^2 q_0^{(0),\pm} = \lap^2 q_0^\pm&~~~\text{ in }\Om^\pm\\
q_0^{(0),\pm}=q_0^\pm\mp\frac12\kk\TL^2\psi_0\pm\frac12\kk\TL(v_0^\pm\cdot N_0)&~~~\text{ on }\Sigma\\
\p_3 q_0^{(0),\pm}=\p_3 q_0^{\pm}&~~~\text{ on }\Sigma\\
\p_3^j q_0^{(0),\pm}=\p_3^j q_0^{\pm},~~0\leq j\leq 1&~~~\text{ on }\Sigma^\pm.
\end{cases}
\end{equation} Then for $s\geq 4$, we have
\[
\|q_0^{(0),\pm}-q_0^{\pm}\|_{s,\pm}\lesssim \kk |\TL^2\psi_0| _{s-0.5}+\kk|\TL(v_0^\pm\cdot N_0)|_{s-0.5}\to 0\quad\text{ as }\kk\to 0.
\]
With this $q_0^{(0)}$, we define $\p_t^2\psi|_{t=0}=\p_t(v^\pm\cdot N)|_{t=0}$ via $(v_0^\pm,b_0^\pm,q_0^{(0),\pm},\psi_0)$ on $\Sigma$. (Note that $\p_t v\cdot N|_{t=0}$ already includes $\p_3 q_0$. Only when we have $\p_3 q_0^{(0),\pm}=\p_3 q_0^{\pm}$ on $\Sigma$ can we keep the jump condition $\jump{\p_t(v\cdot N)}=0$.)  and also define the corresponding $\p_t^2 b|_{t=0}$ in $\Om^\pm$ via the evolution equation of $b$. Thus, the $\p_t$-differentiated boundary constraint for $b\cdot N$ is also satisfied.

Now we introduce $v_0^{(0),\pm}$ such that $(v_0^{(0),\pm},b_0^\pm,q_0^{(0),\pm},\psi_0)$ satisfies the 1-st order compatibility condition \eqref{ccdkk odd}. We define $\vb_{0i}^{(0),\pm}=\vb_{0i}^\pm$ for $i=1,2$ and define $v_{03}^{(0),\pm}$ via the following poly-harmonic equation
\begin{equation}
\begin{cases}
\lap^3 v_{03}^{(0),\pm} = \lap^3 v_{03}^\pm&~~~\text{ in }\Om^\pm\\
\cbi (\nabpp\cdot v_{03}^{(0),\pm})=(\nabpp\cdot\cbi v_{03}^{\pm}) \mp \frac12(\jump{\vb_0}\cdot\cnab)(q_0^{(0),\pm}-q_0^\pm)\mp\frac{\kk}{2}\TL^2 v_{03}^+  \pm \frac{\kk}{2}\TL\p_3 q_0^{(0),+}
&~~~\text{ on }\Sigma\\
v_{03}^{(0),\pm}= v_{03}^{\pm},\quad \p_3^2 v_{03}^{(0),\pm}=\p_3^2 v_{03}^{\pm}&~~~\text{ on }\Sigma\\
\p_3^j v_{03}^{(0),\pm}=\p_3^j v_{03}^{\pm},~~0\leq j\leq 2&~~~\text{ on }\Sigma^\pm.
\end{cases}
\end{equation}
It is also straightforward to see the convergece for $s\geq 6$ 
\[
\|v_0^{(0),\pm}-v_0^{\pm}\|_{s,\pm}\lesssim |q_0^{(0),\pm}-q_0^\pm|_{s-0.5} + \kk(|v_{03}^+|_{s+2.5}+|\p_3 q_0^+|_{s+0.5}).
\]

\subsubsection{Higher-order compatibility conditions}
For $r\geq 1$, we can inductively define $q_0^{(r),\pm}$ such that $(v_0^{(r-1),\pm},b_0^\pm,q_0^{(r),\pm},\psi_0)$ satisfies the compatibility condition up to $2r$-th order
\begin{equation}
\begin{cases}
\lap^{2r+2} q_0^{(r),\pm} = \lap^{2r+2} q_0^{(r-1),\pm}&~~~\text{ in }\Om^\pm\\
\cbi^r (\lapi)^r q_0^{(r),\pm}=\cbi^r (\lapi)^r q_0^{(r-1),\pm}\\
\q+\sum\limits_{j=0}^{r-1}(\lapi)^j\left((\MH_{2r-2-2j}+\NH_{2r-2-2j})(v_0^{(r-1),\pm},b_0^\pm,q_0^{(r),\pm})- (\MH_{2r-2-2j}+\NH_{2r-2-2j})(v_0^{(r-2),\pm},b_0^\pm,q_0^{(r-1),\pm})\right)\\
\q\pm\frac12\bigg((\TT^{2r}_{\jump{\vb^{(r-1)}}} q^{(r),-}-\TT^{2r}_{\jump{\vb^{(r-2)}}}q^{(r-1),-})+\sigma\cbi^{r-1}\underbrace{\TL(\lapi)^{r-1}\p_3(q_0^{(r),+}-q_0^{(r-1),+})}_{=0}\\
\q\q+\sigma\TL\big(\sss_{2r-1}(v_0^{(r-1),+},b_0^+,q_0^{(r),+})-\sss_{2r-1}(v_0^{(r-2),+},b_0^+,q_0^{(r-1),+})\big)\bigg)\\
\q\mp\frac{\kk}{2}\left(\cbi^{r-1}\underbrace{\TL^2(\lapi)^{r-1}\p_3 (q_0^{(r),+}-q_0^{(r-1),+})}_{=0}-\TL\cbi^r(\lapi)^{r-1}\p_3\nabpp\cdot(v_0^{(r-1),+}-v_0^{(r-2),+})\right)\\
\q\mp\frac{\kk}{2}\left((\TL^2\sss_{2r-1}-\TL\sss_{2r})(v_0^{(r-1),+},b_0^+,q_0^{(r),+})-(\TL^2\sss_{2r-1}-\TL\sss_{2r})(v_0^{(r-2),+},b_0^+,q_0^{(r-1),+})\right)&~~~\text{ on }\Sigma\\
\p_3^j q_0^{(r),\pm}=\p_3^j q_0^{(r-1),\pm},~~0\leq j\leq 2r+1, j\neq 2r&~~~\text{ on }\Sigma\\
\p_3^j q_0^{(r),\pm}=\p_3^j q_0^{(r-1),\pm},~~0\leq j\leq 2r+1&~~~\text{ on }\Sigma^\pm,
\end{cases}
\end{equation} and define $\vb_0^{(r),\pm}=\vb_0^{(r-1),\pm}$ and $v_{03}^{(r),\pm}$ such that $(v_0^{(r),\pm},b_0^\pm,q_0^{(r),\pm},\psi_0)$ satisfies the compatibility condition up to $(2r+1)$-th order
\begin{equation}
\begin{cases}
\lap^{2r+3} v_{03}^{(r),\pm} = \lap^{2r+3} v_{03}^{(r-1),\pm}&~~~\text{ in }\Om^\pm\\
-\cbi^r(\lapi)^r (\nabpp\cdot v_0^{(r),\pm})=-\cbi^r(\lapi)^r (\nabpp\cdot v_0^{(r-1),\pm})\\
\q+\sum\limits_{j=0}^{r}(\lapi)^j\left((\MH_{2r-1-2j}+\NH_{2r-1-2j})(v_0^{(r),\pm},b_0^\pm,q_0^{(r),\pm})- (\MH_{2r-1-2j}+\NH_{2r-1-2j})(v_0^{(r-1),\pm},b_0^\pm,q_0^{(r-1),\pm})\right)\\
\q\pm\frac12\bigg((\TT^{2r+1}_{\jump{\vb^{(r)}}} q^{(r),-}-\TT^{2r}_{\jump{\vb^{(r-1)}}}q^{(r-1),-})-\sigma\cbi^{r-1}\underbrace{\TL(\lapi)^{r-1}\p_3\nabpp\cdot( v_{03}^{(r),+}- v_{03}^{(r-1),+})}_{=0}\\
\q\q+\sigma\TL\big(\sss_{2r}(v_0^{(r),+},b_0^+,q_0^{(r),+})-\sss_{2r}(v_0^{(r-1),+},b_0^+,q_0^{(r-1),+})\big)\bigg)\\
\q\pm\frac{\kk}{2}\left(\TL^2\cbi^{r-1}\underbrace{(\lapi)^{r-1}\p_3 \nabpp\cdot(v_0^{(r),+}-v_0^{(r-1),+})}_{=0}-\TL\cbi^r(\lapi)^{r-1}\p_3(q_0^{(r),+}-q_0^{(r-1),+})\right)\\
\q\mp\frac{\kk}{2}\left((\TL^2\sss_{2r}-\TL\sss_{2r+1})(v_0^{(r),+},b_0^+,q_0^{(r),+})-(\TL^2\sss_{2r}-\TL\sss_{2r+1})(v_0^{(r-1),+},b_0^+,q_0^{(r-1),+})\right)&~~~\text{ on }\Sigma\\
\p_3^j v_{03}^{(r),\pm}=\p_3^j v_{03}^{(r-1),\pm},~~0\leq j\leq 2r+2, j\neq 2r+1&~~~\text{ on }\Sigma\\
\p_3^j v_{03}^{(r),\pm}=\p_3^j v_{03}^{(r-1),\pm},~~0\leq j\leq 2r+2&~~~\text{ on }\Sigma^\pm.
\end{cases}
\end{equation} 
Since we require the compatibility conditions up to 7-th order, we can stop at $r=3$ and define $(v_0^{\kk,\pm},b_0^{\kk,\pm},q_0^{\kk,\pm},S_0^{\kk,\pm},\psi_0^{\kk})$ to be $(v_0^{(3),\pm},b_0^{\pm},q_0^{(3),\pm},S_0^{\pm},\psi_0)$. It is also straightforward to see the convergence after long and tedious calculations: For $s\geq 2\times(2r+3)=18$, we have the convergence as $\kk\to 0$
\begin{align*}
\ino{(v_0^{\kk,\pm},q_0^{\kk,\pm})-(v_0^{\pm},q_0^{\pm})}_{s,\pm}\lesssim&~ P(\|v_0^\pm,b_0^\pm,q_0^\pm,S_0^\pm\|_{s+1,\pm})\left(\kk|\psi_0|_{s+3.5}+\sum_{j=0}^{r}\kk|(\lapi)^{j}v_0^\pm|_{s+1.5-2j}+\kk|(\lapi)^{(j-1)_+}\p_3 q_0^\pm|_{s+0.5-2j}\right)\to 0,
\end{align*}provided that the given initial data is sufficiently regular. Specifically, picking $s=18$, the given data is required to satisfy $\|(v_0^{\pm},b_0^{\pm},q_0^{\pm},S_0^{\pm})\|_{20,\pm}+|\psi_0|_{21.5}<+\infty$. We may assume the given data belongs to $C^{\infty}$-class for convenience.
\end{appendix}

\paragraph*{Data avaliability.} Data sharing is not applicable as no datasets were generated or analyzed during the current study.

\subsection*{Ethics Declarations}
\paragraph*{Conflict of interest.} On behalf of all authors, the corresponding author states that there is no conflict of interest.

\end{document}